\documentclass[12pt]{amsart}
\usepackage[usenames,dvipsnames]{color}
\usepackage{fancyvrb}
\usepackage[T1]{fontenc}
\usepackage{lmodern}
\usepackage{upquote,textcomp}
\usepackage{etoolbox}
\usepackage[12pt]{moresize}

\usepackage[latin1]{inputenc}
\usepackage[english]{babel}
\usepackage{amsmath}
\usepackage{amsthm}
\usepackage{amssymb}
\usepackage{amsmath, amsthm, amsfonts, mathrsfs, amsfonts}
\usepackage{latexsym}
\usepackage{textcomp}
\usepackage{enumerate}

\usepackage{graphicx}
\usepackage{tikz}
\usepackage{float}
\usepackage{wrapfig}
\usepackage{array}

\usepackage{amsaddr}
\usepackage{enumitem}

\theoremstyle{definition}
\newtheorem{definition}{Definition}[section]

\newtheorem{hp}[definition]{Hypothesis}

\theoremstyle{remark}
\newtheorem{remark}[definition]{Remark}

\theoremstyle{plain}

\theoremstyle{plain}
\newtheorem{cor}[definition]{Corollary}

\theoremstyle{plain}
\newtheorem{lemma}[definition]{Lemma}

\theoremstyle{plain}
\newtheorem{theorem}[definition]{Theorem}
\newtheorem{theoremA}[]{Theorem}

\theoremstyle{remark}

\newtheorem{notation}[definition]{Notation}

\theoremstyle{definition}

\newcommand{\A}{\mathbf{A}}

\newcommand{\C}{\mathrm{C}}

\newcommand{\F}{\mathcal{F}}
\newcommand{\G}{\mathrm{G}}

\newcommand{\N}{\mathrm{N}}

\newcommand{\J}{\mathrm{J}}

\newcommand{\Aut}{\mathrm{Aut}}
\newcommand{\Hom}{\mathrm{Hom}}

\newcommand{\Out}{\mathrm{Out}}
\newcommand{\Inn}{\mathrm{Inn}}
\newcommand{\Syl}{\mathrm{Syl}}

\newcommand{\coreF}{\mathrm{core_\F}}

\newcommand{\GF}{\mathrm{GF}}
\newcommand{\GL}{\mathrm{GL}}
\newcommand{\Sp}{\mathrm{Sp}}
\newcommand{\SL}{\mathrm{SL}}

\newcommand{\PGammaL}{\mathrm{P}\Gamma\mathrm{L}}
\newcommand{\PSL}{\mathrm{PSL}}

\newcommand{\PSp}{\mathrm{PSp}}

\newcommand{\norm}{\mathrel{\unlhd}}

\def \Z {\mathrm {Z}}

\def \Syl {\hbox {\rm Syl}}

\def \ov {\overline}

\oddsidemargin 0.1in \evensidemargin 0.1in \textwidth=6.1in
\textheight=8.5in \itemsep=0in
\parsep=0.1in

\title{Fusion systems on $p$-groups of sectional rank $3$}
\author{Valentina Grazian}
\address{Institute of Mathematics, University of Aberdeen, Fraser Noble Building, Aberdeen AB24 3UE, U.K.}
\email{valentinagrazian@libero.it}
\thanks{\textbf{Mathematics Subject Classification (2010):} 20D20 and 20D05. 
\\ \indent\textbf{Keywords:} Fusion systems, exotic fusion systems, p-groups of sectional rank 3.}
\thanks{The author was supported by EPSRC}

\begin{document}

\maketitle

\begin{abstract} We study saturated fusion systems on $p$-groups having sectional rank $3$ for all odd primes $p$. For $p\geq 5$, we obtain a complete classification of the ones that do not have any non-trivial normal $p$-subgroups. 
\end{abstract}

\section*{Introduction}
The theory of fusion systems is a modern subject with applications in various branches of algebra. A fusion system $\F$ on a finite $p$-group $S$ is a category whose objects are the subgroups of $S$ and whose morphism sets $\Hom_\F(P,Q)$ between subgroups $P$ and $Q$ of $S$ are collections of injective morphisms that satisfy some axioms, first introduced by Puig (\cite{Pg}) and inspired by the conjugation action of a finite group on its $p$-subgroups.
Given a finite group $G$, there is a natural construction of a fusion system on one of its Sylow $p$-subgroups $S$: this is the category $\F_S(G)$ whose objects are the subgroups of $S$ and whose morphism sets are $\Hom_{\F_S(G)}(P,Q) = \{ c_g|_P \colon P \rightarrow Q \mid g\in G \text{ and } P^g\leq Q \}$, for every $P,Q\leq S$.
Many researchers around the world are currently working on classifying simple fusion systems at the prime $2$ and on classifying important families of simple fusion systems at odd primes. 
Taking inspiration from the Classification of Finite Simple Groups, an important class to examine is the class of $p$-groups of \emph{small sectional rank.} The rank of a finite group is the smallest size of a generating set for it and a $p$-group $S$ has sectional rank $k$ if every elementary abelian section  $Q/R$ of $S$ has order at most $p^k$ and $k$ is the smallest integer with this property (or equivalently if every subgroup of $S$ has rank at most $k$ and $k$ is the smallest integer with this property).
In the elementary case in which $S$ has sectional rank $1$, the group $S$ is cyclic and all saturated fusion systems on $S$ are completely determined by the automorphism group of $S$; this can be proved by adapting Burnside's result for groups with abelian Sylow $p$-subgroup (\cite{Burnside}).
All reduced fusion systems on $2$-groups of sectional rank at most $4$ have been classified by Oliver (\cite{2rank4}).
If $p$ is an odd prime, then the saturated fusion systems on $p$-groups of sectional rank $2$ have been classified by Diaz, Ruiz and Viruel (\cite{DRV}) and Parker and Semeraro (\cite{PSrank2}).

As a natural continuation of these works, in this paper we study saturated fusion systems on $p$-groups of sectional rank $3$ when $p$ is an odd prime. In particular, we classify all such fusion systems $\F$ whenever $p\geq 5$ and $\F$ satisfies the extra condition that $O_p(\F)=1$.

Let $p$ be an odd prime, let $S$ be a $p$-group and let $\F$ be a saturated fusion system on $S$.
The Alperin-Goldschmidt Fusion Theorem  \cite[Theorem 1.19]{AO} guarantees that $\F$ is completely determined by the $\F$-automorphisms of $S$ and by the $\F$-automorphisms of certain subgroups of $S$, called for this reason $\F$-essential subgroups of $S$ (Definition \ref{def.essential}).
If $Q \leq P$ are subgroups of $S$, we say that $Q$ is $\F$-characteristic in $P$ if $Q$ is normalized by $\Aut_\F(P)$. One of the axioms in the definition of a fusion system states that all the restrictions of conjugation maps realized by elements of $S$ belong to the fusion system. Hence if $Q$ is $\F$-characteristic in $P$ then $Q\norm \N_S(P)$.

If the $p$-group $S$ has sectional rank $3$ then by definition its subgroups have rank at most $3$. Since $\F$-essential subgroups are not cyclic, we start characterizing the ones that have rank $2$.
In \cite{pearls} we called $\F$-\emph{pearls} the $\F$-essential subgroups that are either elementary abelian of order $p^2$ or non-abelian of order $p^3$. Note that $\F$-pearls have rank $2$.

\begin{theoremA}\label{rank2pearl}
Suppose $p$ is an odd prime,  $S$ is a $p$-group of sectional rank $3$ and $\F$ is a saturated fusion system on $S$. Then every $\F$-essential subgroup $E$ of $S$ of rank $2$ that is not $\F$-characteristic in $S$ is an $\F$-pearl.
\end{theoremA}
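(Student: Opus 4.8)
The plan is to study the group $\Out_\F(E)$ through its action on the Frattini quotient $V:=E/\Phi(E)$, which is $2$-dimensional over $\mathbb{F}_p$ since $E$ has rank $2$. Being $\F$-essential, $E$ is $\F$-centric (so $C_S(E)=\Z(E)$) and $\Out_\F(E)$ contains a strongly $p$-embedded subgroup, whence $O_p(\Out_\F(E))=1$ and $p$ divides $|\Out_\F(E)|$. First I would invoke the standard fact that the kernel of $\Aut(E)\to\GL(V)$ is a normal $p$-subgroup of $\Aut_\F(E)$; since $[E,E]\leq\Phi(E)$ this kernel contains $\Inn(E)$, so its image in $\Out_\F(E)$ is a normal $p$-subgroup and therefore trivial by $O_p(\Out_\F(E))=1$. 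Consequently $\Out_\F(E)$ embeds into $\GL(V)\cong\GL_2(p)$.

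Next I would show the action on $V$ is irreducible. If $\Out_\F(E)$ stabilised a line of $V$, its image would lie in a Borel subgroup $B$ of $\GL_2(p)$; but the unipotent radical of $B$ is a normal Sylow $p$-subgroup of order $p$, and since $B$ modulo it is a $p'$-group, every subgroup of $B$ has a normal Sylow $p$-subgroup. This contradicts the existence of a strongly $p$-embedded subgroup (which forces at least two Sylow $p$-subgroups). Hence $\Out_\F(E)$ acts irreducibly on $V$, and in particular $E$ has no characteristic subgroup $C$ with $\Phi(E)<C<E$, as any such $C$ would determine an $\Aut_\F(E)$-invariant line of $V$.

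I would then translate this irreducibility into structural constraints, applying the no-characteristic-line condition to $\mho^1(E)$, $\Omega_1(E)$, $[E,E]$ and $\Z(E)$, and invoking Blackburn's classification of $p$-groups of rank $2$ for odd $p$. I expect this to leave only four possibilities: $E$ elementary abelian of order $p^2$, $E$ non-abelian of order $p^3$ (the two $\F$-pearl cases), $E$ homocyclic abelian $C_{p^a}\times C_{p^a}$ with $a\geq 2$, or a special non-abelian group with $\Z(E)=\Phi(E)=[E,E]$ and $V$ the natural module of order at least $p^4$. The remaining task is to rule out the two non-pearl cases using the hypotheses on $S$.

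For the last step I would use that $\Aut_S(E)$ is Sylow in $\Aut_\F(E)$ and maps onto a Sylow $p$-subgroup of $\GL_2(p)$ of order $p$, which gives $|N_S(E):E|=p$; and that, $E$ not being $\F$-characteristic, some $\phi\in\Aut_\F(S)$ sends $E$ to a distinct essential subgroup $E'=\phi(E)$ of the same isomorphism type. In each non-pearl case I would then analyse $\langle E,E'\rangle$ together with a normaliser chain $E<N_S(E)<\cdots$: the two distinct rank-$2$ subgroups $\Omega_1(E)\neq\Omega_1(E')$, respectively the action of a chosen $t\in N_S(E)\setminus E$ on $\Omega_1(E)$, should produce an elementary abelian section of rank at least $4$, contradicting that $S$ has sectional rank $3$. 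This would eliminate $a\geq 2$ and $|E|\geq p^4$, leaving exactly the pearls. I expect this final elimination to be the main obstacle: the difficulty is to control $\langle E,E'\rangle$ and the normaliser chain precisely enough to extract the forbidden rank-violating section in \emph{every} non-pearl configuration, which is where the strongly $p$-embedded action, the sectional rank bound, and the non-characteristic hypothesis must be combined with the greatest care.
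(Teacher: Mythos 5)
Your opening moves are correct and coincide with the paper's setup: the faithful action of $\Out_\F(E)$ on $E/\Phi(E)\cong \C_p\times\C_p$ (Lemma~\ref{GLr}) and the irreducibility of that action, equivalently the absence of proper $\Aut_\F(E)$-invariant subgroups strictly between $\Phi(E)$ and $E$ (Lemma~\ref{char.series}). The gap lies entirely in your classification and elimination stages. First, Blackburn's theorem classifies $p$-groups of \emph{sectional} rank $2$, i.e.\ those with no elementary abelian subgroup of order $p^3$; here ``rank $2$'' only means $E$ is $2$-generated, and inside a group $S$ of sectional rank $3$ such an $E$ may itself have sectional rank $3$ (the $2$-generated Sylow $p$-subgroup of $\Sp_4(p)$ is the standard example). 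Your four-case list is therefore unjustified and in fact incomplete: for $p\geq 5$ the relatively free $2$-generator group of exponent $p$ and class $3$ has order $p^5$, sectional rank $3$, admits $\GL_2(p)$ acting irreducibly on its Frattini quotient, and is neither homocyclic nor special, so it survives every filter you impose. (Also, a $2$-generated group of class $2$ has $[E,E]$ generated by the single commutator of its two generators, so ``special of order at least $p^4$'' is vacuous.) Second, your elimination of the homocyclic case fails at its first step: if $E\cong \C_{p^a}\times\C_{p^a}$ with $a\geq 2$ and $E'$ is a second essential subgroup with $E\cap E'$ of index $p$ in each, then $\Phi(E)\leq E\cap E'$ and hence $\Omega_1(E)=\Omega_1(E\cap E')=\Omega_1(E')$; the two subgroups you want to be distinct always coincide, and no elementary abelian section of rank $4$ appears.

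These are precisely the points where the paper has to work hardest, and it does so with tools your sketch does not supply. Lemma~\ref{frattini.is.char} applies Stellmacher's Pushing Up Theorem to a model of $\N_\F(E)$ to show that $\Phi(E)$ is $\F$-characteristic in $\N_S(E)$, i.e.\ $\coreF(E)=\Phi(E)$; this is what excludes the class-$3$ configurations above. The $\F$-core analysis (Lemmas~\ref{same.T} and~\ref{properties.T}) then gives $\C_E(\Phi(E))\nleq\Phi(E)$, whence irreducibility forces $\Phi(E)\leq\Z(E)$; since $[\N_S(E):E]=p$ and $E$ is not $\F$-characteristic in $\N_S(E)$, the whole of $\N_S(E)$ centralizes $\Phi(E)$, so $O^{p'}(\Aut_\F(E))$ does, and the critical-subgroup argument of Lemma~\ref{critical} forces $\Phi(E)=[E,E]$ and $E$ of exponent $p$ --- in particular ruling out the homocyclic case --- after which $[E:\Z(E)]\leq p^2$ gives $|\Phi(E)|\leq p$ and $|E|\leq p^3$. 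Your instinct to exploit a second essential subgroup $E'=\phi(E)$ and the normalizer chain is the right one, but without a pushing-up input of this kind the purely group-theoretic classification cannot be completed, and the specific rank-$4$ contradictions you propose do not materialize.
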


If $O_p(\F)=1$ and there exists an $\F$-essential subgroup $E$ of $S$ that is $\F$-characteristic in $S$, then there must exist an $\F$-essential subgroup $P$ of $S$ distinct from $E$ (otherwise $E\leq O_p(\F)$, contradicting the assumptions).
With this in mind, we first undertake a deep study of the structure of $\F$-essential subgroups of rank $3$ that are not $\F$-characteristic in $S$ (Section 3). Then we study the interplay between distinct $\F$-essential subgroups that are $\F$-characteristic in $S$ (Section 4), using the classification of Weak BN-pairs of rank 2 presented in \cite{DGS}. This leads us to prove the following result.

\begin{theoremA}\label{normal}
Suppose $p$ is an odd prime,  $S$ is a $p$-group of sectional rank $3$ and $\F$ is a saturated fusion system on $S$ such that $O_p(\F)=1$. Then either 
\begin{itemize}
\item $S$ is isomorphic to a Sylow $p$-subgroup of the group $\Sp_4(p)$, or 
\item there exists an $\F$-essential subgroup of $S$ that is not normal in $S$ and there is at most one $\F$-essential subgroup of $S$ that is $\F$-characteristic in $S$.
\end{itemize}
\end{theoremA}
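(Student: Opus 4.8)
The plan is to prove the contrapositive of each half of the second alternative. Write $A$ for the statement that $S$ is isomorphic to a Sylow $p$-subgroup of $\Sp_4(p)$. It suffices to establish two implications: if every $\F$-essential subgroup of $S$ is normal in $S$ then $A$ holds; and if there are at least two $\F$-essential subgroups of $S$ that are $\F$-characteristic in $S$ then $A$ holds. Together these give that, whenever $S$ is not a Sylow $p$-subgroup of $\Sp_4(p)$, some $\F$-essential subgroup fails to be normal in $S$ and at most one is $\F$-characteristic in $S$, which is exactly the conclusion. As a preliminary observation, since $O_p(\F)=1$ the $p$-group $S$ is not normal in $\F$, so by the Alperin--Goldschmidt theorem $\F$ admits at least one $\F$-essential subgroup. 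Moreover, if $E$ were the unique $\F$-essential subgroup and $E\norm S$, then every $\alpha\in\Aut_\F(S)$ would send $E$ to the $\F$-essential subgroup $E^\alpha\leq S$, forcing $E^\alpha=E$ by uniqueness; thus $E$ would be $\F$-characteristic in $S$, and being normalized by $\Aut_\F(E)$ and $\Aut_\F(S)$ it would be normal in $\F$, giving $1\neq E\leq O_p(\F)$, a contradiction. Hence under the hypothesis of the first implication there are at least two $\F$-essential subgroups, each normal in $S$.

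The common mechanism for both implications is the construction of a weak BN-pair of rank $2$ from a pair $E_1\neq E_2$ of $\F$-essential subgroups that are normal in $S$ (in the first implication, two of the normal essentials; in the second, the two $\F$-characteristic ones, which are normal in $S$ because $\F$-characteristic subgroups are normalized by $\Inn(S)\leq\Aut_\F(S)$). Each $E_i$ is $\F$-centric and fully normalized, so the constrained system $\N_\F(E_i)$ has a model $G_i$, a finite group with $O_p(G_i)=E_i$ realizing $\N_\F(E_i)$; since $E_i\norm S$ we have $\N_S(E_i)=S$, so $S\in\Syl_p(G_i)$. Matching the two copies of $\N_\F(S)$ inside $G_1$ and $G_2$ along $B:=\N_{G_1}(S)=\N_{G_2}(S)$ produces an amalgam $G_1\hookleftarrow B\hookrightarrow G_2$ in which $S$ is a common Sylow $p$-subgroup and each $\Out_\F(E_i)$ has a strongly $p$-embedded subgroup. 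Once this amalgam is shown to have trivial core it is a weak BN-pair of rank $2$, and the classification of such objects in \cite{DGS} applies.

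It then remains to run through the short list of rank-$2$ weak BN-pairs at an odd prime and keep only those whose Sylow $p$-subgroup has sectional rank exactly $3$: the groups of type $A_2$ have extraspecial Sylow $p$-subgroups of sectional rank $2$, while the larger Lie-type possibilities ($G_2$, ${}^3D_4$, the unitary cases, and so on) produce Sylow $p$-subgroups of sectional rank at least $4$, leaving $C_2(p)$ and hence $S$ isomorphic to a Sylow $p$-subgroup of $\Sp_4(p)$, which is $A$. The main obstacle, and the reason the two implications are not symmetric, is the verification that the amalgam above has trivial core. In the first implication every $\F$-essential subgroup is normal in $S$, so a $p$-subgroup normal in $\langle G_1,G_2\rangle$ is normalized by $\Aut_\F(S)$ and by $\Aut_\F(E)$ for all normal essentials $E$, hence lies in $O_p(\F)$ and is trivial; one must still exclude the occurrence of three or more normal essentials, which I expect to follow from the sectional-rank-$3$ hypothesis. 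In the second implication further $\F$-essential subgroups --- necessarily $\F$-pearls by Theorem~\ref{rank2pearl} when of rank $2$, or of the shapes determined in Section~3 when of rank $3$ --- may be present and need not normalize the core, so its triviality has to be extracted from the detailed interplay between the two $\F$-characteristic essentials and these additional essentials. This interplay analysis is where the bulk of the work lies.
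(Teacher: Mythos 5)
Your decomposition into two implications is logically correct, and your second implication (two $\F$-characteristic essentials force $S\cong$ a Sylow $p$-subgroup of $\Sp_4(p)$) is exactly the paper's Theorem \ref{Sp4}, proved there via the weak BN-pair of Theorem \ref{typeI} together with a long argument showing that the core $T=\coreF(E_1,E_2)$ is normal in $\F$; you correctly flag that this core analysis is where the work lies, though you do not carry it out. The genuine gap is in your first implication. You propose to build the amalgam from two essentials $E_1\neq E_2$ that are merely normal in $S$. If $E_i$ is not $\F$-characteristic in $S$, then $G_i$ is a model for $\N_\F(E_i)$ and $\N_{G_i}(S)$ is a model for $\N_{\N_\F(E_i)}(S)$, whose automizer of $S$ is $\N_{\Aut_\F(S)}(E_i)$, a \emph{proper} subgroup of $\Aut_\F(S)$. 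So the two subgroups you want to identify are not both models of $\N_\F(S)$, need not be isomorphic to one another, and the amalgam cannot be formed as described; this is precisely why the paper's Theorem \ref{typeI} hypothesizes that both essentials are $\F$-characteristic in $S$. Moreover, even granting an amalgam, a subgroup normal in both $G_1/T$ and $G_2/T$ is only normalized by $\Aut_\F(E_1)$, $\Aut_\F(E_2)$ and part of $\Aut_\F(S)$ --- not by $\Aut_\F(E)$ for the \emph{other} essentials $E$ --- so you cannot conclude it lies in $O_p(\F)$; and your hoped-for bound ``at most two normal essentials'' does not follow from the sectional rank hypothesis (a rank-$3$ group $S$ can have up to $p^2+p+1$ maximal subgroups, each a candidate).

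The paper proves the existence of a non-normal essential by an entirely different route, which is the content of Sections 3 and 5. Having disposed of the $\Sp_4(p)$ case via Theorem \ref{Sp4}, it observes: any rank-$2$ essential is an $\F$-pearl (Theorem \ref{rank2pearl}) and pearls are never normal in a sectional-rank-$3$ group by \cite[Theorem B]{pearls}. If instead every essential has rank $3$, then since $Z_S=\Omega_1(\Z(S))$ lies in every essential and $O_p(\F)=1$, some essential $E$ must satisfy $Z_S<Z_E$; if $E$ is not $\F$-characteristic in $S$ it is abelian (Theorem \ref{E.moving.center.is.abelian}) and a normal abelian essential forces $O_p(\F)\neq 1$ (Lemma \ref{OpF}), while if $E$ is $\F$-characteristic one shows $\Phi(E)\neq 1$ and that $\Phi(E)=\Phi(Q)$ for every normal essential $Q$ (Lemma \ref{same.frattini}), so that if all essentials were normal then $1\neq\Phi(E)\leq O_p(\F)$. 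None of this is captured by your amalgam strategy, so the first implication as you outline it would not go through.
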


Note that if $S$ is a Sylow $p$-subgroup of the group $\Sp_4(p)$, then $S$ contains a maximal subgroup that is elementary abelian and the reduced fusion systems on $S$ are among the ones classified in \cite{p.index} and \cite{p.index2}.

Up to this point, our results hold for every odd prime $p$. The crucial distinction between $p=3$ and the other odd primes occurs when, given an $\F$-essential subgroup $E$ of $S$ of rank $3$, we look for a bound for the index of $E$ in $S$. We show that if $p\geq 5$ then every $\F$-essential subgroup of $S$ of rank $3$ has index $p$ in $S$ (Theorem \ref{pgeq5}), and so it is normal in $S$.
Combining this result with Theorems  \ref{rank2pearl} and \ref{normal} and using the classification of fusion systems containing pearls  given in \cite[Theorem B]{pearls}, we get our main result.

\begin{theoremA}\label{main}
Let $p\geq5$ be a prime, let $S$ be a $p$-group of sectional rank $3$ and let $\F$ be a saturated fusion system on $S$ such that $O_p(\F)=1$. Then $\F$ contains an $\F$-pearl and exactly one of the following holds:
\begin{enumerate}
\item $S$ is isomorphic to a Sylow $p$-subgroup of the group $\Sp_4(p)$;
\item $p=7$,
$S$ has order $7^5$, $S$ is uniquely determined up to isomorphism and \begin{itemize}
\item there exists a unique $\F$-conjugacy class  $E^\F$ of $\F$-essential subgroups of $S$, where $E\cong \C_7 \times \C_7$ and $\Out_\F(E)\cong \SL_2(7)$,
\item $\Out_\F(S) = \N_{\Out_\F(S)}(E)\cong \C_6$ and
\item $\F$ is unique up to isomorphism, simple and exotic.
\end{itemize}
\end{enumerate}
\end{theoremA}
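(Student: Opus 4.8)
The plan is to reduce the classification to the situation of a fusion system containing an $\F$-pearl, and then to extract the two possibilities from \cite[Theorem~B]{pearls} after imposing that $S$ have sectional rank $3$. I would begin by noting that $\F$ has at least one essential subgroup: otherwise the Alperin--Goldschmidt theorem \cite[Theorem~1.19]{AO} would force $\F$ to be generated by $\Aut_\F(S)$, whence $S=O_p(\F)$, contradicting $O_p(\F)=1$. Every essential subgroup is noncyclic, so under the sectional rank $3$ hypothesis it has rank $2$ or $3$.

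The crucial step is to produce an $\F$-pearl. Suppose first that some essential subgroup $E$ is not normal in $S$. Since a subgroup that is $\F$-characteristic in $S$ is in particular normal in $S$, such an $E$ is not $\F$-characteristic in $S$. Because $p\geq5$, Theorem~\ref{pgeq5} shows that every essential subgroup of rank $3$ has index $p$ in $S$, hence is a maximal subgroup and so is normal in $S$; therefore $E$ has rank $2$. Theorem~\ref{rank2pearl} then identifies $E$ as an $\F$-pearl. Suppose instead that every essential subgroup of $S$ is normal in $S$. Then the second alternative of Theorem~\ref{normal} fails, so $S$ is isomorphic to a Sylow $p$-subgroup of $\Sp_4(p)$; in this case I would obtain an $\F$-pearl from the explicit description of the saturated fusion systems on this $S$ with $O_p(\F)=1$ given in \cite{p.index,p.index2}. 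In either event $\F$ contains an $\F$-pearl.

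With a pearl secured, I would invoke \cite[Theorem~B]{pearls}, which classifies the saturated fusion systems on a $p$-group that contain a pearl, and intersect that classification with the requirement that $S$ have sectional rank $3$. Since most families appearing there are carried by $p$-groups of larger sectional rank, this is a severe restriction, and I would check the sectional rank of each candidate directly from its presentation. The only survivors should be the Sylow $p$-subgroups of $\Sp_4(p)$, yielding conclusion (1), and a single exotic family existing only at $p=7$ on a group of order $7^5$, yielding conclusion (2). For the latter I would transcribe the data recorded in \cite[Theorem~B]{pearls}: that $S$ is determined up to isomorphism, that there is exactly one $\F$-conjugacy class of essential subgroups $E\cong\C_7\times\C_7$ with $\Out_\F(E)\cong\SL_2(7)$, that $\Out_\F(S)=\N_{\Out_\F(S)}(E)\cong\C_6$, and that $\F$ is unique up to isomorphism, simple and exotic. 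The two conclusions are mutually exclusive because a Sylow $p$-subgroup of $\Sp_4(p)$ has order $p^4$ while the group in (2) has order $7^5$; hence exactly one of them holds.

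I expect the principal obstacle to lie in the third paragraph: one must audit every entry of the pearl classification against the sectional rank $3$ condition with enough care to guarantee that no case beyond the two listed survives and that both retained families have sectional rank exactly $3$. A subsidiary point is the assertion that the $\Sp_4(p)$ case always contains an $\F$-pearl, which rests on the structural analysis of \cite{p.index,p.index2} (or on a direct identification of the pertinent rank-$2$ essential subgroups as pearls).
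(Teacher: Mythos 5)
Your proposal is correct and follows essentially the same route as the paper: produce an $\F$-pearl by combining Theorems \ref{normal}, \ref{pgeq5} and \ref{rank2pearl}, then conclude by intersecting \cite[Theorem B]{pearls} with the sectional rank $3$ hypothesis. The only (immaterial) divergence is in the $\Sp_4(p)$ case, where the paper obtains a pearl directly by observing that the only candidates for $\F$-essential subgroups are the $\F$-pearls and the unique elementary abelian maximal subgroup $A$, so that $O_p(\F)=1$ forces a pearl, rather than appealing to the classifications in \cite{p.index} and \cite{p.index2}.
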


More precisely, the group $S$ of order $7^5$ appearing in Theorem \ref{main} is the group stored in the software \emph{Magma} as \texttt{SmallGroup(7\string^5, 37)} and is isomorphic to a maximal subgroup of a Sylow $7$-subgroup of the Monster group. In fact, as shown in \cite{pearls}, the simple exotic fusion system $\F$ defined on $S$ is a subsystem of the $7$-fusion system of the Monster group.
We point out that the proof that $\F$ is exotic is the only part of the proof of Theorem \ref{main} that uses the Theorem of classification of finite simple groups.

If $p=3$, $S$ has sectional rank $3$  and there exists an $\F$-essential subgroup $E$ of $S$ of rank $2$, then by Theorem \ref{rank2pearl} the group $E$ is an $\F$-pearl and by  \cite[Theorem B]{pearls} the $3$-group $S$ is isomorphic to a Sylow $3$-subgroup of the group $\Sp_4(3)$. Note that by Theorem \ref{normal} this is true when $O_3(\F)=1$ and all the $\F$-essential subgroups of $S$ are normal in $S$.
However this is not always the case.

As we mentioned already, a crucial step toward the proof of Theorem \ref{main} is the fact that if $p\geq 5$ then every $\F$-essential subgroup of $S$ having rank $3$ is normal in $S$ (Theorem \ref{pgeq5}). This is not true for $p=3$.
For example if  $q \equiv 1 \mod 3$ and $S$ is a Sylow $3$-subgroup of $\SL_4(q)$, then $S$ has sectional rank $3$, $S\cong \C_{3^a} \wr \C_3$, where $3^a$ is the largest power of $3$ dividing $q-1$, and there is an $\F_S(\SL_4(q))$-essential subgroup $E$ of $S$ isomorphic to the central product $\C_{3^a} \circ 3^{1+2}_+$. In particular if $a\geq 2$ then $E$ is not normal in $S$. Such $3$-group $S$ has a maximal subgroup that is abelian and so the reduced fusion systems on it are among the ones classified in \cite{p.index, p.index2, p.index3}. However there are saturated fusion systems on $3$-groups of sectional rank $3$ in which every maximal subgroup is non-abelian. Examples are given by the $3$-fusion systems of the groups $\PGammaL_3(q^{3^a})$ for $q \equiv 1 \mod 3$ and $a\geq 1$.

\vspace{2mm}
\emph{Organization of the paper.} In Section 1 we study properties of $\F$-essential subgroups.
We characterize the $\F$-automorphism group of an $\F$-essential subgroups $E$ of $S$ of rank at most $3$,  showing that in many cases the group $O^{p'}(\Out_\F(E))$ is isomorphic to the group $\SL_2(p)$. We give sufficient conditions for an $\F$-essential subgroup of rank $2$ to be an $\F$-pearl (Theorem \ref{incenterfrat.is.pearl}), we introduce the concept of the normalizer tower of a subgroup (Definition   \ref{norm.tower}) and we show that abelian $\F$-essential subgroups of rank at most $3$ are not properly contained in any $\F$-essential subgroup of $S$ (Corollary \ref{lift.Ni.ab}).

In Section 2 we introduce the concept of the $\F$-core of a pair of $\F$-essential subgroups $E_1$ and $E_2$ of $S$ having the same normalizer $N$ in $S$ and we prove Theorem \ref{rank2pearl}.

Section 3 focuses on the structure of the $\F$-essential subgroups of a $p$-group $S$ that are not $\F$-characteristic in $S$ and whose normalizer in $S$ has sectional rank at most $3$. 

In Section 4 we suppose that the $p$-group $S$ has sectional rank $3$ and contains distinct $\F$-essential subgroups $E_1$ and $E_2$ both $\F$-characteristic in $S$. We prove that we can build a Weak BN-pair associated to $E_1$ and $E_2$ and  using the classification of Weak BN-pairs of rank $2$ contained in \cite{DGS} we show that if $O_p(\F)=1$ then $S$ is isomorphic to a Sylow $p$-subgroup of the group $\Sp_4(p)$  (Theorem \ref{Sp4}).

In Section 5 we prove Theorem \ref{normal}. To do that we study the $\F$-essential subgroups of $S$ whose automorphism group does not normalize the group $\Omega_1(\Z(S))$.

Finally in Section 6 we prove Theorem \ref{main}.

\vspace{2mm}
Throughout this paper $p$ is an odd prime, $S$ is a $p$-group and $\F$ is a saturated fusion system on $S$.

\section{Properties of $\F$-essential subgroups of small rank}
We refer to \cite[Chapter 1]{AO} for definitions and notations regarding the theory of fusion systems.
We recall here the definition of $\F$-essential subgroup.

\begin{definition}\label{def.essential}
A subgroup $E$ of $S$ is $\F$-essential if  the followings hold:
\begin{itemize}
\item $E$ is $\F$-centric: $\C_S(P) \leq P$ for every $P \in E^\F$;
\item $E$ is fully normalized in $\F$: $|\N_S(E)| \geq |\N_S(P)|$ for every $P\in E^\F$; and
\item $\Out_\F(E)$ contains a strongly $p$-embedded subgroup;
\end{itemize}
where $E^\F=\{ P \leq S \mid P = E\alpha \text{ for some } \alpha \in \Hom_\F(E,S) \}$ is the $\F$-conjugacy class of $E$ in $S$.
\end{definition}
Given an $\F$-essential subgroup $E$ of $S$, the normalizer fusion system $\N_\F(E)$ defined on $\N_S(E)$ is saturated (\cite[Theorem 1.11]{AO}) and constrained, and so it admits a model $G$ \cite[Theorem1.24]{AO}. In particular $G$ is a finite group such that $\N_S(E) \in \Syl_p(G)$, $E=O_p(G)$ and $G/E \cong \Out_\F(E)$.

The next two lemmas describe properties of $\F$-essential subgroups that we will use many times in this paper.
\begin{notation} If $P$ is a $p$-group then we write $\Phi(P)$ for the Frattini subgroup of $P$.
Recall that $\Phi(P) = [P,P]P^p$.
\end{notation}

\begin{definition}
Let $P$ be a $p$-group and let $\varphi \in \Aut(P)$. We say that $\varphi$ stabilizes the series of subgroups
$P_0 \leq P_1 \leq \dots \leq P_n$ of $P$ if for every $0 \leq i \leq n$ the morphism $\varphi$ normalizes $P_i$ and acts trivially on the quotient $P_i/P_{i-1}$ (for $i>0$).
\end{definition}

\begin{lemma}\label{char.series}
Let $E\leq S$ be a subgroup of $S$.
Consider the sequence of subgroups
\begin{equation}\label{series} E_0 \leq E_1 \leq \dots \leq E_n= E \end{equation}
such that $E_0\leq \Phi(E)$ and for every $0 \leq i \leq n$ the group $E_i$ is normalized by $\Aut_\F(E)$. If $\varphi \in \Aut_\F(E)$ stabilizes the series (\ref{series}) then $\varphi \in O_p(\Aut_\F(E))$.
\end{lemma}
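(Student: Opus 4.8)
The plan is to pass to the Frattini quotient $V = E/\Phi(E)$, where the hypothesis $E_0 \leq \Phi(E)$ lets us convert the given series into an honest flag starting at $0$, and then to exploit the classical fact that the kernel of the action of $\Aut_\F(E)$ on $V$ is a $p$-group. Write $A = \Aut_\F(E)$ and let $\pi \colon E \to V = E/\Phi(E)$ be the quotient map, so that $V$ is an $\mathbb{F}_p$-vector space on which $A$ acts through a homomorphism $\rho \colon A \to \GL(V)$. Let $K = \ker \rho$. It is classical (Burnside's basis theorem: an automorphism of a $p$-group acting trivially on the Frattini quotient acts trivially on every factor of the Frattini series, hence is a $p$-element by the stability group theorem) that $K$ is a normal $p$-subgroup of $A$, so $K \leq O_p(A)$; set $\bar A = A/K \cong \rho(A) \leq \GL(V)$.

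Next I would record the induced flag. Since each $E_i$ is normalized by $A$ and $\Phi(E)$ is characteristic in $E$, the images $\bar E_i := \pi(E_i) = E_i\Phi(E)/\Phi(E)$ are $\bar A$-invariant subspaces, and because $E_0 \leq \Phi(E)$ we have $\bar E_0 = 0$. Thus $0 = \bar E_0 \leq \bar E_1 \leq \dots \leq \bar E_n = V$ is an $\bar A$-invariant flag, and $\bar A$ maps into $\prod_{i=1}^n \GL(\bar E_i/\bar E_{i-1})$ via its action on the successive factors. Let $N$ be the kernel of this homomorphism; being a kernel, $N \trianglelefteq \bar A$. Every element of $N$ fixes the flag and acts trivially on each factor, hence is unipotent with respect to any basis refining the flag, so $N$ lies in the unitriangular (Sylow $p$-) subgroup of $\GL(V)$ attached to such a basis and is therefore a $p$-group. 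Consequently $N$ is a normal $p$-subgroup of $\bar A$, so $N \leq O_p(\bar A)$.

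It remains to locate $\varphi$ and pull back. Since $\varphi$ stabilizes the series (\ref{series}), for $e \in E_i$ we have $\varphi(e)e^{-1} \in E_{i-1}$, and applying $\pi$ gives $\rho(\varphi)(\pi(e)) \equiv \pi(e) \pmod{\bar E_{i-1}}$; as $\bar E_i$ is generated by the elements $\pi(e)$, the image $\rho(\varphi)$ acts trivially on every $\bar E_i/\bar E_{i-1}$, that is, $\rho(\varphi) \in N \leq O_p(\bar A)$. Finally, because $K \leq O_p(A)$ is a normal $p$-subgroup, the standard correspondence $O_p(A/K) = O_p(A)/K$ applies, giving $O_p(\bar A) = O_p(A)/K$; hence $\rho(\varphi) = \varphi K \in O_p(A)/K$ forces $\varphi \in O_p(A)$, as required. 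Note that nothing specific to $\varphi$ was used beyond stabilizing the series, so in fact the whole relative stability subgroup of the series lies in $O_p(A)$.

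I expect the only genuine subtlety to be the bottom of the series: we are given no control over how $\varphi$ acts on $E_0$ itself, so a direct appeal to the stability group theorem for the full chain $1 \leq E_0 \leq \dots \leq E$ is unavailable. The hypothesis $E_0 \leq \Phi(E)$ is precisely what makes the passage to $V = E/\Phi(E)$ erase $E_0$, while the $p$-ness of $K = \ker \rho$ is what lets us discard the action on $\Phi(E)$ altogether; verifying these two points is the crux, whereas the unipotency argument on $V$ and the quotient identification $O_p(A)/K = O_p(\bar A)$ are routine.
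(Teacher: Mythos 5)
Your proof is correct and follows essentially the same route as the paper's: the paper simply cites Gorenstein's stability/Burnside results to conclude that the set of series-stabilizing automorphisms is a normal $p$-subgroup of $\Aut_\F(E)$, hence lies in $O_p(\Aut_\F(E))$, whereas you unpack that citation by passing to $E/\Phi(E)$, using unipotence on the induced flag together with Burnside's theorem for the kernel, and the identification $O_p(A/K)=O_p(A)/K$. Your observation that the hypothesis $E_0\leq\Phi(E)$ is exactly what compensates for having no control on $E_0$ itself is the right reading of why the lemma is stated this way.
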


\begin{proof}
By  \cite[Corollary 5.3.3]{GOR} the order of $\varphi$ is a power of $p$. Note that the set $H$ of all the morphisms in $\Aut_\F(E)$ stabilizing the series (\ref{series}) is a normal $p$-subgroup of $\Aut_\F(E)$. Hence $\varphi  \in H \leq O_p(\Aut_\F(E))$.
\end{proof}

\begin{lemma}\label{strict.frattini}
Let $E$ be an $\F$-essential subgroup of $S$. Then
\[ \Phi(E) < [\N_S(E),E]\Phi(E) < E.\]
\end{lemma}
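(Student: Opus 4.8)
The plan is to read off both strict inequalities from the conjugation action of the $p$-group $N := \N_S(E)$ on the Frattini quotient $V := E/\Phi(E)$, which is a nonzero $\mathbb{F}_p[N]$-module (nonzero because an $\F$-essential subgroup is nontrivial). Since $E \norm N$ we have $[N,E] \leq E$, and under the projection $E \to V$ the image of $[N,E]\Phi(E)$ is exactly the commutator submodule $[N,V]$. Thus the chain $\Phi(E) \leq [N,E]\Phi(E) \leq E$ corresponds to $0 \leq [N,V] \leq V$, and the two strict inequalities amount to the assertions $[N,V] \neq V$ (right) and $[N,V] \neq 0$ (left). I would treat these separately.

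The right-hand inequality is the routine one. Because $N$ is a $p$-group acting on the nonzero $\mathbb{F}_p$-module $V$, its action is unipotent, so the coinvariants $V/[N,V]$ are nonzero (equivalently, $N$ fixes a nonzero functional on $V$). Hence $[N,V] < V$, that is, $[N,E]\Phi(E) < E$, with no appeal to essentiality beyond $E \neq 1$.

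The left-hand inequality is where essentiality enters, and this is the step I expect to be the crux. Arguing by contradiction, suppose $[N,E] \leq \Phi(E)$, so that $N$ centralizes $V$. Then every conjugation automorphism in $\Aut_S(E) = N/\C_S(E)$ stabilizes the $\Aut_\F(E)$-invariant series $\Phi(E) \leq E$ (apply Lemma \ref{char.series} with $E_0 = \Phi(E)$ and $E_1 = E$), so that $\Aut_S(E) \leq O_p(\Aut_\F(E))$. Since $E \leq N$ gives $\Inn(E) \leq \Aut_S(E) \leq O_p(\Aut_\F(E))$, the quotient $O_p(\Aut_\F(E))/\Inn(E)$ is a normal $p$-subgroup of $\Out_\F(E) = \Aut_\F(E)/\Inn(E)$, so $\Out_S(E) = \Aut_S(E)/\Inn(E)$ lies in $O_p(\Out_\F(E))$. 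Now I would invoke essentiality: $\Out_\F(E)$ contains a strongly $p$-embedded subgroup, and any finite group with a strongly $p$-embedded subgroup has trivial $O_p$. Hence $\Out_S(E) = 1$, which forces $\Aut_S(E) = \Inn(E)$ and therefore $\N_S(E) = E$ (using $\C_S(E) \leq E$, as $E$ is $\F$-centric). This contradicts the fact that $E$ is a proper subgroup of the $p$-group $S$, for which $E < \N_S(E)$. Therefore $[N,E] \not\leq \Phi(E)$, giving $\Phi(E) < [N,E]\Phi(E)$.

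The main obstacle is organizational rather than computational: packaging the equivalence ``$N$ centralizes $E/\Phi(E)$'' into the hypothesis of Lemma \ref{char.series}, and then correctly transporting the normal $p$-subgroup $O_p(\Aut_\F(E))$ through the quotient by $\Inn(E)$ so that it lands inside $O_p(\Out_\F(E))$. Once this is set up, the contradiction rests on the single structural fact that a strongly $p$-embedded subgroup forces $O_p = 1$, and everything else is bookkeeping about commutators modulo the Frattini subgroup.
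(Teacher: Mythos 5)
Your proof is correct and follows essentially the same route as the paper: the left inequality via the observation that $[\N_S(E),E]\leq\Phi(E)$ would force $\Aut_S(E)$ into $O_p(\Aut_\F(E))$ (hence $\Out_S(E)\leq O_p(\Out_\F(E))=1$, contradicting essentiality), and the right inequality from nilpotence of the $p$-group action on $E/\Phi(E)$. The only cosmetic differences are that you phrase the second step module-theoretically where the paper argues $[\N_S(E),E]=E$ contradicts nilpotency of $S$, and you spell out the passage from $O_p(\Aut_\F(E))$ to $O_p(\Out_\F(E))$ that the paper leaves implicit.
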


\begin{proof}
If $[\N_S(E), E] \leq \Phi(E)$ then the automorphism group $\Aut_S(E)\cong \N_S(E)/\C_S(E)$ centralizes the quotient $E/\Phi(E)$. Hence $\Aut_S(E)$ is normal in $\Aut_\F(E)$, contradicting the fact that $E$ is $\F$-essential. So $\Phi(E) < [\N_S(E),E]\Phi(E)$. If $ [\N_S(E),E]\Phi(E) = E$ then $[\N_S(E),E]=E$, contradicting the fact that $S$ is nilpotent. Thus $[\N_S(E),E]\Phi(E) < E$.
\end{proof}

\begin{lemma}\label{GLr}
Let $E$ be an $\F$-essential subgroup of $S$. Then $\Out_\F(E)$ acts faithfully on $E/\Phi(E)$. In particular if $E/\Phi(E)$ has order $p^r$  then $\Out_\F(E)$ is isomorphic to a subgroup of $\GL_r(p)$.
\end{lemma}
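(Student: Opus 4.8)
The plan is to study the natural action homomorphism $\rho \colon \Aut_\F(E) \to \GL(E/\Phi(E)) \cong \GL_r(p)$ induced by letting each $\varphi \in \Aut_\F(E)$ act on the quotient $E/\Phi(E)$. First I would observe that this is well defined because $\Phi(E)$ is characteristic in $E$, hence normalized by every element of $\Aut_\F(E)$. Next I would check that $\Inn(E)$ lies in the kernel of $\rho$: since $\Phi(E) = [E,E]E^p \supseteq [E,E]$, conjugation by any element of $E$ acts trivially on $E/\Phi(E)$. Therefore $\rho$ factors through $\Out_\F(E) = \Aut_\F(E)/\Inn(E)$, giving a homomorphism $\bar{\rho} \colon \Out_\F(E) \to \GL_r(p)$. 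The whole statement then reduces to proving that $\bar{\rho}$ is injective, i.e. that the kernel of $\bar\rho$ is trivial.

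To control this kernel I would take any $\varphi \in \Aut_\F(E)$ whose image in $\Out_\F(E)$ lies in $\ker \bar\rho$, so that $\varphi$ acts trivially on $E/\Phi(E)$. I would then invoke Lemma \ref{char.series} applied to the short series $\Phi(E) \leq E$ (taking $E_0 = \Phi(E)$, which satisfies $E_0 \leq \Phi(E)$, and $E_1 = E$, both characteristic and hence normalized by $\Aut_\F(E)$). Since $\varphi$ stabilizes this series, Lemma \ref{char.series} yields $\varphi \in O_p(\Aut_\F(E))$.

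The crucial step is then to identify $O_p(\Aut_\F(E))$ with $\Inn(E)$. Here I would use that $\Inn(E) \cong E/\Z(E)$ is a normal $p$-subgroup of $\Aut_\F(E)$, so $\Inn(E) \leq O_p(\Aut_\F(E))$, and that passing to the quotient by $\Inn(E)$ gives $O_p(\Aut_\F(E))/\Inn(E) = O_p(\Out_\F(E))$. Because $E$ is $\F$-essential, $\Out_\F(E)$ contains a strongly $p$-embedded subgroup, and a group possessing a strongly $p$-embedded subgroup has trivial $p$-core; hence $O_p(\Out_\F(E)) = 1$ and consequently $O_p(\Aut_\F(E)) = \Inn(E)$. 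Combining this with the previous paragraph gives $\varphi \in \Inn(E)$, so its class in $\Out_\F(E)$ is trivial and $\ker \bar\rho = 1$. This proves that $\Out_\F(E)$ acts faithfully on $E/\Phi(E)$ and embeds into $\GL_r(p)$ via $\bar\rho$.

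The main obstacle I anticipate is the identification $O_p(\Aut_\F(E)) = \Inn(E)$, which is exactly the place where the essentiality hypothesis (through the strongly $p$-embedded subgroup and the vanishing of $O_p(\Out_\F(E))$) must be used; everything else is formal manipulation of the action on $E/\Phi(E)$ together with Lemma \ref{char.series}.
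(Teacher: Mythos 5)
Your proposal is correct and follows essentially the same route as the paper: the paper's proof also combines Lemma \ref{char.series} (applied to the series $\Phi(E)\leq E$) with the identity $O_p(\Aut_\F(E))=\Inn(E)$, which holds because $\Out_\F(E)$ has a strongly $p$-embedded subgroup and hence trivial $p$-core, to conclude $\C_{\Aut_\F(E)}(E/\Phi(E))=\Inn(E)$. You have merely spelled out the steps the paper compresses into one sentence, including the observation (stated explicitly in the paper) that $E/\Phi(E)$ is elementary abelian so that its automorphism group is $\GL_r(p)$.
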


\begin{proof}
By Lemma \ref{char.series} and the fact that $E$ is $\F$-essential we get $\C_{\Aut_\F(E)}(E/\Phi(E)) = \Inn(E)$. Hence  the group $\Out_\F(E)\cong \Aut_\F(E)/\Inn(E)$ acts faithfully on $E/\Phi(E)$. Since $E/\Phi(E)$ is elementary abelian (\cite[Theorem 5.1.3]{GOR}) we have $\Aut(E/\Phi(E)) \cong \GL_r(p)$, and we conclude.
\end{proof}

\begin{lemma}\label{indexp}
Let $E$ be an $\F$-essential subgroup of $S$. If $[\N_S(E) \colon E] =p$ then every subgroup of $S$ that is $\F$-conjugate to $E$ is $\F$-essential.
\end{lemma}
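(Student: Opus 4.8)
The plan is to check, for an arbitrary subgroup $F \in E^\F$, the three defining properties of an $\F$-essential subgroup listed in Definition \ref{def.essential}. The first one, $\F$-centricity, comes for free: since $F \in E^\F$ we have $F^\F = E^\F$, and the condition ``$\C_S(P) \leq P$ for every $P \in E^\F$'' is a statement about the whole $\F$-conjugacy class, so it holds for $F$ exactly when it holds for $E$. Thus the real work is to establish that $F$ is fully normalized and that $\Out_\F(F)$ contains a strongly $p$-embedded subgroup.

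For the fully normalized condition, first I would record that every $P \in E^\F$ satisfies $|P| = |E|$ and, by $\F$-centricity, $\C_S(P) \leq P$; in particular $P$ is a proper subgroup of $S$ (as $E < S$). Since a proper subgroup of a $p$-group is strictly contained in its normalizer, $|P| < |\N_S(P)|$, whence $|\N_S(P)| \geq p|P| = p|E| = |\N_S(E)|$, the last equality being the hypothesis $[\N_S(E) \colon E] = p$. On the other hand $E$ is fully normalized, so $|\N_S(P)| \leq |\N_S(E)|$. Combining these gives $|\N_S(P)| = |\N_S(E)| = p|E|$ for every $P \in E^\F$; in particular $[\N_S(F) \colon F] = p$ and $|\N_S(F)| = |\N_S(E)| \geq |\N_S(P)|$ for all $P \in E^\F$, so $F$ is fully normalized.

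For the last condition I would transport the strongly $p$-embedded subgroup along an $\F$-isomorphism. Since $F \in E^\F$ there is $\varphi \in \Hom_\F(E,S)$ with $E\varphi = F$, that is, an isomorphism $\varphi \colon E \to F$ in $\F$. Conjugation by $\varphi$, sending $\psi \mapsto \varphi^{-1}\psi\varphi$, restricts to a group isomorphism $\Aut_\F(E) \to \Aut_\F(F)$ (using that $\F$ is closed under composition with $\F$-isomorphisms and their inverses) carrying $\Inn(E)$ onto $\Inn(F)$; hence it induces an isomorphism $\Out_\F(E) \cong \Out_\F(F)$. Because possessing a strongly $p$-embedded subgroup is preserved under group isomorphism and $\Out_\F(E)$ has one by hypothesis, $\Out_\F(F)$ has one too. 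All three conditions hold, so $F$ is $\F$-essential.

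The only place where the hypothesis $[\N_S(E) \colon E] = p$ is genuinely used is the fully normalized step: it forces the normalizer of every conjugate to have the maximal possible order, so that no conjugate can fail to be fully normalized. I expect this to be the main (and only mildly delicate) point; the $\F$-centric and strongly $p$-embedded parts are formal, the latter being nothing more than the observation that $\Out_\F$ is an isomorphism invariant of the $\F$-conjugacy class.
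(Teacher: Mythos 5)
Your proposal is correct and follows essentially the same route as the paper's proof: $\F$-centricity is a class invariant, the strongly $p$-embedded condition transports along the induced isomorphism $\Out_\F(E)\cong\Out_\F(P)$, and full normalization follows from the chain $|P|<|\N_S(P)|\leq|\N_S(E)|=p|P|$ forced by the index-$p$ hypothesis. No gaps.
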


\begin{proof}
Let $P\leq S$ be a subgroup of $S$ that is $\F$-conjugate to $E$. Since $E$ is $\F$-centric, the group $P$ is $\F$-centric. By assumption $P=E\alpha$ for some $\alpha\in \Hom_\F(E,P)$. Hence $\Out_\F(E) \cong \Out_\F(P)$ via the map $\varphi \mapsto \alpha^{-1}\varphi \alpha$ and so the group $\Out_\F(P)$ has a strongly $p$-embedded subgroup. It remains to show that $P$ is fully normalized in $\F$. Since $E$ is fully normalized in $\F$ and $[\N_S(E) \colon E]=p$ we have
\[ |P|< |\N_S(P)| \leq |\N_S(E)| = [\N_S(E)\colon E]|E|=p|E|=p|P|.\]
Therefore $|\N_S(P)|=|\N_S(E)|$ and the group $P$ is fully normalized in $\F$. Hence $P$ is $\F$-essential.
\end{proof}

We now focus our attention on $\F$-essential subgroups of $S$ having rank at most $3$, since these will be the only ones that occur in a $p$-group of sectional rank $3$.   Note that we are not making any assumption on the sectional rank of $S$ yet.

\begin{theorem}\label{class.3}\cite[Theorem 1.7]{pearls}
Let $E\leq S$ be an $\F$-essential subgroup of $S$ of rank $r\leq 3$. Then $\Out_\F(E)$ is isomorphic to a subgroup of $\GL_r(p)$ and one of the following holds
\begin{itemize}
\item $r=2$ and $O^{p'}(\Out_\F(E))\cong \SL_2(p)$;
\item $r=3$, the action of $\Out_\F(E)$ on $E/\Phi(E)$ is reducible, $O^{p'}(\Out_\F(E))\cong \SL_2(p)$ and
$\Out_\F(E)$ is isomorphic to a subgroup of $\GL_2(p) \times \GL_1(p)$; or
\item $r=3$,  the action of $\Out_\F(E)$ on $E/\Phi(E)$ is irreducible and the group  $O^{p'}(\Out_\F(E))$ is isomorphic to one of the following groups:
\begin{enumerate}
\item $\SL_2(p)$;
\item $\PSL_2(p)$;
\item the Frobenius group $13: 3$ with $p=3$.
\end{enumerate}
\end{itemize}
In particular $[\N_S(E) \colon E]=p$ and every subgroup of $S$ that is $\F$-conjugate to $E$ is $\F$-essential.
\end{theorem}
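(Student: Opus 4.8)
The statement concerns only the group $G := \Out_\F(E)$ and its action on $V := E/\Phi(E)$, so the plan is to reduce it to a purely linear classification and then feed in the theory of groups with a strongly $p$-embedded subgroup. Since $E$ has rank $r \leq 3$, the elementary abelian quotient $V$ has order $p^r$, and by Lemma \ref{GLr} the group $G$ acts faithfully on $V$, giving a faithful embedding $G \leq \GL(V) = \GL_r(p)$. Because $E$ is $\F$-essential, $G$ contains a strongly $p$-embedded subgroup $H$; in particular $p \mid |G|$, a Sylow $p$-subgroup of $G$ is non-normal, and $O_p(G)=1$ (a non-trivial normal $p$-subgroup would lie in $H\cap H^g$ for every $g$, contradicting strong $p$-embedding). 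The case $r=1$ is immediately ruled out, since $\GL_1(p)$ has order prime to $p$, so only $r=2$ and $r=3$ survive.

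For $r=2$ I would invoke Dickson's description of the subgroups of $\GL_2(p)$. A Sylow $p$-subgroup of $\GL_2(p)$ has order $p$, hence is automatically a TI-subgroup, so the existence of a strongly $p$-embedded subgroup just records that the Sylow $p$-subgroup is non-normal with self-normalizing (Borel-type) normalizer. Since $O_p(G)=1$ and $\PSL_2(p)$ has no faithful $2$-dimensional representation in characteristic $p$, running through Dickson's list leaves exactly $\SL_2(p) \norm G \leq \GL_2(p)$, whence $O^{p'}(G) \cong \SL_2(p)$ and the Sylow $p$-subgroup of $G$ has order $p$.

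For $r=3$ I would split on reducibility of the $G$-action on $V$. In the reducible case, fix a proper non-zero $G$-invariant subspace $W$; the subgroup of $G$ acting trivially on both $W$ and $V/W$ is a normal $p$-subgroup, hence trivial by $O_p(G)=1$, so $G$ embeds block-diagonally into $\GL(W)\times\GL(V/W)$, that is into $\GL_2(p)\times\GL_1(p)$. Projection onto the $\GL_2(p)$-factor is a quotient by a central $p'$-subgroup, so the image still carries a strongly $p$-embedded subgroup, and the $r=2$ analysis together with the insensitivity of $O^{p'}$ to central $p'$-factors yields $O^{p'}(G)\cong\SL_2(p)$. In the irreducible case $G\leq\GL_3(p)$ acts irreducibly with a strongly $p$-embedded subgroup; here I would apply the classification of finite groups possessing a strongly $p$-embedded subgroup. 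Since for odd $p$ every unipotent element of $\GL_3(p)$ has order $p$, a Sylow $p$-subgroup of $G$ has exponent $p$, which sharply restricts the candidates; matching those admitting a faithful irreducible $3$-dimensional representation over $\GF(p)$ against the list leaves precisely the possibilities $O^{p'}(G)\cong\SL_2(p)$, $O^{p'}(G)\cong\PSL_2(p)$, and, only for $p=3$, the Singer-type Frobenius group $13{:}3\leq\GL_3(3)$ normalizing a cyclic group of order $(3^3-1)/(3-1)=13$.

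For the final assertion I would use saturation: as $E$ is fully normalized, $\Aut_S(E)\in\Syl_p(\Aut_\F(E))$, and since $E$ is $\F$-centric we have $\Out_S(E)\cong\N_S(E)/E$, which is therefore a Sylow $p$-subgroup of $G$. In each of the cases above a Sylow $p$-subgroup of $G$ has order $p$, so $[\N_S(E)\colon E]=|\Out_S(E)|=p$, and Lemma \ref{indexp} then gives that every subgroup $\F$-conjugate to $E$ is $\F$-essential. The main obstacle is the irreducible $r=3$ case: it is here that one must actually enter the classification of groups with a strongly $p$-embedded subgroup and cut it down by $3$-dimensional representation theory over $\GF(p)$, and in particular isolate the exceptional arithmetic coincidence at $p=3$ that produces the genuinely sporadic group $13{:}3$; by contrast the reductions in the other cases are formal consequences of $O_p(G)=1$ and Dickson's theorem.
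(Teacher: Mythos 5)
The paper does not actually prove this statement: Theorem \ref{class.3} is imported wholesale from \cite[Theorem 1.7]{pearls}, so there is no in-paper argument to compare against line by line. Your reconstruction follows the route that the cited source (and the literature on essential subgroups of small rank generally) takes: reduce to the faithful action of $\Out_\F(E)$ on $E/\Phi(E)$ via Lemma \ref{GLr}, kill $r=1$ and handle $r=2$ by Dickson's theorem on subgroups of $\GL_2(p)$ with $O_p=1$ and non-normal Sylow $p$-subgroup, handle the reducible $r=3$ case by the block embedding into $\GL_2(p)\times\GL_1(p)$ (noting that the kernel of the projection to the $\GL_2(p)$-factor is a central $p'$-subgroup, so $O^{p'}(G)\leq O^{p'}(\GL_2(p)\times\GL_1(p))=\SL_2(p)\times 1$, which tightens your ``insensitivity of $O^{p'}$'' remark into an actual argument), and in the irreducible $r=3$ case invoke the classification of groups with a strongly $p$-embedded subgroup cut down by the existence of a faithful irreducible $3$-dimensional $\GF(p)$-module, which is where the exceptional $13{:}3$ at $p=3$ appears. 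You correctly identify that this last step is the only place where real (CFSG-dependent) input enters, and the final assertion via $\Aut_S(E)\in\Syl_p(\Aut_\F(E))$, $\F$-centricity and Lemma \ref{indexp} is exactly right. I see no genuine gap.
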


We state Stellmacher's Pushing Up Theorem (\cite[Theorem 1]{stell}), that is used in the proof of Lemma \ref{frattini.is.char} and is crucial in the proof of Theorem \ref{characterization.E}. We present the complete statement, that includes also the case $p=2$ (whereas in the rest of this paper we recall that $p$ is an odd prime).

\begin{theorem}[Stellmacher's Pushing Up Theorem]\label{stell.here}
Let $G$ be a finite group, $p$ a prime and $P$ a Sylow $p$-subgroup of $G$ such that
\begin{enumerate}
\item No non-trivial characteristic subgroup of $P$ is normal in $G$, and
\item  $\ov{G}/\Phi(\ov{G}) \cong \PSL_2(p^n)$ for $\ov{G}=G/O_p(G)$.
\end{enumerate}
Let $Q=O_p(G)$ and $V=[Q, O^p(G)]$.
Then either $P$ is elementary abelian or there exists $\alpha \in \Aut(P)$ such that
\[ L/V_0O_{p'}(L) \cong \SL_2(p^n) \]
where $L= V^{\alpha}O^p(G)$ and $V_0 = V(L \cap \Z(G))$, and one of the following holds:
\begin{enumerate}
\item $P/\Omega_1(\Z(P))$ is elementary abelian, $V \leq \Z(Q)$ and $V$ is a natural $\SL_2(p^n)$-module for $L/V_0O_{p'}(L)$;
\item $p=2$, $P/\Omega_1(\Z(P))$ is elementary abelian, $V \leq \Z(Q)$, $n>1$ and $V/(V \cap \Z(G))$    is a natural $\SL_2(2^n)$-module for $L/V_0O_{2'}(L)$;
\item $p\neq 2$, $\Z(V) \leq \Z(Q)$,  $\Phi(V) = V \cap \Z(G)$ has order $p^n$, and $V/\Z(V)$ and
$\Z(V)/\Phi(V)$ are natural $\SL_2(p^n)$-modules for $L/V_0O_{p'}(L)$.

\end{enumerate}
In addition, in case $(3)$ the group $P$ has nilpotency class $3$, $\Phi(\Phi(P))=1$ and $P$ does not act quadratically on $V/\Phi(V)$.
\end{theorem}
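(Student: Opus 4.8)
The plan is to read hypothesis (1) as a \emph{failure-of-factorization} condition and then to exploit the classification of failure-of-factorization modules for groups with a $\PSL_2(p^n)$ section. Write $Q=O_p(G)$ and $V=[Q,O^p(G)]$ as in the statement, and let $J(P)$ be the Thompson subgroup of $P$ (generated by the abelian subgroups of $P$ of maximal order). Since by (1) no non-trivial characteristic subgroup of $P$ is normal in $G$, in particular $J(P)$ is not normal in $G$. First I would apply Thompson factorization to turn this into the statement that $O^p(G)$ acts non-trivially on some non-central $G$-chief factor $W$ inside $Q$, and that $W$ is an \emph{FF-module} for $\ov{G}=G/Q$: there is a non-trivial elementary abelian $A\leq P$ (an \emph{offender}) with $|A|\,|\C_W(A)|\geq|W|$. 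The role of (1) is precisely to force the existence of such an offender; were there none, one could push up and exhibit a characteristic subgroup of $P$ normal in $G$.

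Next I would invoke the classification of FF-modules for a group whose image in $\ov G/\Phi(\ov G)\cong\PSL_2(p^n)$ acts faithfully. The crucial input is that the only faithful FF-modules for $\SL_2(p^n)$ are the natural two-dimensional modules over $\GF(p^n)$ and their Galois twists. This pins down the chief factor as a natural $\SL_2(p^n)$-module and, tracking it back inside $Q$, shows that $O^p(G)$ induces $\SL_2(p^n)$ on a natural module sitting in $Q$. After correcting $V$ by an automorphism $\alpha\in\Aut(P)$, needed to align $V$ with the Sylow structure of a genuine copy of $\SL_2(p^n)$, one sets $L=V^{\alpha}O^p(G)$ and $V_0=V(L\cap\Z(G))$ and obtains $L/V_0O_{p'}(L)\cong\SL_2(p^n)$.

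With the module structure in hand I would split according to whether $V$ is abelian. If $V$ is abelian then $V\leq\Z(Q)$ and $V$ is a single natural module, giving case (1); the finer possibility that $V/(V\cap\Z(G))$ rather than $V$ itself is natural, which arises only for $p=2$ and $n>1$, gives case (2). In both abelian cases the assertion that $P/\Omega_1(\Z(P))$ is elementary abelian comes from analysing how $P$ acts on the natural module and on its fixed points. If $V$ is non-abelian and $p$ is odd, a commutator computation forces $V$ to be special with $\Phi(V)=V\cap\Z(G)$ of order $p^n$ and with both $V/\Z(V)$ and $\Z(V)/\Phi(V)$ natural $\SL_2(p^n)$-modules, which is case (3). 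The additional conclusions there, namely that $P$ has nilpotency class $3$, that $\Phi(\Phi(P))=1$, and that $P$ does not act quadratically on $V/\Phi(V)$, follow by comparing the lower central series of $V$ with the module action, since a quadratic action would collapse the class-$3$ structure.

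The main obstacle is the module-theoretic heart of the argument, in two places. First, showing that the FF-module produced is \emph{exactly} a natural module, rather than a larger FF-configuration, requires the full classification of FF-modules for $\SL_2(p^n)$ together with tight control of the offenders, and this is where most of the work lies. Second, the non-abelian case (3) is genuinely delicate: one must analyse the extension $1\to\Z(V)\to V\to V/\Z(V)\to1$ as a sequence of $\SL_2(p^n)$-modules, identify the commutator map $V/\Z(V)\times V/\Z(V)\to\Phi(V)$ with the relevant bilinear form, and then carry out the pushing-up step that produces the corrective automorphism $\alpha$. This last step, whose structural output is exactly the form in which the theorem is later applied in the proofs of Lemma \ref{frattini.is.char} and Theorem \ref{characterization.E}, is where Stellmacher's local-analytic (amalgam-method) techniques are indispensable.
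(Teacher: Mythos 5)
This statement is not proved in the paper at all: it is quoted, with attribution, as Stellmacher's Pushing Up Theorem from \cite[Theorem 1]{stell}, and the author explicitly presents it as an imported black box to be applied later (in Lemma \ref{frattini.is.char} and Theorem \ref{characterization.E}). So there is no internal proof to compare your argument against; the only legitimate ``proof'' within this paper is the citation.

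Judged on its own terms, your outline identifies the correct circle of ideas (failure of factorization, offenders, FF-modules for groups with a $\PSL_2(p^n)$ quotient), but it is a proof strategy rather than a proof, and the two places where you locate ``the main obstacle'' are exactly where the theorem lives. Concretely: (a) hypothesis (1) is much stronger than ``$\J(P)$ is not normal in $G$'', and Thompson factorization only yields an FF-offender on \emph{some} non-central chief factor; the theorem instead pins down the structure of the specific subgroup $V=[Q,O^p(G)]$ inside $Q$ (e.g.\ in case (3) that $V$ is special with $\Phi(V)=V\cap\Z(G)$ of order exactly $p^n$ and two natural-module layers) together with global constraints on $P$ itself ($P/\Omega_1(\Z(P))$ elementary abelian, or class $3$ with $\Phi(\Phi(P))=1$ and non-quadratic action). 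Passing from the chief-factor statement to these requires the full strength of (1) applied to carefully chosen pairs of characteristic subgroups in the Glauberman--Niles/Stellmacher style, and nothing in the outline produces it. (b) The existence of the correcting automorphism $\alpha\in\Aut(P)$ and the identification $L/V_0O_{p'}(L)\cong\SL_2(p^n)$ are asserted, not derived; this is the pushing-up step proper. So the proposal is a reasonable reading guide to Stellmacher's paper, but it does not constitute an independent proof, and for the purposes of this paper the correct move is simply to cite \cite{stell} as the author does.
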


\begin{lemma}\label{frattini.is.char}
Let $E$ be an $\F$-essential subgroup of $S$ of rank $2$. Then its Frattini subgroup $\Phi(E)$ is $\F$-characteristic in $\N_S(E)$.
\end{lemma}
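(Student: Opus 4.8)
The plan is to treat the trivial case $\Phi(E)=1$ separately (then $\Phi(E)$ is automatically $\F$-characteristic) and otherwise to exploit a model of $\N_\F(E)$ together with Stellmacher's Pushing Up Theorem. Write $N=\N_S(E)$. By Theorem \ref{class.3} we have $O^{p'}(\Out_\F(E))\cong\SL_2(p)$, and by Lemma \ref{GLr} the group $\Out_\F(E)$ acts faithfully on $E/\Phi(E)$, on which $\SL_2(p)$ acts as the natural (hence irreducible) module. Let $G$ be the model of $\N_\F(E)$, so $E=O_p(G)$, $N\in\Syl_p(G)$ and $G/E\cong\Out_\F(E)$; replacing $G$ by the preimage of $O^{p'}(\Out_\F(E))$, I may assume $G/E\cong\SL_2(p)$. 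Then $(G/E)/\Phi(G/E)\cong\PSL_2(p)$, so hypothesis (2) of Theorem \ref{stell.here} holds with $n=1$, and by irreducibility of the natural module I get $V:=[E,O^p(G)]=E$, since its image $V\Phi(E)/\Phi(E)$ equals $E/\Phi(E)$. For $p=3$ the same identity holds because $O^p(G/E)\cong Q_8$ still acts irreducibly on $E/\Phi(E)$.

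The key device is the subgroup $M$ defined as the product of all characteristic subgroups of $N$ that are normal in $G$; it is itself characteristic in $N$ and normal in $G$, and being a normal $p$-subgroup of $G$ it satisfies $M\leq O_p(G)=E$. Because $M$ is $G$-invariant, its image $M\Phi(E)/\Phi(E)$ is an $\SL_2(p)$-submodule of the irreducible module $E/\Phi(E)$, so either $M\Phi(E)=E$, forcing $M=E$, or $M\leq\Phi(E)$. If $M=E$ then $E$ is characteristic in $N$, and hence so is the characteristic subgroup $\Phi(E)$ of $E$; we are done.

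So assume $M\leq\Phi(E)$ and pass to $\ov{G}=G/M$, with $\ov{N}=N/M$ and $\ov{E}=E/M$. One checks that $\ov{N}\in\Syl_p(\ov{G})$, that $\ov{E}=O_p(\ov{G})$ (using $O_p(\SL_2(p))=1$), and that $\ov{E}$ again has rank $2$ with $\ov{E}/\Phi(\ov{E})\cong E/\Phi(E)$ and $[\ov{E},O^p(\ov{G})]=\ov{E}$. By the maximality of $M$, no non-trivial characteristic subgroup of $\ov{N}$ is normal in $\ov{G}$, so hypothesis (1) of Theorem \ref{stell.here} holds for $\ov{G}$ as well. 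Applying the theorem, I will match each outcome against the constraint that $\ov{E}$ has rank $2$: the option that $\ov{N}$ is elementary abelian and conclusion (1) both force $\ov{E}$ to be elementary abelian; conclusion (2) requires $p=2$; and conclusion (3) would give $|\ov{E}/\Phi(\ov{E})|=p^4$, impossible for a rank-$2$ group. Hence $\ov{E}=E/M$ is elementary abelian, so $\Phi(E)\leq M$; with $M\leq\Phi(E)$ this yields $\Phi(E)=M$, which is characteristic in $N$.

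The main obstacle is the third paragraph: verifying that passing to the quotient $G/M$ preserves all the hypotheses of Stellmacher's theorem — in particular that the maximality of $M$ delivers hypothesis (1) for $\ov{G}$ — and then carefully ruling out each of the three structural conclusions of the theorem using only that $\ov{E}$ has rank $2$, thereby forcing elementary abelianness. Care is also needed to make the module-theoretic dichotomy ($M=E$ or $M\leq\Phi(E)$) and the identity $V=E$ work uniformly at $p=3$, where $\SL_2(3)$ is not perfect and one must argue through the irreducible action of $O^p(G/E)\cong Q_8$ rather than of the full $\SL_2(p)$.
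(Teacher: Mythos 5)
Your proof is correct and follows essentially the same route as the paper's: both pass to the quotient of a model by a maximal invariant subgroup (your $M$, the paper's core $T$), apply Stellmacher's Pushing Up Theorem, and use the rank-$2$ hypothesis to exclude the non-elementary-abelian outcomes, forcing $\Phi(E)$ to coincide with that invariant subgroup, which is characteristic in $\N_S(E)$. Your handling of conclusion (1) via the natural-module structure of $V=\ov{E}$ is slightly more direct than the paper's argument with $\Omega_1(\Z(\ov{\N}))$, but the strategy is the same.
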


\begin{proof} Set $\N= \N_S(E)$.
Since $E$ has rank $2$, by Theorem \ref{class.3} we have that $[\N \colon E]=p$ and $O^{p'}(\Out_\F(E))\cong \SL_2(p)$.
If $E$ is $\F$-characteristic in $\N$ then $\Phi(E)$ is $\F$-characteristic in $\N$.
Suppose that $E$ is not $\F$-characteristic in $\N$ and let $T$ be the largest subgroup of $E$ that is $\F$-characteristic in $E$ and $\N$. Then $T<E$ and since $E$ is $\F$-essential of rank $2$, by Lemma \ref{char.series} we get $T\leq \Phi(E)$.
Let $G$ be a model for $\N_\F(E)$. Then $E=O_p(G) \norm G$ and $G/E \cong \Out_\F(E)$. Note that $T \norm G$ because  $T$ is $\F$-characteristic in $E$. Set  $\ov{G}=G/T$. By Stellmacher's Pushing Up Theorem (Theorem \ref{stell.here}) applied to $\ov{O^{p'}(G)}$ and $\ov{\N} \in \Syl_p(\ov{O^{p'}(G)})$ we deduce that exactly one of the following holds:
\begin{enumerate}
\item the quotient $\ov{\N}/\Omega_1(\Z(\ov{\N}))$ is elementary abelian;
\item $\Phi(\Phi(\ov{\N})) = 1$ and there exists a subgroup $\ov{V}$ of $\ov{E}$ such that $\ov{V}/\Z(\ov{V})$ and $\Z(\ov{V})/\Phi(\ov{V})$ are natural $\SL_2(p)$-modules for $O^{p'}(G)/E \cong \SL_2(p)$ (in particular $\ov{V}$ has rank $4$).
\end{enumerate}

Suppose that we are in the second case. Then $\Phi(\ov{\N})=\ov{\Phi(\N)}$ is elementary abelian and since $[\ov{V} \colon \Z(\ov{V})]=p^2$, we deduce that $\ov{V} \nleq \ov{\Phi(\N)}$. Thus $\ov{E} = \ov{V}\Phi(\ov{E})$ and so $\ov{E}=\ov{V}$. However by assumption the group $\ov{E}$ has rank $2$, contradicting the fact that $\ov{V}$ has rank $4$.
Therefore the second case cannot occur and so the quotient $\ov{\N}/\Omega_1(\Z(\ov{\N}))$ is elementary abelian.
Note that $\Phi(E) < \Phi(\N) < E$ by Lemma \ref{strict.frattini} and the fact that $[\N \colon E]=p$.
Also, $\N/E$ does not centralize $E/\Phi(E)$. Thus $\Omega_1(\Z(\ov{\N})) \leq \ov{E}$. Hence $\Omega_1(\Z(\ov{\N}))=\ov{\Phi(\N)}$ and so the group $\ov{E}$ is abelian. Moreover $\Omega_1(\Z(\ov{\N})) < \Omega_1(\ov{E})$ by maximality of $T$. By assumption $[\ov{E} \colon \Phi(\ov{E})]=p^2$, so $[\ov{E} \colon \Omega_1(\Z(\ov{\N}))] = p$ and we conclude that $\ov{E} =\Omega_1(\ov{E})$. Since $\ov{E}$ is abelian, we deduce that it is elementary abelian. Therefore $T=\Phi(E)$ and the group $\Phi(E)$ is $\F$-characteristic in $\N$.
\end{proof}

\begin{lemma}\label{critical}
Let $E$ be an $\F$-essential subgroup of $S$ of rank $2$. Suppose there exists an automorphism $\varphi \in \Aut_\F(E)$ of order prime to $p$ that centralizes $\Phi(E)$. Then $\Phi(E)=[E,E]$ and $E$ has exponent $p$.
\end{lemma}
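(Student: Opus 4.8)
The plan is to use the natural $\SL_2(p)$-action of Theorem \ref{class.3} together with $\varphi$ to force $\varphi$ to act \emph{fixed-point-freely} on the Frattini quotient, and then to run a commutator/$p$-power linear-algebra argument on that quotient. Throughout set $V=E/\Phi(E)$, an $\mathbb{F}_p$-vector space of dimension $2$ since $E$ has rank $2$. By Theorem \ref{class.3}, $O^{p'}(\Out_\F(E))\cong\SL_2(p)$, and by Lemma \ref{GLr} the group $\Out_\F(E)$ acts faithfully on $V$; since the natural $\SL_2(p)$-module is irreducible, $\Out_\F(E)$ fixes no line of $V$. Write $\bar\varphi$ for the image of $\varphi$ in $\GL(V)\cong\GL_2(p)$. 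I first note $\bar\varphi\neq 1$: otherwise $\varphi$ centralises both $\Phi(E)$ and $V$, so by Lemma \ref{char.series} (applied to $1\leq\Phi(E)\leq E$) we would get $\varphi\in O_p(\Aut_\F(E))$, contradicting that $\varphi$ is a non-trivial $p'$-element. I also record the two identities that drive everything: as $[x,y]\in\Phi(E)$ and $x^p\in\Phi(E)$ for all $x,y\in E$ and $\varphi$ centralises $\Phi(E)$, we have $\varphi([x,y])=[x,y]$ and $\varphi(x)^p=x^p$.

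Next I would prove $\Phi(E)=[E,E]$, equivalently that the abelianisation $A=E/[E,E]$ is elementary abelian. The multiplication-by-$p$ map $\mu\colon A/pA\to pA/p^2A$ is $\bar\varphi$-equivariant, and $\varphi$ acts trivially on its target because $pA=\Phi(E)/[E,E]$ is centralised by $\varphi$; hence $(\bar\varphi-1)(A/pA)\leq\ker\mu$. If $E$ is non-abelian, the alternating commutator map gives a $\bar\varphi$-equivariant surjection of $\Lambda^2 V$ onto a non-trivial quotient of $[E,E]$, on which $\bar\varphi$ acts as $\det\bar\varphi$ and, that quotient being centralised by $\varphi$, also trivially; thus $\det\bar\varphi=1$. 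Being a non-trivial semisimple element of $\SL_2(p)$, $\bar\varphi$ then has no eigenvalue $1$, so it acts fixed-point-freely and $\bar\varphi-1$ is invertible on $V=A/pA$; therefore $\ker\mu=A/pA$, giving $pA=p^2A=0$ and $A$ elementary abelian. If instead $E$ is abelian, then $E=A=\C_{p^a}\times\C_{p^b}$, and the characteristic structure of $E$ exhibits an $\Aut(E)$-invariant line in $V$ unless $a=b$, which the irreducibility above excludes; finally equivariance of $\mu$ forces $\bar\varphi=1$ when $a=b\geq2$, contradicting $\bar\varphi\neq1$, so $a=b=1$. In all cases $\Phi(E)=[E,E]$ and $\bar\varphi$ acts fixed-point-freely on $V$.

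Finally I would show $E$ has exponent $p$, i.e. $E^p=1$. Fixed-point-freeness and coprime action give $\C_E(\varphi)=\Phi(E)$ and $[E,\varphi]=E$. The identity $\varphi(x)^p=x^p$ says the $p$-power map is constant on $\varphi$-orbits; organising it along the lower central (or Frattini) series, each successive layer carries a $\bar\varphi$-equivariant homomorphism from a subquotient of $V$ into a subquotient centralised by $\varphi$, and invertibility of $\bar\varphi-1$ forces every such homomorphism to vanish. Pushing this down the series, or equivalently inducting on $|E|$ by passing to $E/R$ for a $\varphi$-centralised characteristic central subgroup $R\leq\Phi(E)$ (the hypotheses on $V$ persisting in the quotient since $\Phi(E/R)=\Phi(E)/R$), yields $E^p=1$ and completes the proof.

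The main obstacle is exactly this last step: turning $x\mapsto x^p$ into a genuine $\bar\varphi$-equivariant homomorphism on a subquotient of $V$ requires controlling the Hall--Petrescu commutator corrections (harmless for odd $p$, but needing the fixed-point-free property to make each correction term die) and keeping the induction on the layers honest. Here rank $2$ is what keeps every layer at most one-dimensional, so the bookkeeping stays simple and the induction terminates quickly; the remaining steps are linear algebra over $\mathbb{F}_p$ driven by the invertibility of $\bar\varphi-1$.
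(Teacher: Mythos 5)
Your derivation of the first conclusion, $\Phi(E)=[E,E]$, is correct and takes a genuinely different route from the paper. The paper disposes of the entire lemma in a few lines by quoting the critical-subgroup result \cite[Proposition 11.11]{GLS2}: for $p$ odd there is a characteristic subgroup $C$ of $E$ on which every non-trivial $p'$-automorphism acts non-trivially, with $\Phi(C)=[C,C]$ elementary abelian and $C$ abelian or of exponent $p$; since $\varphi$ centralizes $\Phi(E)$ one gets $C\nleq\Phi(E)$, irreducibility of the action on $E/\Phi(E)$ (via Lemma \ref{char.series}) forces $C\Phi(E)=E$, hence $C=E$, and all the conclusions fall out simultaneously. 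Your replacement of that black box by explicit linear algebra works for the first conclusion: the surjection of $\Lambda^2V$ onto $[E,E]/[E,E,E][E,E]^p$ correctly gives $\det\bar\varphi=1$ in the non-abelian case, hence fixed-point-freeness of $\bar\varphi$, hence $\mu=0$ and $E^p\leq[E,E]$; the abelian case is also handled correctly. (Both your argument and the paper's need $\varphi\neq 1$, which the statement leaves implicit.)

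The exponent-$p$ step, however, has a genuine gap, and it sits exactly where you flag it. The argument you describe really proves only its first instance: $x\mapsto x^p$ induces a $\bar\varphi$-equivariant homomorphism $V\to\Phi(E)/D$ with $D=\Phi(\Phi(E))[\Phi(E),E]$, because modulo $D$ the subgroup $[E,E]$ is central of exponent $p$ and the Hall--Petrescu corrections $c_i^{\binom{p}{i}}$ die; this yields $E^p\leq D$, not $E^p=1$. To descend below $D$ you need the $p$-power map to remain a homomorphism modulo smaller characteristic subgroups, and there the corrections $c_i^{\binom{p}{i}}$ (which are $p$-th powers of elements of $[E,E]$) and $c_p\in\gamma_p(E)$ no longer vanish for free: killing them requires knowing that $[E,E]$ has exponent $p$ and that the class of $E$ is less than $p$, neither of which is available --- $E$ is only assumed $2$-generated, so rank $2$ does not keep ``every layer one-dimensional''. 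The alternative induction on $|E|$ via $E/R$ does not close either: it gives $E^p\leq R$ for each central $R\leq\Phi(E)$ of order $p$, which finishes only when there are at least two such $R$; the terminal configuration $\langle x^p : x\in E\rangle=\Omega_1(\Z(E))\cap\Phi(E)$ of order $p$ is a real one among rank-$2$ groups with $[E,E]=\Phi(E)$ (the modular group of order $p^3$ realizes it), and excluding it requires using $\varphi$ in a way your sketch does not supply. This missing content is precisely what \cite[Proposition 11.11]{GLS2} packages, so the efficient repair is to invoke it as the paper does (or to reprove it for $2$-generated groups) rather than to fight the Hall--Petrescu corrections layer by layer.
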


\begin{proof}
By \cite[Proposition 11.11]{GLS2} and the fact that $p$ is odd we deduce that there exists a characteristic subgroup $C$ of $E$ such that
\begin{enumerate}
\item $\varphi$ acts faithfully on $C$;
\item $\Phi(C)=[C,C]$ is elementary abelian;
\item either $C$ is abelian or $C$ has exponent $p$.
\end{enumerate}
By assumption $\varphi$ centralizes $\Phi(E)$, so $C\nleq \Phi(E)$. Since $E$ is $\F$-essential and has rank $2$, by Lemma \ref{char.series} we get $C\Phi(E)=E$ and so $E=C$. Thus $\Phi(E)=[E,E]$. Finally notice that if $E$ is abelian then $\Phi(E)=[E,E]=1$ and so in any case the group $E$ has exponent $p$.
\end{proof}

\begin{theorem}\label{incenterfrat.is.pearl}
Let $E$ be an $\F$-essential subgroup of $S$ of rank $2$. If $E$ is not $\F$-characteristic in $\N_S(E)$ and $\Phi(E) \leq \Z(E)$ then $E$ is an $\F$-pearl.
\end{theorem}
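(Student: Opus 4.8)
The plan is to combine the structural information from Theorem \ref{class.3} with the hypotheses to bound $|\Phi(E)|$ by $p$. Write $N:=\N_S(E)$. By Theorem \ref{class.3} we have $[N:E]=p$, the quotient $V:=E/\Phi(E)$ is the natural module for $L:=O^{p'}(\Out_\F(E))\cong\SL_2(p)$, and $\Out_\F(E)\le\GL_2(p)$ acts faithfully (hence $L$ irreducibly) on $V$. Since $\Phi(E)\le\Z(E)$ the group $E$ has class at most $2$; as $V$ has rank $2$ the derived subgroup $[E,E]$ is generated by a single commutator, hence cyclic, and for all $a,b\in E$ we get $[a,b]^p=[a^p,b]=1$ because $a^p\in\Phi(E)\le\Z(E)$. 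Thus $|[E,E]|\le p$, and the whole statement reduces to showing $E^p\le[E,E]$: then $\Phi(E)=[E,E]E^p=[E,E]$ has order at most $p$, so $|E|\le p^3$, and $E$ is elementary abelian of order $p^2$ or (an abelian group of order $p^3$ and rank $2$ would make $\Out_\F(E)$ act reducibly on $V$) non-abelian of order $p^3$, i.e. an $\F$-pearl.

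Next I would determine $\overline N:=N/\Phi(E)$. By Lemma \ref{frattini.is.char} the subgroup $\Phi(E)$ is $\F$-characteristic in $N$, so any $\gamma\in\Aut_\F(N)$ with $E\gamma\neq E$ fixes $\Phi(E)$ and moves $\overline E:=E/\Phi(E)$. Hence $\overline E$, an elementary abelian subgroup of order $p^2$ and index $p$ in the group $\overline N$ of order $p^3$, is not characteristic in $\overline N$. Inspecting the groups of order $p^3$, the only ones with a non-characteristic elementary abelian maximal subgroup are $\C_p^3$ and $p^{1+2}_+$ (for $\C_{p^2}\times\C_p$ and the group of order $p^3$ and exponent $p^2$ the unique such subgroup is $\Omega_1$, which is characteristic). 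The abelian case $\overline N\cong \C_p^3$ is excluded by Lemma \ref{strict.frattini}: it would force $[N,E]\le[N,N]\le\Phi(E)$, contradicting $\Phi(E)<[N,E]\Phi(E)$. Therefore $\overline N\cong p^{1+2}_+$, a group of exponent $p$, so $t^p\in\Phi(E)\le\Z(E)$ for every $t\in N$.

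The crux, and the step I expect to be the main obstacle, is to deduce $E^p\le[E,E]$. I would pass to the abelianisation $\hat E:=E/[E,E]$, on which $\Out_\F(E)$ and hence $L\cong\SL_2(p)$ acts, with $\hat E/\Phi(\hat E)\cong V$ the natural module. Irreducibility of $V$ forces $\hat E$ to be homocyclic, say $\hat E\cong \C_{p^a}\times\C_{p^a}$, since the automorphism group of an abelian group of rank $2$ with unequal invariants acts reducibly on its Frattini quotient. Now fix $t\in N\setminus E$: the automorphism it induces on $\hat E$ is non-trivial on $V$ by Lemma \ref{strict.frattini}, so it is a transvection there, and it has order dividing $p$, because its $p$-th power is conjugation by $t^p\in\Z(E)$, which is trivial on $E$. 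The hard part is then a direct computation in $\GL_2(\mathbb{Z}/p^a)$ showing that for $p\ge5$ no element of order $p$ reduces modulo $p$ to a non-trivial transvection unless $a=1$: writing such an element as $I+M$ with $M\equiv\left(\begin{smallmatrix}0&1\\0&0\end{smallmatrix}\right)\pmod p$ one computes $(I+M)^p\equiv I+pM\not\equiv I\pmod{p^2}$. Hence $a=1$, $\hat E$ is elementary abelian, and $E^p\le[E,E]$ as required; finally $-I\in L$ centralises $[E,E]=\Phi(E)$, so Lemma \ref{critical} confirms that $E$ has exponent $p$ and identifies the two pearl types as $\C_p\times\C_p$ and $p^{1+2}_+$. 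The delicate point — and exactly where the prime $p=3$ behaves differently, as the paper stresses elsewhere — is this transvection-lifting computation, since order-$3$ lifts of a transvection to $\GL_2(\mathbb{Z}/9)$ do exist; there I would instead rely on Lemma \ref{critical} together with a closer analysis of the action of $L$ on $\Phi(E)$.
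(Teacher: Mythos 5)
Your argument takes a genuinely different route from the paper's, and for $p\ge 5$ the chain of reductions (natural module, $|[E,E]|\le p$, identification of $\N_S(E)/\Phi(E)$ with $p^{1+2}_+$, homocyclicity of $E/[E,E]$, and the order-$p$ lifting computation in $\GL_2(\mathbb{Z}/p^a)$) appears sound. But there is a real gap: the theorem must hold for \emph{all} odd primes, including $p=3$. Theorem \ref{incenterfrat.is.pearl} feeds into Theorem \ref{prop.rank.2} and hence into Theorem \ref{rank2pearl}, both stated and used for $p=3$ (the introduction explicitly applies Theorem \ref{rank2pearl} at $p=3$ to deduce that rank-$2$ essentials are pearls). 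Your transvection-lifting computation fails exactly there --- as you yourself note, $\GL_2(\mathbb{Z}/9)$ contains order-$3$ elements reducing to nontrivial transvections --- and the promised repair (``a closer analysis of the action of $L$ on $\Phi(E)$'') is never carried out. As written, the proposal does not prove the statement.

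The fix is short, uniform in $p$, and uses only ingredients you already have on the table; it is also the paper's actual argument. Since $\Phi(E)\le\Z(E)$ you have $E=\C_E(\Phi(E))\le\C_{\N}(\Phi(E))$ where $\N=\N_S(E)$, and since $\Phi(E)$ is $\F$-characteristic in $\N$ (Lemma \ref{frattini.is.char}), the subgroup $\C_{\N}(\Phi(E))$ is $\F$-characteristic in $\N$. As $[\N:E]=p$ and $E$ is by hypothesis not $\F$-characteristic in $\N$, this forces $\C_{\N}(\Phi(E))=\N$. Hence $\Aut_S(E)$ centralises $\Phi(E)$, and therefore so does $O^{p'}(\Aut_\F(E))=\langle \Aut_S(E)^{\Aut_\F(E)}\rangle$; taking a nontrivial $p'$-element of this group and applying Lemma \ref{critical} gives $\Phi(E)=[E,E]$ and that $E$ has exponent $p$. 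Combined with your own observation that $\Phi(E)\le\Z(E)$ forces $|[E,E]|\le p$, this yields $|E|\le p^3$ and the pearl conclusion at once, for every odd $p$. This replaces your entire analysis of $\N/\Phi(E)$, the homocyclicity argument and the lifting computation, and it is where the hypothesis that $E$ is not $\F$-characteristic in $\N_S(E)$ really earns its keep.
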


\begin{proof}
Note that $[E \colon \Z(E)] \leq [E \colon \Phi(E)]=p^2$ and so $|[E,E]|\leq p$.
Set $\N=\N_S(E)$.
By assumption $E=\C_E(\Phi(E)) \leq \C_\N(\Phi(E))$. By Lemma \ref{frattini.is.char} the group $\C_\N(\Phi(E))$ is $\F$-characteristic in $\N$. Since $[\N \colon E]=p$ by Theorem \ref{class.3} and $E$ is not $\F$-characteristic in $\N$, we deduce that $\N=\C_\N(\Phi(E))$. Thus $\Phi(E)$ is centralized by $O^{p'}(\Aut_\F(E)) = \langle \Aut_S(E) ^{\Aut_\F(E)} \rangle$. Therefore by Lemma \ref{critical} we get that $\Phi(E)=[E,E]$ and $E$ has exponent $p$.  In particular $|\Phi(E)|=|[E,E]| \leq p$ and so $E$ is an $\F$-pearl.
\end{proof}

\begin{lemma}\label{norm.rank3.SL2p}
Let $E$ be an $\F$-essential subgroup of $S$ of rank $3$.  If $\N_S(E)$ has rank $3$ then
\[ O^{p'}(\Out_\F(E))\cong  \SL_2(p).\]
\end{lemma}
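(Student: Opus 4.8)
The plan is to invoke Theorem \ref{class.3} with $r=3$ and then eliminate the irreducible possibilities using the hypothesis on the rank of $\N_S(E)$. By Theorem \ref{class.3}, since $E$ has rank $3$, exactly one of two situations occurs: either the action of $\Out_\F(E)$ on $E/\Phi(E)$ is reducible, in which case $O^{p'}(\Out_\F(E))\cong \SL_2(p)$ and we are done at once; or the action is irreducible and $O^{p'}(\Out_\F(E))$ is one of $\SL_2(p)$, $\PSL_2(p)$, or (for $p=3$) the Frobenius group $13:3$. So the entire content of the lemma is to show that the irreducible case is incompatible with $\N_S(E)$ having rank $3$.

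First I would fix notation and record the easy group-theoretic facts. Set $\N=\N_S(E)$ and $V=E/\Phi(E)$, an $\mathbb{F}_p$-vector space of dimension $3$. By Theorem \ref{class.3} we have $[\N:E]=p$, so $\N/E$ is cyclic of order $p$; hence $[\N,\N]\leq E$ and $\N^p\leq E$, giving $\Phi(E)\leq \Phi(\N)\leq E$. Since $E$ is $\F$-centric we have $\C_S(E)=\Z(E)$, so $\Aut_S(E)=\N/\Z(E)$ and $\Inn(E)=E/\Z(E)$, whence the image $\Out_S(E)$ of $\Aut_S(E)$ in $\Out_\F(E)$ is isomorphic to $\N/E$, of order $p$. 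Because $\Out_\F(E)/O^{p'}(\Out_\F(E))$ is a $p'$-group and $O^{p'}(\Out_\F(E))$ has a Sylow $p$-subgroup of order $p$ in each listed case, $\Out_S(E)=\langle t\rangle$ is a Sylow $p$-subgroup of $\Out_\F(E)$. As $E$ acts trivially on $V$, the action of $\N$ on $V$ factors through $\langle t\rangle$, and so $[\N,E]\Phi(E)/\Phi(E)=[V,t]$.

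The heart of the argument, and the step I expect to be the main obstacle, is the Jordan type of $t$ on $V$ in each irreducible sub-case: I would verify that in every case $t$ acts as a single Jordan block of size $3$, so that $\dim_{\mathbb{F}_p}[V,t]=2$. The $3$-dimensional irreducible module for $\PSL_2(p)$ is the symmetric square $\Sym^2(U)$ of the natural $\SL_2(p)$-module $U$, and a transvection (acting as a single Jordan block of size $2$ on $U$) acts as a single Jordan block of size $3$ on $\Sym^2(U)$ for every odd $p$; this same module governs the putative irreducible $\SL_2(p)$ case, so the computation covers both. For the Frobenius group $13:3$ with $p=3$, the normal subgroup $\C_{13}$ acts irreducibly and $t$ permutes its three eigenspaces over $\overline{\mathbb{F}_3}$ in a single cycle, from which a direct check gives that $t$ has minimal polynomial $(x-1)^3$ over $\mathbb{F}_3$, i.e. a single Jordan block of size $3$.

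Granting $\dim[V,t]=2$, the conclusion is immediate. From $[\N,E]\leq[\N,\N]\leq\Phi(\N)\leq E$ we obtain that $\Phi(\N)/\Phi(E)$ contains $[\N,E]\Phi(E)/\Phi(E)=[V,t]$, a subspace of dimension $2$. Hence $[E:\Phi(\N)]\leq p$, and therefore $[\N:\Phi(\N)]=[\N:E]\,[E:\Phi(\N)]\leq p^2$, so $\N$ has rank at most $2$. This contradicts the hypothesis that $\N_S(E)$ has rank $3$. Consequently the irreducible case cannot occur, we are necessarily in the reducible case of Theorem \ref{class.3}, and $O^{p'}(\Out_\F(E))\cong \SL_2(p)$, as required.
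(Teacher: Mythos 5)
Your proof is correct, and it reaches the conclusion by a genuinely different route from the paper's. Both arguments share the same bookkeeping: $[\N_S(E):E]=p$, $\Phi(E)\le\Phi(\N_S(E))\le E$, and the fact that $\mathrm{rank}(\N_S(E))=3$ is equivalent to $[\Phi(\N_S(E)):\Phi(E)]=p$, i.e.\ to $\dim [V,t]=1$ for $V=E/\Phi(E)$ and $t$ a generator of $\Out_S(E)$. The paper reads this as saying that $t$ acts \emph{quadratically} on $V$ (namely $[E,\N_S(E)]\Phi(E)=\Phi(\N_S(E))$ and $[E,\N_S(E),\N_S(E)]\le\Phi(E)$) and then invokes the general quadratic-action theorem \cite[Theorem 3.8.3]{GOR} to force $\SL_2(p)$ to be involved in $\Out_\F(E)$, after which Theorem \ref{class.3} pins down $O^{p'}(\Out_\F(E))\cong\SL_2(p)$; no identification of the irreducible modules is needed. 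You instead run the implication in the contrapositive direction: starting from the case list of Theorem \ref{class.3}, you check that in each irreducible possibility a nontrivial $p$-element acts on the $3$-dimensional module with a single Jordan block of size $3$, so $\dim[V,t]=2$ and $\N_S(E)$ would have rank at most $2$. Your module identifications are right: the unique faithful $3$-dimensional $\mathbb{F}_p$-irreducible for $\PSL_2(p)$ is $\Sym^2$ of the natural $\SL_2(p)$-module, on which a transvection acts as a single Jordan block for $p$ odd, and for $13{:}3$ at $p=3$ the order-$3$ element cyclically permutes the three $\C_{13}$-eigenspaces, giving minimal polynomial $(x-1)^3$. What the paper's route buys is independence from the explicit structure of the $3$-dimensional irreducibles; what yours buys is avoidance of the quadratic-action black box at the price of a concrete but elementary representation-theoretic check.
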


\begin{proof}
Set $\N=\N_S(E)$.
By Lemma \ref{strict.frattini} we have $\Phi(E) < [E,\N]\Phi(E) \leq \Phi(\N)$ and by Theorem \ref{class.3} we have $[\N \colon E] = p$, which implies $\Phi(\N) \leq E$. Since $\N$ has rank $3$ we get $[E \colon \Phi(\N)]  = p^2$ and $[\Phi(\N) \colon \Phi(E)]  =p$. In particular
\[[E,\N]\Phi(E) = \Phi(\N)\quad \text{ and } \quad[E,\N,\N] \leq \Phi(E).\]
Hence the group $\Out_S(E)\cong \N/E$ acts quadratically on the elementary abelian $p$-group  $E/\Phi(E)$.
Also, $\Out_\F(E)$ acts faithfully on $E/\Phi(E)$ by Lemma \ref{GLr} and $O_p(\Out_\F(E))=1$ since $\Out_\F(E)$ has a strongly $p$-embedded subgroup.
Therefore by \cite[Theorem 3.8.3]{GOR} the group $\Out_\F(E)$ involves $\SL_2(p)$ and by Theorem \ref{class.3} we conclude $O^{p'}(\Out_\F(E)) \cong \SL_2(p)$.
\end{proof}

\begin{lemma}\label{quad.action}
Let $E$ be an $\F$-essential subgroup of $S$ of rank $3$, let $G$ be a model for $\N_\F(E)$ and let $T$ be a subgroup of $E$ that is $\F$-characteristic in $E$ and $\N_S(E)$.
Assume that there exist subgroups $H\leq \N_S(E) \backslash E$ and $V\leq E$ both containing $T$ such that  $V/T\leq \Omega_1(\Z(E/T))$, $V$ is normal in $G$ and $H/T$ acts quadratically on $V/T$. Then
\[O^{p'}(\Out_\F(E))\cong \SL_2(p).\]
\end{lemma}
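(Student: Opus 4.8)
The plan is to mirror the proof of Lemma \ref{norm.rank3.SL2p}, replacing the module $E/\Phi(E)$ there by $W:=V/T$: I would aim to show that $\Out_\F(E)$ involves $\SL_2(p)$ and then read off the conclusion from Theorem \ref{class.3}, exactly as in Lemma \ref{norm.rank3.SL2p}. First I would record that $W$ is a module for $\Out_\F(E)$. Since $G$ is a model for $\N_\F(E)$ we have $E\norm G$ and $\Aut_G(E)=\Aut_\F(E)$, so the fact that $T$ is $\F$-characteristic in $E$ forces $T\norm G$; as $V\norm G$ as well and $T\le V$, the quotient $W=V/T$ carries a conjugation action of $G$. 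The hypothesis $V/T\le \Omega_1(\Z(E/T))$ says precisely that $W$ is elementary abelian and that $E$ centralises it, so this action factors through $G/E\cong\Out_\F(E)$ and $W$ becomes an $\F_p[\Out_\F(E)]$-module.

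Next I would locate the quadratic element. Let $\bar H$ be the image of $H$ in $\Out_\F(E)=G/E$. Because $H\le\N_S(E)$ with $H\not\le E$ and $[\N_S(E)\colon E]=p$ by Theorem \ref{class.3}, we have $HE=\N_S(E)$, and hence $\bar H=\N_S(E)/E=\Out_S(E)$ is a Sylow $p$-subgroup of $\Out_\F(E)$ of order $p$. Since $E$ centralises $W$, the action of $H$ on $W$ factors through $\bar H$, and the hypothesis that $H/T$ acts quadratically (hence non-trivially, so that $[V,H]\not\le T$) on $V/T$ translates into $[W,\bar H,\bar H]=1$ and $[W,\bar H]\neq 1$; that is, the order-$p$ subgroup $\bar H$ acts quadratically and non-trivially on $W$.

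The hard part will be that $\Out_\F(E)$ need not act faithfully on $W$, whereas the quadratic-pair input \cite[Theorem 3.8.3]{GOR} requires a faithful module with trivial $O_p$. I would remove this obstacle by passing to $\bar X:=\Out_\F(E)/K$, where $K=\C_{\Out_\F(E)}(W)$, which acts faithfully on $W$. The crucial point is that $K$ is a $p'$-group: $\bar H$ is a Sylow $p$-subgroup of $\Out_\F(E)$ of order $p$, so any non-trivial $p$-element of the \emph{normal} subgroup $K$ would generate a Sylow $p$-subgroup conjugate to $\bar H$ and, by normality of $K$, would force $\bar H\le K$, contradicting $[W,\bar H]\neq 1$. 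Since $E$ is $\F$-essential, $\Out_\F(E)$ has a strongly $p$-embedded subgroup, and because strongly $p$-embedded subgroups are inherited modulo normal $p'$-subgroups, $\bar X$ has one too; in particular $O_p(\bar X)=1$. The image $\tilde H$ of $\bar H$ in $\bar X$ is non-trivial (again as $K$ is a $p'$-group) and still acts quadratically and non-trivially on $W$, so \cite[Theorem 3.8.3]{GOR} shows that $\bar X$ involves $\SL_2(p)$; as $\bar X$ is a quotient of $\Out_\F(E)$, the group $\Out_\F(E)$ itself involves $\SL_2(p)$. Finally, inspecting the list in Theorem \ref{class.3} — whose remaining possibilities $\PSL_2(p)$ and the Frobenius group $13\colon 3$ (when $p=3$) do not involve $\SL_2(p)$ on order grounds — I would conclude $O^{p'}(\Out_\F(E))\cong\SL_2(p)$, just as at the end of the proof of Lemma \ref{norm.rank3.SL2p}.
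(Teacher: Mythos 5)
Your overall strategy coincides with the paper's: pass to a faithful module carrying a non-trivial quadratic action of a $p$-element, invoke \cite[Theorem 3.8.3]{GOR} to see that $\SL_2(p)$ is involved, and finish with Theorem \ref{class.3}. (The paper runs this inside the model, with $A=\langle \N_S(E)^G\rangle$ acting on $V/T$, rather than in $\Out_\F(E)$ itself; that difference is cosmetic.) There is, however, one genuine gap: the assertion that ``strongly $p$-embedded subgroups are inherited modulo normal $p'$-subgroups'' is false. For $p=3$ the group $\Alt(4)$ has the strongly $3$-embedded subgroup $\C_3$ (distinct Sylow $3$-subgroups intersect trivially), but its quotient by the normal $3'$-subgroup $\C_2\times\C_2$ is cyclic of order $3$, has no proper subgroup of order divisible by $3$, and has non-trivial $O_3$. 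So your deduction that $\bar X=\Out_\F(E)/K$ has a strongly $p$-embedded subgroup, and in particular that $O_p(\bar X)=1$, is unjustified as written -- and $O_p(\bar X)=1$ is exactly the hypothesis that \cite[Theorem 3.8.3]{GOR} cannot do without.

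To see what is really at stake: since the Sylow $p$-subgroup $\tilde H$ of $\bar X$ has order $p$, the statement $O_p(\bar X)=1$ is equivalent to $\tilde H$ not being normal in $\bar X$, i.e.\ to $O^{p'}(\Out_\F(E))=\langle \bar H^{\Out_\F(E)}\rangle\nleq \bar HK$; if this failed, $O^{p'}(\Out_\F(E))$ would sit inside $K\rtimes\bar H$ and hence would have a normal $p$-complement. For $p\geq 5$ this contradicts Theorem \ref{class.3}, since neither $\SL_2(p)$ nor $\PSL_2(p)$ is $p$-nilpotent, so your argument can be repaired along these lines. For $p=3$, however, all three candidates $\SL_2(3)$, $\PSL_2(3)\cong\Alt(4)$ and $13\colon 3$ are $3$-nilpotent, so this route does not close the case and an extra argument is needed to rule out that the normal $p$-complement of $O^{p'}(\Out_\F(E))$ centralizes $W$. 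The paper's proof instead derives the corresponding statement $O_p(A/\C_A(V))=1$ from $O_p(A/E)=1$ together with $\N_S(E)\in\Syl_p(A)$ and the fact that $A$ is generated by the $A$-conjugates of $\N_S(E)$; you should replace the false inheritance principle by an argument of that kind rather than rely on it.
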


\begin{proof}
Let $A=\langle (\N_S(E))^G\rangle$; so $A/E \cong O^{p'}(\Out_\F(E))$.
To simplify notation we set $\N=\N_S(E)$ and we may assume that $T=1$.
First notice that the group $V$ is elementary abelian.
By assumption $E$ has rank $3$ and so by Theorem \ref{class.3} the quotient $\N/E$ has order $p$. From $H\nleq E$ we get $\N=EH$.
Since $H$ acts quadratically on $V$, we have $[V,H]\neq 1$. In particular $H\nleq \Z(\N)$ and $E=\C_{\N}(V)$.
Thus
\[\N/E\cong H/(E \cap H)=H/\C_H(V) ~ \text{ and } ~ \N/E \cong \C_{A}(V)\N/\C_{A}(V).\]

Therefore $\N/E$ is isomorphic to a $p$-subgroup of $A/\C_{A}(V)$ that acts quadratically on $V$. Note that $A/\C_{A}(V)$ acts faithfully on $V$. Since $E$ is $\F$-essential, the group $G/E$ has a strongly $p$-embedded subgroup. Thus $O_p(A/E)=1$. Also, $\N\in \Syl_p(A)$ and so $O_p(A/\C_{A}(V))=1$.
By \cite[Theorem 3.8.3]{GOR} we deduce that $A/\C_{A}(V)$ involves $\SL_2(p)$. Hence $A/E\cong O^{p'}(\Out_\F(E))$ involves $\SL_2(p)$ and by Theorem \ref{class.3} we conclude that $O^{p'}(\Out_\F(E))\cong \SL_2(p)$.
\end{proof}

\begin{lemma}\label{thompson.notin}
Let $E$ be an $\F$-essential subgroup of $S$ of rank $3$ and let $T$ be a subgroup of $E$ that is $\F$-characteristic in $E$ and $\N_S(E)$. Set  $\ov{\N}=\N_S(E)/T$ and $\ov{E} = E/T$.
If $\J(\ov{\N}) \nleq \ov{E}$ and $\Omega_1(\Z(\ov{E})) \neq \Omega_1(\Z(\ov{\N}))$ then \[ O^{p'}(\Out_\F(E))\cong \SL_2(p),\]
where $\J(\ov{\N})$ is the Thompson subgroup of $\ov{\N}$, i.e. $\J(\ov{\N}) = \langle \A(\ov{\N})\rangle$, where $\A(\ov{\N})$ is the set of abelian subgroups of $\ov{\N}$ having order $\max \{ |A| \mid A\leq \ov{\N} \text{ and } A \text{ is abelian}\}$.

\end{lemma}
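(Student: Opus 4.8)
The plan is to route everything into Lemma~\ref{quad.action} when a quadratic action is available, and to fall back on the reducibility clause of Theorem~\ref{class.3} in the remaining case. Throughout I set $\N=\N_S(E)$, let $G$ be a model for $\N_\F(E)$, and use that $T\norm G$ because $T$ is $\F$-characteristic in $E$; I write $\ov{\phantom{X}}$ for images modulo $T$. Theorem~\ref{class.3} gives $[\ov\N:\ov E]=p$ and says $O^{p'}(\Out_\F(E))$ is one of $\SL_2(p)$, $\PSL_2(p)$, or (for $p=3$) the Frobenius group $13:3$, with the last two occurring only when the action of $\Out_\F(E)$ on $E/\Phi(E)$ is irreducible. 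Hence it suffices either to exhibit a nontrivial quadratic action (to feed Lemma~\ref{quad.action}) or to produce a proper nonzero $\Out_\F(E)$-invariant subspace of $E/\Phi(E)$ (forcing reducibility, hence $\SL_2(p)$). I may assume $T<E$, the only situation arising in applications. I then set $\ov V=\Omega_1(\Z(\ov E))$: this is elementary abelian, characteristic in $\ov E\norm\ov G$ hence normal in $\ov G$, and contained in $\ov E$; since $\ov V\leq\Z(\ov E)$ we have $\ov E\leq\C_{\ov\N}(\ov V)$, so $\C_{\ov\N}(\ov V)$ is either $\ov E$ or $\ov\N$, and these are the two cases of the argument.

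First I would handle $\C_{\ov\N}(\ov V)=\ov E$. As $\J(\ov\N)\nleq\ov E$, I choose an abelian subgroup $A$ of $\ov\N$ of maximal order with $A\nleq\ov E$; then $A\nleq\C_{\ov\N}(\ov V)$ gives $[\ov V,A]\neq1$, and $A_0:=A\cap\ov E=\C_A(\ov V)$ has index $p$ in $A$. The crux is a Thompson maximal-order estimate: since $A_0$ centralizes the abelian group $\ov V$, the product $A_0\ov V$ is abelian, so $|A_0\ov V|\leq|A|=p|A_0|$; combined with $\C_{\ov V}(A)=A\cap\ov V$ (again from maximality of $|A|$) this yields $[\ov V:\C_{\ov V}(A)]\leq p$, so $[\ov V,A]=[\ov V,a]$ has order exactly $p$ for any $a\in A\setminus\ov E$. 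Thus $a$ is a $p$-element acting on $\ov V$ with one-dimensional commutator, i.e.\ $a-1$ is a rank-one nilpotent transformation, which automatically satisfies $(a-1)^2=0$; hence $[\ov V,A,A]=1$ and $A$ acts quadratically and nontrivially on $\ov V$. Taking $H$ to be the preimage of $A$ in $\N$ and $V$ the preimage of $\ov V$ in $E$, Lemma~\ref{quad.action} gives $O^{p'}(\Out_\F(E))\cong\SL_2(p)$.

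Finally I would treat $\C_{\ov\N}(\ov V)=\ov\N$, i.e.\ $\ov V\leq\Z(\ov\N)$, which is exactly where the hypothesis $\Omega_1(\Z(\ov E))\neq\Omega_1(\Z(\ov\N))$ is used. Here $\ov V\leq\Omega_1(\Z(\ov\N))$, and the hypothesis makes the inclusion proper, so I can pick $w\in\Omega_1(\Z(\ov\N))\setminus\ov V$; a short check shows $w\notin\ov E$ (otherwise $w\in\Omega_1(\Z(\ov E))=\ov V$), whence $\ov\N=\ov E\langle w\rangle$ with $w$ centralizing $\ov E$. Therefore $[\ov\N,\ov E]=[\ov E,\ov E]\leq\Phi(\ov E)$, which lifts to $[\N,E]\leq\Phi(E)T$. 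Since Lemma~\ref{strict.frattini} gives $[\N,E]\nleq\Phi(E)$, this forces $T\nleq\Phi(E)$. As $T$ is $\F$-characteristic in $E$ and $T<E$, the image $T\Phi(E)/\Phi(E)$ is a proper nonzero $\Out_\F(E)$-invariant subspace of $E/\Phi(E)$, so the action is reducible and Theorem~\ref{class.3} again yields $O^{p'}(\Out_\F(E))\cong\SL_2(p)$.

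The main obstacle is the quadratic-action verification in the first case: getting the maximal-order estimate to pin $[\ov V:\C_{\ov V}(A)]$ down to $p$, and observing that a rank-one unipotent action is forced to be quadratic rather than merely cubic (this is precisely what distinguishes $\SL_2(p)$ from the $\PSL_2(p)$ and $13:3$ modules, whose unipotent elements are single Jordan blocks of size $3$). Once the extra central element is extracted from the $\Omega_1(\Z)$ hypothesis, the second case reduces to a clean Frattini-and-reducibility argument.
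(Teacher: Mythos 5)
Your proof is correct, but it replaces the paper's key tool with a different one. The paper chooses $H\in\A(\ov{\N})$ with $H\nleq\ov{E}$ and $|\Omega_1(\Z(\ov E))\cap H|$ maximal, argues that $[\Omega_1(\Z(\ov E)),H]\neq 1$ directly from the hypothesis $\Omega_1(\Z(\ov E))\neq\Omega_1(\Z(\ov\N))$, and then invokes the Thompson replacement theorem to show that $\Omega_1(\Z(\ov E))$ normalizes $H$, so that $[\Omega_1(\Z(\ov E)),H,H]\leq[H,H]=1$ and Lemma~\ref{quad.action} applies. You instead take \emph{any} member $A$ of $\A(\ov\N)$ with $A\nleq\ov E$ and extract quadraticity from an order count: $\C_A(\ov V)\,\ov V$ is abelian, hence of order at most $|A|=p\,|\C_A(\ov V)|$, which pins $[\ov V:\C_{\ov V}(A)]$ to $p$ and makes each element of $A\setminus\ov E$ a rank-one unipotent on $\ov V$, hence square-zero. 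This is more elementary (no replacement theorem) at the cost of a slightly longer computation, and both routes feed the same Lemma~\ref{quad.action}. The second genuine difference is your explicit case $\C_{\ov\N}(\ov V)=\ov\N$: here you do not produce a quadratic action at all but instead show $[\N,E]\leq\Phi(E)T$, deduce $T\nleq\Phi(E)$ from Lemma~\ref{strict.frattini}, and conclude via the reducibility clause of Theorem~\ref{class.3}. The paper absorbs this configuration into the claim $[V,H]\neq1$, which tacitly uses $\Omega_1(\Z(\ov\N))\leq\ov E$; your separate treatment of the possibility $\Omega_1(\Z(\ov\N))\nleq\ov E$ is therefore a welcome extra degree of care rather than a detour. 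The only caveat, which you flag yourself and which the paper shares, is the implicit standing assumption $T<E$; with that noted, the argument is complete.
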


\begin{proof}
To simplify notation assume $T=1$. Set $\N=\N_S(E)$ and $V=\Omega_1(\Z(E))$. Let $H \in \A(\N)$ be such that $H\nleq E$ and $|V \cap H|$ is maximal.
By assumption and Theorem \ref{class.3} we have $[\N \colon E] = p$. So $\N=EH$ and since $V\neq \Omega_1(\Z(\N))$ and $H$ is abelian we deduce that $[V,H]\neq 1$. In particular $V\nleq H$.

Note that $V$ is normal in $\N$, so it is normalized by $H$. If $V$ normalizes $H$ then $[V,H,H] \leq [H,H]=1$. So $H$ acts quadratically on $V$ and by Lemma   \ref{quad.action} we conclude $O^{p'}(\Out_\F(E))\cong \SL_2(p)$.

Suppose for a contradiction that $V$ does not normalize $H$.
Then by the Thompson replacement theorem (\cite[Theorem 8.2.5]{GOR}) there exists an abelian subgroup $H^*\in \A(\N)$ such that $V\cap H < V \cap H^*$ and $H^*$ normalizes $H$. Since $|V\cap H|$ is maximal by the choice of $H$, we have $H^*\leq E$. Therefore $V\leq H^*$, by maximality of $|H^*|$,  and so $V$ normalizes $H$, a contradiction.
\end{proof}

\begin{theorem}\label{typeIIauto}
Let $E$ be an $\F$-essential subgroup of $S$ of rank at most $3$ that is not $\F$-characteristic in $S$.
Then $O^{p'}(\Out_\F(E))\cong \SL_2(p)$.
\end{theorem}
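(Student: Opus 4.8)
The plan is to reduce immediately to the only genuinely open configuration and then force the conclusion through the Thompson subgroup. By Theorem~\ref{class.3}, if $E$ has rank $2$, or has rank $3$ with $\Out_\F(E)$ acting reducibly on $E/\Phi(E)$, then $O^{p'}(\Out_\F(E))\cong\SL_2(p)$ and there is nothing to prove; moreover in every listed case $[\N_S(E):E]=p$. So I would assume from the outset that $E$ has rank $3$, that $\Out_\F(E)$ acts irreducibly on $E/\Phi(E)$, and write $\N=\N_S(E)$.

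Next I would introduce the subgroup $T$ defined as the largest \emph{proper} subgroup of $E$ that is $\F$-characteristic in both $E$ and $\N$; since the product of two such subgroups is again of this type, $T$ is well defined, and $T=1$ is available so $T<E$ genuinely exists. Because the action on $E/\Phi(E)$ is irreducible, the image of any such subgroup is an $\Out_\F(E)$-submodule of $E/\Phi(E)$, hence either zero (so the subgroup lies in $\Phi(E)$) or everything (so the subgroup is $E$, by the Frattini property); thus $T\le\Phi(E)$. Passing to $\ov\N=\N/T$ and $\ov E=E/T$, the choice of $T$ buys the second hypothesis of Lemma~\ref{thompson.notin} for free. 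Indeed, if $\Omega_1(\Z(\ov E))=\Omega_1(\Z(\ov\N))$ and $\ov E$ is not elementary abelian, then the preimage of this common subgroup is $\F$-characteristic in both $E$ and $\N$ and lies strictly between $T$ and $E$, contradicting maximality of $T$; and if $\ov E$ is elementary abelian then $T=\Phi(E)$, so the equality would force $\N$ to centralise $E/\Phi(E)$, against the faithful nontrivial action of $\N/E\le\Out_\F(E)$ provided by Lemma~\ref{GLr} together with $O_p(\Out_\F(E))=1$. Hence $\Omega_1(\Z(\ov E))\neq\Omega_1(\Z(\ov\N))$ in all cases.

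The argument then turns on the Thompson subgroup $\J(\ov\N)$. If $\J(\ov\N)\not\le\ov E$, then Lemma~\ref{thompson.notin} applies verbatim and yields $O^{p'}(\Out_\F(E))\cong\SL_2(p)$, completing the proof. The real work is to exclude the alternative $\J(\ov\N)\le\ov E$. For this I would invoke the Thompson factorisation $\J(\ov\N)=\J(\ov E)$, which is characteristic in $\ov\N$; its preimage is then $\F$-characteristic in both $E$ and $\N$, so by maximality of $T$ it must equal $E$. Therefore $\ov E=\J(\ov\N)$ is characteristic in $\ov\N$, and since $T$ is $\F$-characteristic in $\N$ this unwinds to the statement that $E$ is $\F$-characteristic in $\N=\N_S(E)$.

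Finally I would convert this into a contradiction with the hypothesis, and this one-step normalizer argument is the crucial place where the assumption that $E$ is not $\F$-characteristic in $S$ enters. If $E$ were $\F$-characteristic in $\N_S(E)$, then for every $g\in\N_S(\N_S(E))$ the conjugation $c_g$ restricts to an element of $\Aut_\F(\N_S(E))$, which therefore normalizes $E$, giving $g\in\N_S(E)$; hence $\N_S(\N_S(E))=\N_S(E)$, which in the $p$-group $S$ forces $\N_S(E)=S$, and then $E$ being $\F$-characteristic in $\N_S(E)=S$ contradicts the hypothesis. This rules out $\J(\ov\N)\le\ov E$ and finishes the proof. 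I expect this last exclusion --- equivalently, showing that the Thompson subgroup of $\ov\N$ cannot be trapped inside $\ov E$ --- to be the main obstacle, since it is the only step that genuinely uses the non-characteristic hypothesis and it relies on correctly combining the Thompson factorisation with the normalizer-growth principle in $S$.
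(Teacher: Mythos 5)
Your proof is correct and follows essentially the same route as the paper: both introduce a maximal core $T$ that is $\F$-characteristic in $E$ and $\N_S(E)$, verify the two hypotheses of Lemma \ref{thompson.notin}, and conclude. The only difference is cosmetic: the paper works with the three-way core of $E$, a conjugate $E\alpha\neq E$ and $\N_S(E)$, and uses $\alpha$-invariance of $\J(\ov{\N})$ to force $\J(\ov{\N})\nleq\ov{E}$, whereas you exclude $\J(\ov{\N})\leq\ov{E}$ by noting it would make $E$ $\F$-characteristic in $\N_S(E)$ and hence, by normalizer growth in the $p$-group $S$, $\F$-characteristic in $S$ --- the same fact the paper uses implicitly to produce $\alpha$ in the first place.
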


\begin{proof}
If $E$ has rank $2$ then the statement follows from Theorem \ref{class.3}. Suppose $E$ has rank $3$, set $\N=\N_S(E)$ and let $\alpha \in \Aut_\F(\N)$ be such that $E\neq E\alpha$. By Theorem \ref{class.3} the group $E\alpha$ is an $\F$-essential subgroup of $S$. Let $T$ be the largest subgroup of $E \cap E\alpha$ that is normalized by $\Aut_\F(E)$, $\Aut_\F(E\alpha)$ and $\Aut_\F(N)$ and set $\ov{\N}=\N/T$. If the Thompson subgroup $\J(\ov{\N})$ is contained in $\ov{E} \cap \ov{E}\alpha$ then $\J(\ov{\N})=\J(\ov{E}) = \J(\ov{E\alpha})$ and so $\J(\ov{\N}) =1$ by the maximality of $T$, which is a contradiction. Hence $\J(\ov{\N}) \nleq \ov{E} \cap \ov{E\alpha}$ and since $\J(\ov{\N})=\J(\ov{\N})\alpha$, we deduce that $\J(\ov{\N})\nleq \ov{E}$. Note that $\Omega_1(\Z(\ov{E})) \neq \Omega_1(\Z(\ov{\N}))$ by maximality of $T$. Therefore by  Lemma \ref{thompson.notin} we get that $O^{p'}(\Out_\F(E))\cong \SL_2(p)$.
\end{proof}

We now focus on the $\F$-essential subgroups that are $\F$-characteristic in $S$ and have \emph{sectional rank} at most $3$.

\begin{theorem}\label{SL2p} Let $E_1 \leq S$ and $E_2 \leq S$ be distinct $\F$-essential subgroups of $S$. Suppose that $E_1$ and $E_2$ are $\F$-characteristic in $S$ and have sectional rank at most $3$. Then there exists $i\in \{1,2\}$ such that
\[O^{p'}(\Out_\F(E_i))\cong \SL_2(p).\]
\end{theorem}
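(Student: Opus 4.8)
The plan is to exploit that both $E_1$ and $E_2$ are forced to be maximal normal subgroups of $S$, and then to run the Thompson-subgroup argument from the proof of Theorem \ref{typeIIauto}, now feeding in $E_1$ and $E_2$ as the two competing $\F$-essential subgroups (in place of the pair $E$, $E\alpha$ produced there by an automorphism of $S$). First I would record the basic reductions. Since $E_i$ is $\F$-characteristic in $S$ it is normalized by $\Aut_\F(S)\supseteq\Inn(S)$, so $E_i\norm S$ and $\N_S(E_i)=S$; as sectional rank at most $3$ gives rank at most $3$, Theorem \ref{class.3} applies and yields $[S:E_i]=[\N_S(E_i):E_i]=p$. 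Thus $E_1$ and $E_2$ are distinct maximal subgroups of $S$, so $E_1E_2=S$ and $\dim_{\mathbb{F}_p}(S/\Phi(S))\ge 2$. If either $E_i$ has rank $2$, then Theorem \ref{class.3} already gives $O^{p'}(\Out_\F(E_i))\cong\SL_2(p)$ and we are done, so I may assume both $E_1$ and $E_2$ have rank $3$.

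Next I would bound the rank of $S$ by a Frattini computation. Since $\Phi(E_i)=[E_i,E_i]E_i^p\le[S,S]S^p=\Phi(S)$ and $[S,E_i]\le[S,S]\le\Phi(S)$, Lemma \ref{strict.frattini} (applied with $\N_S(E_i)=S$) gives $\Phi(E_i)<[S,E_i]\Phi(E_i)\le\Phi(S)$, so $\Phi(E_i)\lneq\Phi(S)$. As $\Phi(S)\le E_i$ and $\dim(E_i/\Phi(E_i))=3$, this forces $\dim(E_i/\Phi(S))\le 2$ and hence $\dim(S/\Phi(S))\le 3$. Therefore the rank of $S$ is $2$ or $3$. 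If the rank of $S$ is $3$, then $\N_S(E_i)=S$ has rank $3$ and Lemma \ref{norm.rank3.SL2p} gives $O^{p'}(\Out_\F(E_i))\cong\SL_2(p)$ for \emph{each} $i$, finishing this case; note the two-subgroup hypothesis is not yet needed.

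It remains to treat the case where $S$ has rank $2$ and both $E_1,E_2$ have rank $3$; this is where the pair is genuinely used. Following the proof of Theorem \ref{typeIIauto}, let $T$ be the largest subgroup of $E_1\cap E_2$ that is normalized by $\Aut_\F(E_1)$, $\Aut_\F(E_2)$ and $\Aut_\F(S)$, and set $\ov S=S/T$, $\ov{E_i}=E_i/T$. If $\J(\ov S)\le\ov{E_1}\cap\ov{E_2}$ then $\J(\ov S)=\J(\ov{E_1})=\J(\ov{E_2})$ would be $\F$-characteristic in $E_1$, $E_2$ and $S$ and contained in $E_1\cap E_2$, forcing $\J(\ov S)=1$ by maximality of $T$ and contradicting $\ov S\ne 1$. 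Hence $\J(\ov S)\not\le\ov{E_i}$ for some $i\in\{1,2\}$, and for that $i$ I would invoke Lemma \ref{thompson.notin} to conclude $O^{p'}(\Out_\F(E_i))\cong\SL_2(p)$.

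The main obstacle is verifying the remaining hypothesis of Lemma \ref{thompson.notin}, namely $\Omega_1(\Z(\ov{E_i}))\ne\Omega_1(\Z(\ov S))$, for the \emph{same} index $i$ for which $\J(\ov S)\not\le\ov{E_i}$. In the proof of Theorem \ref{typeIIauto} the analogous inequality is free because the automorphism $\alpha$ interchanging the two essential subgroups transports a subgroup characteristic in $E$ and $\N$ to one characteristic in $E\alpha$; here $E_1$ and $E_2$ are both fixed by $\Aut_\F(S)$ but are not related by any single element of $\Aut_\F(S)$, so this transport is unavailable. I would handle this by a refined maximality argument: if $\Omega_1(\Z(\ov{E_i}))=\Omega_1(\Z(\ov S))$, then its preimage $V$ in $S$ satisfies $T<V\le E_i$ and is normalized by both $\Aut_\F(E_i)$ and $\Aut_\F(S)$, and I would show that $V$ (or a suitable $\Aut_\F(S)$-closure of $V$ lying in $E_1\cap E_2$) contradicts the maximality of $T$ for at least one of the two indices. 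It is precisely the asymmetry of this last step, forcing the center condition for one of $E_1,E_2$ but not necessarily both, that accounts for the ``there exists $i$'' rather than ``for all $i$'' in the statement.
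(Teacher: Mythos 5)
Your reductions are fine and partly match the paper: the rank-$2$ case via Theorem \ref{class.3}, the bound $\mathrm{rank}(S)\le 3$ via Lemma \ref{strict.frattini}, the rank-$3$ shortcut via Lemma \ref{norm.rank3.SL2p}, and the observation that $\J(\ov{S})$ cannot lie in $\ov{E_1}\cap\ov{E_2}$ are all correct (the paper does not split on $\mathrm{rank}(S)$, but that split is a harmless simplification). The problem is that the one step you defer --- establishing $\Omega_1(\Z(\ov{E_i}))\neq\Omega_1(\Z(\ov{S}))$ for the index $i$ with $\J(\ov{S})\nleq\ov{E_i}$ --- is exactly where the content of the theorem lives, and the repair you sketch cannot work. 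If $\Omega_1(\Z(\ov{E_1}))=\Omega_1(\Z(\ov{S}))$, its preimage $V$ is normalized by $\Aut_\F(E_1)$ and $\Aut_\F(S)$ but there is no reason for it to be normalized by $\Aut_\F(E_2)$, so maximality of $T$ gives nothing; and passing to the $\Aut_\F(E_2)$-closure $\langle V^{G_2}\rangle$ destroys both containment in $E_1\cap E_2$ and invariance under $\Aut_\F(E_1)$. Worse, the configuration you want to rule out actually occurs: in the $\Sp_4(p)$-type situation of Lemma \ref{PSptype} one has $E_1/T\cong p^{1+2}_+$, $\Z(E_1)$ equal to the preimage of $\Z(S/T)$ (so $\Omega_1(\Z(\ov{E_1}))=\Omega_1(\Z(\ov{S}))$), and $\J(\ov{S})$ equal to the unique elementary abelian maximal subgroup $\ov{E_2}\nleq\ov{E_1}$. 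There is no contradiction to be had; the conclusion $O^{p'}(\Out_\F(E_1))\cong\SL_2(p)$ is true there but must be proved by other means.

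What the paper does in this case is change target rather than seek a contradiction. Assuming $\Omega_1(\Z(E_1))=Z:=\Omega_1(\Z(S))$ (working modulo $T$), it notes that $Z$ is then $\F$-characteristic in $E_1$ and $S$, so by maximality of $T$ it is \emph{not} $\F$-characteristic in $E_2$; hence $Z<V_2:=\langle Z^{G_2}\rangle\le\Omega_1(\Z(E_2))$, where $G_2$ is a model for $\N_\F(E_2)$. The sectional rank bound forces $|V_2|\in\{p^2,p^3\}$. If $|V_2|=p^2$, a direct $\GL_2(p)$ argument shows $\langle S^{G_2}\rangle/E_2$ acts nontrivially on $V_2$ and Theorem \ref{class.3} yields $O^{p'}(\Out_\F(E_2))\cong\SL_2(p)$. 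If $|V_2|=p^3$, one produces a quadratic action --- of $V_2\Phi(E_1)/\Phi(E_1)$ on $E_1/\Phi(E_1)$ when $V_2\nleq E_1$, or of a conjugate $V_2^g$ on $V_2$ when $V_2\le E_1$ --- and concludes via Lemma \ref{quad.action}, landing on $E_1$ or $E_2$ accordingly. This case analysis (which is also where the genuine asymmetry producing ``there exists $i$'' comes from) is the missing half of your proof and cannot be replaced by a maximality argument on $T$.
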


\begin{proof} By assumption and the fact that $\F$-essential subgroups are not cyclic, $E_i$ has rank either $2$ or $3$ for every $i$.
If $E_i$ has rank $2$ for some $i\in \{1,2\}$ then $O^{p'}(\Out_\F(E_i))\cong \SL_2(p)$ by Theorem \ref{class.3}. So we may assume that both $E_1$ and $E_2$ have rank $3$.
Let $G_i$ be a model for $\N_\F(E_i)$ and let $T$ be the largest subgroup of $E_1 \cap E_2$ that is normalized by $\Aut_\F(E_1)$, $\Aut_\F(E_2)$ and $\Aut_\F(S)$.
To simplify notation we assume $T=1$. Set
\[Z=\Omega_1(\Z(S)) \quad \text{ and } \quad V_i = \langle Z^{G_i} \rangle \leq \Omega_1(\Z(E_i)).\]
Let $\J(S)$ be the Thompson subgroup of $S$. If $\J(S) \leq E_1 \cap E_2$ then $\J(S)=\J(E_1)=\J(E_2) = 1$ by maximality of $T$, giving a contradiction. Thus we may assume $\J(S) \nleq E_1$.
Hence by Lemma \ref{thompson.notin} if $\Omega_1(\Z(E_1)) \neq Z$ then $O^{p'}(\Out_\F(E_1)) \cong \SL_2(p)$ and we are done.

Suppose $\Omega_1(\Z(E_1))=Z$.
By maximality of $T$ the group $Z$ is not $\F$-characteristic in $E_2$. In particular $Z < V_2$.
Note that $V_2$ is an elementary abelian subgroup of $E_2$, that has sectional rank $3$. Therefore either $|V_2|=p^2$ (and $|Z|=p$) or $|V_2|=p^3$.

Suppose $|V_2| = p^2$. Then $G_2/\C_{G_2}(V_2)$ is isomorphic to a subgroup of $\GL_2(p)$. Note that $S \nleq \C_{G_2}(V_2)$ (otherwise $V_2\leq Z$) and $E_2 \in \Syl_p(\C_{G_2}(V_2))$. So $\langle (S)^{G_2} \rangle/E_2$ acts non-trivially on $V_2$ and by Theorem \ref{class.3} we deduce that $O^{p'}(\Out_\F(E_2)) \cong \langle (S)^{G_2} \rangle/E_2 \cong \SL_2(p)$.

Suppose $|V_2|= p^3$.
\begin{itemize}
\item If $V_2 \nleq E_1$ then $S=E_1V_2$, since $[S \colon E_1] = p$ by Theorem \ref{class.3}. Also $[E_1,S] \nleq \Phi(E)$ by Lemma \ref{strict.frattini}, so $[E_1, V_2]\nleq \Phi(E_1)$. On the other hand, since $V_2$ is abelian and normal in $S$ we get $[E_1, V_2, V_2] = 1$. Thus $V_2\Phi(E_1)/\Phi(E_1)$ acts quadratically on $E_1/\Phi(E_1)$ and by Lemma   \ref{quad.action} applied with $T=\Phi(E_1)$ (that is $\F$-characteristic in $S$ because $E_1$ is $\F$-characteristic in $S$) we conclude that $O^{p'}(\Out_\F(E_1)) \cong \SL_2(p)$.

\item Assume $V_2 \leq E_1$. By the maximality of $T$ the group $V_2$ is not normalized by $G_1$. Hence there exists $g\in G_1 \backslash \N_{G_1}(S)$ such that $V_2 \neq V_2^g$. Note that $V_2^g \leq E_1$. If $[V_2^g , V_2]=1$ then $V_2V_2^g$ is an elementary abelian subgroup of $E_1$ and so $|V_2V_2^g|\leq p^3$ because $E_1$ has sectional rank $3$, contradicting the fact that $V_2 < V_2V_2^g$ and $|V_2| = p^3$.  Thus $[V_2^g, V_2] \neq 1$. In particular $V_2^g \nleq E_2$. On the other hand $[V_2,V_2^g]\leq V_2 \cap V_2^g$, so $[V_2, V_2^g, V_2^g] =1$. Hence $V_2^g$ acts quadratically on $V_2$ and by Lemma \ref{quad.action} we conclude that $O^{p'}(\Out_\F(E_2)) \cong \SL_2(p)$.

\end{itemize}
\end{proof}

\begin{theorem}\label{Op'.typeI}
Let $E_1 \leq S$ and $E_2 \leq S$ be distinct $\F$-essential subgroups of $S$. Suppose that $E_1$ and $E_2$ are $\F$-characteristic in $S$ and have sectional rank at most $3$.  Then, up to interchanging the definitions of $E_1$ and $E_2$, the following hold:

\begin{enumerate}
\item either $O^{p'}(\Out_\F(E_1)) \cong O^{p'}(\Out_\F(E_2)) \cong \SL_2(p)$;
\item or $O^{p'}(\Out_\F(E_1))\cong \PSL_2(p)$, $O^{p'}(\Out_\F(E_2))\cong \SL_2(p)$ and $S$ has rank $2$.
\end{enumerate}
\end{theorem}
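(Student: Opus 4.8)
The plan is to read the dichotomy off Theorem~\ref{class.3} once Theorem~\ref{SL2p} has fixed one factor, to force $S$ to be $2$-generated by a power--commutator computation inside a single Jordan block, and finally to eliminate the Frobenius group $13:3$ by a saturation argument played out on $E_1\cap E_2$; this last step will be the crux.

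First I would apply Theorem~\ref{SL2p} and relabel so that $O^{p'}(\Out_\F(E_2))\cong\SL_2(p)$. As each $E_i$ is $\F$-characteristic in $S$ it is normalised by $\Inn(S)\le\Aut_\F(S)$, so $E_i\norm S$, $\N_S(E_i)=S$, and Theorem~\ref{class.3} gives $[S:E_i]=p$. Feeding $E_1$ into Theorem~\ref{class.3}: if $E_1$ has rank $2$, or rank $3$ with reducible action on $E_1/\Phi(E_1)$, or rank $3$ with irreducible action and $O^{p'}(\Out_\F(E_1))\cong\SL_2(p)$, then conclusion~(1) holds. The only remaining case has $E_1$ of rank $3$, irreducible on $W:=E_1/\Phi(E_1)$ (of order $p^3$), with $O^{p'}(\Out_\F(E_1))$ isomorphic to $\PSL_2(p)$ or to $13:3$ (and $p=3$); for this case I must show that $S$ has rank $2$ and that $13:3$ is impossible, which is precisely conclusion~(2).

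Write $\Out_S(E_1)=\langle u\rangle\cong S/E_1\cong C_p$, where $u$ is induced by some $s\in S\setminus E_1$. By Lemma~\ref{strict.frattini} $u$ acts non-trivially on $W$, and in each of $\SL_2(p)$, $\PSL_2(p)$ and $13:3$ a non-trivial $p$-element acts on the $3$-dimensional irreducible module as a single Jordan block: for $\SL_2(p)$ and $\PSL_2(p)$ the module is $\mathrm{Sym}^2$ of the natural module, on which unipotent elements are regular, while for $13:3$ the relevant element is a field automorphism $\sigma$ of $\mathbb{F}_{27}$ with $(\sigma-1)^3=0$ of rank $2$. Hence $[W,u]$ has order $p^2$ and $C_W(u)=[W,u,u]$ has order $p$ and lies in $[W,u]$. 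Setting $\ov S:=S/\Phi(E_1)$, and using $\Phi(E_1)=[E_1,E_1]E_1^p\le\Phi(S)$, the group $\ov S$ has order $p^4$, $\ov{E_1}=W$ is elementary abelian, and $[\ov S,\ov S]=[W,u]$. Since $\ov s^{\,p}\in\ov{E_1}$ commutes with $\ov s$ it lies in $C_W(u)\le[W,u]$, and working modulo $[W,u]$, where $\ov S$ is abelian, gives $\ov S^{\,p}\le[W,u]$. Thus $\Phi(\ov S)=[W,u]$ has order $p^2$, so $S/\Phi(S)\cong\ov S/\Phi(\ov S)$ has order $p^2$ and $S$ has rank $2$.

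It remains to exclude $O^{p'}(\Out_\F(E_1))\cong13:3$, which I expect to be the main obstacle. Here $p=3$, and since $S$ now has rank $2$ we have $E_1\cap E_2=\Phi(S)$. In the Frobenius group $13:3$ the normaliser of a Sylow $3$-subgroup is self-normalising, so every $3'$-element of $N_{\Out_\F(E_1)}(\Out_S(E_1))$ acts on $W$ as a scalar; by the extension axiom of saturation applied to the $\F$-characteristic subgroup $E_1$, each $3'$-element of $\Aut_\F(S)$ therefore induces a scalar on $W$. On the other side, $E_2$ is fully normalised, so $\Aut_S(E_2)\in\Syl_p(\Aut_\F(E_2))$, and saturation forces $\Aut_\F(S)$ to realise the non-trivial torus element of $\SL_2(3)\le\Out_\F(E_2)$, which normalises $\Out_S(E_2)$. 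Choosing a $3'$-representative $\tau\in\Aut_\F(S)$, the element $\tau$ fixes $\Z(E_2)\ge\Omega_1(\Z(S))$ pointwise (the torus of $\SL_2$ has determinant $1$) and inverts $E_2/\Z(E_2)$. Now $\tau$ induces a scalar on $W$ and fixes $\Omega_1(\Z(S))$, whose image in $W$ is non-trivial in the elementary abelian case; hence that scalar is $1$, so $\tau$ acts trivially on $W$, and as $\tau$ has order prime to $p$ it centralises $E_1$ (Burnside). Thus $\tau$ fixes every element of $E_1\cap E_2=\Phi(S)$; but $\Phi(S)\nleq\Z(E_2)$, so some $b\in E_1\cap E_2$ lies outside $\Z(E_2)$, and $\tau$ inverts $b$ modulo $\Z(E_2)$, i.e. $\tau(b)\not\equiv b$, contradicting $\tau(b)=b$. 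Therefore $13:3$ cannot occur, $O^{p'}(\Out_\F(E_1))\cong\PSL_2(p)$, and with $S$ of rank $2$ this is conclusion~(2). The only delicate points are the placement of $\Omega_1(\Z(S))$ relative to $\Phi(E_1)$ and the fact that $\tau$ moves some element of $E_1\cap E_2$; both are immediate when $E_1$ is elementary abelian and $E_2\cong 3^{1+2}$, and otherwise follow from the structure of rank-$3$ irreducible essential subgroups established earlier.
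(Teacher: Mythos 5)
Your reduction and the rank-$2$ step are sound. Applying Theorem \ref{SL2p} to fix $O^{p'}(\Out_\F(E_2))\cong\SL_2(p)$ and then reading the residual possibilities for $E_1$ off Theorem \ref{class.3} is exactly what the paper does. For the rank of $S$ you take a different route: the paper simply observes that $\Phi(E_1)<\Phi(S)$ forces $\mathrm{rank}(S)\leq 3$ and that rank $3$ is impossible by Lemma \ref{norm.rank3.SL2p}, whereas you compute $\Phi(S/\Phi(E_1))=[W,u]$ directly from the regular-unipotent (single Jordan block) action on the $3$-dimensional irreducible module. Your computation is correct, but it buys nothing over the quadratic-action lemma already available, and it needs the explicit module identification that the paper's route avoids.

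The genuine gap is in the exclusion of $13\colon3$, which you rightly flag as the crux. Your contradiction rests on three unproved claims: (i) the image of $\Omega_1(\Z(S))$ in $W=E_1/\Phi(E_1)$ is non-trivial; (ii) the $3'$-lift $\tau$ of the central involution of $O^{3'}(\Out_\F(E_2))$ centralizes $\Omega_1(\Z(S))$; (iii) $E_1\cap E_2\nleq\Z(E_2)$. For (i): since $\Omega_1(\Z(E_1))\Phi(E_1)/\Phi(E_1)$ is $\Out_\F(E_1)$-invariant and the action on $W$ is irreducible, either $E_1$ is elementary abelian or $\Omega_1(\Z(S))\leq\Omega_1(\Z(E_1))\leq\Phi(E_1)$ -- in the latter case your ``the scalar is $1$'' deduction collapses. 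For (ii) and (iii), the determinant argument and the existence of $b\in(E_1\cap E_2)\setminus\Z(E_2)$ are only justified when $E_2\cong 3^{1+2}_+$ with the natural action on $E_2/\Z(E_2)$, which has not been established at this point ($E_2$ could, for instance, be abelian, or have rank $3$ with reducible action). Your closing appeal to ``the structure of rank-$3$ irreducible essential subgroups established earlier'' does not rescue this: the structural results of Section 3 (Theorem \ref{characterization.E} etc.) apply only to essential subgroups that are \emph{not} $\F$-characteristic in $S$, which is the opposite of your hypothesis on $E_1$ and $E_2$. The paper sidesteps all of this: since $\Out_\F(E_1)\leq\N_{\GL_3(3)}(13\colon3)\cong(13\colon3)\times\C_2$, the restriction $\ov{\tau}|_{E_1}$ is \emph{central} in $\Out_\F(E_1)$ and hence commutes with $\Out_S(E_1)$, so $\ov{\tau}$ acts trivially on $S/E_1$; combined with the trivial action on $S/E_2\cong E_1/(E_1\cap E_2)$ and $E_1\cap E_2=\Phi(S)$ (rank $2$), Lemma \ref{char.series} forces $\ov{\tau}\in O_3(\Out_\F(S))=1$, a contradiction that requires no control of $\Omega_1(\Z(S))$, $\Phi(E_1)$ or $\Z(E_2)$. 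You would need to either adopt that argument or prove (i)--(iii) in all cases.
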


\begin{proof}
By Theorem  \ref{SL2p}, we may assume that $O^{p'}(\Out_\F(E_2)) \cong \SL_2(p)$. 
Suppose the group $O^{p'}(\Out_\F(E_1))$ is not isomorphic to $\SL_2(p)$. Hence by Theorem \ref{class.3} the group $E_1$ has rank $3$ and either $O^{p'}(\Out_\F(E_1))\cong \PSL_2(p)$ or  $p=3$ and $O^{3'}(\Out_\F(E_1)) \cong 13 \colon 3$.
 By Lemma \ref{strict.frattini} we have $\Phi(E_1) < \Phi(S)$, so $[S \colon \Phi(S)]=p[E_1 \colon \Phi(S)] \leq p^3$ and $S$ has rank at most $3$. If $S$ has rank $3$, then $O^{p'}(\Out_\F(E_1)) \cong \SL_2(p)$ by Lemma \ref{norm.rank3.SL2p}, a contradiction. Therefore the group $S$ has rank $2$. 
It remains to show that $O^{p'}(\Out_\F(E_1))\cong \PSL_2(p)$.

Aiming for a contradiction, suppose that $O^{p'}(\Out_\F(E_1))$ is not isomorphic to $\PSL_2(p)$. Then $p=3$ and $O^{3'}(\Out_\F(E_1)) \cong 13 \colon 3$.
Let $\tau \in \Z(O^{p'}(\Out_\F(E_2)))$ be an involution and let $\ov{\tau} \in \Out_\F(S)$ be such that $\ov{\tau}|_{E_2}=\tau$ (that exists because $E_2$ is $\F$-essential and $S=\N_S(E_2)$).
Since $E_1$ is $\F$-characteristic in $S$, we deduce that $\ov{\tau}|_{E_1} \in \Out_\F(E_1)$.
So $\Out_\F(E_1) \cong (13 \colon 3) \times \C_{2}$ (as $13 \colon \C_6$ is not a subgroup of $\GL_3(3)$). In particular $\ov{\tau} \in \Z(\Out_\F(E_1))$ and so it acts trivially on the quotient $S/E_1$.  Note that $S=E_1E_2$ (because $[S \colon E_1] = [S \colon E_2]=p$) and by definition $\ov{\tau}$ acts trivially on $S/E_2 \cong E_1/(E_1\cap E_2)$. Consider the following series of $\F$-characteristic subgroups of $S$:
\[ \Phi(S) \leq E_1\cap E_2 \leq E_1 < S.\]
Since $S$ has rank $2$ we have $\Phi(S)=E_1 \cap E_2$ and by Lemma \ref{char.series} we deduce that $\ov{\tau}\in O_3(\Out_\F(S))$. However, $O_3(\Aut_\F(S)) = \Inn(S)$ since $S$ is fully normalized, and we get a contradiction.
\end{proof}

Note that the assumptions of Theorem \ref{Op'.typeI} are always satisfied when $S$ has sectional rank $3$ and there are two $\F$-essential subgroups that are $\F$-characteristic in $S$.

We conclude this section determining sufficient conditions for an $\F$-automorphism $\varphi$ of an $\F$-essential subgroup $E$ of $S$ of rank at most $3$ to be the restriction of an automorphism $\hat{\varphi}$ of a subgroup of $S$ properly containing $E$.

\begin{definition}\label{norm.tower} Let $E\leq S$ be an $\F$-essential subgroup of $S$. Set
\[ \N_S^0(E)=E \text{ and } \N_S^i(E)=\N_S(\N^{i-1}_S(E)) \text{ for every } i \geq 1.\]
We refer to the series
\[ E=\N^0_S(E) < \N^1_S(E) < \dots < \N^{m-1}_S(E) < \N^m_S(E)=S\]
as the normalizer tower of $E$ in $S$.
If $[\N^i_S(E) \colon \N^{i-1}_S(E)]=p$ for every $1\leq  i \leq m$ then we say that $E$ has maximal normalizer tower in $S$.

When it does not lead to confusion, we will write $\N^i$ in place of $\N^i_S(E)$.
\end{definition}

\begin{lemma}\label{rank.norm.tower}
Let $E\leq S$ be an $\F$-essential subgroup. If $E$ has maximal normalizer tower in $S$ and $[S \colon E] = p^m$, then for every $1\leq i \leq m$ we have
\[ \Phi(\N^{i-1})< \Phi(\N^i) \quad  \text{ and } \quad  \rm{rank}(\N^i) \leq \rm{rank}(\N^{i-1}).\]
\end{lemma}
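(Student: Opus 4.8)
The plan is to reduce both inequalities to the single claim that, for each $1 \le i \le m$, the order-$p$ quotient $\N^i/\N^{i-1}$ acts non-trivially on the Frattini quotient $\N^{i-1}/\Phi(\N^{i-1})$; equivalently, that $[\N^i,\N^{i-1}] \not\le \Phi(\N^{i-1})$. This suffices because three elementary facts are always available: since $\N^{i-1}\le\N^i$ we have $\Phi(\N^{i-1}) = [\N^{i-1},\N^{i-1}](\N^{i-1})^p \le \Phi(\N^i)$; since $[\N^i \colon \N^{i-1}]=p$ the subgroup $\N^{i-1}$ is maximal in $\N^i$, so $\Phi(\N^i) \le \N^{i-1}$; and $[\N^i,\N^{i-1}] \le [\N^i,\N^i] \le \Phi(\N^i)$. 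Granting the claim, the nontrivial commutator sits in $\Phi(\N^i)$ but outside $\Phi(\N^{i-1})$, forcing $\Phi(\N^{i-1}) < \Phi(\N^i)$, which is the first assertion. The second assertion then follows by counting: from $\Phi(\N^{i-1}) < \Phi(\N^i) \le \N^{i-1}$ and $[\N^i \colon \N^{i-1}]=p$ one gets $p^{\mathrm{rank}(\N^i)} = p\cdot p^{\mathrm{rank}(\N^{i-1})}/[\Phi(\N^i)\colon\Phi(\N^{i-1})] \le p^{\mathrm{rank}(\N^{i-1})}$, since the index of the Frattini subgroups is at least $p$.

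For the base case $i=1$ I would invoke essentiality of $E=\N^0$ directly through Lemma \ref{strict.frattini}, which gives $\Phi(E) < [\N_S(E),E]\Phi(E)$ and hence $[\N^1,\N^0] = [\N_S(E),E] \not\le \Phi(E) = \Phi(\N^0)$, exactly the required claim. It is worth stressing that essentiality is \emph{genuinely} needed at this first step: for a non-centric subgroup one can have $[\N_S(P),P]\le\Phi(P)$ with the rank jumping up by one across a single index-$p$ normalizer step, so no purely combinatorial argument can replace Lemma \ref{strict.frattini} here.

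For the inductive steps $i\ge 2$ essentiality is no longer available (the groups $\N^{i-1}$ need not be essential), so I would argue instead from the defining property of the normalizer tower. Suppose for contradiction that $[\N^i,\N^{i-1}] \le \Phi(\N^{i-1})$ and choose $x \in \N^i \setminus \N^{i-1}$. For any $g \in \N^{i-2}$ one has $[g,x] \in [\N^{i-1},x] \le [\N^i,\N^{i-1}] \le \Phi(\N^{i-1})$, and since $\N^{i-2}$ is maximal in $\N^{i-1}$ we have $\Phi(\N^{i-1}) \le \N^{i-2}$; therefore $g^x = g[g,x] \in \N^{i-2}$, so $x$ normalizes $\N^{i-2}$. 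But then $x \in \N_S(\N^{i-2}) = \N^{i-1}$, contradicting the choice of $x$. This establishes the claim for $i\ge 2$.

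I expect the crux to be recognizing the correct reduction, namely that both statements collapse to the non-triviality of the action of $\N^i/\N^{i-1}$ on $\N^{i-1}/\Phi(\N^{i-1})$, and, within that, the inductive step's use of the tower identity $\N^{i-1}=\N_S(\N^{i-2})$ to manufacture a contradiction from an element of $\N^i$ that would otherwise normalize $\N^{i-2}$. The routine counting and the subgroup–Frattini containments are straightforward; the only delicate input is localizing exactly where essentiality must enter, namely the single base step $i=1$.
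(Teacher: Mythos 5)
Your proof is correct and follows essentially the same route as the paper: the base case $i=1$ comes from Lemma \ref{strict.frattini}, the step $i\geq 2$ derives $\N^{i-2}\norm \N^i$ from the containment $[\N^i,\N^{i-1}]\leq \Phi(\N^{i-1})\leq \N^{i-2}$ and contradicts the tower identity $\N_S(\N^{i-2})=\N^{i-1}$, and the rank inequality follows from the same index count. The paper phrases the contradiction hypothesis as $\Phi(\N^{i-1})=\Phi(\N^i)$ rather than your commutator condition, but the mechanism is identical.
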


\begin{proof}
Note that $E$ having maximal normalizer tower implies $\Phi(\N^i)\leq \N^{i-1}$ for every $i\geq 1$.
By Lemma \ref{strict.frattini} we have $\Phi(E) < \Phi(\N^1)$.
Suppose $2\leq i \leq m$.  If  $\Phi(\N^{i-1}) =\Phi(\N^i)$ then  $\Phi(\N^{i})\leq \N^{i-2}$ and so $\N^{i-2} \norm \N^i$. Thus by definition of the normalizer tower we get $\N^i=\N^{i-1}=S$, which is a contradiction.

Therefore for every $1 \leq i \leq m $ we have
$\Phi(\N^{i-1}) < \Phi(\N^i)$ and
\[ p^{\rm{rank}(\N^i)} =[\N^i \colon \Phi(\N^i)] = [\N^i \colon \N^{i-1}][\N^{i-1}\colon \Phi(\N^i)]\quad < \quad p[\N^{i-1} \colon \Phi(\N^{i-1})] = p^{\rm{rank}(\N^{i-1})+1}.\]
Hence $\rm{rank}(\N^i) \leq \rm{rank}(\N^{i-1})$.
\end{proof}

Let $E$ be an $\F$-essential subgroup of $S$ of rank at most $3$.
Since $E$ is fully normalized it is receptive and every $\F$-automorphism of $E$ that normalizes the group $\Aut_S(E)\cong \N_S(E)/\Z(E)$ is the restriction of an $\F$-automorphism of the group $\N_S(E)$. The following lemma gives sufficient conditions for a morphism $\varphi \in \N_{\Aut_\F(E)}(\Aut_S(E))$ to be the restriction of an $\F$-automorphism of $\N^j$, for some $j\geq 1$.

\begin{lemma}\label{lift.Ni}
Let $E$ be an $\F$-essential subgroup of $S$ of rank at most $3$.
Let $K\leq E$ be a subgroup of $E$ containing $[E,E]$ but not $[\N^1, \N^1]$.
Let $j\in \mathbb{N}$ be such that $\N^j \leq \N_S(K)$.
Then
\begin{enumerate}
\item $E$ has maximal normalizer tower in $\N^j$ and the members of this tower are the first $j$ members of the normalizer tower of $E$ in $S$;
\item if $P$ is a subgroup of $S$ containing $E$, then either $P=\N^i \leq \N^j$ for some $i\leq j$ or $\N^j < P$;
\item for every $1 \leq i\leq j-1$, if $K$ is $\F$-characteristic in  $\N^i$ then
 $\Aut_\F(\N^i) = \Aut_S(\N^i)\N_{\Aut_\F(\N^i)}(\N^{i-1})$, $\Aut_S(\N^i)\norm \Aut_\F(\N^i)$, $\N^i$ is not $\F$-essential and every morphism in $\Aut_\F(\N^i)$ is the restriction of a morphism in $\Aut_\F(\N^{i+1})$.
\end{enumerate}
In particular if  $K$ is $\F$-characteristic in  $\N^i$ for every $1\leq i\leq j-1$ then  every morphism in  $\N_{\Aut_\F(E)}(\Aut_S(E))$ is the restriction of an $\F$-automorphism of $\N^j$ that normalizes each member of the normalizer tower of $E$ in $\N^j$.
\end{lemma}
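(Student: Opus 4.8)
The plan is to prove the four assertions in the order stated, with assertion (1) doing the real work and the rest following by formal $p$-group or fusion-theoretic arguments. For (1), the coincidence of the tower members is immediate: since $\N^i\le\N^j$ and $\N^i=\N_S(\N^{i-1})$ for $i\le j$, we have $\N_{\N^j}(\N^{i-1})=\N^j\cap\N_S(\N^{i-1})=\N^i$, so the normalizer tower of $E$ formed inside $\N^j$ is exactly $\N^0<\dots<\N^j$. It remains to prove $[\N^i:\N^{i-1}]=p$ for all $i\le j$, which I would do by induction on $i$; the base case $[\N^1:E]=p$ is Theorem \ref{class.3}. The engine is to pass to $\ov{\N^i}=\N^i/K$, which is legitimate because $\N^j\le\N_S(K)$ forces $K\norm\N^i$. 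Here $[E,E]\le K$ makes $\ov E=E/K$ abelian, while $[\N^1,\N^1]\nleq K$ makes $\ov{\N^1}$ non-abelian with the abelian subgroup $\ov E$ of index $p$; hence $\C_{\ov{\N^1}}(\ov E)=\ov E$. I would then promote this to $\C_{\ov{\N^i}}(\ov E)=\ov E$ for every $i\le j$: writing $C$ for the preimage in $\N^i$ of $\C_{\ov{\N^i}}(\ov E)$, one has $\N_C(E)=C\cap\N^1=E$ by the previous sentence, and since proper subgroups of a finite $p$-group have strictly larger normalizer this forces $C=E$. Thus $\ov E$ is a self-centralizing abelian subgroup of rank at most $3$ in each $\ov{\N^i}$.

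The main obstacle is to turn ``self-centralizing abelian subgroup of rank at most $3$ with non-abelian overgroup'' into ``every normalizer step has index exactly $p$.'' I would feed in the rank bound via the Frattini computation in the proof of Lemma \ref{rank.norm.tower}: along the already-established maximal segment $E<\dots<\N^{i-1}$ the rank is non-increasing, so $\mathrm{rank}(\N^{i-1})\le\mathrm{rank}(E)\le3$, and for $i\ge2$ the self-centralizing property forces $\ov{\N^{i-1}}$ to be non-abelian (an abelian $\ov{\N^{i-1}}$ would lie in $\C_{\ov{\N^i}}(\ov E)=\ov E$, whence $\ov{\N^{i-1}}=\ov E$). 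One then has the injection $\ov{\N^i}/\ov{\N^{i-1}}\hookrightarrow\Out(\ov{\N^{i-1}})$ coming from $\C_{\ov{\N^i}}(\ov{\N^{i-1}})\le\C_{\ov{\N^i}}(\ov E)=\ov E\le\ov{\N^{i-1}}$, and the task is to show that a rank-$\le3$ group of this shape cannot admit a normalizer step of index $p^2$ or more. I expect this to require a hands-on analysis, of maximal-class flavour, of how $\N^i$ permutes the conjugates of $\ov E$ inside $\ov{\N^{i-1}}$ (note $\ov E$ is normal in $\ov{\N^1}$ but, for $i\ge2$, genuinely not normal in $\ov{\N^i}$). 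This is where I would spend the bulk of the effort.

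Assertion (2) is then a formal consequence of the index-$p$ steps. Given $E\le P\le S$, I climb the tower inside $P$: whenever $E<P$ one has $E<\N_P(E)=P\cap\N^1$, and $[\N^1:E]=p$ forces $\N_P(E)=\N^1$, so $\N^1\le P$; iterating with the index-$p$ steps from (1), either the climb halts at $P=\N^i$ with $i\le j$ (the case $P\le\N^j$) or it pushes past $\N^j$, giving $\N^j\le P$ with $P\neq\N^j$, i.e. $\N^j<P$.

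For (3) I would argue by induction on $i$, interlocking the four sub-claims, since $\N^{i-1}\norm\N^i$ holds by definition. Using that $K$ is $\F$-characteristic in $\N^i$, together with part (2), I would identify the $\Aut_\F(\N^i)$-conjugates of $\N^{i-1}$ inside $\N^i$ as a single $\N^i$-conjugacy class, so that $\Inn(\N^i)\le\Aut_S(\N^i)$ already permutes them transitively; a Frattini argument then gives $\Aut_\F(\N^i)=\Aut_S(\N^i)\,\N_{\Aut_\F(\N^i)}(\N^{i-1})$. From this decomposition one obtains $\Aut_S(\N^i)\norm\Aut_\F(\N^i)$ (the delicate point, where the interlocking induction with the extension clause at the previous level is used); since $\N^{i+1}>\N^i$ and $E$ is $\F$-centric (so $\C_S(\N^i)\le\N^i$) we have $\Inn(\N^i)<\Aut_S(\N^i)$, which exhibits a non-trivial normal $p$-subgroup of $\Out_\F(\N^i)$, forcing $O_p(\Out_\F(\N^i))\neq1$, so $\N^i$ is not $\F$-essential. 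Finally $\Aut_S(\N^i)\norm\Aut_\F(\N^i)$ makes the hypothesis of the extension criterion automatic, so, $\N^i$ being fully normalized, receptivity lets every morphism of $\Aut_\F(\N^i)$ restrict from one of $\Aut_\F(\N^{i+1})$. The concluding ``In particular'' is then a clean climb: starting from $\varphi\in\N_{\Aut_\F(E)}(\Aut_S(E))$, receptivity of $E$ extends $\varphi$ to $\Aut_\F(\N^1)$, and the extension clause of (3), applicable at each level $1\le i\le j-1$ because $K$ is $\F$-characteristic in $\N^i$, carries it up to an $\F$-automorphism of $\N^j$ normalizing every member of the tower.
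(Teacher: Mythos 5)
You correctly isolate the paper's key structural observation: $E/K$ is an abelian, self-centralizing subgroup of index $p$ in its normalizer in $\N^j/K$ (a soft subgroup in H\'ethelyi's sense), and your verification that $\C_{\ov{\N^i}}(\ov{E})=\ov{E}$ via $\N_C(E)=C\cap\N^1=E$ is sound. But you stop exactly where the work of part (1) begins: converting ``soft subgroup'' into ``every normalizer step has index $p$'' is declared to be ``the main obstacle,'' a possible attack via the rank bound and a maximal-class-style analysis is sketched, and the step is explicitly deferred (``this is where I would spend the bulk of the effort''). It is never carried out, so part (1) is not proved. The paper closes this by quoting H\'ethelyi's theorem on soft subgroups (\cite[Lemma 2]{Het1}), which yields both the maximal normalizer tower and the fact that the tower members are the only overgroups of $E$ in $\N^j$ -- with no rank hypothesis at all; the detour through the sectional rank that you propose is neither completed nor needed. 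Your argument for part (2) and for the concluding ``in particular'' clause is fine, but both rest on (1) and (3).

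The second genuine gap is in part (3). The assertion $\Aut_S(\N^i)\norm\Aut_\F(\N^i)$ is the technical heart of that part, and you dispose of it as ``the delicate point, where the interlocking induction with the extension clause at the previous level is used'' without an argument; a Frattini decomposition $\Aut_\F(\N^i)=\Aut_S(\N^i)\N_{\Aut_\F(\N^i)}(\N^{i-1})$ does not by itself imply normality of the Sylow factor, and no mechanism is offered. The paper in fact proves normality \emph{first}, by importing more of H\'ethelyi's structure theory: the subgroups $H_i$ with $H_i/K=\Z_i(\N^i/K)$ and $H_j/K=\Z(\N^1/K)[\N^j/K,\N^j/K]$, the isomorphisms $\N^i/H_i\cong\C_p\times\C_p$, and the $\F$-characteristicity of $H_{i+1}$ in $\N^i$; only then does the rank bound enter, to show $\Aut_S(\N^i)$ stabilizes the series $\Phi(\N^i)\leq H_i<H_{i+1}<\N^i$, so that Lemma \ref{char.series} applies, after which the Frattini argument gives the stated decomposition. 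A further slip: you claim $\Inn(\N^i)$ already permutes the $\F$-conjugates of $\N^{i-1}$ transitively, but $\N^{i-1}\norm\N^i$, so $\Inn(\N^i)$ fixes $\N^{i-1}$ and transitivity would force the conjugacy class to be a singleton; the transitive action the paper actually uses is that of $\Aut_S(\N^i)\cong\N^{i+1}/\C_S(\N^i)$, which properly contains $\Inn(\N^i)$ because $\N^i<\N^{i+1}$.
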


\begin{remark} Recall that $[\N^1 \colon E]=p$ by Theorem \ref{class.3}.
In particular $[\N^1,\N^1]\leq E$ and so $K < E$. The assumption $[E,E]\leq K$ implies that $\N^0 = E \leq \N_S(K)$. Finally note that $\N^j$ is a member of the normalizer tower of $E$, so $\N^i = \N_S^{i}(E) = \N_{\N^j}^i(E)$ for every $i\leq j-1$.
\end{remark}

The key idea for the proof of Lemma \ref{lift.Ni} is that the quotient group $E/K$ is a soft subgroup of $\N^j/K$, defined by H{\'e}thelyi in \cite{Het1} as an abelian self-centralizing subgroup having index $p$ in its normalizer.

\begin{proof}
Consider the group $\N^j/K$. Notice that the subgroup $E/K$ is abelian and for every $i\leq j$ we have  $\N^i(E/K) = \N^i/K$.
Since  $[\N^1, \N^1]\nleq K$ and $[\N^1 \colon E]=p$ by Theorem \ref{class.3}, we deduce that $E/K$ is self-centralizing in $\N^j/K$ and $[\N^1(E/K) \colon E/K] = p$.
Therefore $E/K$ is a soft subgroup of $\N^j/K$. In particular by \cite[Lemma 2]{Het1} the group $E$ has maximal normalizer tower in $\N^j$ and the members of such tower are the only subgroups of $\N^j$ containing $E$.

Let $P\leq S$ be a subgroup of $S$ containing $E$. If $P\leq \N^j$ then $P=\N_{\N^j}^i(E)=\N_S^i(E)=\N^i$ for some $i$. Suppose that $P \nleq \N^j$. We show that $\N^i\leq P$ for every $0\leq i \leq j$ by induction on $i$.
By assumption $\N^0=E\leq P$. Suppose $\N^i\leq P$ for some $0\leq i \leq j-1$. Note that $\N^i < \N_P(\N^i)\leq \N^{i+1}$ and since $[\N^{i+1} \colon \N^i]=p$ we deduce that $\N^{i+1}=\N_P(\N^i) \leq P$. Therefore $\N^i\leq P$ for every $0\leq i \leq j$ and so $\N^j\leq P$.

For every $1\leq i \leq j-1$,  let $H_i \in \N^{i-1}$ be such that $H_i/K =  \Z_i(\N^i/K)$ (the i-th center of $\N^i/K$). Also, let $H_j\leq H_{j-1}$ be such that $H_j/K = \Z(\N^1/K)[\N^j/K, \N^j/K]$. Then by \cite[Lemma 1 and Theorem 2]{Het2} we have that $\N^i/H_i \cong \C_p \times \C_p$ for every $1\leq i \leq j$ and $H_j/K$ is characteristic in $\N^j/K$. In particular $\Phi(\N^i) \leq H_i$ for every $1\leq i \leq j$.

Suppose that $K$ is $\F$-characteristic in $\N^i$ for some $1\leq i \leq j-1$. Then $H_i$ is $\F$-characteristic in $\N^i$ and the group $\Aut_\F(\N^i)$ acts on the quotient $\N^i/H_i\cong \C_p \times \C_p$. Since $\N^i < \N^j \leq S$, the group $\Aut_S(\N^i) \cong \N^{i+1}/\C_S(\N^i)$ acts non-trivially on the set of $\F$-conjugates of $\N^{i-1}$ contained in $\N^i$.  Note that $\N^i/H_i$ has $p+1$ maximal subgroups and at least $p$ of these are $\F$-conjugates of $\N^{i-1}$.
If $\alpha \in \Aut_\F(\N^i)$ then $E\alpha$ is an $\F$-essential subgroup of $S$ by Theorem \ref{class.3}, $\N^i=\N_S^i(E\alpha)$ and $K\alpha=K$. So by part (1) the group $E\alpha$ has maximal normalizer tower in $\N^{i+1}$. Thus $\N^i = \N_S(\N^{i-1}\alpha)$. Since $H_{i+1} \norm \N^{i+1}$, we deduce that $H_{i+1}$ is not of the form $\N^{i-1}\alpha$ for any $\alpha \in \Aut_\F(\N^i)$ and so $H_{i+1}$ is $\F$-characteristic in $\N^i$. Since $\N^i$ has rank at most $3$ by Lemma \ref{rank.norm.tower}, we deduce that  $[H_{i+1} \colon \Phi(\N^i)] \leq p$ and so  $\Aut_S(\N^i)$ stabilizes the series of subgroups  $\Phi(\N^i) \leq H_i < H_{i+1} < \N^i$. By Lemma \ref{char.series} we conclude that $\Aut_S(\N^i)\norm \Aut_\F(\N^i)$. In particular $O_p(\Out_\F(\N^i)) \neq 1$ and so $\N^i$ is not $\F$-essential. Also, the action of $\Aut_S(\N^i)$ on the conjugates of $\N^{i-1}$ contained in $\N^i$ is transitive and so by the Frattini Argument (\cite[3.1.4]{KS}) we have
\[\Aut_\F(\N^i) = \Aut_S(\N^i)\N_{\Aut_\F(\N^i)}(\N^{i-1}).\]
Note that the group $\N^i$ is $\F$-centric, because it contains $E$, and so it is fully centralized in $\F$. Since $\F$ is a saturated fusion system, we deduce that $\N^i$ is receptive (\cite[Theorem 5.2(2)]{RS}). Since $\Aut_S(\N^i)\norm \Aut_\F(\N^i)$ we conclude that  every morphism in $\Aut_\F(\N^i)$ is the restriction of a morphism in  $\Aut_\F(\N^{i+1})$.

The last statement follows from part (3) and the fact that $E$ is receptive and so every morphism in   $\N_{\Aut_\F(E)}(\Aut_S(E))$ is the restriction of a morphism in $\Aut_\F(\N^1)$.
\end{proof}

If $E$ is an abelian $\F$-essential subgroup of $S$ of rank at most $3$, then applying Lemma \ref{lift.Ni} with $K=1$ and $\N^j= S$ we get the following.

\begin{cor}\label{lift.Ni.ab}
Let $E$ be an abelian $\F$-essential subgroup of $S$ of rank at most $3$. Then $E$ has maximal normalizer tower in $S$ and it is not properly contained in any $\F$-essential subgroup of $S$. In particular every morphism in  $\N_{\Aut_\F(E)}(\Aut_S(E))$ is the restriction of an $\F$-automorphism of $S$ that normalizes each member of the normalizer tower of $E$ in $S$.
\end{cor}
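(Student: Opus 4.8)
The plan is to apply Lemma \ref{lift.Ni} with $K = 1$ and with $j = m$, where $\N^m = S$ is the top of the normalizer tower of $E$; the three conclusions of that lemma, together with its final clause, then deliver the corollary almost immediately.

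First I would verify the hypotheses of Lemma \ref{lift.Ni} for the choice $K = 1$. Since $E$ is abelian we have $[E,E] = 1 = K \leq E$, so $K$ contains $[E,E]$, and the requirement $\N^j = S \leq \N_S(K) = \N_S(1) = S$ is vacuous, so any $j$ is admissible; I take $j = m$. The only condition needing an argument is that $K = 1$ does \emph{not} contain $[\N^1, \N^1]$, i.e.\ that $\N^1 = \N_S(E)$ is non-abelian. Here I would use that $E$ is both $\F$-centric and abelian: $\F$-centricity gives $\C_S(E) \leq E$ while abelianness gives $E \leq \C_S(E)$, so $\C_S(E) = E$; were $\N^1$ abelian it would centralize $E$ and hence lie in $\C_S(E) = E$, contradicting $[\N^1 \colon E] = p$ from Theorem \ref{class.3}. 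Thus $[\N^1, \N^1] \neq 1 = K$, as required.

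With the hypotheses in place I would observe that $K = 1$ is trivially $\F$-characteristic in every $\N^i$ (the trivial subgroup is normalized by every automorphism), so part (3) and the final clause of Lemma \ref{lift.Ni} are available for all $1 \leq i \leq m-1$. Part (1) then says directly that $E$ has maximal normalizer tower in $\N^j = S$, which is the first assertion, and the third assertion is precisely the final clause of the lemma specialized to $\N^j = S$. For the claim that $E$ lies in no larger $\F$-essential subgroup, I would combine parts (2) and (3): by part (2) every subgroup $P$ with $E < P \leq S$ equals some $\N^i$ with $1 \leq i \leq m$ (the alternative $\N^j < P$ is impossible since $\N^j = S$), and by part (3) none of $\N^1, \dots, \N^{m-1}$ is $\F$-essential.

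The remaining loose end, which I expect to be the main point not handed to me directly, is the top member $\N^m = S$: part (3) only excludes $\N^i$ for $i \leq m-1$. I would dispatch this by the standard fact that $\Aut_S(S) = \Inn(S)$, so $\Out_S(S) = 1$; since $S$ is fully automized, $\Out_S(S)$ is a Sylow $p$-subgroup of $\Out_\F(S)$, whence $p \nmid |\Out_\F(S)|$ and $\Out_\F(S)$ admits no strongly $p$-embedded subgroup. Hence $S$ is not $\F$-essential either, and combined with the previous paragraph this shows that $E$ is properly contained in no $\F$-essential subgroup of $S$.
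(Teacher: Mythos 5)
Your proposal is correct and follows exactly the route the paper indicates: the paper derives this corollary by applying Lemma \ref{lift.Ni} with $K=1$ and $\N^j=S$, which is precisely your argument. Your verification that $\N^1=\N_S(E)$ is non-abelian (via $\C_S(E)=E$ from centricity plus abelianness) and your handling of the top term $S$ (not $\F$-essential since $\Out_S(S)=1$ is a Sylow $p$-subgroup of $\Out_\F(S)$) correctly fill in the details the paper leaves implicit.
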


\section{Properties of the $\F$-core and proof of  Theorem \ref{rank2pearl}}

\begin{definition}\label{core} Let $E_1 \leq S$ and $E_2 \leq S$ be $\F$-essential subgroups of $S$ such that $\N_S(E_1)=\N_S(E_2)$. We define the $\F$-core of $E_1$ and $E_2$, denoted $\coreF(E_1,E_2)$, as the largest subgroup $T$ of $E_1 \cap E_2$ that is normalized by $\Aut_\F(E_1)$, $\Aut_\F(E_2)$ and $\Aut_\F(\N_S(E_1))$. We set $\coreF(E_1)=\coreF(E_1,E_1)$ and we call it the $\F$-core of $E_1$.
\end{definition}

The structure of the $\F$-cores of $\F$-essential subgroups will play a crucial role in the proofs of most of the results of this paper (and we already used it in the proofs of Theorems \ref{frattini.is.char}, \ref{typeIIauto} and \ref{SL2p}).
In this section we describe the main properties of the $\F$-core.

\begin{remark}
If $E$ is an $\F$-essential subgroup of $S$, then $E=\coreF(E)$ if and only if $E$ is $\F$-characteristic in $S$. Indeed, if
$E=\coreF(E)$ then $E$ is $\F$-characteristic in $\N_S(E)$ and so $E$ is normal in $\N_S(\N_S(E))$, implying that $\N_S(\N_S(E)) = \N_S(E) = S$. Thus $E$ is $\F$-characteristic in $S$. On the other hand, if $E$ is $\F$-characteristic in $S$ then $S=\N_S(E)$ and so $E=\coreF(E)$.
\end{remark}

\begin{lemma}\label{same.T}
Let $E$ be an $\F$-essential subgroup of $S$ and set $T=\coreF(E)$. If $\alpha\in \Hom_\F(\N_S(E),S)$ then $T\alpha = \coreF(E\alpha)$.

In particular $\coreF(E)=\coreF(E,E\alpha)=\coreF(E\alpha)$ for every $\alpha \in \Aut_\F(\N_S(E))$.
\end{lemma}

\begin{proof}
If $E=E\alpha$ then $\alpha|_E \in \Aut_\F(E)$ and so $T\alpha=T=\coreF(E)$.

Suppose $E\neq E\alpha$.
Clearly $T\alpha $ is a subgroup of $E\alpha$. Note that $\Aut_\F(E\alpha) = \alpha^{-1} \Aut_\F(E) \alpha$. Since $E$ is an $\F$-essential subgroup of $S$, it is fully normalized in $\F$. Hence $[ \N_S(E)\alpha \colon E\alpha ]= [ \N_S(E) \colon E] \geq [\N_S(E\alpha) \colon E\alpha]$. Since $\N_S(E)\alpha\leq \N_S(E\alpha)$ we deduce that  $\N_S(E)\alpha = \N_S(E\alpha)$ and so $\Aut_\F(\N_S(E)\alpha) = \alpha^{-1} \Aut_\F(\N_S(E)) \alpha$.
It's now easy to see that $T\alpha=\coreF(E\alpha)$.

Assume $\alpha \in \Aut_\F(\N_S(E))$. Then $\N_S(E\alpha) = \N_S(E)$ and by maximality of $T$ we have $\coreF(E,E\alpha)\leq T$. On the other hand, $T=T\alpha = \coreF(E\alpha)$, so $T$ is contained in $E\cap E\alpha$ and is $\F$-characteristic in $E$, $E\alpha$ and $\N_S(E)$. Hence $T\leq \coreF(E,E\alpha)$, which implies $T=\coreF(E,E\alpha)$.
\end{proof}

\begin{remark} Lemma \ref{same.T} says in particular that if $E$ is an $\F$-essential subgroup of $S$ not $\F$-characteristic in $S$ then the $\F$-core of $E$ can always be described as the $\F$-core of two distinct $\F$-essential subgroups of $S$.
\end{remark}

\begin{lemma}\label{properties.T}
Let $E_1$ and $E_2$ be distinct $\F$-essential subgroups of $S$ such that $\N_S(E_1)=\N_S(E_2)$.
Set $\N=\N_S(E_1)=\N_S(E_2)$, $E_{12}=E_1 \cap E_2$ and $T=\coreF(E_1,E_2)$.
Suppose that $E_1$, $E_2$ and $T$ have rank at most $3$.
Then for every $1 \leq i \leq 2$ the following hold:
\begin{enumerate}
\item $\C_{E_i}(T) \nleq T$;
\item $\C_{\N}(T) \nleq E_{12}$;
\item if $\C_{\N}(T) \nleq E_i$ then $O^{p'}(\Out_\F(E_i))$ centralizes $T$.
\end{enumerate}
\end{lemma}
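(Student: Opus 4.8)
Throughout I write $\N=\N_S(E_1)=\N_S(E_2)$ and $E_{12}=E_1\cap E_2$. Since $E_1,E_2$ have rank at most $3$, Theorem \ref{class.3} gives $[\N\colon E_i]=p$, so $E_1,E_2$ are distinct maximal subgroups of $\N$; hence $E_{12}\norm\N$, $[\N\colon E_{12}]=p^2$, and $T=\coreF(E_1,E_2)<E_{12}<E_i$. As $T$ is normalized by $\Aut_\F(E_i)\geq\Inn(E_i)$ and by $\Aut_\F(\N)\geq\Inn(\N)$, it is normal in $E_i$ and in $\N$. For each $i$ I fix a model $G_i$ of $\N_\F(E_i)$, so $E_i=O_p(G_i)$, $\N\in\Syl_p(G_i)$, $G_i/E_i\cong\Out_\F(E_i)$, and $A_i:=\langle\N^{G_i}\rangle=O^{p'}(G_i)$ satisfies $A_i/E_i\cong O^{p'}(\Out_\F(E_i))$; since $T$ is $\F$-characteristic in $E_i$ it is normal in $G_i$. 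The plan is to prove (1) first (the crux), deduce (2) from (1) and the maximality of $T$, and establish (3) by a Sylow argument inside $G_i$.

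I expect (1), that $\C_{E_i}(T)\nleq T$, to be the main obstacle. Since $\Z(E_i)\leq\C_{E_i}(T)$, I may assume $\Z(E_i)\leq T$ and, arguing by contradiction, that $\C_{E_i}(T)\leq T$, i.e. that $T$ is self-centralizing in $E_i$ with $\C_{E_i}(T)=\Z(T)$. Working in $G_i$ I would first record that $O_p(\C_{G_i}(T))=\C_{G_i}(T)\cap E_i=\C_{E_i}(T)=\Z(T)$, while $O_{p'}(\C_{G_i}(T))\leq O_{p'}(G_i)=1$ (the latter because $G_i$ is a model, so $\C_{G_i}(E_i)=\Z(E_i)\leq E_i$); since $\C_{G_i}(T)$ centralizes $\Z(T)$, its normal Sylow $p$-subgroup is central, and when $\C_\N(T)\leq E_i$ this forces $\C_{G_i}(T)=\Z(T)$. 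I would then combine this with the faithful action (Lemma \ref{GLr}, Theorem \ref{class.3}) of $\Out_\F(E_i)$, hence of $O^{p'}(\Out_\F(E_i))\cong\SL_2(p),\PSL_2(p)$ or $13\colon3$, on $V=E_i/\Phi(E_i)$, noting that $\overline T=T\Phi(E_i)/\Phi(E_i)$ is a proper invariant submodule; the self-centralizing hypothesis constrains how the $p$-group $\N/E_i$ acts on $T$, and via the quadratic/Thompson machinery of Lemmas \ref{norm.rank3.SL2p}--\ref{thompson.notin} and Stellmacher's Theorem \ref{stell.here} this should contradict the existence of a strongly $p$-embedded subgroup in $\Out_\F(E_i)$. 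The delicate point is that the bound $\mathrm{rank}(T)\leq 3$ must be used to rule out the reducible rank-$3$ configuration, where $T\nleq\Phi(E_i)$ is possible and the naive module argument degenerates; this is where I anticipate the real work lies.

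Granting (1), part (2) follows cleanly. Suppose for contradiction $\C_\N(T)\leq E_{12}$. Then $D:=\C_\N(T)$ equals both $\C_{E_1}(T)$ and $\C_{E_2}(T)$, lies in $E_{12}$, and is normalized by $\Aut_\F(E_1)$ and $\Aut_\F(E_2)$ (as $D=\C_{E_i}(T)$ with $E_i$ and $T$ invariant) and by $\Aut_\F(\N)$ (as $\N$ and $T$ are invariant). By the maximality of $T=\coreF(E_1,E_2)$ this gives $D\leq T$, so $\C_{E_i}(T)=D\leq T$, contradicting (1). Hence $\C_\N(T)\nleq E_{12}$.

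For (3) assume $\C_\N(T)\nleq E_i$ and let $\pi\colon G_i\to\Aut(T)$ be the conjugation action (defined since $T\norm G_i$), with $\ker\pi=\C_{G_i}(T)$. As $E_i$ is maximal in $\N$ and $\C_\N(T)=\N\cap\C_{G_i}(T)\leq\ker\pi$, I get $\N=E_i\C_\N(T)$ and therefore $\pi(\N)=\pi(E_i)$. Now $\pi(\N)$ is a Sylow $p$-subgroup of $\pi(G_i)$ equal to the normal subgroup $\pi(E_i)$, so $\pi(E_i)$ is the unique Sylow $p$-subgroup of $\pi(G_i)$ and $O^{p'}(\pi(G_i))=\pi(E_i)$. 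Since $G_i/O^{p'}(G_i)$ is a $p'$-group and $O^{p'}(G_i)$ is generated by Sylow $p$-subgroups, the standard image computation yields $\pi(O^{p'}(G_i))=\pi(E_i)$, whence $\pi(A_i)=\pi(E_i)$ and, by Dedekind's modular law, $A_i=E_i\C_{A_i}(T)$. Thus $O^{p'}(\Out_\F(E_i))=A_i/E_i\cong\C_{A_i}(T)/\C_{E_i}(T)$ is induced by $\C_{A_i}(T)\leq\C_{G_i}(T)$, and so $O^{p'}(\Out_\F(E_i))$ centralizes $T$, completing the proof.
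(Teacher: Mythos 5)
Parts (2) and (3) of your argument are correct and essentially identical to the paper's: (2) is exactly the maximality-of-the-core argument modulo (1), and your Sylow/Dedekind computation for (3) is a slightly longer route to the paper's one-line observation that $\N=E_i\C_\N(T)$ forces $\Out_S(E_i)\cong \C_\N(T)/\C_{E_i}(T)$ to centralize $T$, whence so does its normal closure $O^{p'}(\Out_\F(E_i))$.

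The genuine gap is part (1), which you correctly identify as the crux but then do not prove: ``via the quadratic/Thompson machinery \dots this should contradict \dots'' and ``this is where I anticipate the real work lies'' is a plan, not an argument, and the plan does not match what actually works. The paper's proof of (1) proceeds in two stages, neither of which appears in your sketch. First it shows $\C_{G_i}(T)\nleq T$ for both $i$: assuming the contrary, a stabilizer argument (Lemma \ref{char.series}) gives $\C_{G_i}(T)\leq T$ for both models, hence $T=\C_{G_1}(T/\Phi(T))=\C_{G_2}(T/\Phi(T))$ by maximality of the core, so $\N/E_{12}$ of order $p^2$ acts faithfully on $T/\Phi(T)$, forcing $[T\colon\Phi(T)]=p^3$ and $\N/T\cong p^{1+2}_+$; a careful analysis of $[E_1,T]\Phi(T)$ versus $[E_2,T]\Phi(T)$, a commutator/exponent computation identifying $(E_1\cap E_2)^p\Phi(T)$, and an involution of $\langle \N^{G_1}\rangle/E_1\cong\SL_2(p)$ inverting every layer of the series $Z<[\N,T]\Phi(T)<T<E_{12}<E_1$ then show $E_1/Z$ is elementary abelian of order $p^4$, contradicting rank at most $3$. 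Second, it upgrades this to $\C_{E_i}(T)\nleq T$ via Thompson's $A\times B$ lemma (\cite[Theorem 5.3.4]{GOR}): if $\C_{E_i}(T)\leq T$ then $\C_{G_i}(T)\nleq E_i$ contains a nontrivial $p'$-element $g$, and $\langle g\rangle\times T$ acting on $E_i$ with $g$ centralizing $\C_{E_i}(T)$ forces $g$ to centralize $E_i$, which is impossible in a model. Your preliminary reductions (e.g.\ $O_p(\C_{G_i}(T))=\C_{E_i}(T)$, and $\C_{G_i}(T)=\Z(T)$ when $\C_\N(T)\leq E_i$ --- the latter itself needs Burnside's normal $p$-complement theorem, which you do not cite) point in a different direction and do not engage the case $\C_\N(T)\nleq E_i$ at all; as written, the proposal does not establish (1), and since (2) depends on (1), the lemma is not proved.
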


\begin{proof}
Let $G_i$ be a model for $\N_{\F}(E_i)$.
As an intermediate step we show that $\C_{G_i}(T)\nleq T$ for every $i\in \{1,2 \}$.
Suppose for a contradiction that $\C_{G_1}(T) \leq T$.
Then $\C_{\N}(T) \leq T$. In particular $\C_{\N}(T)= \C_{E_i}(T) \norm \C_{G_i}(T)$ for every $i\in \{1,2\}$.  Let $g\in \C_{G_2}(T)$. Note that $[E_2,g] \leq E_2 \cap \C_{G_2}(T) = \C_{E_2}(T) \leq T$. Thus $g$ stabilizes the sequence $1 < T < E_2$. Hence by Lemma \ref{char.series} and the fact that $E_2$ is $\F$-essential we deduce that $g \in E_2$, and so $g\in \C_{E_2}(T) \leq T$. Therefore $\C_{G_2}(T) \leq T$.
Hence we have $\C_{G_i}(T) \leq T$ for every $i\in \{1, 2\}$.
In particular by Lemma \ref{char.series} we deduce that for every $i\in \{1, 2\}$ the group $\C_{G_i}(T/\Phi(T))$ is a normal $p$-subgroup of $G_i$ and  so it is contained in  $E_i$.  Therefore $\C_{\N}(T/\Phi(T)) \leq E_1 \cap E_2$ and  $\C_{\N}(T/\Phi(T))=\C_{G_1}(T/\Phi(T))=\C_{G_2}(T/\Phi(T))$. By the maximality of $T$ and the fact that $T$ centralizes $T/\Phi(T)$ we conclude that
\[T = \C_{G_1}(T/\Phi(T)) = \C_{G_2}(T/\Phi(T)).\]

Thus the quotient $\N/(E_1 \cap E_2)$ acts non-trivially on $T/\Phi(T)$. By assumption the groups $E_1$ and $E_2$ have  rank at most $3$. Hence by Theorem \ref{class.3} we get $[\N\colon E_1] = [\N \colon E_2] = p$.
So $[\N \colon (E_1 \cap E_2)] =p^2$, that implies $[T \colon \Phi(T)] \geq p^3$. Note that $T$ is supposed to have rank at most $3$ and so $[T \colon \Phi(T)]=p^3$.
For every $i\in \{1,2\}$ the quotient $G_i/T$ is isomorphic to a subgroup of $\Aut(T/\Phi(T))\cong \GL_3(p)$, and so $\N/T \cong p^{1+2}_+$.  In particular $[ (E_1 \cap E_2) \colon T] = p$ and $E_i/T \cong \C_p \times \C_p$ for every $i\in \{1,2\}$.
Note that $[E_i, T] \Phi(T) \leq [\N,T]\Phi(T)$ for every $i\in \{1,2\}$ and $[\N,T] = [E_1,T][E_2,T]$.
If $[E_1,T]\Phi(T) = [E_2,T]\Phi(T)$ then  $[E_1,T]\Phi(T)= [\N,T]\Phi(T)$ and $\N/(E_1 \cap E_2)$ is isomorphic to a subgroup of $\Aut([\N,T]\Phi(T)/\Phi(T))$, a contradiction.
Thus $[E_1,T]\Phi(T) \neq [E_2,T]\Phi(T)$. In particular we get $[[\N,T]\Phi(T) \colon \Phi(T)] = p^2$.

\begin{figure}[H]
\centering
\begin{tikzpicture}[x=1.00mm, y=1.00mm, inner xsep=0pt, inner ysep=0pt, outer xsep=0pt, outer ysep=0pt]
\path[line width=0mm] (-16.54,-43.38) rectangle +(33.71,59.52);
\definecolor{L}{rgb}{0,0,0}
\path[line width=0.30mm, draw=L] (0.,10) -- (-10,-0.0);
\path[line width=0.30mm, draw=L] (10,-0) -- (-0,-10);
\path[line width=0.30mm, draw=L] (10.0,-0.0) -- (-0.0,10.);
\path[line width=0.30mm, draw=L] (-0.0,-10.0) -- (-10.0,0);
\definecolor{F}{rgb}{0,0,0}
\path[line width=0.15mm, draw=L, fill=F] (-10,-0) circle (0.50mm);
\path[line width=0.15mm, draw=L, fill=F] (-0.0,10) circle (0.50mm);
\path[line width=0.15mm, draw=L, fill=F] (-0.0,-10.0) circle (0.50mm);
\path[line width=0.15mm, draw=L, fill=F] (10.0,-0.0) circle (0.50mm);
\path[line width=0.30mm, draw=L] (0.0,-10) -- (0.0,-39);
\draw(2,-19) node[anchor=base west]{\fontsize{8.54}{10.24}\selectfont $T$};
\draw(2,-11) node[anchor=base west]{\fontsize{8.54}{10.24}\selectfont $E_1\cap E_2$};
\draw(12,-1) node[anchor=base west]{\fontsize{8.54}{10.24}\selectfont $E_2$};
\draw(-14.54,-1) node[anchor=base west]{\fontsize{8.54}{10.24}\selectfont $E_1$};
\path[line width=0.15mm, draw=L, fill=F] (0,-18) circle (0.50mm);
\draw(-0.82,11.78) node[anchor=base west]{\fontsize{8.54}{10.24}\selectfont $\N$};
\path[line width=0.15mm, draw=L, fill=F] (0.,-25) circle (0.50mm);
\path[line width=0.15mm, draw=L, fill=F] (0.,-32) circle (0.50mm);
\path[line width=0.15mm, draw=L, fill=F] (0.00,-39) circle (0.50mm);
\draw(2,-41) node[anchor=base west]{\fontsize{8.54}{10.24}\selectfont $\Phi(T)$};
\draw(2,-26) node[anchor=base west]{\fontsize{8.54}{10.24}\selectfont $[\N,T]\Phi(T)$};
\draw(2,-34) node[anchor=base west]{\fontsize{8.54}{10.24}\selectfont $Z$};
\end{tikzpicture}%
\end{figure}

Let $Z$ be the preimage in $\N$ of $\Z(\N/\Phi(T))$. Then $Z \leq T$ and $[Z \colon \Phi(T)] = p$. Since $Z\leq [E_i,T]\Phi(T) \leq [\N,T]\Phi(T)$ for every $i$, we may assume  that
\[ [E_1,T]\Phi(T) = Z \text{ and } [E_2,T]\Phi(T) = [\N,T]\Phi(T).\]

Let $x \in (E_1 \cap E_2) \backslash T$ and let $t\in T$. Note that $[x,t] \in [E_1,T]\Phi(T) =Z$, so $[x,t]$ commutes with $t$ and $x$ modulo $\Phi(T)$. Hence by  properties of commutators (\cite[Lemma 2.2.2]{GOR}) we have
\[ (xt)^p = t^px^p[x,t]^\frac{p(p-1)}{2} = x^p \mod \Phi(T).\]
Since $E_1\cap E_2=  \langle x \rangle T$ we deduce that $(E_1 \cap E_2)^p \Phi(T) = \langle x^p \rangle \Phi(T)$. Thus $ (E_1\cap E_2)^p\Phi(T)=Z$ and the quotient $(E_1 \cap E_2)/ Z$ is elementary abelian of order $p^3$.

Note that $\Phi(E_1) \leq T$ and so either $E_1$ has rank $2$ (and $T=\Phi(E_1)$) or $[T \colon \Phi(E_1)] = p$. In particular by Theorem \ref{class.3} we have $\langle (\N)^{G_1} \rangle/E_1 \cong \SL_2(p)$.
Also, $G_1$ acts transitively on the maximal subgroups of $E_1$ containing $T$ and normalizes $[T,E_1]\Phi(T)$. Hence we conclude that $E_1/[T,E_1]\Phi(T) = E_1/Z$ has exponent $p$.

Let $\tau \in \langle (\N)^{G_1} \rangle$ be an involution that inverts $E_1/T$. Note that $T/Z$ is a natural $\SL_2(p)$-module for  $\langle (\N)^{G_1} \rangle/E_1$ (otherwise $\langle (\N)^{G_1} \rangle$ would centralize every quotient of two consecutive subgroups in the series $\Phi(T) < Z < T$ and so $\langle (\N)^{G_1} \rangle$ would be a $p$-group, a contradiction).
Hence $\tau$ inverts the quotient $T/Z$. Thus $\tau$ inverts every quotient of two consecutive subgroups in the series
\[ \Z < [\N,T]\Phi(T) < T < E_1 \cap E_2 < E_1.\]
Therefore the group $E_1/Z$ is abelian and so elementary abelian of order $p^4$. Thus $\Phi(E_1)\leq Z$ and $E$ has rank at least $4$, a contradiction.

We proved that $\C_{G_i}(T)\nleq T$ for every $i$.
Now suppose for a contradiction that $\C_{E_i}(T) \leq T$ for some $i$.
Then $\C_{G_i}(T)$ is a normal subgroup of $G_i$ not contained in $E_i=O_p(G_i)$. Hence $\C_{G_i}(T)$ is not a $p$-group and there exists a non trivial element $g\in \C_{G_i}(T)$ of order prime to $p$. Note that the direct product $\langle g \rangle  \times T$ acts by conjugation on $E_i$. Then by \cite[Theorem 5.3.4]{GOR} we get $[g, \C_{E_i}(T)] \neq 1$, contradicting the fact that $\C_{E_i}(T)\leq T \leq \C_{G_i}(g)$.
Thus $\C_{E_i}(T) \nleq T$ for every $i$.

Suppose for a contradiction that $\C_{\N}(T) \leq E_1 \cap E_2$. Then $\C_{\N}(T)=\C_{E_1}(T)=\C_{E_2}(T)$ is $\F$-characteristic in $E_1$, $E_2$ and $\N$ and by maximality of $T$ we conclude $\C_{E_i}(T)\leq T$, contradicting  what we proved above.

Finally, assume that $\C_{\N}(T) \nleq E_i$ for some $i$. Then $\N=E_i\C_{\N}(T)$, since $[\N \colon E_i] = p$ by Theorem \ref{class.3}. In particular
$\Out_S(E_i) \cong \N/E_i \cong \C_{\N}(T)/\C_{E_i}(T)$ centralizes $T$. Hence $O^{p'}(\Out_\F(E_i)) = \langle \Out_S(E_i)^{\Out_\F(E_i)}\rangle$ centralizes $T$.
\end{proof}

\begin{theorem}\label{prop.rank.2}
Suppose $p$ is an odd prime,  $S$ is a $p$-group and $\F$ is a saturated fusion system on $S$. Let $E$ be an $\F$-essential subgroup of $S$ such that
\begin{itemize}
\item $E$ has rank $2$;
\item $\Phi(E)$ has rank at most $3$; and
\item $E$ is not $\F$-characteristic in $S$.
\end{itemize}
Then $E$ is an $\F$-pearl.
\end{theorem}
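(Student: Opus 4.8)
The plan is to reduce the statement to Theorem \ref{incenterfrat.is.pearl} by proving that $\Phi(E) \leq \Z(E)$. Since $E$ is not $\F$-characteristic in $S$, the Remark following Definition \ref{core} shows that $E$ is not $\F$-characteristic in $\N := \N_S(E)$, so in particular $\coreF(E) < E$. First I would identify $\coreF(E)$ with $\Phi(E)$. By Lemma \ref{frattini.is.char} the subgroup $\Phi(E)$ is $\F$-characteristic in $\N$, and it is certainly characteristic in $E$, so $\Phi(E) \leq \coreF(E)$; conversely, as $E$ has rank $2$ and $O^{p'}(\Out_\F(E)) \cong \SL_2(p)$ acts irreducibly on $E/\Phi(E)$ (Theorem \ref{class.3}), Lemma \ref{char.series} forces any proper $\F$-characteristic subgroup of $E$ into $\Phi(E)$, whence $\coreF(E) = \Phi(E) =: T$, which has rank at most $3$ by hypothesis.

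Next I would introduce a second essential subgroup. As $E$ is not $\F$-characteristic in $\N$, there is $\alpha \in \Aut_\F(\N)$ with $E_2 := E\alpha \neq E =: E_1$; by Theorem \ref{class.3} the group $E_2$ is again $\F$-essential, $\N_S(E_2) = \N$, and by Lemma \ref{same.T} we have $T = \coreF(E_1,E_2) = \Phi(E_1) = \Phi(E_2)$. Now I would apply Lemma \ref{properties.T} to the pair $(E_1,E_2)$: all three of $E_1, E_2, T$ have rank at most $3$, so part $(2)$ gives $\C_\N(T) \nleq E_1 \cap E_2$, hence $\C_\N(T) \nleq E_j$ for some $j$, and part $(3)$ then yields that $O^{p'}(\Out_\F(E_j)) \cong \SL_2(p)$ centralizes $T = \Phi(E_j)$. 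Since $E_1$ and $E_2$ are $\F$-conjugate (hence isomorphic and simultaneously $\F$-essential), being an $\F$-pearl is shared between them, so I may relabel and simply assume that $O^{p'}(\Out_\F(E)) \cong \SL_2(p)$ centralizes $\Phi(E)$.

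From here I would extract the central involution $\tau$ of $O^{p'}(\Out_\F(E)) \cong \SL_2(p)$, realized (via the model and the conclusion of Lemma \ref{properties.T}$(3)$) by an $\F$-automorphism $\tau \in \Aut_\F(E)$ of order $2$ that centralizes $\Phi(E)$ and induces $-I$ on the natural module $E/\Phi(E)$. Since $\tau$ has order prime to $p$ and centralizes $\Phi(E)$, Lemma \ref{critical} gives $\Phi(E) = [E,E]$ and that $E$ has exponent $p$; in particular $\Phi(E) = \gamma_2(E)$. The key remaining step is to show $\gamma_3(E) = [\Phi(E),E] = 1$, equivalently $\Phi(E) \leq \Z(E)$. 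I would use that $\tau$ inverts $E/\Phi(E)$ while fixing $\Phi(E) \supseteq \gamma_3(E)$ pointwise: for $c \in \gamma_2(E)$ and $x \in E$ a short commutator computation modulo $\gamma_4(E)$ gives $\tau([c,x]) \equiv [c,x]^{-1}$, yet $\tau$ fixes $[c,x] \in \gamma_3(E)$, so $[c,x]^2 \in \gamma_4(E)$; as $p$ is odd this forces $\gamma_3(E) = \gamma_4(E)$, hence $\gamma_3(E) = 1$. (Equivalently, one argues representation-theoretically: $\gamma_3/\gamma_4$ is a quotient of $(E/\Phi(E)) \otimes (\gamma_2/\gamma_3)$, on which $O^{p'}(\Out_\F(E))$ acts through the nontrivial irreducible natural module in the first factor but acts trivially on $\gamma_3/\gamma_4$, forcing $\gamma_3 = \gamma_4$.) Thus $\Phi(E) = [E,E] \leq \Z(E)$, and Theorem \ref{incenterfrat.is.pearl} applies to $E$ (which has rank $2$, is not $\F$-characteristic in $\N_S(E)$, and satisfies $\Phi(E) \leq \Z(E)$), so $E$ is an $\F$-pearl.

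The main obstacle is the passage from the purely \emph{outer} statement of Lemma \ref{properties.T}$(3)$, that $O^{p'}(\Out_\F(E))$ centralizes $\Phi(E)$, to a genuine order-$2$ automorphism of $E$ centralizing $\Phi(E)$ and inverting $E/\Phi(E)$, together with the rigidity argument that the irreducibility of the natural $\SL_2(p)$-module (and the hypothesis that $p$ is odd) drives the nilpotency class of $E$ down to $2$. This is exactly where both the $\SL_2(p)$-structure supplied by the $\F$-core machinery and the bound on the rank of $\Phi(E)$ are used.
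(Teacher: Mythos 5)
Your proposal is correct, and its overall skeleton coincides with the paper's: identify $\coreF(E)$ with $\Phi(E)$ via Lemma \ref{frattini.is.char} and irreducibility, pass to the pair $(E,E\alpha)$ via Lemma \ref{same.T}, invoke Lemma \ref{properties.T}, establish $\Phi(E)\leq \Z(E)$, and finish with Theorem \ref{incenterfrat.is.pearl}. Where you diverge is in how $\Phi(E)\leq \Z(E)$ is extracted. The paper uses only part $(1)$ of Lemma \ref{properties.T}: from $\C_E(\Phi(E))\nleq\Phi(E)$ and the irreducibility of $E/\Phi(E)$ one gets $E=\Phi(E)\C_E(\Phi(E))$, and the non-generator property of the Frattini subgroup immediately forces $E=\C_E(\Phi(E))$. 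You instead use parts $(2)$ and $(3)$ to produce an order-$2$ automorphism $\tau$ centralizing $\Phi(E)$ and inverting $E/\Phi(E)$, apply Lemma \ref{critical}, and then run a commutator (or tensor-weight) argument to show $\gamma_3(E)=\gamma_4(E)$ and hence $\gamma_3(E)=1$. Your route is sound --- the lift of the central involution to an honest order-$2$ element of $\C_{\Aut_\F(E)}(\Phi(E))$ works because the kernel $\Inn(E)$ is a $p$-group, and the computation $[c,x]^2\in\gamma_4(E)$ for $c\in\gamma_2(E)$, $x\in E$ does kill $\gamma_3/\gamma_4$ for odd $p$ --- but it is longer and partially duplicates work that Theorem \ref{incenterfrat.is.pearl} performs internally (that theorem itself derives the centralizing involution and invokes Lemma \ref{critical}). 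What your version buys is a self-contained explanation of \emph{why} the class drops to $2$, via the module structure of the lower central factors; what the paper's version buys is economy, since $\C_E(\Phi(E))\nleq\Phi(E)$ plus the Frattini argument settles the matter in two lines without touching Lemma \ref{critical} at this stage.
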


\begin{proof}
By Lemma \ref{frattini.is.char} we get $\coreF(E)=\Phi(E)$. Let $\alpha\in \Aut_\F(\N_S(E))$ be a morphism that does not normalize $E$. Then by Lemma \ref{same.T} we have $\Phi(E)=\coreF(E,E\alpha)$ and by Lemma \ref{properties.T} applied with $E_1 = E$ and $E_2=E\alpha$ we conclude that $\C_E(\Phi(E)) \nleq \Phi(E)$. Since $[E \colon \Phi(E)]=p^2$ and $E$ is $\F$-essential, by Lemma \ref{char.series} we get $E=\Phi(E)\C_E(\Phi(E))$ and so $E=\C_E(\Phi(E))$. Thus $\Phi(E)\leq \Z(E)$ and by Theorem \ref{incenterfrat.is.pearl} we deduce that $E$ is an $\F$-pearl.
\end{proof}

\begin{proof}[\textbf{Proof of Theorem \ref{rank2pearl}}]
If $S$ has sectional rank $3$ then every subgroup of $S$ has rank at most $3$. Hence Theorem \ref{rank2pearl} is a direct consequence of Theorem \ref{prop.rank.2}.
\end{proof}

\begin{lemma}\label{T.strict.intersection}
Let $E_1$ and $E_2$ be distinct $\F$-essential subgroups of $S$ such that $\N_S(E_1)=\N_S(E_2)$.
Set $T=\coreF(E_1,E_2)$.
Suppose that $E_1$, $E_2$ and $T$ have rank at most $3$.
Then for every $1\leq i\leq 2$ either $T\leq \Phi(E_i)$ or $O^{p'}(\Out_\F(E_i))\cong \SL_2(p)$, $[T\Phi(E_i) \colon \Phi(E_i)]=p$ and \[T\Phi(E_i)/\Phi(E_i) = \C_{E_i/\Phi(E_i)}(O^{p'}(\Out_\F(E_i))).\]

\end{lemma}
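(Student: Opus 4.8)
Throughout fix $i\in\{1,2\}$, write $\N=\N_S(E_1)=\N_S(E_2)$ and $V=E_i/\Phi(E_i)$, and let $G_i$ be a model for $\N_\F(E_i)$, so that $E_i=O_p(G_i)$, $\N\in\Syl_p(G_i)$ and $T,\Phi(E_i)\norm G_i$ (the former because $T$ is $\F$-characteristic in $E_i$). If $T\le\Phi(E_i)$ the first alternative holds, so assume $T\nleq\Phi(E_i)$ and set $W_i=T\Phi(E_i)/\Phi(E_i)\ne 0$. First I would pin down the module picture. Since $E_1\ne E_2$ are distinct subgroups of index $p$ in $\N$ (Theorem \ref{class.3}), $E_1\cap E_2$ is a maximal subgroup of $E_i$, so $\Phi(E_i)\le E_1\cap E_2$ and $T\le E_1\cap E_2\lneq E_i$; hence $W_i$ is a nonzero proper subspace of $V$, and it is $\Out_\F(E_i)$-invariant because $T$ and $\Phi(E_i)$ are. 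Thus $\Out_\F(E_i)$ acts reducibly and faithfully (Lemma \ref{GLr}) on $V$, and Theorem \ref{class.3} forces $\mathrm{rank}(E_i)=3$, $O^{p'}(\Out_\F(E_i))\cong\SL_2(p)$ and $\Out_\F(E_i)\le\GL_2(p)\times\GL_1(p)$, with $V=V_2\oplus V_1$ where $V_2$ is the natural $\SL_2(p)$-module and $V_1$ the trivial one. In particular $\C_V(O^{p'}(\Out_\F(E_i)))=V_1$ has order $p$, and (as $V_2$ is irreducible, $V_1$ trivial, and $\Hom_{\SL_2(p)}(V_2,V_1)=0$) the only nonzero proper invariant subspaces are $V_1$ and $V_2$, so $W_i\in\{V_1,V_2\}$. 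Since the asserted conclusion is exactly $W_i=V_1$, the whole lemma reduces to excluding the possibility $W_i=V_2$, i.e. to showing that $O^{p'}(\Out_\F(E_i))$ centralises $W_i$.

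Next I would exploit Lemma \ref{properties.T}. By part (2), $\C_\N(T)\nleq E_1\cap E_2$. If $\C_\N(T)\nleq E_i$, then part (3) gives that $O^{p'}(\Out_\F(E_i))$ centralises $T$, hence acts trivially on $W_i$; thus $0\ne W_i\le\C_V(O^{p'}(\Out_\F(E_i)))=V_1$ and $W_i=V_1$, as required. Conversely, if $W_i=V_2$ then $O^{p'}(\Out_\F(E_i))$ does not centralise $T$, so by the contrapositive of part (3) we must have $\C_\N(T)\le E_i$, and then $\C_\N(T)\nleq E_{3-i}$ and $\C_{E_i}(T)=\C_\N(T)$. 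Hence the only remaining configuration is $\C_\N(T)\le E_i$ together with $W_i=V_2$, and the task is to derive a contradiction from it.

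So assume $W_i=V_2$ and $\C_\N(T)\le E_i$. Then $E_1\cap E_2=T\Phi(E_i)$ (both contain $\Phi(E_i)$ and have the same, $2$-dimensional, image $V_2=\overline{E_1\cap E_2}=\overline{T}$ in $V$), so $E_1\cap E_2$ is a product of the $G_i$-normal subgroups $T$ and $\Phi(E_i)$ and is itself normal in $G_i$. Moreover $\C_\N(T)\le E_i$ gives $\C_{E_{3-i}}(T)=\C_\N(T)\cap E_{3-i}=\C_{E_1\cap E_2}(T)$. I would now try to contradict the maximality of $T=\coreF(E_1,E_2)$ (Definition \ref{core}) by enlarging it: set
\[ T^\ast=T\,\C_{E_1\cap E_2}(T)\le E_1\cap E_2. \]
By Lemma \ref{properties.T}(1) applied to $E_{3-i}$ we have $\C_{E_{3-i}}(T)=\C_{E_1\cap E_2}(T)\nleq T$, so $T<T^\ast$. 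Since $E_1\cap E_2\norm G_i$ and $T\norm G_i$, the centraliser $\C_{E_1\cap E_2}(T)$ is normal in $G_i$, whence $T^\ast\norm G_i$ and $T^\ast$ is $\Aut_\F(E_i)$-invariant; and since $T\norm G_{3-i}$ and $\C_{E_{3-i}}(T)\norm G_{3-i}$, likewise $T^\ast\norm G_{3-i}$ is $\Aut_\F(E_{3-i})$-invariant. If $T^\ast$ were also normalised by $\Aut_\F(\N)$, this would contradict the maximality of $T$ and exclude $W_i=V_2$, finishing the proof.

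The hard part is precisely this last invariance. Unlike the two automizers of $E_1$ and $E_2$, the group $\Aut_\F(\N)$ may permute the $\F$-essential subgroups of $S$ with normaliser $\N$ and need not stabilise $E_1\cap E_2$ (nor the index-$p$ subgroup $\C_{E_1\cap E_2}(T)=\C_\N(T)\cap E_{3-i}$ of the $\Aut_\F(\N)$-invariant group $\C_\N(T)$). I expect the genuine work to lie here: one must show that in the configuration $W_i=V_2$, $\C_\N(T)\le E_i$, the subgroup $T^\ast$ can be recovered from $\Aut_\F(\N)$-invariant data—e.g. using that $\C_\N(T)$ is $\Aut_\F(\N)$-invariant together with the characteristic subgroups of $\N$ produced by the natural-module structure on $V_2$—or, alternatively, to run a direct contradiction combining $\mathrm{rank}(T)\le 3$, the equality $E_1\cap E_2=T\Phi(E_i)$, and the bound $[T\Phi(E_{3-i}):\Phi(E_{3-i})]\le p$ coming from Lemma \ref{properties.T}(3) applied to $E_{3-i}$. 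Everything outside this step is routine $\SL_2(p)$-module theory together with Theorem \ref{class.3} and Lemma \ref{properties.T}.
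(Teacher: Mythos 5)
Your reduction to the single configuration ``$W_i=V_2$ and $\C_\N(T)\leq E_i$'' is exactly the paper's (the paper phrases it via an involution $\tau$ and coprime action rather than $\Hom_{\SL_2(p)}$-vanishing, but the content is the same), and you have correctly located the crux. However, the step you flag as ``the hard part'' and leave open is a genuine gap: without the $\Aut_\F(\N)$-invariance of your enlarged subgroup, the maximality of $T=\coreF(E_1,E_2)$ cannot be invoked, so the contradiction is not reached and the lemma is not proved.

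The missing idea is a one-line identity that you almost name but do not establish: in this configuration $T\Phi(E_i)=T\Phi(\N)$. Indeed, since $E_1$ and $E_2$ are distinct normal subgroups of index $p$ in $\N$, the quotient $\N/(E_1\cap E_2)$ is elementary abelian of order $p^2$, so $\Phi(\N)\leq E_1\cap E_2=T\Phi(E_i)$, while $\Phi(E_i)\leq\Phi(\N)$ gives the reverse inclusion. Consequently
\[
\C_{E_{3-i}}(T)\;=\;\C_\N(T)\cap E_{3-i}\;=\;\C_\N(T)\cap (E_1\cap E_2)\;=\;\C_\N(T)\cap T\Phi(\N),
\]
and the right-hand side is an intersection of two subgroups that are $\F$-characteristic in $\N$ (recall $T$ is normalized by $\Aut_\F(\N)$ by the definition of the $\F$-core, so $\C_\N(T)$ and $T\Phi(\N)$ are). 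The same group equals $\C_{E_i}(T)\cap T\Phi(E_i)$, hence is $\F$-characteristic in $E_i$, and it is obviously $\F$-characteristic in $E_{3-i}$. Maximality of $T$ then forces $\C_{E_{3-i}}(T)\leq T$, contradicting Lemma \ref{properties.T}(1) directly --- there is no need to form $T^\ast=T\,\C_{E_1\cap E_2}(T)$ at all. This is precisely how the paper closes the argument; your proposal stops one observation short of it.
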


\begin{proof}
Fix $1 \leq i \leq 2$ and set $E=E_i$ and $\N= \N_S(E)$.
Note that $\Phi(E)T$ is a proper $\F$-characteristic subgroup of $E$.
If the action of $\Out_\F(E)$ on $E/\Phi(E)$ is irreducible, then we have $\Phi(E)T=\Phi(E)$, and so $T\leq \Phi(E)$.
Suppose the action is reducible. Then $[E \colon \Phi(E)] = p^3$ and by Theorem \ref{class.3} we get that $\Out_\F(E)$ is isomorphic to  a subgroup of $\GL_2(p) \times \GL_1(p)$ and $O^{p'}(\Out_\F(E))\cong \SL_2(p)$. Let $\tau \in O^{p'}(\Out_\F(E))$ be an involution. Then by coprime action we have
\[ E/\Phi(E) \cong \C_{E/\Phi(E)}(\tau) \times [E/\Phi(E), \tau].\]
Note that the groups  $\C_{E/\Phi(E)}(\tau)$ and $[E/\Phi(E), \tau]$ are the only subgroups of $E/\Phi(E)$ that are normalized by $O^{p'}(\Out_\F(E))$. Thus
$\C_{E/\Phi(E)}(\tau) = \C_{E/\Phi(E)}(O^{p'}(\Out_\F(E)))$ and
 $[E/\Phi(E), \tau]=[E/\Phi(E), O^{p'}(\Out_\F(E))]$.
Also, either $T\leq \Phi(E)$ or $T\Phi(E)$ is the preimage in $E$ of one of these two subgroups of $E/\Phi(E)$.

It remains to prove that $T\Phi(E)$ cannot be the preimage in $E$ of the commutator group $[E/\Phi(E), O^{p'}(\Out_\F(E))]$. Suppose for a contradiction that it is.
Then $T/(T\cap \Phi(E)) \cong T\Phi(E)/\Phi(E)$ is a natural $\SL_2(p)$-module for  $O^{p'}(\Out_\F(E))$.  So $O^{p'}(\Out_\F(E))$ does not centralize $T$ and, by Lemma \ref{properties.T}, we have $\C_{\N}(T) \leq E$.
Since $T\Phi(E) \leq E_1 \cap E_2$ and $[E \colon \Phi(E)] = p^3$, we deduce that $T\Phi(E)=E_1 \cap E_2$.
Let $j\neq i$, $1\leq j\leq 2$. Then $\C_{E_j}(T) \leq \C_{\N}(T) \leq E$ and
\[ \C_{E_j}(T) = \C_{\N}(T) \cap E_j = \C_{E}(T) \cap T\Phi(E).\]
Thus $\C_{E_j}(T)$ is $\F$-characteristic in $E$.
Moreover $\Phi(E)T=\Phi(\N)T$, so $\C_{E_j}(T)=\C_{\N}(T) \cap \Phi(\N)T$ is $\F$-characteristic in $\N$.
Clearly $\C_{E_j}(T)$ is $\F$-characteristic in $E_j$ and we get  $\C_{E_j}(T)\leq T$ by the maximality of $T$, contradicting Lemma \ref{properties.T}.
Thus either $T\leq \Phi(E)$ or $T\Phi(E)$ is the preimage in $E$ of $\C_{E/\Phi(E)}(O^{p'}(\Out_\F(E)))$.
\end{proof}

\begin{lemma}\label{prop.T.II}
Let $E$ be an $\F$-essential subgroup of $S$ not $\F$-characteristic in $S$ and set $\N=\N_S(E)$ and $T=\coreF(E)$. Suppose that $E$ and $T$ have rank at most $3$. Then
\begin{enumerate}
\item $\C_E(T) \nleq T$;
\item $\C_{\N}(T) \nleq E$ and $\N=E\C_{\N}(T)$;
\item $O^{p'}(\Out_\F(E))$ centralizes $T$;
\item either $T\leq \Phi(E)$ or $T/\Phi(E) = \C_{E/\Phi(E)}(O^{p'}(\Out_\F(E)))$ and $[T\Phi(E) \colon \Phi(E)]=p$.
\end{enumerate}
 \end{lemma}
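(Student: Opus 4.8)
The plan is to reduce the entire statement to the two ``two-essential-subgroup'' results already established, namely Lemma \ref{properties.T} and Lemma \ref{T.strict.intersection}, by manufacturing a companion $\F$-essential subgroup that shares the normalizer $\N$ with $E$ and has the same $\F$-core. First I would observe that, since $E$ is not $\F$-characteristic in $S$, the Remark following Definition \ref{core} gives $E\neq\coreF(E)$; equivalently, $E$ is not $\F$-characteristic in $\N$, so there exists $\alpha\in\Aut_\F(\N)$ with $E\alpha\neq E$. By Theorem \ref{class.3} the conjugate $E\alpha$ is again $\F$-essential, it is isomorphic to $E$ and so has rank at most $3$, and because $\alpha$ is an automorphism of $\N$ we have $\N_S(E\alpha)=\N$. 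By Lemma \ref{same.T} we get $\coreF(E)=\coreF(E,E\alpha)$, so setting $E_1=E$, $E_2=E\alpha$ and $T=\coreF(E_1,E_2)$ (the common value) puts us exactly in the hypotheses of Lemmas \ref{properties.T} and \ref{T.strict.intersection}, since $E_1$, $E_2$ and $T$ all have rank at most $3$.

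With this setup, three of the four parts are immediate. Part (1) is Lemma \ref{properties.T}(1) applied to $i=1$. Part (4) is exactly the dichotomy of Lemma \ref{T.strict.intersection} for $i=1$, which yields either $T\leq\Phi(E)$ or $[T\Phi(E)\colon\Phi(E)]=p$ with $T\Phi(E)/\Phi(E)=\C_{E/\Phi(E)}(O^{p'}(\Out_\F(E)))$. And, once part (2) is known, part (3) follows from Lemma \ref{properties.T}(3) with $i=1$, whose hypothesis $\C_\N(T)\nleq E_1$ is precisely the conclusion of part (2).

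The real content is therefore part (2): Lemma \ref{properties.T}(2) only delivers $\C_\N(T)\nleq E_1\cap E_2$, and I must upgrade this to $\C_\N(T)\nleq E$. The key observation is that $T=\coreF(E)$ is, by definition of the $\F$-core, normalized by $\Aut_\F(\N)$, so its centralizer $\C_\N(T)$ is also invariant under every element of $\Aut_\F(\N)$. Consequently, if $\C_\N(T)\leq E$ were to hold, applying $\alpha$ would give $\C_\N(T)=\C_\N(T)\alpha\leq E\alpha$, whence $\C_\N(T)\leq E\cap E\alpha=E_1\cap E_2$, contradicting Lemma \ref{properties.T}(2). Thus $\C_\N(T)\nleq E$; and since $[\N\colon E]=p$ by Theorem \ref{class.3}, the subgroup $E\,\C_\N(T)$ properly contains $E$, forcing $\N=E\,\C_\N(T)$.

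I expect this invariance argument in part (2) to be the only genuinely new step; everything else is bookkeeping to verify the hypotheses of the cited lemmas and to translate their conclusions into the statement. The main subtlety to watch is making sure the companion $E\alpha$ is legitimately chosen (that $\alpha$ exists and that $\N_S(E\alpha)=\N$, both of which rest on $E$ failing to be $\F$-characteristic in $\N$ together with the $\F$-essentiality of $\F$-conjugates from Theorem \ref{class.3}), so that $T$ really is the $\F$-core of a \emph{pair} of distinct $\F$-essential subgroups and the two auxiliary lemmas genuinely apply.
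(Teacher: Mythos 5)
Your proposal is correct and follows essentially the same route as the paper: choose $\alpha\in\Aut_\F(\N)$ with $E\alpha\neq E$, invoke Lemma \ref{same.T} to identify $\coreF(E)$ with $\coreF(E,E\alpha)$, and then read off the conclusions from Lemmas \ref{properties.T} and \ref{T.strict.intersection}, upgrading $\C_\N(T)\nleq E\cap E\alpha$ to $\C_\N(T)\nleq E$ via the $\Aut_\F(\N)$-invariance of $\C_\N(T)$. Your explicit ordering of the dependencies (deriving part (3) only after part (2)) is a small organizational improvement over the paper's phrasing but not a different argument.
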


\begin{proof}
By Lemma \ref{same.T} we have $T=\coreF(E,E\alpha)$, for some $\alpha\in \Aut_\F(\N)$ such that $E\neq E\alpha$. Therefore we can apply Lemmas \ref{properties.T} and \ref{T.strict.intersection} with $E_1=E$ and $E_2=E\alpha$ and so statements $(1)$, $(3)$ and $(4)$ hold.
For part $(2)$, since $\C_{\N}(T)\nleq E \cap E\alpha$ and $\C_{\N}(T)$ is $\F$-characteristic in $\N$, we get that $\C_{\N}(T)\nleq E$. Finally since $E$ has rank at most $3$ by Theorem \ref{class.3} we have $[\N \colon E] = p$ and so $\N=E\C_{\N}(T)$.
\end{proof}

We end this section proving that under certain conditions the $\F$-core $T$ of $E_1$ and $E_2$ is either cyclic or isomorphic to the group $\C_{p^a} \times \C_p$, for some $a\geq 1$. We will see in Section 3 that these conditions are always satisfied when $E_1$ is an $\F$-essential subgroup of $S$ not $\F$-characteristic in $S$ such that $\N_S(E_1)$ has sectional rank at most $3$ and $T=\coreF(E_1)$.

\begin{theorem}\label{char.T}
Let $E_1$ and $E_2$ be distinct $\F$-essential subgroups of $S$ such that $\N_S(E_1)=\N_S(E_2)=\N$.
Set $E_{12}=E_1 \cap E_2$ and $T=\coreF(E_1,E_2)$.
Suppose that the following hold:
\begin{enumerate}
\item $E_1$, $E_2$ and $T$ have rank at most $3$;
\item $O^{p'}(\Out_\F(E_1)) \cong \SL_2(p)$ centralizes $T$;
\item there exists a subgroup $V\leq E_1$ that is $\F$-characteristic in $E_1$, has sectional rank at most $3$, is contained in  $\C_{E_1}(T)T$ and is such that  $V/T$ is a natural $\SL_2(p)$-module for $O^{p'}(\Out_\F(E_1))$.
\end{enumerate}
Then
$T$ is abelian, $T\leq \Z(V), |[V,V]|\leq p$ and
the group $T/[V,V]$  is cyclic.
\end{theorem}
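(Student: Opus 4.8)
The plan is to pin down the structure of $V$ as an (almost) central product and then exploit that $V/T$ is a natural $\SL_2(p)$-module. \textbf{Setup.} Let $G_1$ be a model for $\N_\F(E_1)$ and put $A=\langle \N_S(E_1)^{G_1}\rangle$, so that $T\norm G_1$ and $V\norm G_1$ (both being $\F$-characteristic in $E_1$) and $A/E_1\cong O^{p'}(\Out_\F(E_1))\cong\SL_2(p)$. Reading the hypothesis ``$O^{p'}(\Out_\F(E_1))$ centralizes $T$'' inside the model as $A=\C_A(T)E_1$ with $\C_A(T)=A\cap\C_{G_1}(T)$, the subgroup $\C_A(T)$ maps onto $A/E_1\cong\SL_2(p)$; taking the $2$-part of a lift of the central involution of $\SL_2(p)$ produces an involution $\tau\in\C_A(T)$ that centralizes $T$ and inverts the natural module $V/T$ (since $-I$ acts as $-1$ on the natural module). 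I will use $\tau$ twice.

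\textbf{Central product structure.} Because $V\le\C_{E_1}(T)\,T$ and $T\le V$, Dedekind's modular law gives $V=\C_V(T)\,T$; write $C_0=\C_V(T)$. As $[C_0,T]=1$ and $C_0\cap T=\C_T(T)=\Z(T)$ is central in both factors, $V$ is the central product of $C_0$ and $T$ over $\Z(T)$, with $V/T\cong C_0/\Z(T)$ elementary abelian of order $p^2$; in particular $V/\Z(T)=C_0/\Z(T)\times T/\Z(T)$ is an internal direct product. To prove $T$ abelian, suppose not: then $T/\Z(T)$ is noncyclic, hence of rank at least $2$, while $C_0/\Z(T)\cong V/T$ has rank $2$, so the section $V/\Z(T)$ has rank at least $4$, contradicting that $V$ has sectional rank at most $3$. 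Thus $T$ is abelian, whence $T\le\C_V(T)=C_0$, so $V=C_0=\C_V(T)$ and $T\le\Z(V)$; this settles the first two assertions.

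\textbf{The commutator assertions.} Now $V/T$ is abelian with $T\le\Z(V)$, so $V$ has class at most $2$ and $[V,V]\le T\le\Z(V)$; a standard class-$2$ computation (using $V^p\le T$ and $p$ odd) shows $[V,V]$ is elementary abelian. Commutation then induces a surjective, $\SL_2(p)$-equivariant alternating map $\Lambda^2(V/T)\to[V,V]$, where $\SL_2(p)$ acts as the natural module on $V/T$ and trivially on $[V,V]\le T$. Since $\Lambda^2$ of the two-dimensional natural module is the one-dimensional trivial module, $[V,V]$ is a quotient of a group of order $p$, giving $|[V,V]|\le p$. Finally pass to $\ov V=V/[V,V]$, an abelian section of $V$ of rank at most $3$, on which $\tau$ centralizes $\ov T=T/[V,V]$ and inverts $\ov V/\ov T\cong V/T$. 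Coprime action yields the eigenspace splitting $\ov V=\ov T\times[\ov V,\tau]$ with $[\ov V,\tau]\cong V/T$ of rank $2$, so comparing ranks forces $\ov T=T/[V,V]$ to have rank at most $1$, i.e. to be cyclic.

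\textbf{Main obstacle.} The two delicate points are the honest construction of the involution $\tau$ that genuinely centralizes $T$ (extracting it from $\C_A(T)$ in the model, rather than only modulo inner automorphisms), and the clean module bookkeeping—identifying $\Lambda^2$ of the natural module with the trivial module and verifying that the commutator form is well defined, equivariant and surjective. Once $\tau$ and the central-product description are in place, the rank comparisons giving abelianness of $T$ and cyclicity of $T/[V,V]$ are short.
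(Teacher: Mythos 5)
Your proof is correct and follows essentially the same route as the paper's: Dedekind's modular law gives $V=\C_V(T)T$, the decomposition $V/\Z(T)\cong T/\Z(T)\times \C_V(T)/\Z(T)$ plus the sectional rank bound forces $T/\Z(T)$ cyclic (hence $T$ abelian and $T\leq \Z(V)$), and coprime action of the central involution on $V/[V,V]$ with another rank comparison gives cyclicity of $T/[V,V]$. Your two embellishments — extracting an honest involution $\tau$ centralizing $T$ from $\C_A(T)$ in the model, and deriving $|[V,V]|\leq p$ via $\Lambda^2$ of the natural module rather than directly from $[V:\Z(V)]\leq p^2$ — are both valid but do not change the argument in substance.
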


\begin{proof}
Set $E=E_1$.
Since $V\leq \C_E(T)T$, we get \[\C_V(T)T= (V \cap \C_E(T))T = V \cap \C_E(T)T = V.\]
Note that $\C_V(T) \cap T = \Z(T)$ and so
\[ V/\Z(T) \cong T/\Z(T) \times \C_V(T)/\Z(T).\]

Since $\C_V(T)/\Z(T) \cong V/T \cong \C_p \times \C_p$ and $V$ has sectional rank $3$, we deduce that $T/\Z(T)$ has to be cyclic and so the group $T$ is abelian.
In particular $V=\C_V(T)T=\C_V(T)$.
Hence $T\leq \Z(V)$ and since $[V \colon T] = p^2$ we conclude that $|[V,V]|\leq p$.

Let $\tau \in O^{p'}(\Out_\F(E))$ be an involution. Then by assumption $\tau$ acts on $V$ and $T$ is the centralizer in $V$ of $\tau$. Thus by coprime action we get

\[ V/[V,V] \cong T/[V,V] \times [V/[V,V] , \tau].\]

Since $[V/[V,V], \tau] \cong \C_p \times \C_p$ and $V$ has sectional rank $3$, we deduce that the group $T/[V,V]$ is cyclic.
\end{proof}

\section{Structure of $\F$-essential subgroups that are not $\F$-characteristic in $S$}

Throughout this section, we assume the following hypothesis.

\begin{hp} Suppose that $p$ is an odd prime, $S$ is a $p$-group, $\F$ is a saturated fusion system on $S$ and $E\leq S$ is an $\F$-essential subgroup of $S$ not $\F$-characteristic in $S$ such that the group $\N_S(E)$ has sectional rank $3$. Set $T=\coreF(E)< E$.
\end{hp}

By assumption every subgroup of $\N_S(E)$ has rank at most $3$. So in particular $E$ has rank at most $3$ and by Theorem \ref{typeIIauto} we know that $O^{p'}(\Out_\F(E))\cong \SL_2(p)$. In this section we describe the structure of $E$.
We intend to apply Stellmacher's Pushing Up Theorem  (\cite[Theorem $1$]{stell}), stated in Theorem \ref{stell.here} of this paper. We first show that  the quotient group $\N_S(E)/T$  is non-abelian.

\begin{lemma}\label{non.abelian.quotient}
The quotient group $\N_S(E)/T\Phi(E)$ is non-abelian.
\end{lemma}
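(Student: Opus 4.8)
The plan is to argue by contradiction: assume that $\N/T\Phi(E)$ is abelian, where $\N=\N_S(E)$. Since $\Phi(E)$ is characteristic in $E$ and $T=\coreF(E)$ is normalized by $\Aut_\F(\N)$, the subgroup $T\Phi(E)$ is normal in $\N$, and $[E,E]\leq\Phi(E)\leq T\Phi(E)$. Because $[\N\colon E]=p$ by Theorem \ref{class.3}, the group $\N$ is generated by $E$ together with a single element outside $E$, so the assumption that $\N/T\Phi(E)$ is abelian is equivalent to $[\N,E]\leq T\Phi(E)$. As $E$ acts trivially on $E/\Phi(E)$ and $\Out_S(E)\cong \N/E$ has order $p$, passing to the Frattini quotient gives $[\N,E]\Phi(E)/\Phi(E)=[E/\Phi(E),\Out_S(E)]$; thus it suffices to show that $[E/\Phi(E),\Out_S(E)]\nleq T\Phi(E)/\Phi(E)$. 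I would also record at the outset that $\Out_S(E)\leq O^{p'}(\Out_\F(E))\cong\SL_2(p)$ (using Theorem \ref{typeIIauto} together with $O^{p'}(\Out_\F(E))=\langle\Out_S(E)^{\Out_\F(E)}\rangle$) and that $\Out_S(E)$ acts non-trivially on $E/\Phi(E)$ by Lemma \ref{GLr}.

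I would then split according to Lemma \ref{prop.T.II}(4). If $T\leq\Phi(E)$, then $T\Phi(E)=\Phi(E)$ and the required inequality reduces to $[E/\Phi(E),\Out_S(E)]\neq 1$, which is immediate from Lemma \ref{strict.frattini} since $\Phi(E)<[\N,E]\Phi(E)$; this already contradicts $[\N,E]\leq\Phi(E)$. In the remaining case $T\nleq\Phi(E)$, Lemma \ref{prop.T.II}(4) tells me that $T\Phi(E)/\Phi(E)=\C_{E/\Phi(E)}(O^{p'}(\Out_\F(E)))$ has order $p$. Using a central involution of $O^{p'}(\Out_\F(E))\cong\SL_2(p)$ and coprime action, exactly as in the proof of Lemma \ref{T.strict.intersection}, I obtain the decomposition $E/\Phi(E)=\C_{E/\Phi(E)}(O^{p'}(\Out_\F(E)))\times[E/\Phi(E),O^{p'}(\Out_\F(E))]$ into the two $O^{p'}(\Out_\F(E))$-invariant subspaces. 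Since $\Out_S(E)\leq O^{p'}(\Out_\F(E))$, the commutator $[E/\Phi(E),\Out_S(E)]$ lies in the second factor $[E/\Phi(E),O^{p'}(\Out_\F(E))]$, which meets $T\Phi(E)/\Phi(E)$ trivially, while $[E/\Phi(E),\Out_S(E)]\neq 1$ by faithfulness. Hence $[E/\Phi(E),\Out_S(E)]\nleq T\Phi(E)/\Phi(E)$, contradicting $[\N,E]\leq T\Phi(E)$.

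I expect the only real content to be the case $T\nleq\Phi(E)$: the point is that the commutator $[E/\Phi(E),\Out_S(E)]$ produced by the non-trivial unipotent $\Out_S(E)$ is forced into the natural $\SL_2(p)$-summand $[E/\Phi(E),O^{p'}(\Out_\F(E))]$ and therefore cannot be absorbed into the trivial summand $\C_{E/\Phi(E)}(O^{p'}(\Out_\F(E)))=T\Phi(E)/\Phi(E)$. The case $T\leq\Phi(E)$ is a direct application of Lemma \ref{strict.frattini}. The main subtlety is merely to verify that $\Out_S(E)$ genuinely lies inside $O^{p'}(\Out_\F(E))$ and acts non-trivially, so that the coprime decomposition separates the commutator from the centralizer.
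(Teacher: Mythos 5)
Your proof is correct, and it arrives at the same reduction as the paper --- namely that abelianness of $\N_S(E)/T\Phi(E)$ is equivalent to $\Aut_S(E)$ centralizing $E/T\Phi(E)$ --- but it closes the argument by a different mechanism. The paper's proof is shorter and uniform: since $[T\Phi(E)\colon\Phi(E)]\leq p$ by Lemma \ref{prop.T.II}, the $p$-group $\N_S(E)$ automatically centralizes $T\Phi(E)/\Phi(E)$, so if it also centralized $E/T\Phi(E)$ then $\Aut_S(E)$ would stabilize the $\F$-characteristic series $\Phi(E)\leq T\Phi(E)<E$ and Lemma \ref{char.series} would force $\Aut_S(E)\leq O_p(\Aut_\F(E))=\Inn(E)$, contradicting essentiality; no case distinction is needed. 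You instead split on whether $T\leq\Phi(E)$, handling the easy case by Lemma \ref{strict.frattini} and the case $T\nleq\Phi(E)$ by the full strength of Lemma \ref{prop.T.II}(4): identifying $T\Phi(E)/\Phi(E)$ with $\C_{E/\Phi(E)}(O^{p'}(\Out_\F(E)))$ and using the coprime decomposition to see that $[E/\Phi(E),\Out_S(E)]$ is a non-trivial subgroup of the complementary summand $[E/\Phi(E),O^{p'}(\Out_\F(E))]$. All the individual steps check out (in particular $\Out_S(E)\leq O^{p'}(\Out_\F(E))$ because $\Out_S(E)\in\Syl_p(\Out_\F(E))$, and $[V,B]\leq[V,A]$ for $B\leq A$), so the argument is valid; what the paper's route buys is that it needs only the crude bound $[T\Phi(E)\colon\Phi(E)]\leq p$ rather than the precise module-theoretic identification of that quotient, while your route makes explicit the slightly stronger fact that the commutator $[\N_S(E),E]\Phi(E)/\Phi(E)$ lands inside the natural $\SL_2(p)$-summand.
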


\begin{proof} Consider
the following series of $\F$-characteristic subgroups of $E$:
\[ \Phi(E) \leq T\Phi(E) < E.\]
By Lemma \ref{prop.T.II} we have  $[T\Phi(E) \colon \Phi(E)] \leq p$.
So $\N_S(E)$ centralizes the quotient $T\Phi(E)/\Phi(E)$. Since $O_p(\Aut_\F(E))=\Inn(E) \neq \Aut_S(E)$, by Lemma \ref{char.series} the group $\N_S(E)$ cannot centralize the quotient $E/T\Phi(E)$. Thus the quotient group $\N_S(E)/T\Phi(E)$ is not abelian.
\end{proof}

\begin{theorem}\label{characterization.E}
Set $V=[E,O^{p'}(\Aut_\F(E))]T$.
Then

\begin{minipage}{0.7\textwidth}
\begin{enumerate}\setlength\itemsep{1em}
\item $V/T$ is a natural $\SL_2(p)$-module for the group $O^{p'}(\Out_\F(E))\cong \SL_2(p)$;
\item $\N_S(E)/T$ has exponent $p$;
\item $E/T$ is elementary abelian and $p^2 \leq [E \colon T] \leq p^3$;
\item $[E/T \colon \Z(\N_S(E)/T)] =p$;
\item $T$ is abelian, $T\leq \Z(V)$, $|[V,V]|\leq p$ and  $T/[V,V]$ is a cyclic group.
\end{enumerate}

Moreover, if $[E \colon T] = p^2$, then $T \leq \Z(\N_S(E))$.
\end{minipage}
\begin{minipage}{0.35\textwidth}
\begin{figure}[H]
\centering
\begin{tikzpicture}[x=1.00mm, y=1.00mm, inner xsep=0pt, inner ysep=0pt, outer xsep=0pt, outer ysep=0pt]
\definecolor{L}{rgb}{0,0,0}
\path[line width=0.30mm, draw=L] (-0,30) -- (-0,-40);
\definecolor{F}{rgb}{0,0,0}
\path[line width=0.30mm, draw=L, fill=F] (-0,-10) circle (0.50mm);
\path[line width=0.30mm, draw=L, fill=F] (-0,-30) circle (0.50mm);
\path[line width=0.30mm, draw=L, fill=F] (-0,20) circle (0.50mm);
\path[line width=0.30mm, draw=L, fill=F] (-0,30) circle (0.50mm);
\draw(2.00,-11.65) node[anchor=base west]{\fontsize{11.38}{13.66}\selectfont $T$};
\draw(2.00,-31.63) node[anchor=base west]{\fontsize{11.38}{13.66}\selectfont $[V,V]$};
\draw(2.00,-41.07) node[anchor=base west]{\fontsize{11.38}{13.66}\selectfont $1$};
\draw(2.00,18.89) node[anchor=base west]{\fontsize{11.38}{13.66}\selectfont $E$};
\draw(2.00,28.88) node[anchor=base west]{\fontsize{11.38}{13.66}\selectfont $\N_S(E)$};
\draw[F] (3.35,-21.10) node[anchor=base west]{\fontsize{8.54}{10.24}\selectfont \textcolor[rgb]{0, 0, 0}{$p^a$}};
\draw[F] (3.82,-36.41) node[anchor=base west]{\fontsize{8.54}{10.24}\selectfont \textcolor[rgb]{0, 0, 0}{$\leq p$}};
\draw[F] (2.56,14.10) node[anchor=base west]{\fontsize{8.54}{10.24}\selectfont \textcolor[rgb]{0, 0, 0}{$\leq p$}};
\draw[F] (3.19,24.38) node[anchor=base west]{\fontsize{8.54}{10.24}\selectfont \textcolor[rgb]{0, 0, 0}{$p$}};
\path[line width=0.30mm, draw=L, fill=F] (-0,10) circle (0.50mm);
\path[line width=0.30mm, draw=L, fill=F] (-0,-40.00) circle (0.50mm);
\draw(2.00,8.63) node[anchor=base west]{\fontsize{11.38}{13.66}\selectfont $V$};
\draw[F] (3.07,-0.34) node[anchor=base west]{\fontsize{8.54}{10.24}\selectfont \textcolor[rgb]{0, 0, 0}{$p\times p$}};
\end{tikzpicture}%
\end{figure}
\end{minipage}
\end{theorem}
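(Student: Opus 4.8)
The plan is to let $G$ be a model for $\N_\F(E)$, so that $E=O_p(G)$, $\N:=\N_S(E)\in\Syl_p(G)$ and $G/E\cong\Out_\F(E)$; since $T$ is $\F$-characteristic in $E$ it is normal in $G$, and I pass to $\ov G:=G/T$, bars denoting images modulo $T$. Then $\ov E=O_p(\ov{O^{p'}(G)})$ and $\ov\N\in\Syl_p(\ov{O^{p'}(G)})$. First I would collect the facts already available: $O^{p'}(\Out_\F(E))\cong\SL_2(p)$ (Theorem \ref{typeIIauto}), $[\N\colon E]=p$ (Theorem \ref{class.3}), the fact that $O^{p'}(\Out_\F(E))$ centralizes $T$ with $\C_\N(T)\nleq E$ and $\N=E\C_\N(T)$ (Lemma \ref{prop.T.II}), and that $\ov\N/\Phi(\ov E)$ is non-abelian (Lemma \ref{non.abelian.quotient}). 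The whole argument then runs through Stellmacher's Pushing Up Theorem (Theorem \ref{stell.here}), applied to $\ov{O^{p'}(G)}$ with Sylow $p$-subgroup $\ov\N$.

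To apply it I must verify its two hypotheses. Hypothesis (2) holds with $n=1$ because $\ov{O^{p'}(G)}/\ov E\cong\SL_2(p)$ and $\SL_2(p)/\Phi(\SL_2(p))\cong\PSL_2(p)$ for $p$ odd. For hypothesis (1), suppose $\ov C\neq1$ is characteristic in $\ov\N$ and normal in $\ov{O^{p'}(G)}$. Being a normal $p$-subgroup, $\ov C\leq O_p(\ov{O^{p'}(G)})=\ov E$, so its preimage satisfies $T\leq C\leq E$. A Frattini argument ($\ov G=\ov{O^{p'}(G)}\,\N_{\ov G}(\ov\N)$, with $\ov C$ normalized by both factors) shows $\ov C\norm\ov G$, hence $C$ is normalized by $\Aut_\F(E)$; and $C$ is characteristic in $\N$, hence normalized by $\Aut_\F(\N)$. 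Since $T=\coreF(E)=\coreF(E,E)$ is the largest subgroup of $E$ normalized by $\Aut_\F(E)$ and $\Aut_\F(\N)$, maximality forces $C=T$, i.e. $\ov C=1$; so hypothesis (1) holds. With both hypotheses in place, Stellmacher's theorem (for $p$ odd) leaves only case (1) or the nilpotency-class-$3$ case (3). Case (3) produces a subgroup of $\ov E$ of rank $4$, which is impossible because $\N$ has sectional rank $3$. Thus we are in case (1), where Stellmacher's module $V_{St}=[\ov E,O^p(\ov{O^{p'}(G)})]$ equals $V/T$ (immediate for $p\geq5$, since $\ov{O^{p'}(G)}$ is perfect modulo $\ov E$, and a short check for $p=3$ using $Q_8\leq\SL_2(3)$ on the natural module); this yields conclusion (1), together with $V/T\leq\Z(\ov E)$ and the fact that $\ov\N/\Omega_1(\Z(\ov\N))$ is elementary abelian.

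From $[\ov E,\ov{O^{p'}(G)}]=V/T$ I get that $\SL_2(p)$ centralizes $\ov E/(V/T)$; combining the natural $2$-dimensional submodule, the trivial quotient, and $\mathrm{rank}(\ov E)\leq3$ should force $\ov E$ to be elementary abelian of order $p^2$ or $p^3$ (conclusion (3)). Once $\ov E$ is elementary abelian, $\Phi(\ov E)=1$, so $\ov\N$ is non-abelian by Lemma \ref{non.abelian.quotient}, whence $\C_{\ov\N}(\ov E)=\ov E$ and $\Z(\ov\N)\leq\ov E$; a direct centralizer computation for the transvection generating $\ov\N/\ov E$ then gives $[\ov E\colon\Z(\ov\N)]=p$ (conclusion (4)). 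The exponent-$p$ statement for $\ov\N$ (conclusion (2)) is more delicate and is addressed in the discussion of the obstacle below.

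Finally, for conclusion (5) I would invoke Theorem \ref{char.T} with $E_1=E$ and $E_2=E\alpha$, where $\alpha\in\Aut_\F(\N)$ moves $E$ and $T=\coreF(E,E\alpha)$ by Lemma \ref{same.T}. Its hypotheses hold: $E,E\alpha,T\leq\N$ have rank $\leq3$; $O^{p'}(\Out_\F(E))\cong\SL_2(p)$ centralizes $T$ by Lemma \ref{prop.T.II}; and $V$ is $\F$-characteristic in $E$ of rank $\leq3$ with $V/T$ the natural module by conclusion (1). The one containment needing care, $V\leq\C_E(T)T$, follows easily here: since $\ov E$ is abelian we have $[E,E]\leq T$, and writing $A=O^{p'}(G)=\C_A(T)E$ gives $V=[E,A]T=[E,\C_A(T)]T$ with $[E,\C_A(T)]\leq\C_E(T)$ by the three subgroups lemma (using $[\C_A(T),T]=1$ and $[T,E]\leq T$). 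Theorem \ref{char.T} then delivers that $T$ is abelian, $T\leq\Z(V)$, $|[V,V]|\leq p$ and $T/[V,V]$ is cyclic. For the moreover clause, if $[E\colon T]=p^2$ then $\ov E=V/T$, so $V=E$ and $T\leq\Z(V)=\Z(E)$; hence $E\leq\C_\N(T)$, and from $\N=E\C_\N(T)$ we conclude $\N=\C_\N(T)$, i.e. $T\leq\Z(\N)$. The steps I expect to be the main obstacle are the module-theoretic analysis of $\ov E$ (its being elementary abelian and the order dichotomy $p^2$ or $p^3$) and, above all, the exponent-$p$ claim for $\ov\N$: Stellmacher's case (1) only yields that $\ov\N/\Omega_1(\Z(\ov\N))$ is elementary abelian, so one must separately exclude the non-split, exponent-$p^2$ configuration (in which $\ov\N$ resembles $p^{1+2}_-$), which I would attack through the maximality of $T$ and the $\SL_2(p)$-action on $\ov E$.
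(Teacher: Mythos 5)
Your overall strategy coincides with the paper's: pass to a model $G$ for $\N_\F(E)$, quotient by $T$, verify the hypotheses of Stellmacher's Pushing Up Theorem exactly as you do (hypothesis (1) via the maximality of $T=\coreF(E)$ together with a Frattini argument), rule out the nilpotency-class-$3$ case by the sectional rank bound, and read off conclusion (1). Your treatment of (5) and of the moreover clause is also sound, and your three-subgroups-lemma verification of $V\leq\C_E(T)T$ is a legitimate (arguably cleaner) alternative to the paper's argument via Lemma \ref{char.series} and the maximality of $T$. However, the proposal has a genuine gap: conclusions (2) and (3) --- that $\N_S(E)/T$ has exponent $p$ and that $E/T$ is elementary abelian with $p^2\leq[E\colon T]\leq p^3$ --- are precisely the points you flag as obstacles and never prove, and they do not follow from the module data you list: knowing that $O^{p'}(\Out_\F(E))$ centralizes $\ov{E}/(V/T)$ bounds neither $|\ov{E}|$ nor its exponent, and without (3) your proof of (4) and the identity $[E,E]\leq T$ used in your argument for (5) are also unsupported, so the gap propagates.

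The paper closes these two points as follows, and both steps again hinge on the maximality of $T$. For (3): Stellmacher's case (1) gives that $\ov{\N}/\Omega_1(\Z(\ov{\N}))$ is elementary abelian, hence of order at most $p^3$ by the sectional rank hypothesis; one shows $\Omega_1(\Z(\ov{\N}))\leq\ov{E}$ (otherwise $[\ov{\N},\ov{\N}]=[\ov{E},\ov{E}][\ov{E},\Omega_1(\Z(\ov{\N}))]\leq\Phi(\ov{E})$, contradicting Lemma \ref{non.abelian.quotient}), while maximality of $T$ forces $\Omega_1(\Z(\ov{\N}))<\Omega_1(\Z(\ov{E}))$; the resulting index count yields $[\ov{E}\colon\Omega_1(\Z(\ov{E}))]\leq p$, whence $\ov{E}$ is abelian. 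For (2): taking an involution $\tau\in O^{p'}(\Out_\F(E))$ and letting $C$ be the preimage of $\C_{\ov{E}}(\tau)$, coprime action gives $\ov{E}=\ov{C}\times(V/T)$, so $\N_S(E)/C$ is a Sylow $p$-subgroup of $A/C$ (where $A=\langle\N_S(E)^G\rangle$), an extension of the natural module by $\SL_2(p)$; hence $\N_S(E)/C\cong p^{1+2}_+$ and $\N_S(E)^p\leq C\leq E$. The decisive step is that $\N_S(E)^pT$ is $\F$-characteristic in $\N_S(E)$ and normalized by $G=A\N_G(\N_S(E))$, so $\N_S(E)^p\leq T$ by maximality of $T$; thus $\ov{\N}$ has exponent $p$, $\ov{E}$ is elementary abelian, and $[E\colon T]\leq p^3$ follows from the rank bound. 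Your guess that maximality of $T$ and the $\SL_2(p)$-action are the right tools is correct, but these arguments are the substance of the proof and must be supplied.
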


\begin{proof} Set $\N=\N_S(E)$,
let $G$ be a model for $\N_\F(E)$ and let $A=\langle {\N}^G \rangle \leq G$. Then $A/E \cong O^{p'}(\Out_\F(E))\cong \SL_2(p)$ by Theorem \ref{typeIIauto}.
We want to apply Stellmacher's Pushing Up Theorem (Theorem \ref{stell.here}) to the group $A/T$ and to its Sylow $p$-subgroup $\N/T$. Note that the quotient $\N/T$ is non-abelian by Lemma \ref{non.abelian.quotient}.

Let $T \leq W \leq \N$ be such that $W/T$ is characteristic in  $\N/T$ and $W/T \norm A/T$. Then $W \leq E=O_p(A)$, $W$ is $\F$-characteristic in $\N$ and $W \norm \N_G(\N)A=G$, that implies $W$ $\F$-characteristic in $E$. By the definition of $\F$-core, the group $T$ is the largest subgroup of $E$ that is $\F$-characteristic in $E$ and $\N$. So $W = T$ and $W/T=1$. 
Thus by Stellmacher's Pushing Up Theorem (Theorem \ref{stell.here}) and the fact that $\N$ has sectional rank $3$, we get that $V/T \leq \Z(E/T)$ and $V/T$ is a natural $\SL_2(p)$-module for $A/E$.
In particular $[E \colon T] \geq p^2$.

Let $\Omega_N\leq \N$ be the preimage in $\N$ of $\Omega_1(\Z(\N/T))$ and let $\Omega_E$ be the preimage in $E$ of $\Omega_1(\Z(E/T))$. Then Stellmacher's Pushing Up Theorem (Theorem \ref{stell.here}) tells us that $\N/\Omega_N$ is elementary abelian. Since $\N$ has sectional rank $3$ we deduce that $[\N \colon \Omega_N]\leq p^3$.

If $\Omega_\N\nleq E$ then $\N=E\Omega_N$ and
\[ [\N,\N]=[E,\N]=[E,E][E,\Omega_\N] \leq \Phi(E)T,\]
contradicting the fact that $N/T\Phi(E)$  is non-abelian by Lemma \ref{non.abelian.quotient}. Therefore $\Omega_\N\leq E$ and so $\Omega_N \leq \Omega_E$. By maximality of $T$, we also have $\Omega_\N \neq \Omega_E$. In particular
\[ [E \colon \Omega_E] < [E \colon \Omega_\N] = p^{-1}[\N \colon \Omega_\N] \leq p^2.\]
Therefore $[E \colon \Omega_E] \leq p$, which implies that $E/T$ is abelian.

Let $\tau\in O^{p'}(\Out_\F(E))$ be an involution and let $C\leq E$ be the preimage in $E$ of $\C_{E/T}(\tau) $. Then by coprime action we get
\[E/T\cong C/T \times [E/T, \tau].\]
Note that $[E/T, \tau] \leq V/T$ and since $V/T$ is a natural $\SL_2(p)$-module for $O^{p'}(\Out_\F(E))$, we deduce that $E/T \cong C/T \times V/T$. Thus $\N/C $ is isomorphic to a Sylow $p$-subgroup of  the group $ (\C_p \times \C_p) \colon \SL_2(p)$. Hence $\N/C \cong p^{1+2}_+$ and so $\N^p \leq C$. Therefore $\N^pT$ is a subgroup of $E$ that is $\F$-characteristic in $\N$ and normalized by $G=A\N_G(\N)$.
By maximality of $T$ we get $\N^p \leq T$. Hence $\N/T$ has exponent $p$ and $E/T$ is elementary abelian. In particular $[E \colon T] \leq p^3$.

Since $E/T$ is elementary abelian we have $\Omega_N/T =\Omega_1(\Z(\N/T))=\Z(\N/T)$.
Let $\alpha$ be an $\F$-automorphism of $\N$ such that $E\neq E\alpha$. Then $\N=EE\alpha$ and $E\alpha/T\cong E/T$ is abelian. Hence $\Omega_N= E \cap E\alpha$ and $[E \colon \Omega_N] =p$.

Part $(5)$  is a consequence of Theorem \ref{char.T}, once we have shown that  $V \leq \C_E(T)T$.
Note that the group $\C_E(T)T$ is an $\F$-characteristic subgroup of $E$ not contained in $T$ (by Lemma \ref{prop.T.II}).
Since $\Phi(E) \leq T$ and $E$ has rank at most $3$, by Lemma \ref{char.series} either $V\leq \C_E(T)T$ or $T=\Phi(E)$ and $[\C_E(T)T \colon T]=p$. Suppose for a contradiction that the latter holds.
Since $\C_E(T)T\norm \N$ we deduce $\C_E(T)T\leq \Omega_N$. Also $\C_E(T)T=(\C_{\N}(T) \cap E)T = \C_{\N}(T)T \cap E$. Therefore $\C_E(T)T =\C_{\N}(T)T \cap \Omega_N$. In particular $\C_E(T)T$ is normalized by $\Aut_\F(E)$ and $\Aut_\F(\N)$, contradicting the maximality of $T$.
Therefore $V\leq \C_E(T)T$ and we conclude by Theorem \ref{char.T}.

Finally, if $[E\colon T] = p^2$ then $V=E$ so $E=\C_E(T)$.
Since $\N = E\C_{\N}(T)$ by Lemma \ref{prop.T.II}, we deduce that $\N=\C_{\N}(T)$ and  so $T\leq \Z(\N)$.
\end{proof}

\begin{lemma}\label{lift.to.N2}
Suppose $\N_S(E)<S$ and set $\N^1 = \N_S(E)$ and $\N^2 = \N_S(\N^1)$.
 Then $[\N^2 \colon \N^1] = p$, $\Aut_\F(\N^1) = \Aut_S(\N^1)\N_{\Aut_\F(\N^1)}(E)$, $\N^1$ is not $\F$-essential and every automorphism of $E$ contained in  $\N_{\Aut_\F(E)}(\Aut_S(E))$ is the restriction to $E$ of an $\F$-automorphism of $\N^2$.
\end{lemma}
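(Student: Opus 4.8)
The plan is to obtain all four conclusions simultaneously from Lemma \ref{lift.Ni}, applied with $K = T = \coreF(E)$ and $j = 2$. The hypothesis $\N_S(E) < S$ guarantees, since $S$ is nilpotent, that $\N^1 = \N_S(E) < \N_S(\N^1) = \N^2 \leq S$, so that $\N^2$ is a genuine member of the normalizer tower of $E$ in $S$ and the choice $j = 2$ is legitimate.

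First I would check the hypotheses of Lemma \ref{lift.Ni}. Since $\N^1$ has sectional rank $3$, the group $E$ has rank at most $3$. By Theorem \ref{characterization.E}(3) the quotient $E/T$ is elementary abelian, whence $\Phi(E) \leq T$ and so $[E,E] \leq \Phi(E) \leq T$; on the other hand Lemma \ref{non.abelian.quotient}, combined with $\Phi(E) \leq T$ so that $T\Phi(E) = T$, shows that $\N^1/T$ is non-abelian, giving $[\N^1,\N^1] \nleq T$. Thus $K = T$ contains $[E,E]$ but not $[\N^1,\N^1]$, as Lemma \ref{lift.Ni} demands. By the very definition of the $\F$-core, $T$ is $\F$-characteristic in $\N^1$; in particular $T$ is normalised by every conjugation map induced by an element of $\N^2 = \N_S(\N^1)$, so $\N^2 \leq \N_S(T)$, which is exactly the condition $\N^j \leq \N_S(K)$ for $j = 2$. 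Since $j - 1 = 1$, the only level at which Lemma \ref{lift.Ni}(3) requires $K$ to be $\F$-characteristic is $i = 1$, where it again holds by the definition of $T$.

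Once the hypotheses are verified, I would simply read off the conclusions. Lemma \ref{lift.Ni}(1) gives that $E$ has maximal normalizer tower in $\N^2$, hence $[\N^2 \colon \N^1] = p$. Lemma \ref{lift.Ni}(3) with $i = 1$ (recalling $\N^0 = E$) yields both $\Aut_\F(\N^1) = \Aut_S(\N^1)\N_{\Aut_\F(\N^1)}(E)$ and the fact that $\N^1$ is not $\F$-essential. The concluding assertion of Lemma \ref{lift.Ni}, valid because $T$ is $\F$-characteristic in $\N^i$ for all $1 \leq i \leq j-1$, then says that every automorphism in $\N_{\Aut_\F(E)}(\Aut_S(E))$ is the restriction to $E$ of an $\F$-automorphism of $\N^2$, completing the list.

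I do not expect a genuine obstacle here: the lemma is an application of the soft-subgroup machinery already packaged in Lemma \ref{lift.Ni}, and the only substantive inputs are the inclusion $\Phi(E) \leq T$ and the non-abelianity of $\N^1/T$, both established in Theorem \ref{characterization.E} and Lemma \ref{non.abelian.quotient}. The mildest point of care is recognising that, because we climb only two steps of the normalizer tower, $T$ need only be $\F$-characteristic at the single level $i = 1$; this is precisely what keeps the verification immediate rather than forcing us to control $T$ higher up the tower.
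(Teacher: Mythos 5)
Your proposal is correct and follows exactly the paper's own route: the paper also proves this by applying Lemma \ref{lift.Ni} with $K=T$ and $\N^j=\N^2$, citing Theorem \ref{characterization.E}(3) for $[E,E]\leq T$ and Lemma \ref{non.abelian.quotient} for $[\N^1,\N^1]\nleq T$. Your write-up merely verifies the hypotheses in slightly more detail (e.g.\ noting $T\Phi(E)=T$ and that $\F$-characteristicity is only needed at level $i=1$) than the paper's three-line argument.
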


\begin{proof}
By Theorem \ref{characterization.E}(3) we have $[E,E]\leq T$ and by Lemma \ref{non.abelian.quotient}  we get $[\N^1,\N^1]\nleq T$.
Also note that $T\norm \N^2$ because $T$ is $\F$-characteristic in $\N^1$. Hence the statement follows from Lemma \ref{lift.Ni} applied with $K=T$ and $\N^j=\N^2$.
\end{proof}

\begin{lemma}\label{centerN2}
Suppose that $\N_S(E)<S$ and set $\N^1 = \N_S(E)$ and $\N^2 = \N_S(\N^1)$. Then $|\Z(\N^2/T)|=p$.
\end{lemma}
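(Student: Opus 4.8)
The plan is to work inside the quotient $\N^2/T$, writing $\ov{X} = X/T$ for $T \le X \le \N^2$; this makes sense because $T$ is $\F$-characteristic in $\N^1$ and $\N^2 = \N_S(\N^1)$ normalizes $\N^1$, so $T \norm \N^2$. Recall from Theorem \ref{characterization.E} that $\ov{E}$ is elementary abelian with $p^2 \le |\ov{E}| \le p^3$, that $\ov{\N^1}$ has exponent $p$ (so $\Omega_1(\Z(\ov{\N^1})) = \Z(\ov{\N^1})$), and that $[\ov{E} \colon \Z(\ov{\N^1})]=p$ with $\Z(\ov{\N^1}) \le \ov{E}$; hence $|\Z(\ov{\N^1})| = |\ov{E}|/p \in \{p,p^2\}$. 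Since $\N^1 < \N^2$ by Lemma \ref{lift.to.N2}, the group $\ov{\N^2}$ is a nontrivial $p$-group, so $\Z(\ov{\N^2}) \neq 1$ and it suffices to prove $|\Z(\ov{\N^2})| \le p$.

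First I would show $\Z(\ov{\N^2}) \le \Z(\ov{\N^1})$. If some $\ov{g} \in \Z(\ov{\N^2})$ had $g \notin \N^1$, then $\ov{g}$ would centralize $\ov{\N^1}$, so conjugation by $g$ would fix every subgroup of $\ov{\N^1}$; applying this to $\ov{E}$ gives $E^g T = ET$, whence $E^g = E$ (as $T \le E \cap E^g$), so $g \in \N_S(E)=\N^1$, a contradiction. Thus $\Z(\ov{\N^2}) \le \ov{\N^1}$, and being centralized by all of $\ov{\N^2} \supseteq \ov{\N^1}$ it lies in $\Z(\ov{\N^1})$. If $|\Z(\ov{\N^1})| = p$ (equivalently $[E \colon T]=p^2$), we are already done.

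The remaining case is $[E \colon T]=p^3$, where $\Z(\ov{\N^1})$ has order $p^2$ and I must rule out $\Z(\ov{\N^2}) = \Z(\ov{\N^1})$. Here $\Phi(E)=T$, so $\ov{E}=E/\Phi(E)$. Let $C \le E$ be the preimage of $\C_{\ov{E}}(O^{p'}(\Out_\F(E)))$. Taking an involution $\tau \in O^{p'}(\Out_\F(E))$ and using coprime action, $\ov{E} = \C_{\ov{E}}(\tau) \oplus [\ov{E},\tau]$ with $[\ov{E},\tau] \le V/T$ (the natural submodule of Theorem \ref{characterization.E}(1)); since $|V/T|=p^2 < |\ov{E}|$ we get $\C_{\ov{E}}(\tau)=\C_{\ov{E}}(O^{p'}(\Out_\F(E))) \neq 1$, so $T < C$. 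Moreover $C$ is $\F$-characteristic in $E$, and since $\ov{\N^1}/\ov{E} \cong \N^1/E$ embeds in $O^{p'}(\Out_\F(E))$ we have $\ov{C} \le \C_{\ov{E}}(\ov{\N^1}) = \Z(\ov{\N^1})$. Now suppose for contradiction $\Z(\ov{\N^2}) = \Z(\ov{\N^1})$; then $\ov{C} \le \Z(\ov{\N^2})$, so $C \norm \N^2$ with $[\N^2,C] \le T$. Using Lemma \ref{lift.to.N2}, which gives $\Aut_\F(\N^1) = \Aut_S(\N^1)\N_{\Aut_\F(\N^1)}(E)$, the first factor normalizes $C$ because $C \norm \N^2 = \N_S(\N^1)$, and the second factor normalizes $C$ because each of its elements restricts to an $\F$-automorphism of $E$ fixing the $\F$-characteristic subgroup $C$. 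Hence $C$ is $\F$-characteristic in $\N^1$ as well as in $E$, and since $T < C \le E$ this contradicts the maximality of $T = \coreF(E)$. Therefore $\Z(\ov{\N^2}) \lneq \Z(\ov{\N^1})$, forcing $|\Z(\ov{\N^2})| = p$.

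I expect the step requiring the most care to be the case $[E\colon T]=p^3$, and within it the passage from ``$\ov{C}$ is central in $\ov{\N^2}$'' to ``$C$ is $\F$-characteristic in $\N^1$'': this is exactly where the structural input of Lemma \ref{lift.to.N2} (that $\F$-automorphisms of $\N^1$ are generated by $S$-conjugation together with automorphisms stabilizing $E$) is indispensable, and where one must correctly identify the characteristic subgroup $C$ whose strict enlargement beyond $T$ is what maximality of the $\F$-core forbids.
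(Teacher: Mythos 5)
Your proof is correct and follows essentially the same route as the paper: both reduce to showing $\Z(\N^2/T)$ is a proper subgroup of $\Z(\N^1/T)$ in the case $[E\colon T]=p^3$ by considering the preimage $C$ of $\C_{E/T}(O^{p'}(\Aut_\F(E)))$ and combining the factorization $\Aut_\F(\N^1)=\Aut_S(\N^1)\N_{\Aut_\F(\N^1)}(E)$ from Lemma \ref{lift.to.N2} with the maximality of $T=\coreF(E)$. The only difference is presentational (you argue by contradiction from $C\norm \N^2$, the paper argues directly that $C$ cannot be normal in $\N^2$), and you supply a few more of the routine verifications.
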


\begin{proof}
First notice that $T\norm \N^2$ so we can consider the group $\N^2/T$. If $[E \colon T] =p^2$ then the fact that $E$ is not normal in $\N^2$ implies that $\Z(\N^2/T) < E/T$ and so $|\Z(\N^2/T)|=p$. Hence by Theorem \ref{characterization.E} we can assume that $[E \colon T]=p^3$.
Let $C$ be the preimage in $E$ of the group $\C_{E/T}(O^{p'}(\Aut_\F(E)))$. Then $|C/T|=p$. Recall that $T$ is the largest subgroup of $E$ that is $\F$-characteristic in $E$ and $\N^1$. Hence $C$ is not $\F$-characteristic in $\N^1$.
By Lemma \ref{lift.to.N2} we have $\Aut_\F(\N^1)=\Aut_S(\N^1)\N_{\Aut_\F(\N^1)}(E)$. Since $\Aut_S(\N^1) \cong \N^2/\Z(\N^1)$ and $C$ is $\F$-characteristic in $E$, we deduce that $C$ is not normal in $\N^2$. In particular $C/T\nleq \Z(\N^2/T)$.
Since $C/T \leq \Z(\N^1/T)$ we get $\Z(\N^2/T) < \Z(\N^1/T)$. By Theorem \ref{characterization.E}(4) we have $|\Z(\N^1/T)|=p^2$ and so $|\Z(\N^2/T)|=p$.\end{proof}

We conclude this section with further properties of the quotient group $\N_S(E)/\Phi(E)$.

\begin{lemma}\label{Zindexp}
Let $Z$ be the preimage in $\N_S(E)$ of $\Z(\N_S(E)/\Phi(E))$.
Then  $Z$ is the preimage in $E$ of  $\Z(\N_S(E)/T)$. In particular $[E\colon Z] = p$, $Z$ is $\F$-characteristic in $\N_S(E)$ and $\Phi(\N_S(E)) \leq Z$.
\end{lemma}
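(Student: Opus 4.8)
The plan is to work inside $\N:=\N_S(E)$ and to compare the quotients $\N/\Phi(E)$ and $\N/T$, which differ only by the small central layer $T/\Phi(E)$. First I would record the two facts that make this comparison possible. Since $E/T$ is elementary abelian (Theorem \ref{characterization.E}(3)) we have $\Phi(E)\le T$; combined with Lemma \ref{prop.T.II}(4) this forces $[T\colon\Phi(E)]\le p$. Moreover $\Z(\N/T)\le E/T$ with $[E/T\colon\Z(\N/T)]=p$ by Theorem \ref{characterization.E}(4). Let $W$ be the preimage in $E$ of $\Z(\N/T)$, so $W\le E$ and $[E\colon W]=p$; the lemma reduces to proving $Z=W$. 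The inclusion $Z\le W$ is easy: if $x\Phi(E)\in\Z(\N/\Phi(E))$ then $[x,\N]\le\Phi(E)\le T$, so $xT\in\Z(\N/T)$ and $x\in W$; in particular $Z\le E$.

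For the reverse inclusion I would first identify $\Z(\N/\Phi(E))$ explicitly. Fixing $s\in\N\setminus E$ we have $\N=\langle s\rangle E$ since $[\N\colon E]=p$ by Theorem \ref{class.3}, and Lemma \ref{strict.frattini} shows $s$ acts non-trivially on the elementary abelian group $E/\Phi(E)$; hence so does every nontrivial power of $s$, no coset outside $E/\Phi(E)$ is central, and $Z/\Phi(E)=\C_{E/\Phi(E)}(s)$. Thus $W\le Z$ is equivalent to showing that $\C_{E/\Phi(E)}(s)$ has order $p^2$. If $T=\Phi(E)$ this is immediate, since then $\N/\Phi(E)=\N/T$ and both $Z$ and $W$ are the preimage of $\Z(\N/T)$.

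The real content is the case $[T\colon\Phi(E)]=p$. Here a rank count pins down the shape of $E$: as $E$ has rank at most $3$ while $[E\colon T]\ge p^2$ by Theorem \ref{characterization.E}, the identity $[E\colon\Phi(E)]=[E\colon T][T\colon\Phi(E)]$ forces $[E\colon T]=p^2$ and $[E\colon\Phi(E)]=p^3$. By the final clause of Theorem \ref{characterization.E} this yields $T\le\Z(\N)$ and $E=V$, so $E/T$ is the natural $\SL_2(p)$-module for $O^{p'}(\Out_\F(E))$. The main obstacle is now to control the layer $T/\Phi(E)$, i.e. to show the $s$-fixed space does not drop in dimension when passing from $E/T$ to $E/\Phi(E)$. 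I would resolve this by coprime action of the central involution $\tau\in O^{p'}(\Out_\F(E))\cong\SL_2(p)$: since $p$ is odd, $E/\Phi(E)=\C_{E/\Phi(E)}(\tau)\oplus[E/\Phi(E),\tau]$. Because $\tau$ acts as $-1$ on the natural module $E/T$ and trivially on the $1$-dimensional module $T/\Phi(E)$ (which by Lemma \ref{prop.T.II}(4) equals $\C_{E/\Phi(E)}(O^{p'}(\Out_\F(E)))$), the projection $E/\Phi(E)\to E/T$ carries $[E/\Phi(E),\tau]$ isomorphically onto $E/T$ with kernel $T/\Phi(E)=\C_{E/\Phi(E)}(\tau)$. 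Hence $E/\Phi(E)$ splits as $(\text{trivial})\oplus(\text{natural})$, and the transvection $s$ fixes a $1$-space in each summand, so $|\C_{E/\Phi(E)}(s)|=p^2=|W/\Phi(E)|$. With $Z\le W$ this gives $Z=W$.

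Finally, $Z=W$ is $\F$-characteristic in $\N$, since $T=\coreF(E)$ is normalized by $\Aut_\F(\N)$ and $\Z(\N/T)$ is characteristic in $\N/T$, and $[E\colon Z]=[E\colon W]=p$. For the last assertion $\Phi(\N)\le Z$, I would observe that $\N/T$ is non-abelian by Lemma \ref{non.abelian.quotient} (as $\Phi(E)\le T$) with $[\N/T\colon\Z(\N/T)]=p^2$, hence of class $2$, so $[\N,\N]\le W$; together with $\N^p\le T$ from Theorem \ref{characterization.E}(2) this gives $\Phi(\N)=[\N,\N]\N^p\le W=Z$. The step I expect to be delicate is the splitting in the third paragraph: everything hinges on lifting $s$-fixed vectors modulo $T$ to $s$-fixed vectors modulo $\Phi(E)$, and the coprime decomposition by $\tau$ is what rules out a nonsplit extension that would shrink the fixed space.
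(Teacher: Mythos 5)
Your proof is correct and follows essentially the same route as the paper: both reduce to the case $\Phi(E)<T$ (where $[E\colon T]=p^2$ and $[T\colon\Phi(E)]=p$) and both rest on the coprime-action splitting of $E/\Phi(E)$ into $T/\Phi(E)$ and $[E/\Phi(E),O^{p'}(\Aut_\F(E))]$. The only cosmetic difference is that you compute $|\Z(\N_S(E)/\Phi(E))|=p^2$ exactly as the fixed space of the transvection $s$ on the two summands, whereas the paper gets the lower bound from the fact that each of the two normal summands meets the centre of the $p$-group $\N_S(E)/\Phi(E)$ non-trivially and the upper bound from $Z<E$.
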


\begin{proof}
Suppose that $\Phi(E) \neq T$. Then by Theorem  \ref{characterization.E}(3) we deduce that $\Phi(E) < T$, $E$ has rank $3$ and $[E \colon T] =p^2$. Also, by Lemma \ref{prop.T.II}(4) we have $T/\Phi(E)=\C_{E/\Phi(E)}(O^{p'}(\Aut_\F(E)))$.
In particular by coprime action we have $E/\Phi(E) \cong T/\Phi(E) \times [E/\Phi(E), O^{p'}(\Aut_\F(E))]$ and so
\[T/\Phi(E) \cap [E/\Phi(E), O^{p'}(\Aut_\F(E))] = 1.\]
 Since both $T/\Phi(E)$ and $ [E/\Phi(E), O^{p'}(\Aut_\F(E))]$ are normal subgroups of  $\N_S(E)/\Phi(E)$, their intersection with the center $Z/\Phi(E)$ is non-trivial. Hence \[|Z/\Phi(E)|\geq p^2.\] Since $Z \leq E$ and $[\N_S(E), E] \nleq \Phi(E)$ by Lemma \ref{strict.frattini}, we conclude that  $Z < E$ and so $|Z/\Phi(E)|= p^2$. In other words $[E \colon Z] =p$ and since $Z/T \leq \Z(\N_S(E)/T) < E/T$ we conclude that $Z/T = \Z(\N_S(E)/T)$.

Therefore in any case we have that $Z$ is the preimage in $E$ of  $\Z(\N_S(E)/T)$.
Note that $Z$ is $\F$-characteristic in $\N_S(E)$ because $T$ is $\F$-characteristic in $\N_S(E)$ and $[E \colon Z] =p$ by Theorem \ref{characterization.E}(4). So $[\N_S(E) \colon Z] =p^2$ and since $E$ is not $\F$-characteristic in $\N_S(E)$ we deduce that the quotient $\N_S(E)/Z$ is not cyclic. Hence $\N_S(E)/Z$ is elementary abelian and $\Phi(\N_S(E))\leq Z$.
\end{proof}

\begin{lemma}\label{same.rank}
The group $\N_S(E)/\Phi(E)$ has exponent $p$. In particular $\Phi(\N_S(E))=[\N_S(E),\N_S(E)]\Phi(E)$ and the groups $E$ and $\N_S(E)$ have the same rank.
\end{lemma}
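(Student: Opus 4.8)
The plan is to reduce to a single configuration and then fix the exponent by an equivariance argument for the $p$-power map. Since $E/T$ is elementary abelian by Theorem~\ref{characterization.E}(3) we have $\Phi(E)\leq T$, so Lemma~\ref{prop.T.II}(4) leaves exactly two cases. If $T=\Phi(E)$, then $\N_S(E)/\Phi(E)=\N_S(E)/T$ has exponent $p$ by Theorem~\ref{characterization.E}(2). Otherwise $[T:\Phi(E)]=p$, and together with Theorem~\ref{characterization.E}(3) this forces $E$ to have rank $3$, $[E:T]=p^2$, $V=E$ and $T\leq \Z(\N_S(E))$ (the last clause of Theorem~\ref{characterization.E}); this is the case to treat in detail.

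Write $\N=\N_S(E)$, $\ov{\N}=\N/\Phi(E)$, $\ov{T}=T/\Phi(E)\cong \C_p$ and $\ov{W}=[E,O^{p'}(\Aut_\F(E))]\Phi(E)/\Phi(E)$. By coprime action $\ov{E}=\ov{T}\times\ov{W}$, where $\ov{W}$ is the natural $\SL_2(p)$-module for $O^{p'}(\Out_\F(E))$ and $\ov{T}=\C_{\ov{E}}(O^{p'}(\Out_\F(E)))\leq \Z(\ov{\N})$ by Lemma~\ref{prop.T.II}(4) and $T\leq \Z(\N)$. A generator $\ov{y}$ of $\ov{\N}/\ov{E}\cong \C_p$ centralises $\ov{T}$ and acts as a transvection on $\ov{W}$, so $[\ov{\N},\ov{\N}]=[\ov{E},\ov{y}]$ equals the line of $\ov{W}$ fixed by that transvection; it has order $p$ and lies in $\Z(\ov{\N})$, whence $\ov{\N}$ has nilpotency class $2$. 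As $p$ is odd and $[\ov{\N},\ov{\N}]\cong \C_p$, the formula $(gh)^p=g^ph^p[h,g]^{\binom{p}{2}}$ shows that the $p$-power map $P\colon\ov{\N}\to\ov{T}$ is a homomorphism with $\ov{E}\leq\ker P$; hence $\ov{\N}$ has exponent $p$ if and only if $\ov{y}^{\,p}=1$. Once this is known, $\N^p\leq\Phi(E)$ gives $\Phi(\N)=[\N,\N]\Phi(E)$, and since $[\ov{\N},\ov{\N}]$ has order $p$ one obtains $\mathrm{rank}(\N)=[\ov{\N}:[\ov{\N},\ov{\N}]]=|\ov{E}|=[E:\Phi(E)]=\mathrm{rank}(E)$, proving the ``in particular'' clauses (the case $T=\Phi(E)$ is identical once one notes $[\ov{\N},\ov{\N}]$ is again the transvection line of order $p$).

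For the crux $\ov{y}^{\,p}=1$ I would use the torus. The normaliser of $\Out_S(E)$ in $O^{p'}(\Out_\F(E))\cong\SL_2(p)$ contains a cyclic group of order $p-1$ acting on $\ov{W}$ as $\mathrm{diag}(\mu,\mu^{-1})$ and centralising $\ov{T}$; such an element normalises $\Aut_S(E)$, so by receptivity of $E$ it extends to some $\hat{d}\in\Aut_\F(\N)$. Then $\hat{d}$ fixes $E$, hence $\Phi(E)$, acts on $\ov{\N}$ fixing $\ov{T}$ pointwise, and induces multiplication by $\mu^2$ on $\ov{\N}/\ov{E}$ (because $\hat{d}$ conjugates $\ov{y}$ to a word congruent to $\ov{y}^{\,\mu^2}$ modulo $\ov{E}$). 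Equivariance of the power map then gives $\hat{d}(\ov{y}^{\,p})=(\ov{y}^{\,p})^{\mu^2}$, while $\ov{y}^{\,p}\in\ov{T}$ is fixed by $\hat{d}$; thus $(\ov{y}^{\,p})^{\mu^2-1}=1$, and any $\mu$ with $\mu^2\neq1$ forces $\ov{y}^{\,p}=1$.

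The main obstacle is precisely this last step when no torus element with $\mu^2\neq1$ exists, that is when $p=3$: there the relevant torus is only $\{\pm1\}$, every available lift fixes $\ov{\N}/\ov{E}$, and the scaling argument yields nothing. For $p=3$ one must instead exclude directly the possibility that $P$ is surjective with $\ker P=\ov{E}$. I would attempt this using a second essential subgroup $E\alpha\neq E$ (available because $E$ is not $\F$-characteristic in $\N$: otherwise $\N_S(\N)=\N$ would force $\N=S$ and then $E$ would be $\F$-characteristic in $S$, against the hypothesis) together with the $\Aut_\F(\N)$-equivariance of $g\mapsto g^p$ modulo the $\Aut_\F(\N)$-invariant subgroup $\Phi(T)$, so as to contradict the maximality of $T=\coreF(E)$. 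The delicate point is that $\Phi(E)$ itself is not $\Aut_\F(\N)$-invariant (indeed $\Phi(E\alpha)\neq\Phi(E)$), so this argument cannot be run naively on $\ov{\N}$ and must be carried out one level lower, inside $T$.
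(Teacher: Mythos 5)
Your reduction to the two cases $T=\Phi(E)$ and $[T:\Phi(E)]=p$ matches the paper's, and in the second case your torus argument does settle $p\geq 5$ (modulo one loose end: you assert $\ov{y}^{\,p}\in\ov{T}$ without proof — a priori $\ov{y}^{\,p}$ only lies in $\C_{\ov{E}}(\ov{y})=\ov{T}\times[\ov{E},\ov{y}]$ — but this is repairable by decomposing $\ov{y}^{\,p}$ into $\hat{d}$-eigenspaces of $\ov{E}$ and noting that $\mu^2\notin\{1,\mu,\mu^{-1}\}$ when $\mu$ has order $p-1\geq 4$). The genuine gap is $p=3$. The lemma is stated under the standing hypothesis that $p$ is any odd prime, and it is genuinely needed at $p=3$: it feeds into Lemma \ref{char.frat}, Lemma \ref{OpF} and Lemma \ref{same.frattini}, hence into Theorem \ref{normal} and Theorem \ref{Sp4}, all of which are asserted for every odd prime. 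At $p=3$ your scaling argument collapses exactly as you say, and the fallback you sketch is a plan, not a proof: no contradiction with the maximality of $T$ is actually derived, and the premise you lean on (``$\Phi(E\alpha)\neq\Phi(E)$'') is unsubstantiated — indeed Lemma \ref{char.frat} later shows $\Phi(E)$ \emph{is} $\F$-characteristic in $\N_S(E)$, though that lemma cannot be cited here since its proof uses the present one.

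The paper closes the case $\Phi(E)<T$ by a prime-uniform device that you may want to compare with. Instead of a torus element it uses the central involution $\tau=-I$ of $O^{p'}(\Aut_\F(E))$, available for every odd $p$ and centralizing $T$ by Lemma \ref{prop.T.II}. Writing $Z$ for the preimage in $E$ of $\Z(\N_S(E)/T)$, one has $[Z:T]=p$ and $Z$ abelian; for $z\in Z\setminus T$ the fixed element $z^p\in T$ satisfies $z^p=(z\tau)^p=(z^{-1}t)^p=z^{-p}t^p$, whence $Z^p=T^p$ and, by counting $|A|=|\Omega_1(A)||A^p|$ in the abelian groups $Z$ and $T$, the strict inclusion $\Omega_1(T)<\Omega_1(Z)\leq\Omega_1(E)$. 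If $\Omega_1(\N_S(E))$ were contained in $E$ it would equal $\Omega_1(E)$ and be $\F$-characteristic in both $E$ and $\N_S(E)$, forcing $\Omega_1(E)\leq T$ by maximality of the core — against that strict inclusion. So there is an element $h$ of order $p$ outside $E$, every element of $\N_S(E)$ has the form $eh^i$, and the class-two commutator collection formula modulo $\Phi(E)$ (via Lemma \ref{Zindexp}) gives exponent $p$ for all odd $p$ at once. This is the idea missing from your proposal: the non-characteristic hypothesis is converted into an $\Omega_1$ statement rather than an eigenvalue statement, which is what survives at $p=3$.
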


\begin{proof}
If $\Phi(E) = T$ then $\N_S(E)/\Phi(E)$ has exponent $p$ by Theorem \ref{characterization.E}(2). Suppose $\Phi(E)\neq T$.
Then by Theorem  \ref{characterization.E}(3) we deduce that $\Phi(E) < T$, $E$ has rank $3$, $[E \colon T] =p^2$ and $T\leq \Z(\N_S(E))$.
Let $Z$ be the preimage in $E$ of $\Z(\N_S(E)/T)$. Then by Theorem \ref{characterization.E}(4) we have $[E \colon Z]=p$. Thus $[Z \colon T]=p$ and $Z$ is abelian.

By Theorem \ref{typeIIauto} we have  $O^{p'}(\Out_\F(E))\cong \SL_2(p)$ and by Theorem \ref{characterization.E} the quotient $E/T$ is a natural $\SL_2(p)$-module for $O^{p'}(\Out_\F(E))$. Thus there exists a morphism $\tau\in O^{p'}(\Aut_\F(E))$ that acts on $E/T$ as $\begin{pmatrix} -1 & 0 \\ 0 & -1 \end{pmatrix}$ with respect to the basis $\{ xT, zT\}$, for some  $x\in E \backslash Z$ and $z\in Z \backslash T$.
Then  $z\tau = z^{-1}t$ for some $t\in T$ and $\tau$ centralizes $T$  by Lemma \ref{prop.T.II}(4). Since $z^p \in T$ we get
\[ z^p = (z^p)\tau = (z\tau)^p =(z^{-1}t)^p = (z^p)^{-1}t^p.\]
Hence $z^p \in T^p$ and we conclude that $Z^p = T^p$. Since $Z$ and $T$ are abelian we also have $|Z|=|\Omega_1(Z)||Z^p|$ and $|T|=|\Omega_1(T)||T^p|$. Therefore 
\begin{equation}\tag{$\star$}\label{star} \Omega_1(T) < \Omega_1(Z) \leq \Omega_1(E).\end{equation}

Suppose for a contradiction that $\Omega_1(\N_S(E)) \leq E$. Then $\Omega_1(\N_S(E))= \Omega_1(E)$ is $\F$-characteristic in both $E$ and $\N_S(E)$ and so $E \leq T$ by the definition of $\F$-core. So $\Omega_1(E)\leq \Omega_1(T)$, contradicting (\ref{star}). So $\Omega_1(\N_S(E)) \nleq E$. Since $[\N_S(E) \colon E]=p$ by Theorem \ref{class.3}, we deduce that there exists an element $h\in \N_S(E)$ of order $p$ such that every element $g$ of $\N_S(E)$ can be written as a product $eh^i$ for some $e\in E$ and $1\leq i \leq p$.
By Lemma \ref{Zindexp} we have that $Z$ is the preimage in $E$ of $\Z(\N_S(E)/\Phi(E))$ and $\Phi(\N_S(E)) \leq Z$. So $[e,h^i]\in Z$ commutes with $e$ and $h^i$ modulo $\Phi(E)$ and by \cite[Lemma 2.2.2]{GOR} we get
\[ g^p = (eh^i)^p= (h^i)^pe^p[h^i,e]^{\frac{p(p-1)}{2}} \equiv 1 \mod \Phi(E).\]
Hence the group $\N_S(E)/\Phi(E)$ has exponent $p$.

As a consequence, we deduce that
$\Phi(\N_S(E)) = [\N_S(E), \N_S(E)]\Phi(E)$. By Lemma \ref{Zindexp} we have $[\N_S(E)/\Phi(E) \colon \Z(\N_S(E)/\Phi(E))]=p^2$, so $[\Phi(\N_S(E)) \colon \Phi(E)]=p$. Therefore
\[ [\N_S(E) \colon \Phi(\N_S(E))]=[\N_S(E) \colon E][E \colon \Phi(\N_S(E))] = \frac{p[E \colon \Phi(E)]}{[\Phi(\N_S(E)) \colon E]} = [E \colon \Phi(E)].\]
Hence $E$ and $\N_S(E)$ have the same rank.
\end{proof}

We end this section proving that the Frattini subgroup $\Phi(E)$ of $E$ is $\F$-characteristic in $\N_S(E)$.
Note that we have already seen in Lemma \ref{frattini.is.char} that this is true when $E$ has rank $2$ (and in that case we didn't need extra assumptions on the sectional rank of $\N_S(E)$).

\begin{lemma}\label{char.frat}
 The Frattini subgroup $\Phi(E)$ of $E$ is $\F$-characteristic in $\N_S(E)$.
\end{lemma}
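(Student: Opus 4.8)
The plan is to split into two cases according to whether $\Phi(E)$ equals the $\F$-core $T=\coreF(E)$. If $\Phi(E)=T$, there is nothing to prove: by Definition \ref{core} the subgroup $T$ is normalized by $\Aut_\F(\N_S(E))$, so $\Phi(E)=T$ is $\F$-characteristic in $\N_S(E)$. Hence the real content is the case $\Phi(E)\neq T$, and my plan is to realize $\Phi(E)$ as the intersection of two subgroups already known to be $\F$-characteristic in $\N:=\N_S(E)$.

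First I would collect the structural facts available in this case. By Theorem \ref{characterization.E}(3) together with Lemma \ref{prop.T.II}(4), we have $\Phi(E)<T$ with $[T\colon\Phi(E)]=p$, the group $E$ has rank $3$, $[E\colon T]=p^2$, and $T/\Phi(E)=\C_{E/\Phi(E)}(O^{p'}(\Aut_\F(E)))$. By Lemma \ref{same.rank} the groups $E$ and $\N$ have the same rank (namely $3$), the quotient $\N/\Phi(E)$ has exponent $p$, and $\Phi(\N)=[\N,\N]\Phi(E)$. Comparing orders (both $[\N\colon\Phi(\N)]$ and $[E\colon\Phi(E)]$ equal $p^3$, while $[\N\colon E]=p$) then yields $[\Phi(\N)\colon\Phi(E)]=p$, and since $[\N\colon E]=p$ forces $[\N,\N]\leq E$ we also get $\Phi(\N)\leq E$.

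The key claim I would then establish is $\Phi(\N)\cap T=\Phi(E)$. Using the splitting $E/\Phi(E)=T/\Phi(E)\times U$ coming from coprime action by an involution of $O^{p'}(\Aut_\F(E))$ (exactly as in the proof of Lemma \ref{Zindexp}), where $U=[E/\Phi(E),O^{p'}(\Aut_\F(E))]\cong E/T$ is the natural $2$-dimensional $\SL_2(p)$-module and $T/\Phi(E)$ is the centralized summand, I would pick $h\in\N\setminus E$. Its image $\bar h$ has order $p$ in $\N/\Phi(E)$, acts trivially on $T/\Phi(E)$, and acts on $U$ as a non-trivial transvection, since $\Out_S(E)$ is a non-trivial $p$-element lying in $O^{p'}(\Out_\F(E))\cong\SL_2(p)$. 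Consequently the commutator line $\Phi(\N)/\Phi(E)=[\N,\N]\Phi(E)/\Phi(E)=[\bar h,E/\Phi(E)]$ is a single nonzero line (nonzero by Lemma \ref{strict.frattini}) contained in $U$; as $U\cap(T/\Phi(E))=1$, it meets $T/\Phi(E)$ trivially, giving $\Phi(\N)\cap T=\Phi(E)$. Finally $\Phi(\N)$ is $\F$-characteristic in $\N$ (it is a genuinely characteristic subgroup) and $T$ is $\F$-characteristic in $\N$ (it is the $\F$-core), so their intersection $\Phi(E)$ is $\F$-characteristic in $\N$, completing the proof.

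The main obstacle is the transversality claim $\Phi(\N)\cap T=\Phi(E)$: one has to verify that the commutator line $\Phi(\N)/\Phi(E)$ falls into the natural-module summand $U$ rather than into the centralized summand $T/\Phi(E)$, and in particular that $\Phi(\N)\neq T$ even though both contain $\Phi(E)$ with index $p$. This is precisely where the $\Out_S(E)$-module decomposition of $E/\Phi(E)$ into a trivial line plus a natural $\SL_2(p)$-module does the work; the remaining steps are bookkeeping supplied by Theorem \ref{characterization.E} and Lemmas \ref{prop.T.II}, \ref{same.rank} and \ref{Zindexp}.
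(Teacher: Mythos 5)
Your proof is correct, but it takes a genuinely different route from the paper's. The paper argues by contradiction: if some $\alpha\in\Aut_\F(\N_S(E))$ moved $\Phi(E)$, then since $T$ is $\F$-characteristic in $\N_S(E)$ and $[T\colon\Phi(E)]=p$ one would get $T=\Phi(E)\cdot\Phi(E)\alpha\leq\Phi(\N_S(E))$, and since $\N_S(E)/T$ is non-abelian (Lemma \ref{non.abelian.quotient}) this containment is strict, forcing $[\N_S(E)\colon\Phi(\N_S(E))]\leq p^2$ and hence rank $2$ for $\N_S(E)$, against Lemma \ref{same.rank}. You instead prove the sharper direct statement $\Phi(E)=\Phi(\N_S(E))\cap T$, by locating the line $\Phi(\N_S(E))/\Phi(E)=[\bar h,E/\Phi(E)]$ inside the natural-module summand $U$ complementary to the centralized summand $T/\Phi(E)$, and then conclude because an intersection of $\F$-characteristic subgroups is $\F$-characteristic. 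Both arguments rest on the same two pillars, namely Lemma \ref{same.rank} (which you need in the form $\Phi(\N_S(E))=[\N_S(E),\N_S(E)]\Phi(E)$, so that no $p$-th powers can push $\Phi(\N_S(E))$ into $T\setminus\Phi(E)$) and the $\F$-characteristicity of $T$; but your mechanism for the key step is the coprime-action decomposition $E/\Phi(E)=T/\Phi(E)\times U$ rather than the non-abelianness of $\N_S(E)/T$. What your version buys is a canonical identification of $\Phi(E)$ as $\Phi(\N_S(E))\cap\coreF(E)$ with no proof by contradiction; what the paper's buys is brevity, needing only the index count $[\Phi(\N_S(E))\colon\Phi(E)]=p$ and no module theory. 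All the ingredients you invoke (Theorem \ref{characterization.E}(3), Lemma \ref{prop.T.II}(4), Lemmas \ref{strict.frattini} and \ref{same.rank}) are established before Lemma \ref{char.frat}, so there is no circularity.
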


\begin{proof}
If $\Phi(E)=T$ then this follows from the definition of $T$. Thus by Theorem \ref{characterization.E}(3) we may assume that $\Phi(E) < T$, $[E \colon T]  =p^2$ and $E$ has rank $3$.
Suppose for a contradiction that there exists $\alpha \in \Aut_\F(\N_S(E))$ such that $\Phi(E)\alpha \neq \Phi(E)$. Since $[T \colon \Phi(E)]=p$ and $T$ is $\F$-characteristic in $\N_S(E)$, we get $T= \Phi(E)\Phi(E)\alpha$. In particular $T\leq \Phi(\N_S(E))$ and since $\N_S(E)/T$ is non-abelian by Lemma \ref{non.abelian.quotient}, we deduce that $T<\Phi(\N_S(E))$. Hence $[\N_S(E) \colon \Phi(\N_S(E))]\leq p^2$ and $\N_S(E)$ has rank $2$, contradicting Lemma \ref{same.rank}.
Therefore $\Phi(E)$ is $\F$-characteristic in $\N_S(E)$.
\end{proof}

\section{Interplay of $\F$-essential subgroups that are $\F$-characteristic in $S$}

Throughout this section, we assume the following hypothesis.

\begin{hp} Suppose that $p$ is an odd prime, $S$ is a $p$-group of sectional rank $3$, $\F$ is a saturated fusion system on $S$ and $E_1$ and $E_2$ are distinct $\F$-essential subgroups of $S$ that are $\F$-characteristic in $S$. Set  $T=\coreF(E_1,E_2)$.
\end{hp}
We now study the interplay of the $\F$-essential subgroups $E_1$ and $E_2$.

\begin{lemma}\label{centr.E.T}
Let $G_i$  be a model for $\N_\F(E_i)$ and let $A_i=\langle S^{G_i}\rangle$. Then \[\C_{A_i}(E_i/T) \leq E_i/T \quad \text{ for every } 1\leq i \leq 2\]
and either
\begin{enumerate}
\item $\C_{G_i}(E_i/T) \leq E_i/T$ for every $1 \leq i \leq 2$; or
\item $\Phi(E_1)=\Phi(E_2) < T$, $S/T \cong p^{1+2}_+$, $E_1$ and $E_2$ have rank $3$ and $A_1/E_1 \cong A_2/E_2 \cong \SL_2(p)$. Moreover there exists a morphism $\theta \in \Out_\F(S)$ of order dividing $p-1$ that centralizes $S/T$  and acts non-trivially on the quotient  $T/\Phi(E_1) \cong \C_{p}$.
\end{enumerate}
\end{lemma}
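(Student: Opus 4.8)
The plan is to work throughout in the quotient by $T$, reading the first assertion as $\C_{A_i/T}(E_i/T) \leq E_i/T$. Since $E_1$ and $E_2$ are $\F$-characteristic in $S$ we have $\N_S(E_i) = S$, so each model $G_i$ satisfies $S \in \Syl_p(G_i)$, $E_i = O_p(G_i)$, $\C_{G_i}(E_i) \leq E_i$ and $A_i/E_i \cong O^{p'}(\Out_\F(E_i))$; as $T$ is $\F$-characteristic in $E_i$ it is normal in $G_i$, so all quotients modulo $T$ are legitimate. For the first assertion I would argue by contradiction: if $\C_{A_i/T}(E_i/T) \nleq E_i/T$, its image is a nontrivial normal subgroup of $A_i/E_i$, which by Theorem \ref{class.3} is one of $\SL_2(p)$, $\PSL_2(p)$ or $13\colon 3$. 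In every case this produces either the whole group or a nontrivial $p'$-element acting trivially on the section $E_i/T\Phi(E_i)$ of $E_i/\Phi(E_i)$. I would then combine the faithfulness of $\Out_\F(E_i)$ on $E_i/\Phi(E_i)$ (Lemma \ref{GLr}) with Lemma \ref{T.strict.intersection}, which pins $T\Phi(E_i)/\Phi(E_i)$ down to being either trivial or exactly the fixed line $\C_{E_i/\Phi(E_i)}(O^{p'}(\Out_\F(E_i)))$ of an $\SL_2(p)$; the natural-plus-trivial constituent structure then makes each possibility contradict faithfulness (a nonzero natural constituent cannot be centralised, and $-1 \in \SL_2(p)$ cannot centralise a nonzero natural constituent).

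For the dichotomy I would suppose $\C_{G_i/T}(E_i/T) \nleq E_i/T$ for some $i$, say $i=1$. Since $G_1/A_1$ is a $p'$-group and the first part bounds the centraliser inside $A_1$, this yields a $p'$-element $h \in G_1$ whose image centralises $E_1/T$ but lies outside $E_1/T$; coprime action together with $\C_{G_1}(E_1) \leq E_1$ forces $[T,h] \neq 1$. If $T \leq \Phi(E_1)$ then $h$ would centralise $E_1/\Phi(E_1)$, contradicting Lemma \ref{GLr}; hence $T \nleq \Phi(E_1)$ and Lemma \ref{T.strict.intersection} gives $O^{p'}(\Out_\F(E_1)) \cong \SL_2(p)$, $E_1$ of rank $3$, and $T\Phi(E_1)/\Phi(E_1)$ equal to the fixed line. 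Reading $h$ on $E_1/\Phi(E_1) = N \oplus F$ (natural part $N$, fixed line $F = T\Phi(E_1)/\Phi(E_1)$), I find $h$ acts as the identity on $N$ and as a scalar $\mu \neq 1$ of order dividing $p-1$ on $F$. Because $\Out_S(E_1) \cong S/E_1$ is a $p$-subgroup of $O^{p'}(\Out_\F(E_1)) \cong \SL_2(p)$ acting as a transvection on $N$ and trivially on $F$, it commutes with $h = \mathrm{id}_N \oplus \mu$; hence a lift $\hat h$ centralises $\Aut_S(E_1)$ and, by receptivity of the fully normalised essential subgroup $E_1$, extends to $\tilde h \in \Aut_\F(S)$ whose $p'$-part $\theta$ has order dividing $p-1$. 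Since $\hat h$ centralises $\Aut_S(E_1)$, the extension $\theta$ acts trivially on $S/E_1$; combined with triviality on $E_1/T$ this gives $[S/T,\theta,\theta] = 1$, and coprimality then forces $\theta$ to centralise $S/T$, while $\theta$ still acts as $\mu \neq 1$ on $T/\Phi(E_1)$. This is exactly the outer morphism promised in (2).

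Finally I would assemble the global structure, which is the main obstacle. As $E_2$ is $\F$-characteristic in $S$, $\theta$ restricts to $E_2$, centralises $E_2/T$ and is nontrivial on $T$, so $E_2$ satisfies the same hypotheses as $E_1$; invoking Theorem \ref{Op'.typeI} (its rank-$2$ branch is excluded because $\mathrm{rank}(E_1) = 3$ forces $\mathrm{rank}(S) = 3$) I get $O^{p'}(\Out_\F(E_i)) \cong \SL_2(p)$ and $E_i$ of rank $3$ for both $i$, with $E_i/T$ a natural module. Then $E_i/T$ is elementary abelian (so $\Phi(E_i) \leq T$ and $S^p \leq T$), $E_1 \cap E_2/T$ is the order-$p$ centre of $S/T$, and since $[S:T] = p^3$ with $S/T$ nonabelian of exponent $p$ I conclude $S/T \cong p^{1+2}_+$. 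Using Theorem \ref{char.T} to see that $T$ is abelian and Lemma \ref{properties.T} to locate $\C_S(T)$, together with the coprime action of $\theta$ on the abelian group $T$, I identify $\Phi(E_1)$ and $\Phi(E_2)$ as the common index-$p$ subgroup $\C_T(\theta)$ of $T$, giving $\Phi(E_1) = \Phi(E_2) < T$. The delicate points — that $\theta$ centralises all of $S/T$ and that the two Frattini subgroups coincide — are where the bookkeeping is hardest, since $E_i$ is $\F$-characteristic in $S$ and so Theorem \ref{characterization.E} of Section 3 is unavailable and the module analysis must be carried out directly.
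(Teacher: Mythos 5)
Your treatment of the first assertion and of the construction of $\theta$ is essentially sound. Your route to ``$\theta$ centralises $S/T$'' (the image of $h$ commutes with $\Out_S(E_1)$ in the faithful representation on $E_1/\Phi(E_1)$, so $h$ extends by receptivity, and then quadratic action plus coprimality finishes) is a legitimate alternative to the paper's, which instead takes $g\in\N_{G_1}(S)$ via the Frattini argument in the model and rules out a nontrivial coprime decomposition $S/T\cong E_1/T\times[S/T,g]$ using Lemma \ref{char.series}. Two small imprecisions there: $\hat h$ only centralises $\Aut_S(E_1)$ modulo $\Inn(E_1)$ (which still gives $[S,\tilde h]\le E_1$, enough for your quadratic-action step); and until $\Phi(E_1)\le T$ is established, the nontrivial action of $\theta$ lives on $T\Phi(E_1)/\Phi(E_1)$, not on $T/\Phi(E_1)$.

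The genuine gap is in your last paragraph. You pass from ``$E_i/T\Phi(E_i)$ is a natural module'' to ``$E_i/T$ is elementary abelian (so $\Phi(E_i)\le T$)'', but the containment $\Phi(E_i)\le T$ is precisely the hardest conclusion of alternative (2) ($\Phi(E_1)=\Phi(E_2)<T$), and nothing in the module analysis delivers it: $\Phi(E_i)$ is automatically $\F$-characteristic in $E_i$ and in $S$, but to force it into $T=\coreF(E_1,E_2)$ one must show it is normalised by $\Aut_\F(E_j)$ for the \emph{other} index $j$ as well, which is why the paper first proves $\Phi(E_1)=\Phi(E_2)$. Your fallback --- identifying both Frattini subgroups with $\C_T(\theta)$ via Theorem \ref{char.T} --- is circular for the same reason (hypothesis (3) of that theorem asks for a subgroup $V$ with $V/T$ a natural module) and additionally requires that $\theta$ centralise $\Phi(E_1)$, which you have not shown. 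The paper closes the gap with an idea absent from your sketch: since $\theta$ centralises $S/T$ but acts fixed-point-freely on the central order-$p$ section $T\Phi(E_1)/\Phi(E_1)$, no $p$-th power of an element of $S$ can land in $T\Phi(E_1)\setminus\Phi(E_1)$; as $S/T\Phi(E_1)$ is a Sylow $p$-subgroup of $(\C_p\times\C_p)\colon\SL_2(p)$ and hence isomorphic to $p^{1+2}_+$ of exponent $p$, this forces $S/\Phi(E_1)$ to have exponent $p$. The coprime decomposition of $E_2/\Phi(E_1)$ under $\theta$ (an abelian subgroup of order $p^2$ centralised by $\theta$ times a central subgroup of order $p$) then shows $E_2/\Phi(E_1)$ is elementary abelian, so $\Phi(E_2)=\Phi(E_1)$ by sectional rank $3$, and maximality of $T$ finally gives $\Phi(E_1)\le T$. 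Without this (or an equivalent) argument, your proof of alternative (2) is incomplete.
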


An example of the situation described in part (2) of Lemma \ref{centr.E.T} is given by the fusion category of the group $G=(\C_p \colon \C_{p-1}) \times \PSL_3(p)$ on one of its Sylow $p$-subgroups $S\cong  \C_p \times p^{1+2}_+$.

\begin{proof}
Clearly if $\C_{G_i}(E_i/T) \leq E_i/T$ then $\C_{A_i}(E_i/T) \leq E_i/T$.

Suppose  $\C_{G_i}(E_i/T) \nleq E_i/T$ for some $1 \leq i\leq 2$.
Set $E=E_i$, $G=G_i$ and $A=A_i$.
Note that $E/T = O_p(G/T)$. Hence if $\C_{G}(E/T)$ is a $p$-group then $\C_{G}(E/T)\leq E/T$, contradicting the assumptions. So there exists a non-trivial element $g\in \C_{G}(E/T)$ such that $(o(g),p)=1$.
If $T\leq \Phi(E)$ then $g$ centralizes $E/\Phi(E)$ and so $g=1$ by Burnside's Theorem (\cite[Theorem 5.1.4]{GOR}), a contradiction. Thus we have $T\nleq \Phi(E)$. Since $S$ has sectional rank $3$ by assumption, we can apply Lemma \ref{T.strict.intersection} and we deduce that $A/E \cong \SL_2(p)$, $[T\Phi(E) \colon \Phi(E)] =p $ and $T\Phi(E)/\Phi(E)=\C_{E/\Phi(E)}(A/E)$. In particular $E$ has rank $3$.
Note that $\C_A(E/T)$ stabilizes the series of subgroups:
\[ \Phi(E) < T\Phi(E) < E.\]
Hence by Lemma \ref{char.series} we get $\C_A(E/T)\leq E/T$.
Note that this proves that \[\C_{A_k}(E_k/T)\leq E_k/T \text{ for every } 1\leq k\leq 2.\]
Also, we get that $g$ acts non-trivially on $T\Phi(E)/\Phi(E)\cong \C_p$ and $g\notin A$.
Hence $g$ has order dividing $p-1$ and since $G=A\N_G(S)$ by the Frattini Argument, we may assume that $g \in \N_G(S)$.

Suppose that $g$ does not centralize $S/T$. Since $[S \colon E]=p$ by Theorem \ref{class.3}, we deduce that $E/T = \C_{S/T}(g)$. Hence by coprime action we get
\[ S/T \cong E/T \times [S/T,g].\]
Thus $[S/T,g]$ is a subgroup of $S/T$ that commutes with $E/T$ and so stabilizes the series $\Phi(E) < T\Phi(E) < E$. Hence by Lemma \ref{char.series} we have $[S,g]T\leq E$, a contradiction.

Thus $g$ centralizes the group $S/T$.
Note that $S/T\Phi(E)$ is a Sylow $p$-subgroup of the group $A/T\Phi(E) \cong (\C_p \times \C_p)\colon \SL_2(p)$.
Hence $S/T\Phi(E)\cong p^{1+2}_+$.
Also, since $g$ centralizes $S/T$ but acts non-trivially on $T\Phi(E)/\Phi(E)$, every element of $T\Phi(E)/\Phi(E)$ is not a $p$-th power of an element in $S/\Phi(E)$. Hence the group $S/\Phi(E)$ has exponent $p$.

Let $P=E_j$ for $j\neq i$ and consider the group $P/\Phi(E)$, that has order $p^3$ and exponent $p$. Since $g$ centralizes $P/T\Phi(E)$ and acts non-trivially on $T\Phi(E)/\Phi(E)$,
we deduce that $P/\Phi(E)$ is elementary abelian.
Since $S$ has sectional rank $3$, we conclude $\Phi(E)=\Phi(P)$. In particular $P$ has rank $3$,  $\Phi(E) \leq T$ (because $\Phi(E)=\Phi(P)$ is $\F$-characteristic in $E$, $P$ and $S$) and $S/T \cong p^{1+2}_+$.
Also, by Lemma \ref{T.strict.intersection} we have $O^{p'}(\Out_\F(P)) \cong \SL_2(p)$.
Finally set $\theta=c_g\Inn(S) \in \Out_\F(S)$.
\end{proof}

\begin{theorem}\label{typeI}
Either $S/T \cong p^{1+2}_+$ or $S/T$ is isomorphic to a Sylow $p$-subgroup of the group $\Sp_4(p)$.
\end{theorem}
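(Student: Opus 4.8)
The plan is to recognise $E_1$ and $E_2$ as the two parabolic data of a weak BN-pair of rank $2$ on the quotient $S/T$ and then to read off the possibilities for $S/T$ from the Delgado--Goldschmidt--Stellmacher classification in \cite{DGS}. First note that the conclusion is immediate in case (2) of Lemma \ref{centr.E.T}, where $S/T\cong p^{1+2}_+$ is part of the statement; so I would assume throughout that case (1) holds, namely $\C_{G_i}(E_i/T)\le E_i/T$ for $i=1,2$. Recall that since $E_i$ is $\F$-characteristic in $S$ we have $\N_S(E_i)=S$, whence $[S\colon E_i]=p$ by Theorem \ref{class.3}, so $E_1$ and $E_2$ are distinct maximal subgroups of $S$; moreover $T$ is $\F$-characteristic in $E_1$, $E_2$ and $S$, hence normal in the models $G_1,G_2$ and in a model for $\N_\F(S)$, so I may pass to quotients modulo $T$.

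Next I would build the amalgam. Because $E_i$ is $\F$-characteristic in $S$, every $\F$-automorphism of $S$ normalises $E_i$, so $\N_\F(S)\subseteq\N_\F(E_i)$ and a model $B$ for $\N_\F(S)$ embeds into each $G_i$ as the normaliser of the common Sylow $p$-subgroup $S$. Gluing $G_1$ and $G_2$ along $B$ produces an amalgam $\mathcal A=(G_1,G_2;B)$ with $S\in\Syl_p(G_i)$. Passing to $\overline{\mathcal A}=\mathcal A/T$, each vertex group $\overline{G_i}$ has $O_p(\overline{G_i})=E_i/T$, the case (1) hypothesis gives $\C_{\overline{G_i}}(E_i/T)\le E_i/T$, and $O^{p'}(\overline{G_i}/O_p(\overline{G_i}))\cong O^{p'}(\Out_\F(E_i))$ is isomorphic to $\SL_2(p)$ or $\PSL_2(p)$ by Theorems \ref{class.3} and \ref{Op'.typeI}, with $S/E_i$ a Sylow $p$-subgroup of it; this is exactly the rank-$1$ Lie-type parabolic structure over $\GF(p)$. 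Finally, since $\overline{G_i}$ maps onto the full $\Out_\F(E_i)$ and $\overline B$ carries the $\Aut_\F(S)$-action, a nontrivial subgroup of $S/T$ normal in both $\overline{G_1}$ and $\overline{G_2}$ would lift to a subgroup of $E_1\cap E_2$ invariant under $\Aut_\F(E_1)$, $\Aut_\F(E_2)$ and $\Aut_\F(S)$ and properly containing $T$, contradicting the maximality in Definition \ref{core}. Thus $\overline{\mathcal A}$ is faithful and is a weak BN-pair of rank $2$ in the sense of \cite{DGS}.

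I would then invoke the DGS classification. Since both rank-$1$ parabolics have $O^{p'}$ isomorphic to $\SL_2(p)$ or $\PSL_2(p)$, both are defined over $\GF(p)$, so $S/T$ — the common Sylow $p$-subgroup of $\overline{\mathcal A}$ — is isomorphic to a Sylow $p$-subgroup of one of the untwisted rank-$2$ groups of Lie type over $\GF(p)$ on the DGS list, namely $\SL_3(p)/\PSL_3(p)$, $\Sp_4(p)/\PSp_4(p)$ or $G_2(p)$. Every section of $S/T$ is a section of $S$, so $S/T$ has sectional rank at most $3$. The Sylow $p$-subgroup of $\SL_3(p)$ is $p^{1+2}_+$ and that of $\Sp_4(p)$ has sectional rank $3$, so both survive; the Sylow $p$-subgroup of $G_2(p)$ has order $p^6$ and sectional rank exceeding $3$ (equivalently, its unipotent radicals are not both natural $\SL_2(p)$-modules, which is incompatible with $E_i$ having rank at most $3$), and is therefore discarded. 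This gives the stated dichotomy.

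The main obstacle is the amalgam step: after reducing modulo $T$ one must check carefully that $\overline{\mathcal A}$ satisfies all the axioms of a rank-$2$ weak BN-pair, the crucial points being the identification of faithfulness with the maximality in the definition of the $\F$-core $T$, and the verification — valid uniformly for every odd $p$, including $p=3$ — of the self-centralising parabolic conditions supplied by case (1) of Lemma \ref{centr.E.T}. The subsequent sectional-rank bookkeeping on the DGS list is then routine, once one confirms that every completion other than the $\SL_3$- and $\Sp_4$-types forces a Sylow $p$-subgroup violating sectional rank $3$.
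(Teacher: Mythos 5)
Your proposal is correct and follows essentially the same route as the paper: assemble the models $G_1$, $G_2$ and a model for $\N_\F(S)$ into an amalgam modulo $T$, verify the weak BN-pair axioms (with the self-centralising condition supplied by case (1) of Lemma \ref{centr.E.T} and faithfulness by the maximality of the $\F$-core), and then cut down the Delgado--Goldschmidt--Stellmacher list using the sectional rank $3$ hypothesis to leave only the $\PSL_3(p)$ and $\PSp_4(p)$ Sylow types. The only cosmetic difference is that the paper assumes $S/T\not\cong p^{1+2}_+$ at the outset rather than splitting on the two cases of Lemma \ref{centr.E.T}, and it checks the faithfulness axiom for arbitrary subgroups of the Borel rather than just $p$-subgroups, reducing to the latter via the self-centralising condition.
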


\begin{proof}
Suppose $S/T$ is not isomorphic to the group $ p^{1+2}_+$.
Let $G_{12}$ be a model for $\N_\F(S)$ and for every $1\leq i\leq 2$ let $G_i$ be a model for $\N_{\F}(E_i)$ and set $A_i=\langle S^{G_i} \rangle$.
We show that the amalgam $\mathcal{A}(G_1/T, G_2/T, G_{12}/T, A_1/T, A_2/T)$ is a weak $BN$-pair of rank $2$ (as defined in \cite{DGS}).
It is enough to prove the following:
\begin{enumerate}
\item there exist  monomorphisms $\phi_1\colon G_{12} \rightarrow G_1$ and $\phi_2\colon G_{12} \rightarrow G_2$ such that $G_{12}\phi_i = \N_{G_i}(S)$ and $\phi_i|_S = \rm{id}_S$;
\item $A_i/T \cap G_{12}\phi_i/T$ is the normalizer of a Sylow $p$-subgroup of $A_i/T$;
\item $E_i/T \leq A_i/T$ and $G_i/T=(A_i \cdot G_{12}\phi_i)/T$;
\item $\C_{G_i/T}(E_i/T) \leq E_i/T$;
\item $A_i/E_i$ is isomorphic to either $\SL_2(p)$ or $\PSL_2(p)$; and
\item if $H/T$ is a subgroup of  $G_{12}/T$ such  that $H\phi_i/T \norm \G_i/T$ for every $i$ then $H/T =1$.
\end{enumerate}

Note that  the groups $\N_{G_1}(S)$ and $\N_{G_2}(S)$ are models for $\N_\F(S)$. Hence the existence of the monomorphisms $\phi_1$ and $\phi_2$ is guaranteed by the Model Theorem for constrained fusion systems (\cite[Theorem 5.10]{AKO}). In particular $A_i/T \cap  G_{12}\phi_i/T = \N_{A_i}(S)/T$ is the normalizer of the Sylow $p$-subgroup $S/T$ of $A_i/T$.
 Point $(3)$ follows from the Frattini Argument and point $(4)$ is a consequence of Lemma \ref{centr.E.T} and the assumption that $S/T$ is not isomorphic to $p^{1+2}_+$. By Theorem \ref{Op'.typeI} we get point $(5)$. Let $H\leq G_{12}$ be the subgroup described in point $(6)$. Since $\phi_i$ is injective and acts as the identity on $S$ we deduce that $H \cap S = H\phi_i \cap S$ for every $i$.
Note that $H\phi_i \cap S\in \Syl_p(H\phi_i)$ and since $S=O_p(G_{12})$, we deduce that $H\phi_i\cap S \norm H\phi_i$. Thus $H\phi_i\cap S$ is the unique Sylow $p$-subgroup of $H\phi_i$ and is therefore characteristic in $H\phi_i$.
Hence $S \cap H\phi_i \norm G_i$ for every $1 \leq i \leq 2$ and so $S \cap H = S\cap H\phi_1 = S \cap H\phi_2 \leq O_p(G_1) \cap O_p(G_2) = E_1\cap E_2$. By the maximality of $T$ we deduce that $S\cap H \leq T$.
In particular we have
\[ [E_i,H\phi_i] \leq E_i \cap H\phi_i \leq S \cap H\phi_i \leq T.\]
So $H\phi_i/T$ is a subgroup of $G_i/T$ centralizing $E_i/T$ for every $i$.
By Lemma \ref{centr.E.T} (and the assumption that $S/T$ is not isomorphic to $p^{1+2}_+$) we deduce $H\phi_i \leq E_i$. Thus $H = H\phi_i$ and $H\leq E_1 \cap E_2$ is normalized by $G_1$ and $G_2$. By definition of $T$ we then get $H\leq T$ and so $H/T=1$.  Hence point $(6)$ holds.

Therefore $\mathcal{A}(G_1/T, G_2/T, G_{12}/T, A_1/T, A_2/T)$ is a weak $BN$-pair of rank $2$.  In particular the quotient $S/T$ is isomorphic to a Sylow $p$-subgroup of one of the groups listed in \cite[Theorem II.4.A]{DGS}.
Since $p$ is odd and $S/T$ has sectional rank at most $3$, by \cite[Theorem 3.3.3]{GLS3} we deduce that $S/T$ is isomorphic to a Sylow $p$-subgroup of either $\PSL_3(p)$ or $\PSp_4(p)$. Finally notice that the Sylow $p$-subgroups of $\PSL_3(p)$ are isomorphic to the group $p^{1+2}_+$ and that the Sylow $p$-subgroups of $\PSp_4(p)$ are isomorphic to the Sylow $p$-subgroups of $\Sp_4(p)$.
\end{proof}

\begin{lemma}\label{PSLtype}
If $S/T \cong p^{1+2}_+$ then $E_1$ and $E_2$ are abelian,
$E_1 \cap E_2=\Z(S)$ and for every $1\leq i \leq 2$ the group $T$ is the centralizer in $\Z(S)$ of $O^{p'}(\Aut_\F(E_i))$.
\end{lemma}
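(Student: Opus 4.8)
The plan is to pass to the quotient $\ov S=S/T\cong p^{1+2}_+$ and reformulate everything as module theory over $O^{p'}(\Out_\F(E_i))$. First I would record the elementary structural facts. Since each $E_i$ is $\F$-characteristic in $S$ we have $\N_S(E_i)=S$, so $[S\colon E_i]=p$ by Theorem \ref{class.3}; hence $\ov{E_i}=E_i/T$ is a maximal subgroup of $\ov S\cong p^{1+2}_+$ and is therefore elementary abelian of order $p^2$, giving $\Phi(E_i)\leq T$ and $[E_i,E_i]\leq T$. As $E_1\neq E_2$ the subgroups $\ov{E_1},\ov{E_2}$ are distinct, so, writing $Z_0=E_1\cap E_2$, we get $Z_0/T=\ov{E_1}\cap\ov{E_2}=\Z(\ov S)$ of order $p$; this says $[S,Z_0]\leq T$ and, since $\Z(S)T/T\leq\Z(\ov S)$, that $\Z(S)\leq Z_0$. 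Next I would pin down the automizers: $\Aut_S(E_i)$ acts nontrivially on $\ov{E_i}$ (otherwise $\ov{E_i}\leq\Z(\ov S)$, impossible as $|\ov{E_i}|=p^2$), so the image of $O^{p'}(\Out_\F(E_i))$ in $\Aut(\ov{E_i})\cong\GL_2(p)$ is nontrivial. Since neither $\PSL_2(p)$ nor the Frobenius group $13\colon 3$ admits a faithful two-dimensional representation over $\GF(p)$, Theorem \ref{class.3} forces $O^{p'}(\Out_\F(E_i))\cong\SL_2(p)$, and the only nontrivial two-dimensional $\SL_2(p)$-module is the natural one; thus $\ov{E_i}$ is a natural $\SL_2(p)$-module, in particular irreducible with no nonzero fixed points.

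I would then show $T\leq\Z(S)$. By Lemma \ref{properties.T}(1) $\C_{E_i}(T)\nleq T$, so $\C_{E_i}(T)T/T$ is a nonzero submodule of the irreducible module $\ov{E_i}$, whence $\C_{E_i}(T)T=E_i$. By Lemma \ref{properties.T}(2) $\C_S(T)\nleq E_1\cap E_2$, so after relabelling $\C_S(T)\nleq E_1$; then Lemma \ref{properties.T}(3) gives that $O^{p'}(\Out_\F(E_1))$ centralizes $T$. Now Theorem \ref{char.T} applies with $V=E_1$, its hypotheses being exactly $E_1=\C_{E_1}(T)T$ together with $\ov{E_1}$ natural and $T$ centralized, and it yields that $T$ is abelian with $T\leq\Z(E_1)$. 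Hence $E_1\leq\C_S(T)$; combined with $\C_S(T)\nleq E_1$ and $[S\colon E_1]=p$ this forces $\C_S(T)=S$, i.e. $T\leq\Z(S)$. Consequently $\C_S(T)=S\nleq E_2$ as well, so Lemma \ref{properties.T}(3) gives that $O^{p'}(\Out_\F(E_2))$ also centralizes $T$, and $T\leq\Z(S)\cap E_i\leq\Z(E_i)$ for both $i$.

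The heart of the argument is abelianness, and here the key device is to use each essential's central involution on the other module. Fix $i$ and let $j\neq i$. Since $E_j$ is $\F$-essential with $\N_S(E_j)=S$, the central involution of $O^{p'}(\Out_\F(E_j))\cong\SL_2(p)$ lifts to some $\ov\tau\in\Aut_\F(S)$ which centralizes $T$ and inverts $\ov{E_j}$. Analysing the action of $\ov\tau$ on $\ov S\cong p^{1+2}_+$, namely that it inverts the maximal subgroup $\ov{E_j}$ and hence inverts $\Z(\ov S)$, while comparison of commutators in $\ov S$ forces it to act trivially on $\ov S/\ov{E_j}$, I obtain that $\ov\tau$ inverts $Z_0/T$ and centralizes $E_i/Z_0\cong S/E_j$. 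Thus $\ov\tau$ acts on the two-dimensional space $\ov{E_i}$ as a reflection, of determinant $-1$. Because $T\leq\Z(E_i)$, the commutator map $\ov{E_i}\times\ov{E_i}\to[E_i,E_i]$ is a well-defined alternating form, and on a two-dimensional space any linear map scales an alternating form by its determinant; hence $\ov\tau$ sends each $[x,y]$ to $[x,y]^{-1}$. But $[x,y]\in[E_i,E_i]\leq T$ is fixed by $\ov\tau$, so $[x,y]^2=1$ and, $p$ being odd, $[x,y]=1$. Therefore $E_i$ is abelian for both $i$. The delicate point, and the main obstacle, is justifying that $\ov\tau$ really is a reflection on $\ov{E_i}$: this rests on the commutator computation in $p^{1+2}_+$ establishing that $\ov\tau$ centralizes $\ov S/\ov{E_j}$; once that is in place the rest of the step is formal.

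Finally, with $E_1,E_2$ abelian the remaining claims fall out. Since $Z_0=E_1\cap E_2$ lies in each abelian $E_i$ it commutes with $E_i$, hence with $S=E_1E_2$, giving $Z_0\leq\Z(S)$; together with $\Z(S)\leq Z_0$ this yields $\Z(S)=E_1\cap E_2$. For the last assertion, $T$ is centralized by $O^{p'}(\Aut_\F(E_i))$, whereas any $z\in\Z(S)\setminus T$ maps to a nonzero vector of the natural module $\ov{E_i}=E_i/T$, which has no nonzero fixed point; thus $z$ is moved by $O^{p'}(\Aut_\F(E_i))$, and we conclude $T=\C_{\Z(S)}(O^{p'}(\Aut_\F(E_i)))$ for each $i$.
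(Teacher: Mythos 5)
Your proposal is correct and follows essentially the same route as the paper's proof: establish that each $E_i/T$ is a natural $\SL_2(p)$-module, use Lemma \ref{properties.T} and Theorem \ref{char.T} to get $T$ abelian and central in $S$, lift the central involution of $O^{p'}(\Out_\F(E_j))$ to an $\F$-automorphism of $S$, and use its action (determinant $-1$ on $\ov{E_i}$ versus trivial action on $[E_i,E_i]\leq T$) to force $E_i$ abelian, after which the remaining claims are immediate. The only differences are cosmetic: you phrase the final commutator computation via the alternating form scaling by the determinant where the paper computes with explicit elements, and you reach $T\leq \Z(S)$ via $\C_S(T)\supsetneq E_1$ rather than via $E_2=\C_{E_2}(T)$.
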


\begin{proof}
Note that $E_1/T \cong E_2/T \cong \C_p \times \C_p$.
Thus $\Phi(E_i) \leq T$ and by Theorem \ref{class.3} we have $O^{p'}(\Out_\F(E_i))\cong \SL_2(p)$. In particular $E_i/T$ is a natural $\SL_2(p)$-module for $O^{p'}(\Out_\F(E_i))$.
By  Lemma \ref{properties.T} we have $E_i=\C_{E_i}(T)T$ and $\C_S(T)\nleq E_1 \cap E_2$. Thus we may assume $\C_S(T) \nleq E_1$ and so $O^{p'}(\Out_\F(E_1))$ centralizes $T$ (again by Lemma  \ref{properties.T}). Therefore by Theorem \ref{char.T} we deduce that $T$ is abelian, $T\leq \Z(E_1)$, $|[E_1,E_1]|\leq p$ and $T/[E_1,E_1]$ is cyclic.
The fact that $T$ is abelian implies that $E_2=\C_{E_2}(T)T = \C_{E_2}(T)$ and so $T\leq \Z(E_2)$. Since $S=E_1E_2$ we conclude that $T\leq \Z(S)$.

Since $E_1$ is receptive, every morphism in $\N_{O^{p'}(\Aut_F(E_1))}(\Aut_S(E_1))$ is the restriction of an $\F$-automorphism of $\N_S(E_1)=S$. Hence there exists a morphism $\tau \in \Aut_\F(S)$ that acts on $E_1/T$ as the involution $\begin{pmatrix} -1 & 0 \\ 0 & -1 \end{pmatrix}$, with respect to the basis $\{ e_1T, xT\}$ , for some $e_1\in E_1 \backslash E_2$ and $x\in E_1 \cap E_2$.
Since $S/T\cong p^{1+2}_+$, we have $[E_1,E_2]T=E_1\cap E_2$. 
The group $E_2$ is $\F$-characteristic in $S$, so $\tau$ acts on $E_2$ and centralizes the quotient $E_2/(E_1\cap E_2) \cong S/E_1$.
Let $y\in E_2 \backslash E_1$. Then $x\tau =x^{-1}t_1$ and $y\tau=yt_2$, for some $t_1,t_2\in T$. Since $T\leq \Z(S)$ and $\tau$ centralizes $T$, we have
\[ [x,y] = [x,y]\tau= [x^{-1}t_1, yt_2]= [x,y]^{-1}.\]
Since $p$ is an odd prime, we deduce that $[x,y]=1$ and the group $E_2$ is abelian.
Note that $\C_S(T)=S\nleq E_2$ so  $O^{p'}(\Aut_\F(E_2))$ centralizes $T$ by Lemma  \ref{properties.T}. Hence we can repeat the same argument with $E_2$ in place of $E_1$ to prove that $E_1$ is abelian.

Since $E_1$ and $E_2$ are abelian and $S=E_1E_2$, we deduce $E_1 \cap E_2=\Z(S)$. Also, since there exists an involution in $O^{p'}(\Aut_\F(E_i))$ that inverts the quotient $\Z(S)/T$, we conclude that  for every $1\leq i \leq 2$ the group $T$ is the centralizer in $\Z(S)$ of $O^{p'}(\Aut_\F(E_i))$.

\end{proof}

\begin{lemma}\label{PSptype}
 If $S/T$ is isomorphic to a Sylow $p$-subgroup of $\Sp_4(p)$ then, up to interchanging the definitions of $E_1$ and $E_2$, the following hold:
\begin{enumerate}
\item  $\Z(S)=\Z(E_1)$ is the preimage in $S$ of $\Z(S/T)$;
\item $E_1/T \cong p^{1+2}_+$ and $O^{p'}(\Out_\F(E_1))\cong \SL_2(p)$;
\item $E_2$ is abelian, $T=\Phi(E_2)$ and  $O^{p'}(\Out_\F(E_2))\cong \PSL_2(p)$.
\end{enumerate}
\end{lemma}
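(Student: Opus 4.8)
The strategy is to transport the structure of a Sylow $p$-subgroup of $\Sp_4(p)$ through the quotient by $T$ and then to argue as in Lemma~\ref{PSLtype}. Set $P=S/T$. By Theorem~\ref{typeI} we may assume $P$ is a Sylow $p$-subgroup of $\Sp_4(p)$; recall that $P$ has order $p^4$ and class $3$, that $\Z(P)=[P,P,P]$ has order $p$ and $[P,P]$ has order $p^2$, and that $P$ has a unique abelian maximal subgroup (elementary abelian of order $p^3$), every other maximal subgroup being isomorphic to $p^{1+2}_+$. Since $P\not\cong p^{1+2}_+$, part (1) of Lemma~\ref{centr.E.T} applies, and the amalgam constructed in the proof of Theorem~\ref{typeI} is the $C_2(p)$ weak $BN$-pair; hence $E_1/T$ and $E_2/T$ are its two parabolic radicals. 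Interchanging $E_1$ and $E_2$, I fix $E_1/T\cong p^{1+2}_+$ and $E_2/T$ elementary abelian of order $p^3$, and I write $Z$ for the preimage in $S$ of $\Z(P)$, so that $[Z:T]=p$ and $Z\le E_1\cap E_2$.

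First I would fix the outer automorphism groups. Because $E_2/T$ is elementary abelian, $\Phi(E_2)\le T$; the second alternative of Lemma~\ref{T.strict.intersection} applied to $E_2$ would give $[E_2:\Phi(E_2)]=p^4$, violating sectional rank $3$, so $T=\Phi(E_2)$ and $E_2$ has rank $3$. By Lemma~\ref{GLr} the group $O^{p'}(\Out_\F(E_2))$ then acts faithfully on $E_2/T$, and the image of $\Out_S(E_2)$ is regular unipotent (the commutator $[E_2/T,S/E_2]$ has order $p^2$ in $P$), which rules out the reducible case of Theorem~\ref{class.3}; since $\SL_2(p)$ has no faithful irreducible $3$-dimensional $\mathbb{F}_p$-module and $13\nmid|\Sp_4(3)|$, this forces $O^{p'}(\Out_\F(E_2))\cong\PSL_2(p)$, acting as the symmetric square of the natural module. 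Theorem~\ref{Op'.typeI} now yields $O^{p'}(\Out_\F(E_1))\cong\SL_2(p)$ and that $S$ has rank $2$, giving claim (2) and the outer-automorphism part of claim (3).

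The decisive step is to prove $T\le\Z(S)$. By Lemma~\ref{properties.T} we have $\C_{E_2}(T)\nleq T$, and $\C_{E_2}(T)/T$ is an $O^{p'}(\Out_\F(E_2))$-submodule of the irreducible module $E_2/T$; hence $\C_{E_2}(T)=E_2$, so $T\le\Z(E_2)$, $T$ is abelian and $E_2\le\C_S(T)$. Thus $\C_S(T)\in\{E_2,S\}$. If $\C_S(T)=E_2$ then $\C_{E_1}(T)=E_1\cap E_2$, whose image in $E_1/T$ has order $p^2$; but $\C_{E_1}(T)/T$ is invariant under $O^{p'}(\Out_\F(E_1))\cong\SL_2(p)$, whose only invariant subgroups of the Heisenberg group $E_1/T$ have orders $1,p,p^3$ (the $\SL_2(p)$-action on $(E_1/T)/(Z/T)$ is the natural module, since otherwise $\SL_2(p)$ would centralise $E_1/T$, contradicting faithfulness on $E_1/\Phi(E_1)$), a contradiction. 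Hence $\C_S(T)=S$, i.e.\ $T\le\Z(S)$. Now $[E_2,E_2]\le T\le\Z(S)$ is a trivial $O^{p'}(\Out_\F(E_2))$-module, while the commutator realises it as a quotient of $\Lambda^2(E_2/T)\cong E_2/T$, which is irreducible and non-trivial; therefore $[E_2,E_2]=1$ and $E_2$ is abelian, completing claim (3).

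Finally I would identify the centres. From $\Z(E_1)T/T\le\Z(E_1/T)=Z/T$ and $T\le\Z(S)\le\Z(E_1)$ we get $T\le\Z(E_1)\le Z$. Since $T\le\Z(S)$, the group $Z=\langle z\rangle T$ is abelian, and the central involution $\tau$ of $O^{p'}(\Aut_\F(E_1))$ centralises $T$ and $Z$ while inverting $E_1/Z$; for $x\in E_1$ one then computes $\tau([x,z])=[x^{-1}c,z]=[x,z]^{-1}$ with $c\in Z$ (using $[x,z]\in T\le\Z(S)$ and $Z$ abelian), whereas $\tau$ fixes $[x,z]\in T$, so $[x,z]=1$ as $p$ is odd. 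Hence $Z\le\Z(E_1)$ and $\Z(E_1)=Z$. As $Z\le E_2$ with $E_2$ abelian and $[E_1,Z]=1$, we obtain $[S,Z]=1$, so $Z\le\Z(S)\le Z$ and $\Z(S)=\Z(E_1)=Z$ is the preimage of $\Z(S/T)$, which is claim (1). The main obstacle is the centraliser dichotomy $\C_S(T)\in\{E_2,S\}$: everything rests on excluding $\C_S(T)=E_2$, which I do by confronting the irreducibility of the $\PSL_2(p)$-module $E_2/T$ with the very restricted lattice of $\SL_2(p)$-invariant subgroups of the Heisenberg group $E_1/T$; a secondary delicate point is uniformity at $p=3$, handled by $13\nmid|\Sp_4(3)|$ and by the fact that the symmetric square stays irreducible for $\SL_2(3)$.
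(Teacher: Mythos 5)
Your architecture is sound and in places genuinely nicer than the paper's, but two steps as written are flawed. First, the exclusion of the Frobenius group $13{:}3$ at $p=3$: the observation that $13\nmid|\Sp_4(3)|$ is a non sequitur, because $\F$ is not assumed to be realised by $\Sp_4(p)$ or by any group, so $\Out_\F(E_2)$ has no reason to embed into $\Sp_4(3)$; and the regular-unipotent criterion does not exclude $13{:}3$ either, since the order-$3$ element of $13{:}3\leq \GL_3(3)$ (the Frobenius of $\GF(27)$ acting on $\GF(27)\cong \GF(3)^3$) is itself regular unipotent. The conclusion survives only because Theorem \ref{Op'.typeI}, which you invoke in the next sentence anyway, already forces both $O^{p'}(\Out_\F(E_1))$ and $O^{p'}(\Out_\F(E_2))$ into $\{\SL_2(p),\PSL_2(p)\}$; that is also how the paper disposes of $13{:}3$. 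Second, and more seriously since you call it the decisive step: the chain ``$\C_{E_2}(T)/T$ is a submodule of the irreducible module $E_2/T$, hence $\C_{E_2}(T)=E_2$, so $T\leq \Z(E_2)$ and $T$ is abelian'' is circular. A priori $T\nleq \C_{E_2}(T)$ (only $\Z(T)=T\cap\C_{E_2}(T)$), so the quotient $\C_{E_2}(T)/T$ is not defined, and the irreducibility argument yields only $\C_{E_2}(T)T=E_2$. To conclude that $T$ is abelian you must argue as in Theorem \ref{char.T}: from $E_2/\Z(T)\cong T/\Z(T)\times \C_{E_2}(T)/\Z(T)$ with second factor $\cong E_2/T$ elementary abelian of rank $3$, sectional rank $3$ forces $T=\Z(T)$. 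The repair is available (it is exactly what the paper does, with $E_1$ in place of $E_2$), but it is missing from your write-up.

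With those two repairs the proof goes through, and it differs from the paper's in instructive ways. You read off from the \cite{DGS} classification that $\{E_1/T,E_2/T\}$ are the two unipotent radicals of the $C_2(p)$ amalgam, which requires the full ``local isomorphism'' statement of that theorem; the paper extracts only the isomorphism type of $S/T$ from Theorem \ref{typeI} and identifies which $E_i/T$ is elementary abelian by a maximality-of-$\coreF$ argument giving $T=\Phi(E_2)$. Your exclusion of $\C_S(T)=E_2$ via the lattice of $\SL_2(p)$-invariant subgroups of the extraspecial group $E_1/T$, and your proof that $E_2$ is abelian via the equivariant surjection from $\Lambda^2(E_2/T)$ (irreducible and non-trivial) onto the trivial module $[E_2,E_2]$, are cleaner than the paper's explicit commutator computations with the lifted involution $\tau$, at the cost of a little representation theory; the paper's route, by contrast, never needs to know that $\Lambda^2$ of the $3$-dimensional $\PSL_2(p)$-module is irreducible.
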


\begin{proof}
Note that $S/T$ has order $p^4$, center of order $p$ and a unique elementary abelian maximal subgroup; every other maximal subgroup of $S/T$ is extraspecial.  Thus we may assume that $E_1/T$ is extraspecial.
Note that $T\nleq \Phi(E_1)$ and since $S$ has sectional rank $3$,  by Lemma \ref{T.strict.intersection} we get $O^{p'}(\Out_\F(E_1)) \cong \SL_2(p)$ and $[T\Phi(E_1) \colon T] =p$. Note that $T\Phi(E_1)$ is normal in $S$, so $T\Phi(E_1)/T=\Z(S/T)$. If $E_1/T$ has exponent $p^2$ and $\Omega$ is the preimage in $E_1$ of $\Omega_1(E/T)$, then $[E \colon \Omega] =p$ and $\Aut_S(E_1)$ stabilizes the series $\Phi(E_1) < T\Phi(E_1) < \Omega < E_1$. So $\Aut_S(E_1) \leq O_p(\Aut_\F(E_1))=\Inn(E_1)$ by Lemma \ref{char.series}, a contradiction. Thus $E_1/T$ has exponent $p$ and so  $E_1/T\cong p^{1+2}_+$.

If $T\nleq \Phi(E_2)$ then by Lemma \ref{T.strict.intersection} we have $[T\Phi(E_2) \colon T]=p$. Thus $T\Phi(E_2)/T=\Z(S/T)=T\Phi(E_1)/T$, contradicting the maximality of $T$. Therefore $T\leq \Phi(E_2)$ and $\Phi(E_2)/T < \Z(S/T)$. Since $S$ has sectional rank $3$ and $|\Z(S/T)|=p$ we deduce that $T=\Phi(E_2)$.

Suppose that the group $O^{p'}(\Out_\F(E_2))$ is isomorphic to $\SL_2(p)$  and let $C\leq E_2$ be the preimage in $E_2$ of  the group $\C_{E_2/T}(O^{p'}(\Aut_\F(E_2)))$. Then $[C \colon T]  =p$ and since $C\norm S$ we get $C/T=\Z(S/T)=T\Phi(E_1)/T$. Hence $C$ is $\F$-characteristic in $E_1$, $E_2$ and $S$, contradicting the maximality of $T$. Therefore $O^{p'}(\Out_\F(E_2))$ is not isomorphic to $\SL_2(p)$. Since $S$ has sectional rank $3$ we can apply Theorem \ref{Op'.typeI}  to deduce that $O^{p'}(\Out_\F(E_2))\cong \PSL_2(p)$.
In particular the group $\Out_\F(E_2)$ acts irreducibly on $E_2/T$ and since $\C_{E_2}(T)\nleq T$ by Lemma \ref{properties.T} and $T\C_{E_2}(T)$ is $\F$-characteristic in $E_2$, we conclude $T\C_{E_2}(T)=E_2$.

If $\C_S(T) \leq E_1$ then $E_2\leq T\C_S(T)\leq E_1$, a contradiction.
Thus $\C_S(T)\nleq E_1$ and $O^{p'}(\Out_\F(E_1))$ centralizes $T$ by Lemma \ref{properties.T}.
Also, $E_1\cap E_2 \leq E_2 \leq T\C_S(T)$. So $E_1 \cap E_2 \leq T\C_{E_1}(T)$. Since $E_1$ is $\F$-essential, by Lemma \ref{char.series} no proper non-trivial subgroup of $E_1/T\Phi(E_1)$ can be $\F$-characteristic in $E_1$. Therefore we conclude that  $E_1=T\C_{E_1}(T)$ and $S=T\C_S(T)$.

Note that $E_1/\Z(T) \cong T/\Z(T) \times \C_{E_1}(T)/\Z(T)$ and $\C_{E_1}(T)/\Z(T) \cong E_1/T \cong p^{1+2}_+$.
Since $S$ has sectional rank $3$, we deduce that the group $T/\Z(T)$ is cyclic and so $T$ is abelian.
Hence $S=\C_S(T)$ and $T\leq \Z(S)$.

The quotient $E_1/T\Phi(E_1)$ is a natural $\SL_2(p)$-module for the group $O^{p'}(\Out_\F(E_1))$ and the group $E_1$ is receptive. Hence there exists a morphism $\tau \in \Aut_\F(S)$ that acts on $E_1/T\Phi(E_1)$ as the involution $\begin{pmatrix} -1 & 0 \\ 0 & -1 \end{pmatrix}$, with respect to the basis $\{e_1T\Phi(E_1), xT\Phi(E_1)\}$, for some $e_1 \in E_1 \backslash E_2$ and $x \in E_1 \cap E_2$. Also, $\tau$ centralizes $T\Phi(E_1)/T$ and since $[E_1,E_2]T=E_1\cap E_2$, the morphism $\tau$ centralizes the quotient $E_2/(E_1 \cap E_2)$.

Since $\langle x \rangle T\Phi(E_1)/ T\Phi(E_1)$ is the only section of $E_2$ that is not centralized by $\tau$ and $T\leq \Z(E_2)$, we deduce that $\langle x \rangle T\leq \Z(E_2)$. Since $\Z(E_2)$ is $\F$-characteristic in $E_2$ and $\Out_\F(E_2)$ acts irreducibly on $E_2/T$, the only possibility is $E_2=\Z(E_2)$. Thus $E_2$ is abelian.
Similarly, since $T\leq \Z(S)$ and $T\Phi(E_1)/T$ is the only section of $E_1/T$ not inverted by $\tau$, we deduce that $T\Phi(E_1)\leq \Z(E_1)$. Since the group $E_1$ is non-abelian ($E_1/T\cong p^{1+2}_+$) we conclude that $T\Phi(E_1)=\Z(E_1)=\Z(S)$.
\end{proof}

We end this section proving that if $p$ is an odd prime, $S$ has sectional rank $3$,  there are two $\F$-essential subgroups of $S$ that are $\F$-characteristic in $S$ and $O_p(\F)=1$ then $S$ is isomorphic to a Sylow $p$-subgroup of the group $\Sp_4(p)$.

\begin{theorem}\label{Sp4}
Suppose $p$ is an odd prime,  $S$ is a $p$-group of sectional rank $3$ and $\F$ is a saturated fusion system on $S$ such that $O_p(\F)=1$. Assume there exist distinct $\F$-essential subgroups $E_1$ and $E_2$ of $S$ both $\F$-characteristic in $S$. Then $S$ is isomorphic to a Sylow $p$-subgroup of the group $\Sp_4(p)$, $E_i \cong p^{1+2}_+$ and $E_j\cong \C_p \times \C_p \times \C_p$, where $\{i,j\} = \{1,2\}$.
\end{theorem}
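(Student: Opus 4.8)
The plan is to start from the dichotomy of Theorem~\ref{typeI}: either $S/T\cong p^{1+2}_+$ or $S/T$ is isomorphic to a Sylow $p$-subgroup of $\Sp_4(p)$, where $T=\coreF(E_1,E_2)$. In both cases Lemmas~\ref{PSLtype} and~\ref{PSptype} give $T\leq \Z(S)$, and the entire theorem reduces to proving that $T=1$. Indeed, in the second case $T=1$ yields $S\cong S/T$, a Sylow $p$-subgroup of $\Sp_4(p)$, with $E_1\cong p^{1+2}_+$ and $E_2\cong \C_p\times\C_p\times\C_p$ by Lemma~\ref{PSptype}; in the first case $T=1$ forces $S\cong p^{1+2}_+$, which has sectional rank $2$, contradicting the hypothesis and thereby excluding that case. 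To reach $T=1$ I would prove $T\norm \F$ and apply $O_p(\F)=1$. Since $T\leq \Z(S)$ is contained in every $\F$-centric subgroup, and $T$ is $\Aut_\F(S)$-invariant by definition of the $\F$-core, it suffices to show that $T$ is normalized by $\Aut_\F(P)$ for every $\F$-essential subgroup $P$ (equivalently, that $T$ is strongly closed).

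The engine is a power-map argument for abelian essentials. Let $E$ be an abelian $\F$-essential subgroup with $\Phi(E)\leq T$. Because $T\leq \Z(S)\leq \Z(E)$, both $\Inn(E)$ and $\Aut_S(E)$ centralize $T$; as $T$ is $\Aut_\F(E)$-invariant, so does every $\Aut_\F(E)$-conjugate of $\Aut_S(E)$, and hence $O^{p'}(\Out_\F(E))=\langle\Aut_S(E)^{\Aut_\F(E)}\rangle$ centralizes $T$. Since $E$ is abelian, the $p$-th power map is an $\Aut_\F(E)$-equivariant homomorphism $E\to \mho^1(E)=\Phi(E)$ with kernel $\Omega_1(E)$, inducing an equivariant isomorphism $E/\Omega_1(E)\cong \Phi(E)$; as $O^{p'}(\Out_\F(E))$ centralizes $\Phi(E)\leq T$ it centralizes $E/\Omega_1(E)$, so $[E,O^{p'}(\Out_\F(E))]\leq \Omega_1(E)$. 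In the $\Sp_4(p)$ case I apply this with $E=E_2$, where $\Phi(E_2)=T$ and $O^{p'}(\Out_\F(E_2))\cong \PSL_2(p)$ acts irreducibly and non-trivially on $E_2/T$ (Lemma~\ref{PSptype}); hence $[E_2,O^{p'}(\Out_\F(E_2))]\,T=E_2$, so $E_2=\Omega_1(E_2)\Phi(E_2)=\Omega_1(E_2)$ is elementary abelian and $T=\Phi(E_2)=1$, which finishes that case.

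For the case $S/T\cong p^{1+2}_+$, Lemma~\ref{PSLtype} gives that $E_1$ and $E_2$ are abelian with $E_1\cap E_2=\Z(S)$. If some $E_i$ has rank $2$, then $\Phi(E_i)=T$ and the same power-map argument, now with $O^{p'}(\Out_\F(E_i))\cong \SL_2(p)$ acting as the natural module on $E_i/T$, forces $T=1$ and hence $S\cong p^{1+2}_+$ of sectional rank $2$, which is excluded; so both $E_1,E_2$ have rank $3$. I would then classify all $\F$-essential subgroups: any such $P$ contains $T\leq \Z(S)$ and is $\F$-centric, while $\Z(S)$ is not $\F$-centric because $S$ is non-abelian; since also $P\neq S$, the quotient $P/T$ must be a maximal subgroup of $S/T\cong p^{1+2}_+$. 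Thus every $\F$-essential subgroup is one of the $p+1$ index-$p$ subgroups of $S$ containing $T$, each normal in $S$. It then remains to show that each such $P$ normalizes $T$: using that $\Aut_S(P)$ centralizes $T\leq \Z(S)$ together with the coprime and power-map structure of $P$, I would aim to identify $T$ with the $\Aut_\F(P)$-invariant subgroup $\C_{\Z(S)}(O^{p'}(\Out_\F(P)))$. This gives $T\norm \F$, whence $T\leq O_p(\F)=1$, producing the sectional-rank contradiction that eliminates the $p^{1+2}_+$ case.

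The main obstacle is exactly this last step of the $p^{1+2}_+$ case: establishing $T\norm \F$ requires controlling \emph{every} $\F$-essential subgroup, not only the two $\F$-characteristic ones $E_1$ and $E_2$ for which the invariance of $T$ is built into the definition of $\coreF$. The reduction to the $p+1$ index-$p$ subgroups containing $T$ is clean, but proving that each of them normalizes the central subgroup $T$ (equivalently, that $T$ is strongly closed) is the delicate point, and it is here that the hypothesis $O_p(\F)=1$ must be exploited through the module structure of the quotients $E_i/T$ rather than by direct computation.
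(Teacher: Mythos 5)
Your global strategy coincides with the paper's: reduce everything to showing $T=\coreF(E_1,E_2)\norm \F$ (so that $O_p(\F)=1$ forces $T=1$), split according to the dichotomy of Theorem~\ref{typeI}, and kill the $p^{1+2}_+$ branch on sectional-rank grounds. Your treatment of the $\Sp_4(p)$ branch, however, is genuinely different from the paper's and, as far as I can check, correct and cleaner: since $E_2$ is abelian with $\Phi(E_2)=T\leq \Z(S)$ (Lemma~\ref{PSptype}), the $p$-th power map gives an $\Aut_\F(E_2)$-equivariant isomorphism $E_2/\Omega_1(E_2)\cong \Phi(E_2)$, so $[E_2,O^{p'}(\Aut_\F(E_2))]\leq \Omega_1(E_2)$, while irreducibility of the $\PSL_2(p)$-action on $E_2/T$ forces $[E_2,O^{p'}(\Aut_\F(E_2))]T=E_2$; hence $E_2=\Omega_1(E_2)$ is elementary abelian and $T=\Phi(E_2)=1$ outright, with no need to control any further essential subgroups and without even invoking $O_p(\F)=1$ in this branch. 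The paper instead proves $T\norm\F$ by a case analysis on a third essential subgroup $E_3$ (according to $[E_3:\Z(S)]\in\{p,p^2\}$), with commutator computations; your argument bypasses all of that.

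The gap is where you say it is: the $p^{1+2}_+$ branch with $E_1,E_2$ both of rank $3$. There you reduce correctly to the $p+1$ maximal subgroups of $S$ containing $\Z(S)$, but the claim that for each such essential $P$ one can ``identify $T$ with $\C_{\Z(S)}(O^{p'}(\Out_\F(P)))$'' is left as an aim, and it does not follow from the power-map structure alone: a priori $\C_{\Z(S)}(O^{p'}(\Out_\F(P)))$ is some index-$p$ subgroup of $\Z(S)$ with no visible reason to equal $T$, and the core machinery (Definition~\ref{core}, Lemma~\ref{PSLtype}) only applies to pairs of essential subgroups that are $\F$-characteristic in $S$. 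The paper closes this in two steps you are missing. First, if $P$ is \emph{not} $\F$-characteristic in $S$, then $O^{p'}(\Out_\F(P))\cong\SL_2(p)$ lifts to an $\F$-automorphism of $S$ that inverts $P/\Z(S)$ and centralizes $S/P$, hence does not act as a scalar on $S/\Z(S)\cong\C_p\times\C_p$; but it normalizes the three distinct maximal subgroups $E_1$, $E_2$, $P$ of $S$ containing $\Z(S)$, which forces a scalar action --- a contradiction. So every essential subgroup here is $\F$-characteristic in $S$. Second, for such a $P$ one applies Theorem~\ref{typeI} and Lemma~\ref{PSptype} to the pair $(E_1,P)$: since both are abelian, the $\Sp_4(p)$ alternative is impossible, so $S/\coreF(E_1,P)\cong p^{1+2}_+$ and Lemma~\ref{PSLtype} identifies $\coreF(E_1,P)=\C_{\Z(S)}(O^{p'}(\Out_\F(E_1)))=T$, whence $T$ is $\F$-characteristic in $P$. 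Without these two steps your proof of $T\norm\F$ in the $p^{1+2}_+$ case is incomplete, and the sectional-rank contradiction cannot be drawn.
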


\begin{proof} Set $T=\coreF(E_1,E_2)$. We aim to prove that $T \norm \F$.
By Theorem \ref{typeI} either $S/T \cong p^{1+2}_+$ or $S/T$ is isomorphic to a Sylow $p$-subgroup of the group $\Sp_4(p)$. Note that the group $p^{1+2}_+$ has sectional rank $2$. Since $S$ has sectional rank $3$, if $T\norm \F$ then $T=O_p(\F)=1$ and $S$ is isomorphic to a Sylow $p$-subgroup of the group $\Sp_4(p)$. Note that by Lemmas \ref{PSLtype} and \ref{PSptype} we have $T\leq \Z(S)$. So $T$ is contained in every $\F$-essential subgroup of $S$ (recall that $\F$-essential subgroups are self-centralizing in $S$). By \cite[Proposition I.4.5]{AKO} and the fact that $T$ is $\F$-characteristic in $S$ by definition, to prove that $T$ is normal in $\F$ it is enough to show that $T$ is $\F$-characteristic in every $\F$-essential subgroup of $S$. By definition $T$ is $\F$-characteristic in $E_1$ and $E_2$. Suppose there exists an $\F$-essential subgroup $E_3$ of $S$ distinct from $E_1$ and $E_2$.

Assume $S/T\cong p^{1+2}_+$.
Since $\Z(S)<E_3$ and $\Z(S)$ has index $p^2$ in $S$ we get that $E_3$ is abelian and normal in $S$.
\begin{itemize}
\item Suppose $E_3$ is $\F$-characteristic in $S$ and set $T_{1,3}= \coreF(E_1,E_3)$. Since both $E_1$ and $E_3$ are abelian, by Theorem \ref{typeI} and Lemma \ref{PSptype} we deduce that $S/T_{1,3}\cong p^{1+2}_+$. Hence by Lemma  \ref{PSLtype} we have $T_{1,3} = \C_{\Z(S)}(O^{p'}(\Out_\F(E_1))) = T$. Therefore $T$ is $\F$-characteristic in $E_3$.

\item Suppose $E_3$ is not $\F$-characteristic in $S$. Then $O^{p'}(\Out_\F(E_3))\cong \SL_2(p)$ by Theorem \ref{typeIIauto} and since $E_3$ is receptive there exists an $\F$-automorphism $\varphi$ of $\N_S(E_3)=S$ that inverts $E_3/C$, where $C$ is the preimage in $E_3$ of the group $\C_{E_3/\Phi(E_3)}(O^{p'}(\Out_\F(E_3))$. Thus $\varphi$ inverts $E_3/\Z(S)$ and centralizes $S/E_3$. In particular the action of $\varphi$ on $S/\Z(S)$ is not scalar. However, $\varphi$ normalizes $E_1$, $E_2$ and $E_3$ and we get a contradiction.
\end{itemize}
Hence if $S/T \cong p^{1+2}_+$ then the group $T$ is $\F$-characteristic in every $\F$-essential subgroup of
$S$ and is therefore normal in $\F$.

 Assume $S/T$ is isomorphic to a Sylow $p$-subgroup of the group $\Sp_4(p)$. Then by Lemma \ref{PSptype} we can assume that $E_1/T\cong p^{1+2}_+$ and $E_2$ is abelian. Note that $[S \colon \Z(S)]=p^3$ so $[E_3 \colon \Z(S)]\leq p^2$.

Suppose $[E_3 \colon \Z(S)]=p^2$. Then $E_3$ is normal in $S$.
If $\Z(S)$ is not $\F$-characteristic in $E_3$, then $\Z(S) < \Z(E_3)$ and so $E_3$ is abelian. In particular $\Z(S)=E_2 \cap E_3$ has index $p$ in $E_3$, which is a contradiction.
Thus $\Z(S)$ is $\F$-characteristic in $E_3$. Let $G_3$ be a model for $\N_\F(E_3)$. Then  $T^g\leq \Z(S)$ for every $g\in G_3$. Since $T$ is $\F$-characteristic in $S$ and $G_3=\langle S^{G_3} \rangle\N_{G_3}(S)$ by the Frattini Argument, we conclude that $G_3$ normalizes $T$ and so $T$ is $\F$-characteristic in $E_3$.

Suppose $[E_3\colon \Z(S)]=p$. Then $E_3$ is abelian and not normal in $S$. Set $T_3=\coreF(E_3)$ and suppose $T\neq T_3$.
Note that $\Z(S)/T_3=\Z(S/T_3)$ (since $\Z(S/T_3) < E/T_3$) and so $[ E_3/T_3 \colon \Z(S/T_3)]=p$. Thus by Lemma \ref{centerN2} we have $[E_3 \colon T_3] =p^2$.

Let $\varphi \in O^{p'}(\Aut_\F(E_3))$ be a morphism that normalizes $\Z(S)$, inverts the quotient $E_3/T_3$ and centralizes $T_3$. Such a morphism exists because $O^{p'}(\Out_\F(E_3))\cong \SL_2(p)$ by Theorem \ref{typeIIauto} and $T_3$ is centralized by  $O^{p'}(\Out_\F(E_3))$ (Lemma \ref{prop.T.II}).
Note that $\varphi$ is a restriction to $E_3$ of an $\F$-automorphism $\ov{\varphi}$ of $S$ by Lemma \ref{lift.to.N2}. Also we have $[E_3,S] \leq \Z_2(S) \backslash \Z(S)$ and $[E_3, \Z_2(S)]\leq \Z(S) \backslash T_3$.  Using properties of commutators (\cite[Theorem 2.2.1, Lemma 2.2.2]{GOR}), it is not hard to see that the action of $\ov{\varphi}$ on the sections of $S/T\cap T_3$ is as described in Figure \ref{PSp}.

\begin{figure}[H]
\centering
\begin{tikzpicture}[x=1.00mm, y=1.00mm, inner xsep=0pt, inner ysep=0pt, outer xsep=0pt, outer ysep=0pt]
%\path[line width=0mm] (-17.42,-46.43) rectangle +(34.30,62.56);
\definecolor{L}{rgb}{0,0,0}
\path[line width=0.30mm, draw=L] (-0.0,-10.) -- (-10.0,0.00);
\definecolor{F}{rgb}{0,0,0}
\path[line width=0.15mm, draw=L, fill=F] (-10.,-0.) circle (0.50mm);
\path[line width=0.15mm, draw=L, fill=F] (-0.0,10) circle (0.50mm);
\path[line width=0.15mm, draw=L, fill=F] (-0.0,-10.0) circle (0.50mm);
\path[line width=0.15mm, draw=L, fill=F] (10.0,-0.0) circle (0.50mm);
\draw(11.3,-31) node[anchor=base west]{\fontsize{8.54}{10.24}\selectfont $T$};
\draw(2.5,-11.53) node[anchor=base west]{\fontsize{8.54}{10.24}\selectfont $\Z_2(S)$};
\draw(11.3,-1.12) node[anchor=base west]{\fontsize{8.54}{10.24}\selectfont $E_1$};
\draw(-15,-10.66) node[anchor=base west]{\fontsize{8.54}{10.24}\selectfont $E_3$};
\path[line width=0.15mm, draw=L, fill=F] (0.0,-20.0) circle (0.50mm);
\draw(-0.82,11.78) node[anchor=base west]{\fontsize{8.54}{10.24}\selectfont $S$};
\path[line width=0.15mm, draw=L, fill=F] (-10,-10.) circle (0.50mm);
\draw(2.5,-20.70) node[anchor=base west]{\fontsize{8.54}{10.24}\selectfont $\Z(S)$};
\path[line width=0.30mm, draw=L] (10,0.) -- (-0.,10.);
\path[line width=0.30mm, draw=L] (-10,-10.) -- (-10,-0.0);
\path[line width=0.30mm, draw=L] (-10,-0.0) -- (0.,10.0);
\path[line width=0.30mm, draw=L] (10,-30) -- (-10.,-10);
\path[line width=0.30mm, draw=L] (-0.,-10.) -- (-0.,-20.);
\path[line width=0.30mm, draw=L] (-10,-30) -- (0,-20);
\path[line width=0.30mm, draw=L] (-0.0,-40.00) -- (-10.0,-30.0);
\path[line width=0.30mm, draw=L] (-0.0,-40) -- (10.0,-30);
\path[line width=0.15mm, draw=L, fill=F] (-10,-30) circle (0.50mm);
\path[line width=0.15mm, draw=L, fill=F] (10.0,-30) circle (0.50mm);
\path[line width=0.15mm, draw=L, fill=F] (-0.0,-40.0) circle (0.50mm);
\draw(-15,-31) node[anchor=base west]{\fontsize{8.54}{10.24}\selectfont $T_3$};
\draw(-20.94,-0.55) node[anchor=base west]{\fontsize{8.54}{10.24}\selectfont $\N_S(E_3)$};
\draw(-5,-43.78) node[anchor=base west]{\fontsize{8.54}{10.24}\selectfont $T_3 \cap T$};
\definecolor{T}{rgb}{1,0,0}
\draw[T] (-7.52,-37.05) node[anchor=base west]{\fontsize{8.54}{10.24}\selectfont \textcolor[rgb]{0, 0, 0}{1}};
\draw[T] (-7.95,-25) node[anchor=base west]{\fontsize{8.54}{10.24}\selectfont \textcolor[rgb]{0, 0, 0}{-1}};
\draw[T] (-7.95,-16.90) node[anchor=base west]{\fontsize{8.54}{10.24}\selectfont \textcolor[rgb]{0, 0, 0}{-1}};
\draw[T] (1.11,-16) node[anchor=base west]{\fontsize{8.54}{10.24}\selectfont \textcolor[rgb]{0, 0, 0}{1}};
\draw[T] (-7.38,6.28) node[anchor=base west]{\fontsize{8.54}{10.24}\selectfont \textcolor[rgb]{0, 0, 0}{-1}};
\draw[T] (6.86,-6.91) node[anchor=base west]{\fontsize{8.54}{10.24}\selectfont \textcolor[rgb]{0, 0, 0}{-1}};
\draw[T] (-13,-6.82) node[anchor=base west]{\fontsize{8.54}{10.24}\selectfont \textcolor[rgb]{0, 0, 0}{1}};
\draw[T] (5.86,-25) node[anchor=base west]{\fontsize{8.54}{10.24}\selectfont \textcolor[rgb]{0, 0, 0}{1}};
\draw[T] (6.01,-37.05) node[anchor=base west]{\fontsize{8.54}{10.24}\selectfont \textcolor[rgb]{0, 0, 0}{-1}};
\path[line width=0.30mm, draw=L] (0.,-10) -- (10.,0.);
\end{tikzpicture}%
\caption{}\label{PSp}
\end{figure}

In particular we get $[E_1,\Z_2(S)]\leq T$ and so the group $E_1/T$ is abelian, contradicting the assumption that $E_1/T$ is extraspecial. Thus $T=T_3$ is $\F$-characteristic in $E_3$.

Hence $T$ is $\F$-characteristic in every $\F$-essential subgroup of $S$ and is therefore normal in $\F$. This, together with the assumptions that $S$ has sectional rank $3$ and $O_p(\F)=1$, completes the proof.
\end{proof}

\section{Proof of Theorem \ref{normal}}

Throughout this section, we assume the following hypothesis.

\begin{hp}
Let $p$ be an odd prime, let $S$ be a $p$-group of sectional rank $3$ and let $\F$ be a saturated fusion  system on $S$.
Let $E\leq S$ be an $\F$-essential subgroup of $S$ not $\F$-characteristic in $S$ and set $T=\coreF(E) < E$.
\end{hp}

\begin{definition}
For every subgroup $P\leq S$ containing $\Z(S)$ we define
\[Z_P=\langle ~ \Omega_1(\Z(S)) ^ {\Aut_\F(P)} ~ \rangle.\]
\end{definition}

\begin{remark}
Note that $Z_S=\Omega_1(\Z(S))$ and $Z_S \leq Z_P \leq \Omega_1(\Z(P))$.
In particular $Z_P$ is elementary abelian and the assumption on the sectional rank of $S$ implies $|Z_P|\leq p^3$.
\end{remark}

The group $Z_S$ is an $\F$-characteristic subgroup of $S$ contained in every $\F$-essential subgroup of $S$ (recall that every $\F$-essential subgroup is self-centralizing in $S$). If $O_p(\F)=1$ then there exists and $\F$-essential subgroup $P$ of $S$ such that $Z_S < Z_P$. When this happens we say that $P$ \emph{moves} $Z_S$. In this section we study the structure of the $\F$-essential subgroups of $S$ that move $Z_S$, aiming for the proof of Theorem \ref{normal}.

\begin{lemma}\label{lemmaZS}
If $Z_ST = Z_ET$ then $Z_S \leq T$.
\end{lemma}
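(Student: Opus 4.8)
The plan is to prove the contrapositive-free statement directly by showing that, under the hypothesis $Z_ST=Z_ET$, the subgroup $Z_ST$ is $\F$-characteristic in both $E$ and $\N_S(E)$, and then to invoke the maximality built into the definition of the $\F$-core (Definition \ref{core}) to force $Z_ST\leq \coreF(E)=T$. First I would record the containment $Z_S\leq E$: since $E$ is $\F$-essential it is $\F$-centric, so $\Z(S)\leq \C_S(E)\leq E$ and hence $Z_S=\Omega_1(\Z(S))\leq E$; consequently $Z_ST\leq E$.

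The first key step is that the hypothesis makes $Z_ST$ $\F$-characteristic in $E$. Indeed, $T$ is $\F$-characteristic in $E$ by the definition of the $\F$-core, and $Z_E=\langle Z_S^{\Aut_\F(E)}\rangle$ is $\Aut_\F(E)$-invariant by construction; therefore $Z_ET$ is normalized by $\Aut_\F(E)$, and since $Z_ST=Z_ET$ this transfers to $Z_ST$. The second, more delicate step is to show $Z_ST$ is $\F$-characteristic in $\N:=\N_S(E)$. If $\N=S$ this is immediate, because $Z_S$ is characteristic in $S$ and $T$ is $\F$-characteristic in $\N_S(E)=S$ by definition. If $\N<S$, I would apply Lemma \ref{lift.to.N2}, which yields the factorization $\Aut_\F(\N)=\Aut_S(\N)\,\N_{\Aut_\F(\N)}(E)$; it then suffices to check that each factor preserves $Z_ST$. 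An automorphism in $\N_{\Aut_\F(\N)}(E)$ normalizes $E$, so its restriction lies in $\Aut_\F(E)$ and preserves $Z_ST$ by the previous step, while $\Aut_S(\N)$, being conjugation by elements of $\N_S(\N)\leq S$, fixes $Z_S\leq \Z(S)$ pointwise and preserves $T$ (which is already $\Aut_\F(\N)$-invariant).

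The main obstacle is precisely the control of those automorphisms of $\N_S(E)$ that do not normalize $E$: for such an automorphism $\beta$ one only knows a priori that $Z_S\beta\leq\Omega_1(\Z(\N_S(E)))$, which need not lie in $Z_ST$, so a naive case split on whether $\beta$ normalizes $E$ stalls. Lemma \ref{lift.to.N2} is exactly what removes this difficulty, by letting me generate $\Aut_\F(\N_S(E))$ from the inner part $\Aut_S(\N_S(E))$ (which fixes the central subgroup $Z_S$) together with the $E$-normalizing automorphisms (which are handled by the first step). Once both characteristic properties are in hand, the conclusion is immediate: $Z_ST$ is $\F$-characteristic in $E$ and in $\N_S(E)$ and is contained in $E$, so by the maximality of $T=\coreF(E)$ we obtain $Z_ST\leq T$, that is, $Z_S\leq T$.
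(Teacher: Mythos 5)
Your proposal is correct and follows essentially the same route as the paper: show $Z_ST=Z_ET$ is $\F$-characteristic in $E$, handle $\N_S(E)$ by splitting into the cases $\N_S(E)=S$ and $\N_S(E)<S$ via the factorization $\Aut_\F(\N_S(E))=\Aut_S(\N_S(E))\,\N_{\Aut_\F(\N_S(E))}(E)$ from Lemma \ref{lift.to.N2}, and conclude by maximality of $T=\coreF(E)$. The only difference is that you spell out a few routine details (e.g.\ that $Z_S\leq E$ by $\F$-centricity and that $E$-normalizing automorphisms restrict to $\Aut_\F(E)$) which the paper leaves implicit.
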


\begin{proof}
Recall that by definition $T$ is the largest subgroup of $E$ that is $\F$-characteristic in both $E$ and $\N_S(E)$.
Note that the group $Z_ET$ is $\F$-characteristic in $E$ and the group $Z_ST$ is normalized by $\Aut_S(\N_S(E))\cong \N_S(\N_S(E))/\Z(\N_S(E))$.
Suppose  $Z_ST=Z_ET$. If $\N_S(E)=S$ then $Z_ST$ is $\F$-characteristic in $\N_S(E)$. If $\N_S(E) < S$ then $\Aut_\F(\N_S(E))=\Aut_S(\N_S(E))\N_{\Aut_\F(\N_S(E))}(E)$ by Lemma \ref{lift.to.N2} and so $Z_ST=Z_ET$ is $\F$-characteristic in $\N_S(E)$. Therefore in any case the group $Z_ST=Z_ET$ is $\F$-characteristic in $E$ and $\N_S(E)$. Hence $Z_ST = T$ by maximality of $T$ and so $Z_S \leq T$.
\end{proof}

\begin{theorem}\label{center.fixed.in.T}
We have
\[ Z_S = Z_E \text{ if and only if } Z_S \leq T.\]
\end{theorem}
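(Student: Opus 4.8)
The plan is to prove the two implications separately; the forward one is essentially free, while the reverse one carries all the content.

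For the direction $Z_S = Z_E \Rightarrow Z_S \leq T$, I would simply invoke Lemma \ref{lemmaZS}: if $Z_S = Z_E$ then certainly $Z_ST = Z_ET$, and the lemma yields $Z_S \leq T$ at once.

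For the reverse direction $Z_S \leq T \Rightarrow Z_S = Z_E$, the first step is to record two consequences of the hypothesis. Since $E$ is $\F$-centric we have $Z_S \leq \Z(S) \leq \C_S(E) \leq E$, and as $Z_S \leq \Z(S)$ this forces $Z_S \leq \Z(E)$; in particular $Z_S \leq Z_E \leq \Omega_1(\Z(E))$ always. Because $T$ is $\F$-characteristic in $E$ and $Z_S \leq T$, every $\Aut_\F(E)$-image of $Z_S$ lies in $T$, so $Z_E \leq T$. Moreover $\Inn(E)$ fixes $Z_S$ pointwise (as $Z_S \leq \Z(E)$), while $O^{p'}(\Out_\F(E))$ centralizes $T \ni Z_S$ by Lemma \ref{prop.T.II}(3); hence $O^{p'}(\Aut_\F(E))$ fixes $Z_S$ pointwise. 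Since $\Aut_S(E)$ is a Sylow $p$-subgroup of $\Aut_\F(E)$ lying inside $O^{p'}(\Aut_\F(E))$, the Frattini argument gives $\Aut_\F(E) = O^{p'}(\Aut_\F(E)) \cdot N$ with $N = N_{\Aut_\F(E)}(\Aut_S(E))$, and because the first factor fixes $Z_S$ we obtain $Z_E = \langle Z_S^{N} \rangle$. As $E$ is receptive, every element of $N$ is the restriction of an $\F$-automorphism of $\N^1 = \N_S(E)$, and since $Z_S \leq E$ such a restriction moves $Z_S$ exactly as the corresponding automorphism of $\N^1$ does. Thus the whole problem reduces to showing that $Z_S$ is $\F$-characteristic in $\N^1$.

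To establish that, I would climb the normalizer tower $E = \N^0 < \N^1 < \dots < \N^m = S$ by downward induction. At the top, $Z_S = \Omega_1(\Z(S))$ is manifestly $\Aut_\F(S)$-invariant. For the inductive step, assuming $Z_S$ is $\F$-characteristic in $\N^{i+1}$, I would apply Lemma \ref{lift.Ni}(3) with $K = T$ to conclude that every morphism of $\Aut_\F(\N^i)$ is the restriction of a morphism of $\Aut_\F(\N^{i+1})$; such a lift stabilizes $Z_S$ by the inductive hypothesis, and restricting back (recall $Z_S \leq E \leq \N^i$) shows that $\Aut_\F(\N^i)$ stabilizes $Z_S$, i.e. $Z_S$ is $\F$-characteristic in $\N^i$. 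Running this down to $i = 1$ finishes the reduction, and hence the theorem. The main obstacle is precisely the hypothesis required to invoke Lemma \ref{lift.Ni}(3) at each stage: it demands that $T = \coreF(E)$ be $\F$-characteristic not only in $\N^1$ (where it holds by definition) but in every $\N^i$ along the tower. Propagating the characteristicness of $T$ upward — equivalently, verifying that neither $\N^{i+1}$ nor $\Aut_\F(\N^{i+1})$ can move $T$ — is where the soft-subgroup structure underlying Lemma \ref{lift.Ni} (the maximal normalizer tower, the uniqueness of the intermediate members, and the consequent fact that each $\N^i$ fails to be $\F$-essential) must be deployed carefully. This bookkeeping, rather than any single sharp estimate, is the delicate point of the argument.
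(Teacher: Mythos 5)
The forward implication is handled exactly as in the paper (via Lemma \ref{lemmaZS}), and your opening reductions for the converse --- $Z_E\leq T$, the fact that $O^{p'}(\Aut_\F(E))$ fixes $Z_S$ because $O^{p'}(\Out_\F(E))$ centralizes $T$ and $\Inn(E)$ fixes $\Z(E)\geq Z_S$, and the Frattini decomposition $\Aut_\F(E)=O^{p'}(\Aut_\F(E))\N_{\Aut_\F(E)}(\Aut_S(E))$ --- all coincide with the first third of the paper's argument. The reduction to ``$Z_S$ is $\F$-characteristic in $\N^1$'' is also sound. But from that point on your proof has a genuine gap rather than mere bookkeeping. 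Your downward induction along the normalizer tower rests on Lemma \ref{lift.Ni}(3) with $K=T$, which requires (a) that $S=\N^m\leq \N_S(T)$, i.e. $T\norm S$, just to set the lemma up with $j=m$, and (b) that $T$ be $\F$-characteristic in \emph{every} $\N^i$ for $1\leq i\leq m-1$. Neither is known: $T=\coreF(E)$ is by definition only $\F$-characteristic in $E$ and $\N^1$ (hence normal in $\N^2$), and the paper has to work hard (Lemma \ref{Tincenter}) to push characteristicness even one step up to $\N^2$, and only under the extra hypothesis $[E\colon T]=p^2$. For $p=3$ the tower can be long, and no mechanism is offered for propagating the $\F$-characteristicness of $T$ upward. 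Worse, the paper's own proof observes that in a putative counterexample the offending automorphism $\alpha\in\N_{\Aut_\F(E)}(\Aut_S(E))$ is \emph{not} the restriction of any $\F$-automorphism of $S$; so in exactly the situation you must exclude, the tower-climbing you propose necessarily breaks down at some stage, and identifying why it cannot break down when $Z_S\leq T$ is the entire content of the theorem, not a routine verification.

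The paper avoids this local obstruction by going global: it proves the contrapositive ($Z_S<Z_E\Rightarrow Z_S\nleq T$) by choosing a maximal counterexample $E$, writing $\ov{\alpha}\in\Aut_\F(\N_S(E))$ as a composite of automorphisms of essential subgroups via Alperin's Fusion Theorem, and extracting an $\F$-essential subgroup $P\geq \N_S(E)$ with $Z_S<Z_P$; it then derives contradictions separately according to whether $P$ is $\F$-characteristic in $S$ or not, using Theorem \ref{characterization.E}, Lemma \ref{lemmaZS} and Lemma \ref{lift.Ni} applied with $K=Z_P$ (not $K=T$). To repair your argument you would either have to supply a proof that $T$ is $\F$-characteristic in each member of the tower under the hypothesis $Z_S\leq T$ --- which is not established anywhere in the paper and is at least as hard as the theorem itself --- or abandon the purely local strategy in favour of the paper's Alperin-type analysis.
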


\begin{proof}
If $Z_S = Z_E$  then $Z_S \leq T$ by Lemma \ref{lemmaZS}. We want to prove that if $Z_S < Z_E$ then $Z_S \nleq T$.
Aiming for a contradiction, assume there exists an $\F$-essential subgroup
$E$ of $S$, not $\F$-characteristic in $S$, such that $Z_S < Z_E$ and $Z_S \leq T$. We can choose $E$ to be a maximal counterexample (with respect to inclusion) among the $\F$-essential subgroups of $S$ not $\F$-characteristic in $S$.
From $Z_S \leq T$ we get $Z_E \leq T$. So $Z_E\leq \Omega_1(T)$ and by Theorem \ref{characterization.E} and the fact that $Z_S < Z_E$ we conclude $|Z_E|=p^2$ and $|Z_S|=p$.
By Theorem \ref{prop.T.II} the group $O^{p'}(\Out_\F(E))$ centralizes $T$.
Note that $\Inn(S)$ acts trivially on $Z_S$, so the group $O^{p'}(\Aut_\F(E))$ centralizes $Z_S$.
By the Frattini argument we have
\[ \Aut_\F(E) = O^{p'}(\Aut_\F(E))\N_{\Aut_\F(E)}(\Aut_S(E)).\]
Then we may assume that there exists $\alpha \in \N_{\Aut_\F(E)}(\Aut_S(E))$ of order prime to $p$ such that $Z_S\alpha \neq Z_S$. Note that $\alpha$ is the restriction to $E$ of an $\F$-automorphism $\ov{\alpha}$ of $\N_S(E)$ (since $E$ is receptive) but it is not a restriction of an $\F$-automorphism of $S$ (otherwise it normalizes $Z_S$).
 In particular the group $E$ is not abelian by Corollary \ref{lift.Ni.ab}.

By Alperin's Fusion Theorem there exist subgroups $P_1, P_2, \dots , P_n$ of $S$ and morphisms $\phi_i \in \Aut_\F(P_i)$ for every $1 \leq i \leq n$ such that
\begin{itemize}
\item every $P_i$ is either $\F$-essential or equal to $S$,
\item $\N_S(E) \leq P_1 \cap P_n$, and
\item $\phi_1\cdot\phi_2\cdots\phi_n|_{\N_S(E)} = \ov{\alpha}$.
\end{itemize}
Suppose $Z_S\phi_1 = Z_S$. Note that $E\phi_1$ is an $\F$-essential subgroup of $S$ isomorphic to $E$ by Lemma \ref{indexp} and $\Aut_\F(E)=\phi_1\Aut_\F(E\phi_1)\phi_1^{-1}$. In particular $Z_S$ is not normalized by $\Aut_\F(E\phi_1)$. Also $\Z_S \leq T\phi_1=\coreF(E\phi_1)$ by Lemma \ref{same.T} and we can replace $E$ by $E\phi_1$. Thus we can assume that $Z_S\phi_1 \neq Z_S$. In particular $P=P_1$ is an  $\F$-essential subgroup of $S$ containing $\N_S(E)$ such that $Z_S < Z_{P}$.

Suppose $P$ is not $\F$-characteristic in $S$
and set $T_P=\coreF(P)$. Then by the choice of $E$ we have $Z_S \nleq T_P$. In particular $T\nleq T_P$ and since $\Phi(E)\leq \Phi(P)\leq T_P$ by Theorem \ref{characterization.E}(3), we deduce that $[T \colon \Phi(E)]=p$ and $[E \colon T]=p^2$.
In particular $[T \colon T \cap T_P]=[T \colon \Phi(E)]=p$.
Since $T\leq P$ and $T_P\norm P$ we can consider the group $TT_P$ and we get $[TT_P \colon T_P] = [T \colon T \cap T_P]=p$. Since $Z_S \leq T$ and $Z_S \nleq T_P$ we deduce $T_P < Z_ST_P =TT_P$.
Since $E$ is not abelian and $T\leq \Z(E)$ (by Theorem \ref{characterization.E}), we conclude that $T=\Z(E)$. In particular $Z_P \leq \Z(P) \leq \Z(E)=T$ and so $Z_ST_P = Z_PT_P = TT_P$.
Hence $Z_P\leq T_P$ by Lemma \ref{lemmaZS}, that is a contradiction.

We deduce that the $\F$-essential subgroup $P$ has to be $\F$-characteristic in $S$. In particular $Z_P$ is $\F$-characteristic in $S$.
If $Z_P\leq T$ then $Z_P=\Omega_1(T)$ (since $Z_S<Z_P$ and $|\Omega_1(T)|=p^2$). So $[E,E] \leq Z_P$ and by Lemma \ref{lift.Ni} applied with $K=Z_P$ and $\N^j=S$ we conclude that $E$ has maximal normalizer tower in $S$, $P$ is the unique maximal subgroup of $S$ containing $E$ and $P$ is not $\F$-essential, a contradiction.
Thus $Z_P \nleq T$. In particular, since $Z_P \leq \Omega_1(\Z(P)) \leq \Omega_1(\Z(E))$, we get  $\Omega_1(\Z(E))\nleq T$ and so $Z_E<\Omega_1(\Z(E))$.
Since $|Z_E|=p^2$ and $S$ has sectional rank $3$, this implies $|\Omega_1(\Z(E))|=p^3$ and \[ [T\Omega_1(\Z(E)) \colon T] = [\Omega_1(\Z(E)) \colon T \cap\Omega_1(\Z(E))] = [ \Omega_1(\Z(E)) \colon Z_E]=p.\]
Recall that by Theorem \ref{characterization.E} either $E=\C_E(T)$ or $[\C_E(T) \colon T] =p^2$. Since $T\Omega_1(\Z(E)) < \C_E(T)$ and it is $\F$-characteristic in $E$, we deduce that $T\leq \Z(E)$. Also $T\neq \Z(E)$ (otherwise $\Omega_1(\Z(E)) \leq T$) and  $E$ is not abelian, so $[E \colon T]=p^3$ and $[E \colon \Z(E)] = p^2$. Note that $\N_S(E)=E\C_E(T)$ by Lemma \ref{prop.T.II}, so  $T\leq \Z(\N_S(E))$. Also, $\Z(\N_S(E)) < \Z(E)$ otherwise $\Z(\N_S(E))=\Z(E)$ is $\F$-characteristic in $E$ and $\N_S(E)$ and so $\Z(E)$ is contained in $T$, a contradiction. Hence we conclude $T=\Z(\N_S(E))$.
In particular $Z_P \leq \Z(P) \leq \Z(\N_S(E))= T$, and we get a contradiction.

Therefore if $Z_S <Z_E$ then $Z_S \nleq T$.

\end{proof}

\begin{theorem}\label{E.moving.center.is.abelian}
If $Z_S < Z_E$ then $E$ is abelian.
\end{theorem}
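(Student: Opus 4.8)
The plan is to pass to the quotient $E/T$, where Theorem \ref{characterization.E} equips $E/T$ with the structure of an $\Out_\F(E)$-module, and to combine this module picture with Lemma \ref{lemmaZS} and the sectional rank hypothesis. First I would record the basic data: by Theorem \ref{center.fixed.in.T} the hypothesis $Z_S < Z_E$ is equivalent to $Z_S \nleq T$; since $E$ is $\F$-centric we have $Z_S \le \Omega_1(\Z(E))$, and since $\Inn(E)$ acts trivially on $E/T$ (because $[E,E]\le T$ by Theorem \ref{characterization.E}(3)) the image $\overline{Z_E}:=Z_ET/T$ is an $\Out_\F(E)$-submodule of $E/T$, nonzero because $\overline{Z_S}\le\overline{Z_E}$ and $Z_S\nleq T$. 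The strategy is to show that each possibility for this submodule either forces $E$ abelian or is excluded.

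If $[E\colon T]=p^2$ then $E=V$, the module $E/T$ is the irreducible natural $\SL_2(p)$-module, and $T\le\Z(E)$ by the final clause of Theorem \ref{characterization.E}; hence the nonzero $\overline{Z_E}$ equals $E/T$, so $E=Z_ET$ with both factors central and $E$ is abelian. If $[E\colon T]=p^3$ then Theorem \ref{characterization.E} gives $E/T=V/T\oplus C/T$ with $V/T$ natural and $C/T$ trivial of dimension one, so the only submodules are $0,\ C/T,\ V/T,\ E/T$, and I would split on $\overline{Z_E}$. The case $\overline{Z_E}=C/T$ cannot occur, since it would force $Z_ST=C=Z_ET$, whence $Z_S\le T$ by Lemma \ref{lemmaZS}, a contradiction. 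The case $\overline{Z_E}=E/T$ gives $E=Z_ET$ with $Z_E$ central and $T$ abelian, so $[E,E]=[T,T]=1$ and $E$ is abelian.

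In the case $\overline{Z_E}=V/T$ one obtains $V=Z_ET\le\Z(E)T$, so $V$ is abelian; since $V$ is $\F$-characteristic in $E$ and $T\le\Z(V)$ by Theorem \ref{characterization.E}(5), the subgroup $\C_E(V)$ is $\F$-characteristic and contains $T$, so $\C_E(V)/T$ is a submodule containing $V/T$ and hence $\C_E(V)\in\{V,E\}$. If $\C_E(V)=E$ then $V\le\Z(E)$, so $E/\Z(E)$ is cyclic and $E$ is abelian. The only remaining possibility, $\overline{Z_E}=V/T$ with $\C_E(V)=V$, is where I expect the genuine work, and I would treat it by contradiction, assuming $E$ non-abelian.

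For this crux case the plan is to produce an elementary abelian section of rank $4$, contradicting the sectional rank $3$ hypothesis. Here $T$ is cyclic (Theorem \ref{characterization.E}(5), as $[V,V]=1$) and $\Phi(E)=T$ (since $E$ has rank $3$ and $[E\colon T]=p^3$); from $\Z(E)\le\C_E(V)=V$ and $\overline{Z_E}=V/T$ I would deduce $\Z(E)T=V$ but $T\nleq\Z(E)$. Writing $E=V\langle c\rangle$, the element $c$ acts on the cyclic group $T$ as a $p$-automorphism, hence trivially on $T/\Phi(T)$, so $[c,T]\le\Phi(T)=T^p$; combined with $V=Z_ET$ and $Z_E\le\Z(E)$ this gives $[E,E]=[V,c]=[T,c]\le T^p$, so $E/T^p$ is abelian, and moreover $c^p\in T^p$ (otherwise $c^p$ generates $T$ and $E=Z_E\langle c\rangle$ is abelian). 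Since $V$ is abelian with $\Phi(V)=V^p=T^p$, the quotient $V/T^p$ is elementary abelian of rank $3$, while $cT^p$ has order $p$ and lies outside $V/T^p$; thus $E/T^p=(V/T^p)\times\langle cT^p\rangle$ is elementary abelian of rank $4$, the desired contradiction. The main obstacle is precisely isolating this configuration and recognising that it is eliminated not by the internal structure of $E$ (which by itself is consistent) but only by the global sectional rank bound on $S$.
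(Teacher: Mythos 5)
Your proof is correct, but it takes a genuinely different route from the paper's. You organise the argument as a case analysis on the $\Out_\F(E)$-submodule $Z_ET/T$ of $E/T$, using the decomposition $E/T=V/T\oplus C/T$ from Theorem \ref{characterization.E} to enumerate the possible submodules, and you dispose of the one non-trivial configuration ($Z_ET=V$, $\C_E(V)=V$, $E$ non-abelian) by exhibiting $E/T^p$ as elementary abelian of rank $4$, against the sectional rank hypothesis. The paper instead reduces everything to the single claim $T\le \Z(E)$: granting that, $[E\colon Z_ET]\le p$ forces $E/\Z(E)$ cyclic. To rule out $T\nleq \Z(E)$ the paper shows $\Omega_1(\Z(E))$ would have order $p^3$ and equal $\Omega_1(E)$ with $T$ cyclic, then picks $y$ of minimal order with $E=\langle y\rangle\Omega_1(E)T$ and argues that $\langle y\rangle T$ must be cyclic, so $y$ centralizes $T$, a contradiction. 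Both arguments lean on the same ingredients (Theorem \ref{center.fixed.in.T}, Lemma \ref{lemmaZS}, Theorem \ref{characterization.E}(3)--(5), and the sectional rank bound); yours buys a transparent, exhaustive case split in which each branch is short, at the cost of invoking the module structure more heavily, while the paper's is more compressed and localises the entire difficulty in the claim $T\le\Z(E)$. Two small points worth making explicit in your write-up: in the crux case one has $T\neq 1$ (otherwise $E=E/T$ is elementary abelian by Theorem \ref{characterization.E}(3), contradicting non-abelianness), which is needed for $[T\colon T^p]=p$ and hence for $|E/T^p|=p^4$; and $Z_E\le\Omega_1(\Z(E))$, so that $Z_E$ is central in $E$, which you use throughout and which follows from $\Z(S)\le\C_S(E)=\Z(E)$ since $E$ is $\F$-centric.
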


\begin{proof}
If $T=1$ then $E$ is elementary abelian by Theorem \ref{characterization.E}(3), so we can assume $T\neq 1$.
By Theorem \ref{center.fixed.in.T} we have $Z_S\nleq T$. So $Z_E\nleq T$ and by Lemma \ref{lemmaZS} we have $T < Z_ST < Z_ET$. Thus $[Z_ET \colon T] \geq p^2$.

Aiming for a contradiction, suppose $T\nleq \Z(E)$ (so $[E \colon T]=p^3$ by Theorem \ref{characterization.E} and $T=\Phi(E)$). Hence $Z_ET=\C_E(T) =\Omega_1(\Z(E))T$ and $[Z_ET \colon T]=p^2$.
In particular, since $S$ has sectional rank $3$ and $T\cap \Omega_1(\Z(E)) \neq 1$, we deduce that $|\Omega_1(\Z(E))|=p^3$, $\Omega_1(\Z(E))=\Omega_1(E)$ and $|\Omega_1(T)|=p$, so $T$ is cyclic.

Let $y\in E$ be of minimal order such that $E = \langle y \rangle \Omega_1(E)T$. We want to show that $y$ commutes with $T$, contradicting the fact that $T \nleq \Z(E)$. Note that $y^p \in \Phi(E)= T$.
Suppose that $\langle y \rangle T$ has rank $2$. Then there exists a normal subgroup of $\langle y \rangle T$ isomorphic to the group $C_p \times C_p$. In particular $y$ has order $p$ and so $y\in \Omega_1(E)$. Thus $E=\Omega_1(E)T=\C_E(T)$ contradicting the assumptions. Thus the group $\langle y \rangle T$ has to be cyclic. In particular $y$ commutes with $T$ and so $T \leq \Z(E)$,  a contradiction.

Therefore $T\leq \Z(E)$. So $Z_ET \leq \Z(E)$. Since $[E \colon Z_ET] \leq p$ we conclude that $E$ is abelian.
\end{proof}

\begin{lemma}\label{char.C}
Suppose $E$ is normal in $S$ and has rank $3$.
Let $C$ be the preimage in $E$ of the group $\C_{E/\Phi(E)}(O^{p'}(\Aut_\F(E)))$.
Then $S/C \cong p^{1+2}_+$, $C/\Phi(E) < \Z(S/\Phi(E))$ and $C=T$.
\end{lemma}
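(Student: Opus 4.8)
The plan is to show that, under the hypotheses, we are forced into the case $[E\colon T]=p^2$, which is equivalent to $C=T$, and then to read off the remaining two assertions from the structure theorem of Section~3.

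First I would record the set-up. Since $E\norm S$ we have $\N_S(E)=S$, and as $E$ is $\F$-essential of rank $3$ Theorem~\ref{class.3} gives $[S\colon E]=p$; moreover $O^{p'}(\Out_\F(E))\cong\SL_2(p)$ by Theorem~\ref{typeIIauto}. Thus all of Section~3 applies with $\N_S(E)=S$. By Theorem~\ref{characterization.E}(3) we have $\Phi(E)\leq T$, and by Lemma~\ref{prop.T.II}(3) the group $O^{p'}(\Out_\F(E))$ centralizes $T$; hence $T/\Phi(E)\leq \C_{E/\Phi(E)}(O^{p'}(\Out_\F(E)))=C/\Phi(E)$, i.e.\ $T\leq C$. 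Next, Lemma~\ref{prop.T.II}(4) together with $\Phi(E)\leq T$ yields the dichotomy: either $T=\Phi(E)$ (and then $[E\colon T]=p^3$) or $T=C$ with $[T\colon\Phi(E)]=p$ (and then $[E\colon T]=p^2$). Since $V/\Phi(E)$ is a $2$-dimensional natural module by Theorem~\ref{characterization.E}(1) while $E/\Phi(E)$ is $3$-dimensional, the centralizer $C/\Phi(E)$ is exactly $1$-dimensional; so in the first alternative $\Phi(E)=T<C$. Consequently the entire content of the lemma reduces to excluding the case $T=\Phi(E)$.

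The main step, and the step I expect to be the main obstacle, is ruling out $T=\Phi(E)$. In that case $C$ is $\F$-characteristic in $E$ (the subgroup $\C_{E/\Phi(E)}(O^{p'}(\Out_\F(E)))$ is $\Out_\F(E)$-invariant and $\Phi(E)$ is characteristic in $E$), with $\Phi(E)=T<C$; so by the maximality of $T=\coreF(E)$ it suffices to show that $C$ is $\F$-characteristic in $\N_S(E)=S$, giving a contradiction. Here $\Phi(E)=T$ is $\F$-characteristic in $S$ by definition (equivalently Lemma~\ref{char.frat}), so $\Aut_\F(S)$ acts on $S/\Phi(E)$ preserving $\Z(S/\Phi(E))$ and $\Phi(S)/\Phi(E)=\Phi(S/\Phi(E))$. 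One computes that $\Z(S/\Phi(E))$ has order $p^2$ (Lemma~\ref{Zindexp}) and decomposes as $\Phi(S)/\Phi(E)\oplus C/\Phi(E)$, where $\Phi(S)/\Phi(E)=[S,V]\Phi(E)/\Phi(E)$ is the unipotent line and $C/\Phi(E)\leq\Z(S/\Phi(E))$ is the line centralized by $O^{p'}(\Out_\F(E))$ (which lies in the centre because $\Aut_S(E)\leq O^{p'}(\Aut_\F(E))$). The difficulty is that $E$ is not $\F$-characteristic in $S$, so a typical $\alpha\in\Aut_\F(S)$ with $E\alpha\neq E$ sends $C$ to the analogous line for the (essential, $S$-normal) subgroup $E\alpha$, and the distinguished complement $C/\Phi(E)$ of $\Phi(S)/\Phi(E)$ is not pinned down by the $S/\Phi(E)$-structure alone (an involution's $+1$-eigenline is not determined by its $-1$-eigenline). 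To overcome this I would analyse the pair $(E,E\alpha)$, which has $\coreF(E,E\alpha)=T$ by Lemma~\ref{same.T}, using the $\F$-core Lemma~\ref{properties.T} and the extension $\hat\tau\in\Aut_\F(S)$ of the central involution of $O^{p'}(\Out_\F(E))$ (available by receptivity of $E$), splitting according to the centre-moving dichotomy: if $Z_S<Z_E$ then $E$ is abelian by Theorem~\ref{E.moving.center.is.abelian} and a direct module computation identifies $C$ inside $\Z(S)$; if $Z_S=Z_E$ then $Z_S\leq T=\Phi(E)$ by Theorem~\ref{center.fixed.in.T}, and $C$ is located as the centralizer of an $\F$-characteristic subgroup of $S$. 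The goal of this analysis is $C\alpha=C$, whence $C$ is $\F$-characteristic in $S$, the desired contradiction.

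Finally, once $T=\Phi(E)$ is excluded we are in the case $C=T$ with $[E\colon T]=p^2$, and the remaining assertions follow quickly. Then $[S\colon C]=[S\colon E][E\colon T]=p\cdot p^2=p^3$; the quotient $S/C=S/T$ is non-abelian by Lemma~\ref{non.abelian.quotient} and has exponent $p$ by Theorem~\ref{characterization.E}(2), and a non-abelian group of order $p^3$ and exponent $p$ is isomorphic to $p^{1+2}_+$, so $S/C\cong p^{1+2}_+$. The final clause of Theorem~\ref{characterization.E} gives $T\leq\Z(\N_S(E))=\Z(S)$, whence $C/\Phi(E)=T/\Phi(E)\leq\Z(S)/\Phi(E)\leq\Z(S/\Phi(E))$; and by Lemma~\ref{Zindexp} the preimage of $\Z(S/\Phi(E))$ has index $p$ in $E$, so $|\Z(S/\Phi(E))|=p^2>p=[C\colon\Phi(E)]$ and the inclusion $C/\Phi(E)<\Z(S/\Phi(E))$ is strict. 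Thus the whole difficulty is concentrated in excluding $T=\Phi(E)$, where the hard part is controlling the image of $C$ under automorphisms of $S$ that do not normalize $E$.
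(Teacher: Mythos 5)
Your reduction of the lemma is sound: using Theorem \ref{characterization.E}(3) and Lemma \ref{prop.T.II}(3)--(4) to get $\Phi(E)\leq T\leq C$ and the dichotomy $T=\Phi(E)$ versus $T=C$ is correct, your derivation of $S/C\cong p^{1+2}_+$ and of the strict containment $C/\Phi(E)<\Z(S/\Phi(E))$ from $C=T$ is correct, and you have correctly located the crux: one must show that $C$ is normalized by the automorphisms $\alpha\in\Aut_\F(S)$ with $E\alpha\neq E$. But that crux is exactly where your proposal stops being a proof. The sentences ``a direct module computation identifies $C$ inside $\Z(S)$'' and ``$C$ is located as the centralizer of an $\F$-characteristic subgroup of $S$'' are placeholders, and your closing sentence ``the goal of this analysis is $C\alpha=C$'' states the conclusion rather than establishing it. The centre-moving dichotomy (Theorems \ref{center.fixed.in.T} and \ref{E.moving.center.is.abelian}) does not obviously help here: even in the abelian case, knowing $C\leq\Z(S)$ would not distinguish the line $C/\Phi(E)$ inside $\Z(S/\Phi(E))$ in an $\Aut_\F(S)$-invariant way, which is precisely the obstruction you yourself flagged.

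The paper closes this gap with a specific two-step argument you are missing. First, the involution $\tau\in O^{p'}(\Aut_\F(E))$ inverting $E/C$ centralizes $C/\Phi(E)$ and inverts $Z/C$ (where $Z$ is the preimage of $\Z(S/\Phi(E))$), so it acts non-scalarly on $Z/\Phi(E)$ with exactly two invariant lines; since $\tau$ extends to $\ov{\tau}\in\Aut_\F(S)$ and $\Phi(S)/\Phi(E)$ is one $\ov{\tau}$-invariant line distinct from $C/\Phi(E)$ (here one needs $C\neq\Phi(S)$, which you assert but do not prove -- it follows from Lemma \ref{char.series} applied to the series $\Phi(E)<C<E$), the only candidates for maximal subgroups of $Z/\Phi(E)$ that are $\F$-characteristic in $S$ are $C/\Phi(E)$ and $\Phi(S)/\Phi(E)$. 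Second, since $\Inn(S)$ centralizes $Z/\Phi(E)$ and $S$ is fully automized, the group $\Aut_\F(S)/\C_{\Aut_\F(S)}(Z/\Phi(E))$ has order prime to $p$, so Maschke's Theorem produces an $\Aut_\F(S)$-invariant complement to $\Phi(S)/\Phi(E)$ in $Z/\Phi(E)$; by the first step this complement must be $C/\Phi(E)$, so $C$ is $\F$-characteristic in $S$, whence $C\leq T$ and $C=T$. Without this Maschke argument (or a genuine substitute for it), your proof is incomplete at its central step.
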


\begin{proof}
Since $E$ is normal in $S$ and has rank $3$, we have $[S \colon E] = p$ by Theorem \ref{class.3} and $O^{p'}(\Out_\F(E)) \cong \SL_2(p)$ by Theorem \ref{typeIIauto}.
Let $\alpha \in \Aut_\F(S)$ be such that $E\alpha \neq E$. By Lemma \ref{char.frat} the group $\Phi(E)$ is $\F$-characteristic in $S$, so $\Phi(E)=\Phi(E\alpha)$. In particular the group $E\alpha/\Phi(E)$ is abelian and if $Z$ is the preimage in $S$ of $\Z(S/\Phi(E)$ then $Z= E \cap E\alpha$ and $[S \colon Z]=p^2$.
By Lemma \ref{same.rank} the group $S/\Phi(E)$ has exponent $p$ and $S$ has rank $3$.
Note that $|C/\Phi(E)| = p$ and $C < Z$.  Also, $C\neq \Phi(S)$ otherwise $\Aut_S(E)$ stabilizes the series $\Phi(E) < C < E$ and so it is normal in $\Aut_\F(E)$ by Lemma \ref{char.series}, contradicting the fact that $E$ is $\F$-essential.
Since $S/C$ has order $p^3$ and exponent $p$ (it is a section of $S/\Phi(E)$ that has exponent $p$), we deduce that $S/C \cong p^{1+2}_+$.

Let $\tau$ be the $\F$-automorphism of $E$ that inverts $E/C$. Then $\tau$ centralizes $C/\Phi(E)$ by Lemma \ref{prop.T.II} and inverts $Z/C$. In particular $\tau$ does not act as a scalar on $Z/\Phi(E)$ and so $C/\Phi(E)$ and $\Phi(S)/\Phi(E)$ are the only maximal subgroups of $Z/\Phi(E)$ normalized by $\tau$. Since $E$ is receptive, $\tau$ is the restriction of an $\F$-automorphism $\ov{\tau}$ of $S$.
Thus $C/\Phi(E)$ and $\Phi(S)/\Phi(E)$ are the only maximal subgroups of $Z/\Phi(E)$ that can be $\F$-characteristic in $S$.
Since the inner automorphisms of $S$ act trivially on $Z/\Phi(E)$ and $S$ is fully automized, the group $\Aut_\F(S)/ \C_{\Aut_\F(S)}(Z/\Phi(E))$ has order prime to $p$. Since $\Phi(S)$ is $\F$-characteristic in $S$, by Maschke's Theorem (\cite[Theorem 3.3.2]{GOR}) there exists a maximal subgroup of $Z/\Phi(E)$ distinct from $\Phi(S)/\Phi(E)$ that is $\F$-characteristic in $S$. Hence $C$ and $\Phi(S)$ are the only maximal subgroups of $Z$ containing $\Phi(E)$ that are $\F$-characteristic in $S$. In particular  $C \leq T$ and since $[E \colon T] \geq p^2$ by Theorem \ref{characterization.E}(3), we deduce that $C=T$.
\end{proof}

\begin{lemma}\label{OpF}
If $E$ is normal and abelian then
$O_p(\F)\neq 1$.
\end{lemma}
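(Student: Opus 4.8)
The plan is to prove that the $\F$-core $T=\coreF(E)$ is normal in $\F$; since we will see $T\neq 1$, this yields $O_p(\F)\neq 1$. First I would fix the structure of $E$ and $S$. As $E$ is normal, $\N_S(E)=S$, so Theorems \ref{class.3} and \ref{typeIIauto} give $[S:E]=p$ and $O^{p'}(\Out_\F(E))\cong\SL_2(p)$, whence $[E:T]=p^2$. Because $E$ is abelian the subgroup $V$ of Theorem \ref{characterization.E} satisfies $[V,V]=1$, so part (5) of that theorem forces $T$ to be cyclic, and its last assertion (together with $[E:T]=p^2$) gives $T\leq\Z(S)$. In both possible ranks one also gets $S/T\cong p^{1+2}_+$: for rank $3$ this is exactly Lemma \ref{char.C}, while for rank $2$ we have $T=\Phi(E)$ by Lemma \ref{frattini.is.char}, and $S/T=S/\Phi(E)$ is non-abelian (otherwise $\Aut_S(E)$ would centralise $E/\Phi(E)$, contradicting that $E$ is $\F$-essential) of order $p^3$ and exponent $p$ (Lemma \ref{same.rank}). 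Finally $T\neq 1$, since $p^{1+2}_+$ has sectional rank $2$ while $S$ has sectional rank $3$.

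Next I would use that $E$ is not $\F$-characteristic in $S$. Pick $\alpha\in\Aut_\F(S)$ with $E\alpha\neq E$; then $E\alpha$ is again an abelian normal $\F$-essential subgroup with $\coreF(E\alpha)=T$ (Lemma \ref{same.T}), and $E/T$ and $E\alpha/T$ are distinct maximal subgroups of $S/T\cong p^{1+2}_+$, meeting in $\Z(S/T)$. Since $E$ and $E\alpha$ are abelian and $S=E\,E\alpha$, every element of $E\cap E\alpha$ centralises $S$, so $\Z(S)=E\cap E\alpha$ is an overgroup of $T$ of index $p$. The central involution $\tau$ of $O^{p'}(\Out_\F(E))$ centralises $T$ (Lemma \ref{prop.T.II}) and inverts $E/T$, hence inverts $\Z(S)/T$; extending $\tau$ to $\Aut_\F(S)$ and noting that for odd $p$ a cyclic group admits no automorphism that is trivial on an index-$p$ subgroup yet inverts the quotient, I deduce that $\Z(S)$ is non-cyclic. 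This already excludes rank $2$: there $Z_S=\Omega_1(\Z(S))=\Omega_1(E)=\Omega_1(\Z(E))$, forcing $Z_E=Z_S$ and so $Z_S\leq T$ by Theorem \ref{center.fixed.in.T}, which is impossible as $Z_S$ is then elementary abelian of rank $2$ while $T$ is cyclic. So $E$ has rank $3$. The decisive consequence is that \emph{every} $\F$-essential subgroup $P$ is normal of index $p$: being $\F$-centric, $P\supseteq\Z(S)$, so $P/T$ is a subgroup of $p^{1+2}_+$ containing its centre and hence a maximal (elementary abelian) one; in particular $\Phi(P)\leq T\leq\Z(P)$ for every essential $P$.

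It then remains to show that $T$ is $\F$-characteristic in every $\F$-essential subgroup $P$. Granting this, $T$ is $\F$-characteristic in $S$ by definition and contained in each essential subgroup, so \cite[Proposition I.4.5]{AKO} gives $T\norm\F$ and $O_p(\F)\geq T\neq 1$. For each essential $P$ that is not $\F$-characteristic in $S$ the hypotheses of Section 3 hold with $P$ in place of $E$, so Lemma \ref{prop.T.II} shows $O^{p'}(\Out_\F(P))$ centralises $\coreF(P)$, and the analysis of this section applied to $P$ identifies $\coreF(P)$ as a cyclic index-$p$ subgroup of $\Z(S)$ with $S/\coreF(P)\cong p^{1+2}_+$. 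The hard part will be to show that all these cores coincide with $T$: each is an $\Aut_\F(S)$-invariant index-$p$ subgroup of $\Z(S)$ on whose quotient the associated central involution acts by $-1$ while fixing the core pointwise, and I would argue via this common eigenspace decomposition of $\Z(S)$ that $T$ is the unique such subgroup, while separately treating any essential subgroup that happens to be $\F$-characteristic in $S$. This coordination across essential subgroups not $\F$-conjugate to $E$ — exhibiting a single central cyclic subgroup that is the $\F$-core of every essential subgroup — is the main obstacle; the rest follows routinely from the results already established.
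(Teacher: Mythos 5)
Your setup is sound and follows the same overall strategy as the paper: identify $T=\coreF(E)$ with the preimage $C$ of $\C_{E/\Phi(E)}(O^{p'}(\Aut_\F(E)))$ (Lemma \ref{char.C}), observe $T\neq 1$ because $S/T\cong p^{1+2}_+$ has sectional rank $2$, note that every $\F$-essential subgroup $P$ contains $\Z(S)$ with index $p$ and is therefore abelian and normal, and then try to show $T$ is $\F$-characteristic in every such $P$ so that \cite[Proposition I.4.5]{AKO} gives $T\norm\F$. (Your detour through $Z_E$ and Theorem \ref{center.fixed.in.T} to kill the rank-$2$ case is more elaborate than necessary --- the paper just notes that a rank-$2$ abelian normal essential subgroup is a pearl, forcing $|S|=p^3$ against sectional rank $3$ --- but it is correct.) The problem is that the one step you single out as ``the hard part'' is exactly the content of the lemma, and what you propose for it does not close. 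Your sketch compares, for two essential subgroups $P$ and $P'$, the eigenspace decompositions of $\Z(S)$ under two \emph{different} involutions $\bar\tau_P$ and $\bar\tau_{P'}$; there is no a priori reason these decompositions agree. If you push the idea (apply $\bar\tau_P$ to the chain $T\cap T'<T'<\Z(S)$, using that $\bar\tau_P$ centralises $T'=\coreF(P)$ and inverts $\Z(S)/T'$, hence inverts $T/(T\cap T')$), the cyclicity of $T$ only forces $T\cap T'=1$, i.e.\ $|T|=|T'|=p$ and $|\Z(S)|=p^2$; it does not produce a contradiction, so a genuine case survives your argument.

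The paper closes this gap differently. It first proves $\Phi(P)=\Phi(E)$ for every essential $P$ (via the exponent-$p$ statement of Lemma \ref{same.rank} and the sectional rank bound), so that $\Z(S)/\Phi(E)\cong\C_p\times\C_p$ is a common frame for all essential subgroups. For each $P$ it forms $H$, the preimage of $\C_{P/\Phi(E)}(O^{p'}(\Aut_\F(P)))$ (using Lemma \ref{norm.rank3.SL2p} to get $\SL_2(p)$ even when $P$ is $\F$-characteristic in $S$, a case you also leave aside), and extends the inverting automorphism $\mu\in\Aut_\F(P)$ to $S$. Since $\mu$ does not act as a scalar on $\Z(S)/\Phi(E)$, it has exactly two invariant maximal subgroups there; both $C/\Phi(E)$ and $\Phi(S)/\Phi(E)$ are $\F$-characteristic in $S$, hence $\mu$-invariant, and Lemma \ref{char.series} rules out $H\leq\Phi(S)$, forcing $H=C$. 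The essential ingredients missing from your proposal are the identification $\Phi(P)=\Phi(E)$, the use of the two canonical $\Aut_\F(S)$-invariant subgroups $C$ and $\Phi(S)$ of $\Z(S)$ (rather than eigenspaces of varying involutions), and the exclusion $H\nleq\Phi(S)$. Without some such mechanism the coincidence of all the cores is unproved, so the proposal as it stands has a genuine gap.
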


\begin{proof}\label{proofC.norm.F}
Since $E$ is normal in $S$ we have $[S \colon E] = p$ by Theorem  \ref{class.3}.
If $E$ has rank $2$ then $E$ is an $\F$-pearl by Theorem \ref{rank2pearl}. So $E\cong \C_p \times \C_p$ and $|S|=p^3$, contradicting the fact that $S$ has sectional rank $3$.
Therefore $E$ has rank $3$. In particular $O^{p'}(\Out_\F(E)) \cong \SL_2(p)$ by Theorem \ref{typeIIauto}.
Let $\alpha \in \Aut_\F(S)$ be such that $E\alpha \neq E$. Then $S=EE\alpha$ and since $E$ is abelian we deduce that $E\alpha$ is abelian and $\Z(S)=E \cap E\alpha$. Thus $[S \colon \Z(S)]=p^2$.
By Lemma \ref{same.rank} the group $S/\Phi(E)$ has exponent $p$, $\Phi(S)=\Phi(E)[S,S]$  and $S$ has rank $3$. By Lemma \ref{char.frat} we also have that the group $\Phi(E)$ is $\F$-characteristic in $S$.
Let $C\leq E$ be the preimage in $E$ of $\C_{E/\Phi(E)}(O^{p'}(\Aut_\F(E)))$.
Then by Lemma \ref{char.C} we have $S/C \cong p^{1+2}_+$, $C < \Z(S)$ and $C=T$.
In particular $C$ is $\F$-characteristic in $S$.

Let $P$ be an $\F$-essential subgroup of $S$.
Then $\Z(S) < P < S$. So $[S \colon P] =[P \colon \Z(S)]=p$ and $P/\Phi(E)$ is elementary abelian (because $S/\Phi(E)$ has exponent $p$). Since $S$ has sectional rank $3$ we deduce that $\Phi(E)=\Phi(P)$. Thus $P$ has rank $3$ and $\Phi(E)$ is $\F$-characteristic in $P$.  Since $S$ has rank $3$, by Lemma  \ref{norm.rank3.SL2p} we deduce that $O^{p'}(\Out_\F(P)) \cong \SL_2(p)$. In particular if $H$ is the preimage in $P$ of $\C_{P/\Phi(E)}(O^{p'}(\Aut_\F(P)))$, then $[H \colon \Phi(E)]=p$. So $H/\Phi(E)$ is a maximal subgroup of $\Z(S)/\Phi(E)$. Let $\mu \in \Aut_\F(P)$ be the morphism that inverts $P/H$. Then $\mu$ centralizes $H/\Phi(E)$ by Lemma \ref{prop.T.II} and so it does not act as a scalar on $\Z(S)/\Phi(E)$. However $\mu$ is the restriction to $P$ of an $\F$-automorphism of $S$ (because $\mu \in \N_{\Aut_\F(P)}(\Aut_S(P))$,  $P$ is receptive and $S=\N_S(P)$) and so it normalizes $C/\Phi(E), H/\Phi(E)$ and $\Phi(S)/\Phi(E)$. Since $P$ is $\F$-essential, by Lemma \ref{char.series} we have $H\nleq \Phi(S)$. Hence $H=C$. In particular $C$ is $\F$-characteristic in $P$ (indeed if $P$ is not $\F$-characteristic in $S$ then $C=\coreF(P)$).

We proved that the group $C$ is $\F$-characteristic in $S$ and in every $\F$-essential subgroup of $S$. Hence by \cite[Proposition I.4.5]{AKO} we have $C\norm \F$.
Since $E$ has rank $3$ we also have $|C|\geq p$, and so $1 \neq C \leq O_p(\F)$.
\end{proof}

\begin{lemma}\label{same.frattini}
Suppose $E$ is normal in $S$ and has rank $3$. Let $P\leq S$ be an $\F$-characteristic $\F$-essential subgroup of $S$. Then $\Phi(P)=\Phi(E)$.
\end{lemma}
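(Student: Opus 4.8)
The plan is to pass to the quotient $\ov{S}=S/\Phi(E)$ and to pin down the isomorphism type of $\ov{P}=P/\Phi(E)$, showing it must be elementary abelian. First I would assemble the structure of $\ov S$. Since $E$ is normal of rank $3$, Theorem \ref{class.3} gives $[S:E]=p$ and Theorem \ref{typeIIauto} gives $O^{p'}(\Out_\F(E))\cong\SL_2(p)$. Let $C$ be the preimage in $E$ of $\C_{E/\Phi(E)}(O^{p'}(\Aut_\F(E)))$; by Lemma \ref{char.C} we have $C=T$, $[C:\Phi(E)]=p$, $C/\Phi(E)<\Z(\ov S)$ and $C\neq\Phi(S)$. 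Hence $[E:T]=p^2$, so the ``moreover'' clause of Theorem \ref{characterization.E} yields $T\leq\Z(\N_S(E))=\Z(S)$ (recall $\N_S(E)=S$). By Lemma \ref{same.rank}, $\ov S$ has exponent $p$, $S$ has rank $3$, and $\Phi(S)=[S,S]\Phi(E)$ with $[\Phi(S):\Phi(E)]=p$; thus $|\ov S|=p^4$. From (the proof of) Lemma \ref{char.C} the preimage $Z$ of $\Z(\ov S)$ has $[S:Z]=p^2$, so $|\Z(\ov S)|=p^2$, while $[\ov S,\ov S]=\Phi(S)/\Phi(E)$ has order $p$ and lies in $\Z(\ov S)$. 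Since $T/\Phi(E)$ and $\Phi(S)/\Phi(E)$ are distinct subgroups of $\Z(\ov S)$ of order $p$ (distinct because $T\neq\Phi(S)$), I obtain $\Z(\ov S)=(T/\Phi(E))\times(\Phi(S)/\Phi(E))$; note also $\Phi(E)\leq T\leq\Z(S)$, so $\ov{\Z(S)}:=\Z(S)/\Phi(E)$ is meaningful.

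Next I would locate $\ov P$ and exclude the bad case, which I expect to be the only delicate point. As $P$ is $\F$-characteristic of index $p$ and $\Phi(E)\leq\Phi(S)\leq P$, the quotient $\ov P$ is a maximal subgroup of $\ov S$ of order $p^3$ and exponent $p$, hence either elementary abelian or extraspecial isomorphic to $p^{1+2}_+$. Suppose for contradiction $\ov P\cong p^{1+2}_+$. Because $P$ is $\F$-essential it is $\F$-centric, so $\Z(S)\leq\C_S(P)\leq P$ and therefore $\ov{\Z(S)}\leq\ov P$; since $\Z(S)$ is central, $\ov{\Z(S)}\leq\Z(\ov S)$, giving $\ov{\Z(S)}\leq\ov P\cap\Z(\ov S)$. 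Now $\ov P$ contains $\Phi(\ov S)=[\ov S,\ov S]=\Phi(S)/\Phi(E)$, and $\ov P\not\supseteq\Z(\ov S)$ (otherwise $\ov P/\Z(\ov S)$ would be cyclic, forcing $\ov P$ abelian); hence $\ov P\cap\Z(\ov S)$ has order $p$ and equals $\Phi(S)/\Phi(E)$. Combined with $T\leq\Z(S)$, which gives $T/\Phi(E)\leq\ov{\Z(S)}$, this forces $T/\Phi(E)\leq\Phi(S)/\Phi(E)$, i.e.\ $T=\Phi(S)$, contradicting $C=T\neq\Phi(S)$.

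Therefore $\ov P$ is elementary abelian, so $\Phi(P)\leq\Phi(E)$. Finally, since $S$ has sectional rank $3$ the group $P$ has rank at most $3$, whereas $[P:\Phi(E)]=p^3$ forces $[P:\Phi(P)]\geq p^3$; hence $[P:\Phi(P)]=p^3$ and $\Phi(P)=\Phi(E)$, as required. The whole argument is elementary once the structure of $\ov S$ is in hand; the one genuinely load-bearing step is the exclusion of the extraspecial case, which rests entirely on the interplay of the three facts $T\leq\Z(S)$, $T\neq\Phi(S)$, and $\Z(S)\leq P$.
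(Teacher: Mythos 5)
Your proof is correct, and it takes a genuinely different route from the paper's. The paper lifts an involution $\tau\in O^{p'}(\Aut_\F(E))$ inverting $E/C$ to an automorphism $\ov{\tau}$ of $S$ and runs commutator calculations in two cases according to whether $C\leq P$ or not; you avoid automorphisms of $S$ altogether and argue purely from the structure of $\ov{S}=S/\Phi(E)$ (order $p^4$, exponent $p$, $|\Z(\ov{S})|=p^2$, $|[\ov{S},\ov{S}]|=p$) together with three positional facts: $T\leq\Z(S)$ (the ``moreover'' clause of Theorem \ref{characterization.E}, applicable since $[E:T]=p^2$), $T\neq\Phi(S)$ (from Lemma \ref{char.C}), and $\Z(S)\leq P$ ($\F$-centricity). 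Your exclusion of the non-abelian case is clean: a non-abelian maximal subgroup of $\ov{S}$ meets the order-$p^2$ centre $\Z(\ov{S})$ exactly in $[\ov{S},\ov{S}]=\Phi(S)/\Phi(E)$, which together with $T/\Phi(E)\leq\Z(S)/\Phi(E)\leq\ov{P}\cap\Z(\ov{S})$ would force $T=\Phi(S)$, a contradiction. This is more elementary and shorter than the paper's argument; on the other hand the lifting-of-$\tau$ technique is the one the paper reuses throughout Sections 4--6, so the author's choice keeps the proofs uniform. A side observation your approach makes visible: since $C=T\leq\Z(S)\leq\C_S(P)\leq P$, the paper's Case 2 ($C\nleq P$) can in fact never occur. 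Two small presentational points: the facts $[C:\Phi(E)]=p$ and $C\neq\Phi(S)$ appear in the proof of Lemma \ref{char.C} rather than in its statement, but both follow at once from $S/C\cong p^{1+2}_+$ (so $[S:C]=p^3$ gives $[C:\Phi(E)]=p$, and $[S,S]\nleq C$ gives $C\neq\Phi(S)$); and the index $[S:P]=p$ that you use should be justified by Theorem \ref{class.3} applied to $P$.
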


\begin{proof}
By Theorem \ref{typeIIauto} we have $O^{p'}(\Out_\F(E))\cong \SL_2(p)$.
Let $C\leq E$ be the preimage in $E$ of $\C_{E/\Phi(E)}(O^{p'}(\Aut_\F(E)))$. Then by Lemma \ref{char.C} we have $S/C \cong p^{1+2}_+$, $C/\Phi(E) < \Z(S/\Phi(E)$ and $C= T$. Let $\tau$ be the automorphism of $E$ that inverts $E/C$. Then $\tau$ centralizes $C$ by Lemma  \ref{prop.T.II} and since $E$ is receptive and normal in $S$, $\tau$ is the restriction of an $\F$-automorphism $\ov{\tau}$ of $S$. In particular $\ov{\tau}$ normalizes $P$ and centralizes the quotient $P/(E\cap P)$.

\textbf{Case 1:} suppose $C\leq P$.
Let $x\in P \backslash E$ and $y \in (E\cap P) \backslash C$. Then $x\ov{\tau} = xc$ for some $c\in C$ and $y\ov{\tau} =y^{-1}u$ for some $u\in \Phi(E)$. Hence using properties of commutators (\cite[Theorem 2.2.1, Lemma 2.2.2]{GOR})  we get
\[ [x,y]\ov{\tau} = [xc,y^{-1}u] \equiv [x,y]^{-1} \mod \Phi(E).\]
Since $\ov{\tau}$ centralizes $C/\Phi(E)$ and $[x,y] \in C$, we deduce that $[x,y] \equiv 1 \mod \Phi(E)$ and so the group $P/\Phi(E)$ is abelian.
Since $S$ has sectional rank $3$ and the group $S/\Phi(E)$ has exponent $p$ by Lemma \ref{same.rank}, we conclude that  $\Phi(E)=\Phi(P)$.

\textbf{Case 2:} suppose $C\nleq P$. Then $E/\Phi(E) \cong C/\Phi(E) \times (E\cap P)/\Phi(E)$ and so $E\cap P$ is $\F$-characteristic in $E$ and $(E\cap P)/\Phi(E)$ is a natural $\SL_2(p)$-module for $O^{p'}(\Out_\F(E))$.
Suppose for a contradiction that $\Phi(E) \neq \Phi(P)$. By Lemma \ref{same.rank} the group $S$ has rank $3$. Since $P$ is $\F$-essential, by Lemma \ref{strict.frattini} we have
\[ \Phi(P) < [S,S]\Phi(P) \leq \Phi(S).\]
 Thus $S/\Phi(P)$ is non-abelian, $P$ has rank $3$ and $\Phi(S)=\Phi(E)\Phi(P)$.
Since $\ov{\tau}$ centralizes $C$, it centralizes $\Phi(S)/\Phi(P)$.
Let $x\in E \backslash P$ and $y\in (E\cap P) \backslash \Phi(S)$. Then $x\ov{\tau} = xu$ for some $u\in \Phi(S)$ and $y\ov{\tau} = y^{-1}v$ for some $v\in \Phi(P)$.
 Therefore
\[ [x,y]\ov{\tau} = [xu, y^{-1}v] \equiv [x,y]^{-1} \mod \Phi(P).\]
Since $[x,y] \in [S,S] \leq \Phi(S)$, we deduce that $[x,y] \in \Phi(P)$ and so the group $E/\Phi(P)$ is abelian. In particular $(E\cap P)/\Phi(P) \leq \Z(S/\Phi(P))$.

Since $S/\Phi(P)$ is non-abelian, we get $(E\cap P)  \Phi(P) = \Z(S/\Phi(P))$ and since $P$ is $\F$-characteristic in $S$, we deduce that $E\cap P$ is $\F$-characteristic in $S$. Thus $E\cap P \leq \coreF(E)=C$, a contradiction.
\end{proof}

\begin{proof}[\textbf{Proof of Theorem \ref{normal}}]
Suppose that $S$ is not isomorphic to a Sylow $p$-subgroup of the group $\Sp_4(p)$.
Then by Theorem \ref{Sp4} there is at most one $\F$-essential subgroup of $S$ that is $\F$-characteristic in $S$.
If there exists an $\F$-essential subgroup $E$ of $S$ having rank $2$, then $E$ is an $\F$-pearl by Theorem \ref{rank2pearl}, and by \cite[Theorem B]{pearls} we conclude that $E$ is not normal in $S$, as wanted.

Suppose that all the $\F$-essential subgroups of $S$ have rank $3$.
The group $Z_S=\Omega_1(\Z(S))$ is $\F$-characteristic in $S$ and contained in every $\F$-essential subgroup of $S$. Since $O_p(\F)=1$, there exists an $\F$-essential subgroup $E$ of $S$ that moves $Z_S$.
If $E$ is not $\F$-characteristic in $S$, then $E$ is abelian by Theorem \ref{E.moving.center.is.abelian} and $E$ is not normal in $S$ by Lemma \ref{OpF}, so we are done. Suppose $E$ is $\F$-characteristic $S$. If $E$ is elementary abelian, then the assumption on the sectional rank of $S$ implies $|E|=p^3$. Note that $E$ is normal in $S$, so $[S\colon E]=p$ by Theorem \ref{class.3} and we deduce that $|S|=p^4$.  Since $E\nleq O_p(\F)=1$, there exists an $\F$-essential subgroup $P$ of $S$ that is distinct from $E$. In particular $P$ is not $\F$-characteristic in $S$ and has rank $3$. Thus $P$ is elementary abelian of order $p^3$ and it is normal in $S$, contradicting  Lemma \ref{OpF}. So $\Phi(E)\neq 1$. Note that  by Lemma \ref{same.frattini} we have $\Phi(E)=\Phi(Q)$ for every $\F$-essential subgroup of $S$ that is normal in $S$. Since $O_p(\F)=1$ we conclude that there exists an $\F$-essential subgroup of $S$ that is not normal in $S$. 
\end{proof}

\section{Proof of  Theorem \ref{main}}

Throughout this section, we assume the following hypothesis.

\begin{hp}
Let $p$ be an odd prime, let $S$ be a $p$-group of sectional rank $3$, let $\F$ be a saturated fusion  system on $S$ and let $E$ be an $\F$-essential subgroup of $S$ of rank $3$ not $\F$-characteristic in $S$. Set $T=\coreF(E)<E$, $\N^1=\N_S(E)$ and $\N^2=\N_S(\N^1)$.
\end{hp}

\begin{remark}
Recall that
\begin{itemize}
\item $O^{p'}(\Out_\F(E)) \cong \SL_2(p)$  by Theorem \ref{typeIIauto};
\item every morphism $\varphi$ in $\N_{\Aut_\F(E)}(\Aut_S(E))$ is the restriction of an automorphism of $\N^1$ (since $E$ is receptive) and if $\N^1 < S$ then $[\N^2 \colon \N^1]=p$ and $\varphi$ is the restriction of an automorphism of $\N^2$ by Lemma \ref{lift.to.N2};
\item $\Phi(E)\leq T$ and  $p^2 \leq [E\colon T]  \leq p^3$ by Theorem \ref{characterization.E}(3);
\item $\N^1$ has rank $3$ and $\N^1/\Phi(E)$ has exponent $p$ by Lemma \ref{same.rank}; and
\item $\Phi(E)$ is $\F$-characteristic in $\N^1$ by Lemma \ref{char.frat} (and so $\Phi(E) \norm \N^2$).
\end{itemize}
\end{remark}

\begin{lemma}\label{Tincenter}
If $[E \colon T]=p^2$ and $\N^1 < S$ then $T\leq \Z(\N^2)$ and $T$ is $\F$-characteristic in $\N^2$.
\end{lemma}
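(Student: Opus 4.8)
The plan is to first extract the structure forced by $[E:T]=p^2$, then reduce the centrality statement to a single commutator, and finally deduce the $\F$-characteristic claim.

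First I would record the consequences of $[E:T]=p^2$. By Theorem \ref{characterization.E} we have $V=E$, so $E/T$ is a natural $\SL_2(p)$-module, $T$ is abelian with $T\leq\Z(E)$, and $T\leq\Z(\N^1)$; moreover, since $E$ has rank $3$ we have $[E:\Phi(E)]=p^3$ and hence $[T:\Phi(E)]=p$. As $\N^1<S$, Lemma \ref{lift.to.N2} gives $[\N^2:\N^1]=p$, and $T\norm\N^2$ because $T$ is $\F$-characteristic in $\N^1$. The quotient $\N^1/T$ has order $p^3$, is non-abelian by Lemma \ref{non.abelian.quotient} and has exponent $p$ by Theorem \ref{characterization.E}(2), so $\N^1/T\cong p^{1+2}_+$ with $Z/T=\Z(\N^1/T)$ of order $p$; combining Lemma \ref{centerN2} with $\Z(\N^2/T)\leq\Z(\N^1/T)$ yields $\Z(\N^2/T)=Z/T$.

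Next I would reduce the assertion $T\leq\Z(\N^2)$ to a commutator computation. Since $T\leq\Z(\N^1)$ we have $\N^1\leq\C_{\N^2}(T)\norm\N^2$, so $\C_{\N^2}(T)$ equals $\N^1$ or $\N^2$; thus it suffices to show that a generator $g$ of $\N^2/\N^1$ centralises $T$. As $g$ is a $p$-element and $\Aut(T/\Phi(E))\cong\Aut(\C_p)$ has order prime to $p$, the element $g$ acts trivially on $T/\Phi(E)$, whence $[\N^2,T]\leq\Phi(E)<T$. The task is to upgrade this to $[\N^2,T]=1$.

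The key structural input I would use for this upgrade is that, because $g\in\N^2\setminus\N^1=\N^2\setminus\N_S(E)$, the subgroup $E^{g}\neq E$ is again $\F$-essential by Theorem \ref{class.3}, satisfies $\N_S(E^{g})=\N^1$, and by Lemma \ref{same.T} has $T=\coreF(E,E^{g})$, while $E\cap E^{g}=Z$ (two distinct maximal subgroups of $p^{1+2}_+\cong\N^1/T$ meet in its centre). Alongside this I would bring in the involution $\tau\in O^{p'}(\Aut_\F(E))$, which inverts $E/T$ and centralises $T$ (Lemma \ref{prop.T.II}); as $\tau\in\N_{\Aut_\F(E)}(\Aut_S(E))$ it lifts by Lemma \ref{lift.to.N2} to $\hat\tau\in\Aut_\F(\N^2)$ that normalises $E$ and $\N^1$, centralises $T$, and inverts $E/T$. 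Running coprime action of $\langle\hat\tau\rangle$ on $\N^2/T$ together with the maximality of the core $T$, the aim is to show that a nontrivial (necessarily unipotent) action of $\N^2/\N^1$ on $T$ would produce a subgroup strictly between $\Phi(E)$ and $T$ that is $\F$-characteristic in both $E$ and $\N^1$, contradicting $T=\coreF(E)$. I expect this to be the main obstacle: the abstract group theory alone permits a nontrivial unipotent action of $\N^2/\N^1$ on $T$ compatible with $[\N^2,T]\leq\Phi(E)$ (for instance when $T$ is cyclic of order at least $p^{2}$, where $\Phi(E)=T^{p}$), so the exclusion must genuinely use saturation, through the receptivity of $E$ and $E^{g}$ and the defining maximality of $T$, rather than group theory alone.

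Finally, for the $\F$-characteristic claim I would argue as follows once $T\leq\Z(\N^2)$ is known. The image of $\Z(\N^2)$ in $\N^2/T$ lies in $\Z(\N^2/T)=Z/T$, so $T\leq\Z(\N^2)\leq Z$ with $[Z:T]=p$. If $\Z(\N^2)=T$ then $T$ is characteristic, hence $\F$-characteristic, in $\N^2$. If instead $\Z(\N^2)=Z$, I would pin down $T$ intrinsically: $\Phi(E)$ is $\F$-characteristic in $\N^1$ by Lemma \ref{char.frat}, and $T/\Phi(E)=\C_{E/\Phi(E)}(O^{p'}(\Aut_\F(E)))$ by Lemma \ref{prop.T.II}(4), so $T$ is the unique index-$p$ subgroup of $\Z(\N^2)$ over $\Phi(E)$ that is the fixed line of $O^{p'}(\Aut_\F(E))$. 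Using that $\Aut_\F(\N^1)=\Aut_S(\N^1)\N_{\Aut_\F(\N^1)}(E)$ and that every morphism of $\N^1$ extends to $\N^2$ (Lemma \ref{lift.to.N2}), this line is preserved by $\Aut_\F(\N^2)$, so $T$ is $\F$-characteristic in $\N^2$.
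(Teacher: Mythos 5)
Your setup (the structure of $\N^1/T\cong p^{1+2}_+$, the identification $\Z(\N^2/T)=Z/T$, and the reduction of $T\leq\Z(\N^2)$ to showing that a generator $g$ of $\N^2/\N^1$ centralizes $T$, with $[\N^2,T]\leq\Phi(E)$ already in hand) is sound and consistent with the paper. But the core of the lemma is precisely the upgrade from $[\N^2,T]\leq\Phi(E)$ to $[\N^2,T]=1$, and there you only state an ``aim'' and flag it as the main obstacle, so the proof is not complete. Moreover, the contradiction you propose to derive cannot work as stated: since $[E\colon T]=p^2$ and $E$ has rank $3$, you yourself computed $[T\colon\Phi(E)]=p$, so there is no subgroup strictly between $\Phi(E)$ and $T$; and even if there were, a subgroup \emph{contained} in $T$ that is $\F$-characteristic in $E$ and $\N^1$ would not contradict the maximality defining $T=\coreF(E)$ (only a larger such subgroup would). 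The paper closes this step differently: $\N^2/T$ has maximal class, and writing $Z_i$ for the preimages of $\Z_i(\N^2/T)$ one takes $C$ to be the preimage of $\C_{\N^2/T}(Z_2/T)$, so that $\N^2=\N^1C$; the lifted automorphism $\tau\in\Aut_\F(\N^2)$ inverting $E/T$ is shown (by commutator calculus) to centralize $Z_2/Z_1$ and invert $\N^2/\N^1$, whence $c\tau=c^{-1}z$ with $z\in Z_2\leq\N^1$ for a generator $c$ of $C$ over $Z_2$; then $[c,t]=[c,t]\tau=[c^{-1}z,t]=[c^{-1},t]$ forces $t^{c^2}=t$, and as $c$ is a $p$-element with $p$ odd, $c$ centralizes $T$, giving $T\leq\Z(\N^1 C)=\Z(\N^2)$. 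Your instinct that the lifted involution is the right tool was correct, but the actual mechanism is a direct commutator identity, not a core-maximality contradiction.

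The final part also has a gap. Your ``intrinsic'' characterization of $T$ (as the fixed line of $O^{p'}(\Aut_\F(E))$ over $\Phi(E)$ inside $\Z(\N^2)$) is intrinsic to the pair $(E,\N^1)$, but $\Aut_\F(\N^2)$ need not normalize $E$ or $\N^1$, and $\Phi(E)$ need not be $\F$-characteristic in $\N^2$; an $\alpha\in\Aut_\F(\N^2)$ can send $E$ to another essential subgroup $E\alpha$ with $\N^1\alpha\neq\N^1$ and $T\alpha=\coreF(E\alpha)\neq T$, which is exactly the configuration one must exclude. The paper does this by assuming $T\alpha\neq T$ when $1\neq T<\Z(\N^2)$, noting $\N^1\cap\N^1\alpha=\Z_2(\N^2)$ and $TT\alpha=\Z(\N^2)$, and using that $\tau$ acts as a scalar on $\N^2/\Z_2(\N^2)$ (hence normalizes $\N^1\alpha$ and $T\alpha$) to compute that $\N^1\alpha/T\alpha$ would be abelian, contradicting Lemma \ref{non.abelian.quotient}. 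Without an argument of this kind your claim that ``this line is preserved by $\Aut_\F(\N^2)$'' is unjustified.
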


\begin{proof}
For $i\geq 1$ let $Z_i\leq \N^2$ be the preimage in $\N^2$ of $\Z_i(\N^2/T)$.
The group $\N^2/T$ has maximal nilpotency class (since $E/T \cong \C_p \times \C_p$ is self-centralizing in $\N^2/T$) and so $Z_3=\N^2$, $\N^2/Z_2\cong \C_p \times \C_p$ and  $[Z_2 \colon Z_1]=[Z_1 \colon T] = p$.
Also, $Z_1< E$ and since $[E,Z_2] \leq Z_1\leq E$ we get $Z_2 \leq \N^1$.
By Lemma \ref{lift.to.N2} and the fact that $O^{p'}(\Out_\F(E))\cong \SL_2(p)$ (Theorem \ref{typeIIauto}), there exists a morphism $\tau\in \Aut_\F(\N^2)$ that normalizes $E$ and inverts $E/T$. Note that $\tau$ normalizes $Z_1$ and $Z_2$ and by Lemma \ref{prop.T.II} it centralizes $T$.
 Using properties of commutators (\cite[Theorem 2.2.1, Lemma 2.2.2]{GOR}), we deduce that $\tau$ centralizes the quotient $Z_2/Z_1$ and inverts $\N^2/\N^1$. Let $C\leq \N^2$ be the preimage in $\N^2$ of $\C_{\N^2/T}(Z_2/T)$. Then $[\N^2 \colon C]=p$ and $\N^2=\N^1C$.
Let $c\in C$ be such that $C=\langle c \rangle Z_2$.
Note that $c\tau = c^{-1}z$ for some $z\in Z_2$ and for every $t\in T$ we have $[c,t] \in T$ and $[z,t]=1$.  So we get
\[ [c,t] = [c,t]\tau = [c\tau, t\tau] = [c^{-1}z, t] =[c^{-1},t].\]

Therefore $[c,t] = 1$. Since this is true for every $t\in T$, we conclude that $T\leq \Z(C)$. By Theorem \ref{characterization.E}  we have $T\leq \Z(\N^1)$, so $T\leq \Z(\N^1C)=\Z(\N^2)$. 

If $T=1$ or $T=\Z(\N^2)$ then $T$ is characteristic in $\N^2$. Suppose $1 \neq T< \Z(\N^2)$. Then $[E \colon \Z(\N^2)]=p$ and so $Z_1 = \Z(\N^2)$, $Z_2=\Z_2(\N^2)$ and $E$ is abelian.
Suppose for a contradiction that there exists $\alpha \in \Aut_\F(\N^2)$ such that $T\alpha \neq T$. Then $\N^1\alpha \neq \N^1$, $\N^1 \cap \N^1\alpha =\Z_2(\N^2)$ and $TT\alpha =\Z(\N^2)$.
The morphism $\tau$ acts as a scalar on $\N^2/\Z_2(\N^2)$.
Hence $\tau$ normalizes $\N^1\alpha$. Note that $\Aut_\F(\N^1\alpha) = \alpha^{-1}\Aut_\F(\N^1)\alpha$, so $T\alpha$ is $\F$-characteristic in $\N^1\alpha$. Thus $\tau$ normalizes $T\alpha$ and $T\cap T\alpha$.
Let $x\in \N^1\alpha \backslash \Z_2(\N^2)$ and $y \in \Z_2(\N^2) \backslash \Z(\N^2)$. Then $x\tau = x^{-1}z_1$ and $y\tau = yz_2$ for some $z_1,z_2 \in \Z(\N^2)$ and
\[ [x,y]\tau = [x^{-1}z_1, yz_2] = [x,y]^{-1}. \]
In particular, since $\tau$ centralizes $T/T\cap T\alpha$, we conclude that $[x,y] \in T\alpha$ and so $\N^1\alpha/T\alpha$ is abelian. Thus $\N^1/T \cong \N^1\alpha/T\alpha$ is abelian, contradicting  Lemma \ref{non.abelian.quotient}.
Therefore $T=T\alpha$ for every $\alpha \in \Aut_\F(\N^2)$ and so $T$ is $\F$-characteristic in $\N^2$.

\end{proof}

\begin{lemma}\label{p2norm}
If $p\geq 5$ and $[E \colon T] =p^2$ then $E\norm S$.
\end{lemma}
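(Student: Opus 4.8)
The plan is to argue by contradiction: suppose $E$ is not normal in $S$, so that $\N^1 = \N_S(E) < S$. Then Lemma \ref{lift.to.N2} gives $[\N^2 \colon \N^1] = p$, and since $[E \colon T] = p^2$ Lemma \ref{Tincenter} applies, yielding $T \leq \Z(\N^2)$ with $T$ $\F$-characteristic in $\N^2$. Working in $\ov{\N^2} = \N^2/T$, we have $|\ov{\N^2}| = p^4$ and, as recorded in the proof of Lemma \ref{Tincenter}, $\ov E = E/T \cong \C_p \times \C_p$ is self-centralizing; hence $\ov{\N^2}$ is a $p$-group of maximal class. First I would pin down its two top layers. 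We have $\ov E \leq \ov{\N^1} = \N^1/T$, and by Lemma \ref{same.rank} the maximal subgroup $\ov{\N^1}$ has exponent $p$; it cannot be abelian, since otherwise $\ov{\N^1} \leq \C_{\ov{\N^2}}(\ov E) = \ov E$, which is absurd on orders. Thus $\ov{\N^1} \cong p^{1+2}_+$, and one checks that $\Z(\ov{\N^1}) = \Z(\ov{\N^2})$ is the unique central line contained in $\ov E$, while $\ov E$ is one of the $p$ non-normal self-centralizing lines of $\ov{\N^1}$.

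The crucial use of $p \geq 5$ is that $\ov{\N^2}$ then has nilpotency class $3 < p$ and is therefore a regular $p$-group. I would exploit regularity together with Theorem \ref{characterization.E}(5), which tells us that $T$ is abelian with $|[E,E]| \leq p$ and $T/[E,E]$ cyclic, so that $T$ is central in $\N^2$ and almost cyclic. The idea is to analyse the power structure of the regular group $\ov{\N^2}$ — its subgroup of $p$-th powers has order at most $p$, and its elements of order dividing $p$ form a subgroup of order at least $p^3$ containing $\ov{\N^1}$ — and then to transport this information, together with the $\Aut_\F$-action (in particular the involution $\tau \in \Aut_\F(\N^2)$ produced in the proof of Lemma \ref{Tincenter}, which inverts $E/T$ and centralises $T$), back through the central almost-cyclic subgroup $T$ and up the normalizer tower $E < \N^1 < \N^2 \leq \dots \leq S$. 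The aim is to manufacture inside $S$ an elementary abelian section of order $p^4$, contradicting the standing hypothesis that $S$ has sectional rank $3$, and thereby forcing $\N^1 = S$, i.e. $E \norm S$.

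The main obstacle is precisely this last step, and it must be handled with care: the reduction to a regular maximal-class group of order $p^4$ is clean, but such a group genuinely exists for $p \geq 5$ (for instance $\C_p \times \C_p \times \C_p$ extended by $\C_p$ acting with a single Jordan block), and even its central extension by an almost-cyclic $T$ can have sectional rank $3$ locally. So no contradiction can be read off from $\ov{\N^2}$ in isolation; the contradiction has to be extracted from the global interplay of the regular power map with $T$ and with the sectional rank of $S$, following the tower all the way to the top. This is exactly where $p \geq 5$ is indispensable: for $p = 3$ the class-$3$ quotient $\ov{\N^2}$ may be irregular, as it is in the genuine example $\C_{3^a} \wr \C_3$ with its essential subgroup $\C_{3^a} \circ 3^{1+2}_+$ (where $E$ is not normal for $a \geq 2$), and the $p$-th power collection that accumulates the extra independent elements of order $p$ when $p \geq 5$ simply breaks down. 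I expect the delicate bookkeeping to lie in showing that the regular power map forces enough such independent elements to appear that sectional rank $3$ is violated, rather than collapsing them into a cyclic extension of a rank-$3$ group as happens when $p = 3$.
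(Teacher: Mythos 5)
Your setup is fine as far as it goes: assuming $\N^1<S$, invoking Lemma \ref{lift.to.N2} and Lemma \ref{Tincenter} to get $T\leq \Z(\N^2)$, and identifying $\N^2/T$ as a maximal class group of order $p^4$ with $\N^1/T\cong p^{1+2}_+$ all match the reductions available in the paper. But after that the proposal stops being a proof: you explicitly defer the contradiction to unspecified ``delicate bookkeeping'' involving the regular power map, the tower above $\N^2$, and the involution $\tau$, and none of these is actually the mechanism that works. That is a genuine gap, and moreover the tools you name would not close it.

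Concretely, the paper's proof never uses regularity and never leaves $\N^2$; the contradiction is read off entirely inside $\N^2/\Phi(E)$ (note $[T:\Phi(E)]=p$ here since $E$ has rank $3$, so this is one layer below the quotient $\N^2/T$ you work in, and that extra layer matters). The correct extra input is not the involution $\tau$ but a torus element $\varphi\in\Aut_\F(\N^2)$ of order $p-1$ restricting to a diagonal element of $O^{p'}(\Aut_\F(E))\cong\SL_2(p)$: it acts with eigenvalue $\lambda^{-1}$ on $E/Z_1$, $\lambda$ on $Z_1/T$, trivially on $T/\Phi(E)$ (by Lemma \ref{prop.T.II}), and then commutator calculus forces eigenvalues $\lambda^2$ on $Z_2/Z_1$ and $\lambda^3$ on $C/Z_2$, where $Z_1,Z_2$ come from the soft subgroup structure of $E/\Phi(E)$ in $\N^2/\Phi(E)$ and $C$ is the centralizer of $Z_2$ modulo $T$. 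The hypothesis $p\geq 5$ enters only through $\lambda$ having order at least $4$: then $\lambda^5\neq 1$ forces $[C,C]\leq\Phi(E)$, and $\lambda^3\notin\{1,\lambda,\lambda^2\}$ forces $C^p\leq\Phi(E)$, so $C/\Phi(E)$ is elementary abelian of order $p^4$, contradicting sectional rank $3$. An involution only sees the signs $\pm1$ and cannot separate these layers, and regularity of $\N^2/T$ gives you no leverage on where $p$-th powers of elements of $C$ land relative to the $\varphi$-eigenspaces. So the decisive step of your argument is missing, and the route you sketch for it would not produce the contradiction.
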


\begin{proof} Aiming for a contradiction, suppose that $\N^1 < S$ and set $\Phi_E=\Phi(E)$. By Lemma \ref{char.frat} we have $\Phi_E \norm \N^2$ and we can consider the group $\N^2/\Phi_E$.

Note that $E/\Phi_E$ is a soft subgroup of $\N^2/\Phi_E$. In particular by \cite[Lemma 1 and Theorem 2]{Het2},
if we denote by $Z_1$ the preimage in $\N^2$ of $\Z(\N^1/\Phi_E)$ and we set $Z_2=[\N^2,\N^2]Z_1$, then  $Z_1  < E$, $Z_2 < \N^1$ and $\N^1/Z_1 \cong \N^2/Z_2 \cong \C_p \times \C_p$. 
Let $C\leq \N^2$ be the preimage in $\N^2$ of $\C_{\N^2/T}(Z_2)$. Then $[\N^2 \colon C] = [C \colon Z_2] =p$ and $\N^2=\N^1C$. By Lemma \ref{same.rank} the group $\N^1/\Phi_E$ has exponent $p$, $\N^1$ has rank $3$ and $[\N^1,\N^1]\Phi_E=\Phi(\N^1)$.
In particular the quotient $Z_2/\Phi_E$ is elementary abelian of order $p^3$ and both $T/\Phi_E$ and $\Phi(\N^1)/\Phi_E$ are normal subgroups of $\N^2/\Phi_E$ of order $p$. Thus $T\Phi(\N^1) / \Phi_E \leq \Z(\N^2/\Phi_E) \leq Z_1/\Phi_E$ and we deduce that $Z_1$ is the preimage in $\N^2$ of $\Z(\N^2/\Phi_E)$. So $Z_1/\Phi_E$ is in the center of $C/\Phi_E$.

\begin{figure}[H]
\centering
\begin{tikzpicture}[x=1.00mm, y=1.00mm, inner xsep=0pt, inner ysep=0pt, outer xsep=0pt, outer ysep=0pt]
\definecolor{L}{rgb}{0,0,0}
\path[line width=0.30mm, draw=L] (-10,30) -- (-10.0,-0.);
\path[line width=0.30mm, draw=L] (-10,15) -- (-0.0,5);
\path[line width=0.30mm, draw=L] (-10,30) -- (-0.0,20);
\path[line width=0.30mm, draw=L] (-10,-0.) -- (10.,-20);
\path[line width=0.30mm, draw=L] (10,-20.) -- (0.,-30);
\definecolor{F}{rgb}{0,0,0}
\path[line width=0.15mm, draw=L, fill=F] (0.0,-20.) circle (0.50mm);
\path[line width=0.15mm, draw=L, fill=F] (-0.,5.0) circle (0.50mm);
\path[line width=0.15mm, draw=L, fill=F] (-10.0,-0.0) circle (0.50mm);
\path[line width=0.15mm, draw=L, fill=F] (-0.0,-10.) circle (0.50mm);
\path[line width=0.15mm, draw=L, fill=F] (-10,30) circle (0.50mm);
\path[line width=0.15mm, draw=L, fill=F] (-0.0,20.0) circle (0.50mm);
\path[line width=0.15mm, draw=L, fill=F] (-10.0,15) circle (0.50mm);
\path[line width=0.15mm, draw=L, fill=F] (10.0,-20) circle (0.50mm);
\draw(-14.,30.00) node[anchor=base west]{\fontsize{8.54}{10.24}\selectfont $\N^2$};
\draw(1.50,-10.78) node[anchor=base west]{\fontsize{8.54}{10.24}\selectfont $Z_1$};
\draw(-14.,-0.51) node[anchor=base west]{\fontsize{8.54}{10.24}\selectfont $E$};
\draw(-14.,14.27) node[anchor=base west]{\fontsize{8.54}{10.24}\selectfont $\N^1$};
\draw(1.50,4.22) node[anchor=base west]{\fontsize{8.54}{10.24}\selectfont $Z_2=[\N^2,\N^2]Z_1$};
\draw(1.50,19.13) node[anchor=base west]{\fontsize{8.54}{10.24}\selectfont $C$};
\draw(1.50,-21.) node[anchor=base west]{\fontsize{8.54}{10.24}\selectfont $T$};
\draw(11,-21.) node[anchor=base west]{\fontsize{8.54}{10.24}\selectfont $\Phi(\N^1)$};
\path[line width=0.30mm, draw=L] (-0.0,20.) -- (0.00,-30.0);
\draw(1.50,-31.) node[anchor=base west]{\fontsize{8.54}{10.24}\selectfont $\Phi_E$};
\path[line width=0.15mm, draw=L, fill=F] (-0.,-30) circle (0.50mm);
\end{tikzpicture}%
\end{figure}

Let $\varphi\in \Aut_\F(\N^2)$ be the morphism that acts on $E/T$ as $\begin{pmatrix} \lambda^{-1} & 0 \\ 0 & \lambda \end{pmatrix}$  with respect to the basis $\{ eT, zT\}$, for some $e \in E \backslash Z_1$, $z\in Z_1$ and $\lambda\in \GF(p)$ of order $p-1$. Such a morphism exists by Lemma \ref{lift.to.N2} and the fact that $O^{p'}(\Out_\F(E)) \cong \SL_2(p)$ (Theorem \ref{typeIIauto}).
Note that $\varphi$ centralizes $T$ by Lemma \ref{prop.T.II}. Let $x\in Z_2 \backslash Z_1$. Then $[e,x] \in [E, Z_2] \leq Z_1\backslash T$ and $x\varphi = x^au$ for some $a\in \GF(p)$ and $u\in Z_1$. Thus
\[ [e,x]^\lambda \equiv [e,x]\varphi = [e\varphi, x\varphi] = [e^{\lambda^{-1}}, x^au] \equiv [e,x]^{\lambda^{-1}a} \mod T.\]
Hence $a \equiv \lambda^2 \mod p$. In other words, the morphism $\varphi$ acts as $\lambda^2$ on $Z_2/Z_1$.
Since $[E,C] \leq Z_2 \backslash Z_1$, the same method shows that $\varphi$ acts as $\lambda^3$ on $C/Z_2$. 
Let $c\in C\backslash Z_2$.
Then $c\varphi = c^{\lambda^3}v$ for some $v\in Z_2$. Also note that $x\varphi = x^{\lambda^2}u$ for some $u\in \Phi_E$ (because $\Z_2/\Phi_E$ is elementary abelian). Since $\varphi$ centralizes $T$ and $[c,x] \in [C, Z_2] \leq  T\leq \Z(\N^2)$ by definition of $C$ and Lemma \ref{Tincenter}, we get
\[ [c,x] = [c,x]\varphi = [c^{\lambda^3}v, x^{\lambda^2}u] \equiv [c,x]^{\lambda^5} \mod \Phi_E.\]

Since $\lambda^5 \neq 1 \mod  p$, we deduce that $[c,x] \in \Phi_E$.
Therefore the group $C/\Phi$ is abelian of order $p^4$.
Moreover we have
\[ (c^p)\varphi = (c\varphi)^p= (c^{\lambda^3}v)^p = (c^p)^{\lambda^3}v^p[c^{\lambda^3},v]^{\frac{p(p-1)}{2}} \equiv (c^p)^{\lambda^3} \mod \Phi_E.\]

Since $p\geq 5$ and $\lambda\in \GF(p)$ has order $p-1$, we deduce that $\lambda^3 \not\equiv \lambda^2 \mod p$, $\lambda^3 \not\equiv \lambda \mod p$ and $\lambda^3 \not\equiv 1 \mod p$. Thus the only option is $c^p \equiv 1 \mod \Phi_E$. Hence  $C^p\leq \Phi_E$ and the group $C/\Phi_E$ is elementary abelian of order $p^4$, contradicting the fact that $S$ has sectional rank $3$.
Therefore the $\F$-essential subgroup $E$ is normal in $S$.
\end{proof}

\begin{lemma}\label{p3norm}
If $p\geq 5$ and $[E \colon T] =p^3$ then $E\norm S$.
\end{lemma}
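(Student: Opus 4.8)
The plan is to assume $\N^1 < S$ and derive a contradiction by proving that \emph{every} element of $\N^2$ normalises $E$; this forces $\N^2 \le \N_S(E) = \N^1$, contradicting $\N^1 < \N_S(\N^1) = \N^2$. First I would fix the module picture. Since $E$ has rank $3$ and $[E:T] = p^3 = [E:\Phi(E)]$, the remark following the section hypothesis gives $T = \Phi(E)$, so $E/T$ is elementary abelian of rank $3$. By Theorems~\ref{typeIIauto} and~\ref{characterization.E}, $O^{p'}(\Out_\F(E)) \cong \SL_2(p)$ and $E/T = C/T \oplus V/T$, where $C/T = \C_{E/T}(O^{p'}(\Aut_\F(E)))$ is the trivial line and $V/T$ is the natural module. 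I write $c, v_1, v_2$ for generators of $C/T$, of the $\Aut_S(E)$-fixed line $L/T \le V/T$, and of the complementary line of $V/T$, so that $E/T = \langle c, v_1, v_2\rangle T/T$; by Theorem~\ref{characterization.E}(4), $\Z(\N^1/T) = C/T \oplus L/T = \langle c, v_1\rangle T/T \le E/T$ has order $p^2$.

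Next I would diagonalise with a torus. A generator of order $p-1$ of a maximal torus of $O^{p'}(\Out_\F(E)) \cong \SL_2(p)$ normalises $\Aut_S(E)$, hence lies in $\N_{\Aut_\F(E)}(\Aut_S(E))$ and, by Lemma~\ref{lift.to.N2}, extends to some $\varphi \in \Aut_\F(\N^2)$; by Lemma~\ref{prop.T.II}(3) it centralises $T$. Choosing a generator $\lambda$ of $\GF(p)^\times$ with $L/T$ of eigenvalue $\lambda$, the $\varphi$-eigenvalues on $\N^1/T$ are $1$ on $C/T$, $\lambda$ on $L/T$, $\lambda^{-1}$ on the complementary line of $V/T$, and $\lambda^2$ on $\N^1/E$. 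Since $T = \coreF(E)$ is the largest subgroup of $E$ that is $\F$-characteristic in both $E$ and $\N^1$, the subgroup $C$ (which is $\F$-characteristic in $E$) is not $\F$-characteristic in $\N^1$; as $\Aut_\F(\N^1) = \Aut_S(\N^1)\,\N_{\Aut_\F(\N^1)}(E)$ by Lemma~\ref{lift.to.N2}, some $w \in \N^2$ moves $C/T$. A coprime-action computation on the two $\varphi$-eigenlines of $\Z(\N^1/T)$ then forces the $\varphi$-eigenvalue of $\N^2/\N^1$ to equal $\lambda$ and shows that $\N^2$ fixes $L/T$ pointwise, so $\Z(\N^2/T) = L/T$ (as predicted by Lemma~\ref{centerN2}).

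Finally, I would fix an arbitrary $w \in \N^2$ and check that conjugation by $w$ maps the three generators of $E/T = \langle c, v_1, v_2\rangle T/T$ back into $E/T$. Indeed $v_1^w = v_1$ because $L/T \le \Z(\N^2/T)$; $c^w \in \Z(\N^1/T) \le E/T$ because $\Z(\N^1/T)$ is characteristic in $\N^1/T$; and $[w, v_2] \in \Z(\N^1/T)$ because its image in the elementary abelian quotient $Q = (\N^1/T)/\Z(\N^1/T)$ is $\varphi$-fixed (the relevant eigenvalue is $\lambda \cdot \lambda^{-1} = 1$, using that $\N^2/\N^1$ has eigenvalue $\lambda$), whereas $Q$ carries only the eigenvalues $\lambda^{-1}$ and $\lambda^2$, neither equal to $1$ since $p \ge 5$. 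Hence $v_2^w \in v_2\,\Z(\N^1/T) \le E/T$, so $E^w = E$ for every $w \in \N^2$, giving the contradiction.

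The crux — and the only place the hypothesis $p \ge 5$ enters — is this last eigenvalue comparison: one must rule out a component of $[w, v_2]$ along the $\N^1/E$-direction, whose eigenvalue $\lambda^2$ equals $1$ precisely when $p = 3$ (and then the conclusion genuinely fails, as the $\SL_4(q)$ example in the introduction shows). The one technical point to execute carefully is the coprime-action bookkeeping: since $\varphi(w) \equiv w^\lambda \pmod{\N^1}$ and $\N^1$ acts trivially on the central quotient $Q$, conjugation by $w$ on $Q$ is a unipotent transformation commuting with $\varphi$ up to the scalar $\lambda$, which legitimises treating $[w, v_2]$ as a $\varphi$-eigenvector of eigenvalue $1$.
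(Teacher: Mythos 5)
Your argument is correct, but it reaches the contradiction by a genuinely different route from the paper's. The paper's proof stays inside the soft-subgroup machinery of H\'ethelyi: it forms $H=\Z(\N^1)[\N^2,\N^2]$ and a $\varphi$-invariant maximal subgroup $M$ of $\N^2$ distinct from $\N^1$, climbs the eigenvalue ladder $\lambda^{-1},\lambda,\lambda^2,\lambda^3$ up the chain $E<\N^1<\N^2$, and concludes from $\lambda^3\not\equiv\lambda$ that $[C,M]=1$, whence $C\leq\Z(\N^2/T)$ against $C\nleq\Z(\N^2/T)$. You dispense with $H$ and $M$ entirely: from $1\neq[C,\N^2]\leq[\Z(\N^1/T),\N^2]\leq\C_{\Z(\N^1/T)}(\N^2)$ and $C\nleq\Z(\N^2/T)$ you identify $\Z(\N^1/T)\cap\Z(\N^2/T)$ with the $\lambda$-eigenline $L/T$ and read off that $\varphi$ acts on $\N^2/\N^1$ as $\lambda$; you then show that for every $w\in\N^2$ the image of $[w,v_2]$ in $Q=(\N^1/T)/\Z(\N^1/T)$ is $\varphi$-fixed, hence trivial because the eigenvalues of $\varphi$ on $Q$ are $\lambda^{-1}$ and $\lambda^2$ and $p\geq5$, so that $\N^2$ normalises $E$ --- a cleaner contradiction. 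Both proofs use $p\geq5$ only through $\lambda^2\neq1$, and it is amusing that your computation and the paper's assign the eigenvalues $\lambda$ and $\lambda^3$ respectively to $\N^2/\N^1$; as both derivations are sound, their clash is itself a third route to the same contradiction, and both correctly degenerate at $p=3$. The two points you must write out in full are the ones you already flag: that $[\Z(\N^1/T),\N^2]$ is centralised by $\N^2$ (because $\N^2/\N^1$ acts unipotently on the two-dimensional $\Z(\N^1/T)$, so $(\gamma_w-1)^2=0$ there), and that $(\gamma_w-1)^2=0$ on $Q$ justifies $[w^{\lambda},v_2^{\lambda^{-1}}]\equiv[w,v_2]\bmod\Z(\N^1/T)$; both are routine applications of \cite[Lemma 2.2.2]{GOR}.
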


\begin{remark}
By Theorem \ref{characterization.E}(3) the assumption $[E \colon T]=p^3$ implies that $T=\Phi(E)$ and $E$ has rank $3$.
\end{remark}

\begin{proof}
Aiming for a contradiction, suppose that $\N^1<S$. Then $[\N^2 \colon \N^1] =p$ by Lemma \ref{lift.to.N2}.
Also, $T\norm \N^2$ because $T$ is $\F$-characteristic in $\N^1$ and we can consider the group $\N^2/T$. To simplify notation we assume $T=1$.

By Theorem \ref{characterization.E}(4) and Lemma \ref{centerN2} we have $|\Z(\N^1)|=p^2$ and $|\Z(\N^2)|=p$.
Recall that $O^{p'}(\Out_\F(E))\cong \SL_2(p)$ by Theorem \ref{typeIIauto}. 
Set $C=\C_E(O^{p'}(\Aut_\F(E)))$. Then $|C|=p$ and $C$ is $\F$-characteristic in $E$, so $C \leq \Z(\N^1)$. By maximality of $T$, the group $C$ is not $\F$-characteristic in $\N^1$.
By Lemma \ref{lift.to.N2} we have $\Aut_\F(\N^1)=\Aut_S(\N^1)\N_{\Aut_\F(\N^1)}(E)$. Since $\Aut_S(\N^1) \cong \N^2/\Z(\N^1)$ and $C$ is $\F$-characteristic in $E$, we deduce that $C$ is not normal in $\N^2$. In particular $C\nleq \Z(\N^2)$.

By Lemma \ref{lift.to.N2}, there exists a morphism $\varphi \in \Aut_\F(\N^2)$ that normalizes $E$ and acts on $E/C$ as
\[ \begin{pmatrix} \lambda^{-1} & 0 \\ 0 & \lambda \end{pmatrix}, \text{ for some } \lambda\in \GF(p) \text{ of order } p-1,\]
with respect to the basis $\{xC, zC\}$, where $x\in E\backslash \Z(\N^1)$ and $z\in \Z(\N^1)\backslash C$.

Since $E$ is abelian and $[\N^1 \colon E] =p$, the group $E$ is a soft subgroup of $\N^2$. In particular if we set $H=\Z(\N^1)[\N^2,\N^2]$, then by \cite[Theorem 2]{Het2} we have that $H< \N^1$,  $\N^2/H\cong \C_p\times \C_p$ and $H$ is normalized by $\varphi$.
Hence by Maschke's Theorem (\cite[Theorem 3.3.2]{GOR})  there exists a maximal subgroup $M$ of $\N^2$ containing $H$ and distinct from $\N^1$ that is normalized by $\varphi$.
Since the action of $\varphi$ on $\Z(\N^1)$ is not scalar, we deduce that $C$ and $\Z(\N^2)$ are the only maximal subgroups of $\Z(\N^1)$ normalized by $\varphi$. Note that $\N^1$ has rank $3$ by Lemma \ref{same.rank}, so $|\Phi(\N^1)|=p$. Also, $\Phi(\N^1)\leq \Z(\N^1)$ is normalized by $\varphi$ and normal in $\N^2$. Hence $\Phi(\N^1)=\Z(\N^2)$.
In particular $[E,H] = [\N^1, \N^1] =\Z(\N^2)$.

Let  $h\in H\backslash \Z(\N^1)$. Then $h\varphi = h^au$ for some $u\in \Z(\N^1)$ and $a\in \GF(p)$ and
\[ [x,h]^{\lambda} = [x,h]\varphi = [x^{\lambda^{-1}}, h^au] = [x,h]^{\lambda^{-1}a}.\]
Hence $a \equiv \lambda^2 \mod p$.

Note that $H=\Z(\N^1)[\N^1,M]$ and $H/\Z(\N^1) = \Z(\N^2/\Z(\N^1))$. Let $y\in \N^1\backslash H$ and $g\in M \backslash H$. Then $y\varphi = y^{\lambda^{-1}}v$ for some $v\in \Z(\N^1)$ and $g\varphi  =g^bk$ for some $k\in H$ and $b\in \GF(p)$. Since $[y,g] \in H \backslash\Z(\N^1)$, we have
\[ [y,g]^{\lambda^2} \equiv [y,g]\varphi= [y^{\lambda^{-1}}u, g^bk] \equiv  [y,g]^{\lambda^{-1}b} \mod \Z(\N^1).\]
Hence $b \equiv \lambda^3 \mod p$.

Therefore $\varphi$ acts as $\lambda^2$ on $H/\Z(\N^1)$ and as $\lambda^3$ on $M/H$.

Note that
\[ [h,g] = [h,g]\varphi = [h^{\lambda^2}, g^{\lambda^3}k] \equiv  [h,g]^{\lambda^5} \mod \Z(\N^2).\]
Since $\lambda^5 \not\equiv 1 \mod p$, we deduce that $[g,h]\in \Z(\N^2)$ and the group $M/\Z(\N^2)$ is abelian. In particular $H/\Z(\N^2)=\Z(\N^2/\Z(\N^2))$ and so $H=\Z_2(\N^2)$.
Also, the group $H$ is elementary abelian (since $\N^1$ has exponent $p$ by Theorem \ref{characterization.E}(2) and $[H \colon \Z(\N^1)]=p$).

Let $c\in C$.
Then $c\varphi = c$ and we have
\[ [c,g]\varphi = [c, g^{\lambda^3}k] = [c,g]^{\lambda^{3}}.\]
Note that $[c,g] \in [\Z(\N^1),M]=\Z(\N^2)$ (since $[\Z(\N^1),M]$ is a proper subgroup of $\Z(\N^1)$ normalized by $\varphi$ and $C$ is not normal in $M$).
The assumption $p\geq 5$ implies $\lambda^3 \not\equiv \lambda \mod p$. So $[c,g]=1$ and $C\leq \Z(M)$, contradicting the fact that $C$ is not contained in $\Z(\N^2)$.

Therefore the $\F$-essential subgroup $E$ of $S$ is normal in $S$.
\end{proof}

\begin{theorem}\label{pgeq5}
Suppose $p\geq 5$, $S$ is a $p$-group of sectional rank $3$ and $\F$ is a saturated fusion system on $S$. Then every $\F$-essential subgroup of $S$ of rank $3$ is normal in $S$.
\end{theorem}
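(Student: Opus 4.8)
The plan is to fix an arbitrary $\F$-essential subgroup $E$ of $S$ of rank $3$ and to prove that $E \norm S$ by splitting into two cases according to whether or not $E$ is $\F$-characteristic in $S$. The first case is immediate: if $E$ is $\F$-characteristic in $S$, then $E$ is normalized by $\Aut_\F(S)$, which contains $\Inn(S)=\Aut_S(S)$; hence $E$ is normalized by $S$, i.e.\ $E \norm S$. (Equivalently, an $\F$-characteristic subgroup of $P$ is normal in $\N_S(P)$, and $\N_S(S)=S$.)

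The substance lies in the second case, in which $E$ is not $\F$-characteristic in $S$. Here $E$ satisfies the standing hypothesis of this section, so I may set $T=\coreF(E)<E$. By Theorem \ref{characterization.E}(3) the Frattini subgroup satisfies $\Phi(E)\leq T$ and the index is forced into the range $p^2 \leq [E \colon T] \leq p^3$. Since $E$ has rank $3$ and $\Phi(E)\leq T$, these are the only two possibilities, so it suffices to treat $[E \colon T]=p^2$ and $[E \colon T]=p^3$ separately. For the first I would invoke Lemma \ref{p2norm}, and for the second Lemma \ref{p3norm}; together with the hypothesis $p\geq 5$, each of these directly yields $E \norm S$, completing the proof.

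The main obstacle is not in assembling this dichotomy but in the two lemmas themselves, and in particular in the way the hypothesis $p\geq 5$ enters. Both lemmas argue by contradiction, assuming $\N^1=\N_S(E)<S$ and passing to $\N^2=\N_S(\N^1)$; one exploits that $E/T$ (respectively $E/\Phi(E)$) is a soft subgroup of an appropriate quotient of $\N^2$ to pin down its central series, and then studies the action of a torus element $\varphi\in\Aut_\F(\N^2)$ acting on $E/T$ as $\mathrm{diag}(\lambda^{-1},\lambda)$ with $\lambda\in\GF(p)$ of order $p-1$. Propagating this action up the central series via commutator identities shows that $\varphi$ scales successive quotients by $\lambda^2$, $\lambda^3$ and $\lambda^5$, and here the condition $p\geq 5$ is essential: it guarantees $\lambda^5\not\equiv 1 \pmod p$ (and similarly $\lambda^3\not\equiv 1,\lambda$), which forces the relevant commutators to vanish and produces an elementary abelian section of order $p^4$ inside $S$, contradicting sectional rank $3$. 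This is precisely the step that fails for $p=3$, consistent with the counterexamples described in the introduction.
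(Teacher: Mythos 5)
Your proposal is correct and follows essentially the same route as the paper: the $\F$-characteristic case is dispatched via $\Inn(S)\leq\Aut_\F(S)$, and the remaining case is split by $[E\colon\coreF(E)]\in\{p^2,p^3\}$ using Theorem \ref{characterization.E}(3) and then Lemmas \ref{p2norm} and \ref{p3norm}. Your account of where $p\geq 5$ enters inside those lemmas also matches the paper's arguments.
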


\begin{proof}
Let $E\leq S$ be an $\F$-essential subgroup of $S$ of rank $3$. If $E$ is $\F$-characteristic in $S$ then $E\norm S$. If $E$ is not $\F$-characteristic in $S$ then $\coreF(E)<E$ and by Theorem \ref{characterization.E}(3) we have $p^2 \leq [E \colon \coreF(E)] \leq p^3$. Thus $E$ is normal in $S$ by Lemmas \ref{p2norm} and \ref{p3norm}.
\end{proof}

\begin{proof}[\textbf{Proof of Theorem \ref{main}}]
By assumption $p\geq 5$ and $O_p(\F)=1$.
If $S$ is isomorphic to a Sylow $p$-subgroup of the group $\Sp_4(p)$ then the subgroups of $S$ that are candidates for $\F$-essential subgroups are the $\F$-pearls and the unique elementary abelian maximal subgroup $A\cong \C_p \times \C_p \times \C_p$. The assumption $O_p(\F) = 1$ implies that $A$ is not the only $\F$-essential subgroup of $S$, so $\F$ contains an $\F$-pearl.
Suppose $S$ is not isomorphic to a Sylow $p$-subgroup of the group $\Sp_4(p)$. Then by Theorem \ref{normal} there exists an $\F$-essential subgroup $E$ of $S$ that is not normal in $S$. Thus Theorem \ref{pgeq5} implies that $E$ has rank $2$ and by Theorem \ref{rank2pearl} we conclude that $E$ is an $\F$-pearl. Therefore in any case the fusion system $\F$ contains an $\F$-pearl.
The characterization of $S$ and $\F$ is then a direct consequence of \cite[Theorem B]{pearls}.
\end{proof}
\bibliographystyle{alpha}
\bibliography{bibrank}

\begin{thebibliography}{AKO11}

\bibitem[AKO11]{AKO}
M.~Aschbacher, R.~Kessar, and B.~Oliver.
\newblock {\em Fusion systems in algebra and topology}, volume 391 of {\em
  London Mathematical Society Lecture Note Series}.
\newblock Cambridge University Press, Cambridge, 2011.

\bibitem[Asc16]{AO}
B.~Aschbacher, M.and~Oliver.
\newblock Fusion systems.
\newblock {\em Bull. Amer. Math. Soc. (N.S.)}, 53(4):555--615, 2016.

\bibitem[Bur97]{Burnside}
W.~Burnside.
\newblock The theory of groups of finite order.
\newblock {\em Cambridge Univ. Press}, 1897.

\bibitem[COS17]{p.index2}
D.~A. Craven, B.~Oliver, and J.~Semeraro.
\newblock Reduced fusion systems over {$p$}-groups with abelian subgroup of
  index {$p$}: {II}.
\newblock {\em Adv. Math.}, 322:201--268, 2017.

\bibitem[DGS85]{DGS}
A.~Delgado, D.~Goldschmidt, and B.~Stellmacher.
\newblock {\em Groups and graphs: new results and methods}, volume~6 of {\em
  DMV Seminar}.
\newblock Birkh\"auser Verlag, Basel, 1985.
\newblock With a preface by the authors and Bernd Fischer.

\bibitem[DRV07]{DRV}
A.~D{\'{\i}}az, A.~Ruiz, and A.~Viruel.
\newblock All {$p$}-local finite groups of rank two for odd prime {$p$}.
\newblock {\em Trans. Amer. Math. Soc.}, 359(4):1725--1764 (electronic), 2007.

\bibitem[GLS96]{GLS2}
D.~Gorenstein, R.~Lyons, and R.~Solomon.
\newblock {\em The classification of the finite simple groups. {N}umber 2.
  {P}art {I}. {C}hapter {G}}, volume~40 of {\em Mathematical Surveys and
  Monographs}.
\newblock American Mathematical Society, Providence, RI, 1996.
\newblock General group theory.

\bibitem[GLS98]{GLS3}
D.~Gorenstein, R.~Lyons, and R.~Solomon.
\newblock {\em The classification of the finite simple groups. {N}umber 3.
  {P}art {I}. {C}hapter {A}}, volume~40 of {\em Mathematical Surveys and
  Monographs}.
\newblock American Mathematical Society, Providence, RI, 1998.
\newblock Almost simple $K$-groups.

\bibitem[Gor80]{GOR}
D.~Gorenstein.
\newblock {\em Finite groups}.
\newblock Chelsea Publishing Co., New York, second edition, 1980.

\bibitem[Gra18]{pearls}
V.~Grazian.
\newblock Fusion systems containing pearls.
\newblock {\em J. Algebra}, 510:98--140, 2018.

\bibitem[H{\'e}t84]{Het1}
L.~H{\'e}thelyi.
\newblock Soft subgroups of {$p$}-groups.
\newblock {\em Ann. Univ. Sci. Budapest. E\"otv\"os Sect. Math.}, 27:81--85
  (1985), 1984.

\bibitem[H{\'e}t90]{Het2}
L.~H{\'e}thelyi.
\newblock On subgroups of {$p$}-groups having soft subgroups.
\newblock {\em J. London Math. Soc. (2)}, 41(3):425--437, 1990.

\bibitem[KS04]{KS}
H.~Kurzweil and B.~Stellmacher.
\newblock {\em The theory of finite groups}.
\newblock Universitext. Springer-Verlag, New York, 2004.
\newblock An introduction, Translated from the 1998 German original.

\bibitem[Oli14]{p.index}
B.~Oliver.
\newblock Simple fusion systems over {$p$}-groups with abelian subgroup of
  index {$p$}: {I}.
\newblock {\em J. Algebra}, 398:527--541, 2014.

\bibitem[Oli16]{2rank4}
B.~Oliver.
\newblock Reduced fusion systems over 2-groups of sectional rank at most 4.
\newblock {\em Mem. Amer. Math. Soc.}, 239(1131):v+100, 2016.

\bibitem[OR17]{p.index3}
B.~Oliver and A.~Ruiz.
\newblock Reduced fusion systems over {$p$}-groups with abelian subgroup of
  index {$p$}: {III}.
\newblock {\em Preprint: https://arxiv.org/pdf/1708.08710v3.pdf}, 2017.

\bibitem[PS19]{PSrank2}
C.~Parker and J.~Semeraro.
\newblock Fusion systems on maximal class $3$-groups of rank two revisited.
\newblock {\em Preprint: https://arxiv.org/pdf/1809.01957v3.pdf}, 2019.

\bibitem[Pui06]{Pg}
L.~Puig.
\newblock Frobenius categories.
\newblock {\em J. Algebra}, 303(1):309--357, 2006.

\bibitem[RS09]{RS}
K.~Roberts and S.~Shpectorov.
\newblock On the definition of saturated fusion systems.
\newblock {\em J. Group Theory}, 12(5):679--687, 2009.

\bibitem[Ste86]{stell}
B.~Stellmacher.
\newblock Pushing up.
\newblock {\em Arch. Math. (Basel)}, 46(1):8--17, 1986.

\end{thebibliography}
\end{document}